\newlength{\dhatheight}
\newcommand{\lie}{\mathcal{L}}
\newcommand{\SL}{\operatorname{SL}}
\newcommand{\dvol}{\operatorname{dvol}}
\newcommand{\J}{\mathcal{J}}
\newcommand{\aut}{\mathfrak{aut}}
\newcommand{\R}{\mathbb{R}}
\newcommand{\C}{\mathbb{C}}
\newcommand{\Z}{\mathbb{Z}}
\newcommand{\g}{\mathfrak{g}}
\newcommand{\SO}{\mathrm{SO}}
\newcommand{\SU}{\mathrm{SU}}
\newcommand{\U}{\mathrm{U}}
\newcommand{\Sl}{\mathrm{SL}}
\newcommand{\st}{\big|\;}
\newcommand{\Sp}{\mathrm{Sp}}
\renewcommand{\dim}{\mathrm{dim}}
\newcommand{\GL}{\operatorname{GL}}
\newcommand{\vol}{\operatorname{vol}}
\newcommand{\abs}[1]{|#1|}
\newcommand{\norm}[1]{||#1||}
\newcommand{\Ind}{\operatorname{Ind}}
\renewcommand{\div}{\operatorname{div}}
\newcommand{\curl}{\mathrm{curl}}
\newcommand{\Sym}{\operatorname{Sym}}
\newcommand{\Spin}{\operatorname{Spin}}
\renewcommand{\phi}{\varphi}
\newtheorem{theorem}{Theorem}[section]
\newtheorem*{theorem*}{Theorem}
\newtheorem*{question*}{Question}
\newtheorem*{conjecture*}{Conjecture}
\newtheorem*{proposition*}{Proposition}
\newtheorem{remark}[theorem]{Remark}
\newtheorem{example}[theorem]{Example}
\newtheorem{definition}[theorem]{Definition}
\newtheorem{corollary}[theorem]{Corollary}
\newtheorem{proposition}[theorem]{Proposition}
\newtheorem{lemma}[theorem]{Lemma}
\begin{document}
\author{Enric Solé-Farré}
\title{Stability of nearly Kähler and nearly parallel $G_2$-manifolds}
\address{Department of Mathematics, University College London, London WC1E 6BT, United Kingdom}
\thanks{{\tt enric.sole-farre.21@ucl.ac.uk}}
\begin{abstract}
We investigate generalisations of Hitchin’s functionals, whose critical points correspond to nearly Kähler and nearly parallel $G_2$-structures. We focus on the gradient flow of these functionals and the spectral decomposition of their Hessians with respect to natural indefinite inner products.

We introduce a Morse-like index for these functionals, termed the Hitchin index. We prove this index provides a lower bound for the Einstein co-index and explore its relationship with the deformation theory of $G_2$ and $\Spin(7)$-conifolds.
\end{abstract}
\maketitle
\vspace{-1cm}
\section{Introduction}

Given a complete Riemannian manifold $(M^{n-1},g)$, one can study its associated metric cone $C(M)= (M\times \R_+, dr^2 +r^2g)$, and the interplay between structures on $M$ and its cone. For instance, the cone metric will be Ricci-flat if and only if $g$ is Einstein with constant $(n-2)$. In terms of holonomy, one can ask what structure $M$ carries when the cone has holonomy contained in $\U(n/2)$, $\SU(n/2)$, $\Sp(n/4)$, $G_2$ or $\Spin(7)$. The corresponding geometries on the link are called Sasaki, Sasaki-Einstein, 3-Sasaki, nearly Kähler and nearly parallel $G_2$, respectively. We will focus on the latter two. Since Riemannian manifolds with holonomy contained in either $G_2$ or $\Spin(7)$ are Ricci flat, nearly Kähler and nearly parallel $G_2$-manifolds are Einstein with constant $\lambda=5$ and $\lambda=6$ respectively. 

The term nearly Kähler structures here corresponds to what is often referred to as almost Hermitian strict nearly Kähler manifolds in dimension $6$ or Gray manifolds in the literature. Gray \cite{Gray70} introduced the general term nearly Kähler to distinguish a particular class of almost Hermitian manifolds in every even dimension. Strict nearly Kähler manifolds of dimension $6$ are of particular interest due to the work of Nagy \cite{Nagy02}, who showed that every nearly Kähler manifold is locally isometric to a Riemannian product of 6-dimensional strict nearly Kähler manifolds, nearly Kähler homogeneous spaces and twistor spaces over positive scalar curvature quaternionic-Kähler manifolds.

Only six examples of simply connected nearly Kähler manifolds are currently known. Four of them are homogeneous: 
\begin{multicols}{2}
\begin{itemize}
    \item $(S^6, g_{round})= G_2/ \SU(3)$, 
    \item $\C P^3 = \Sp(2)/\U(1)\times \Sp(1)$ and 
    \item[\vspace{\fill}]
    \item $S^3 \times S^3 = \SU(2)^3/\triangle \SU(2)$,
    \item $\mathbb{F}_{1,2} = \SU(3)/T^2$.
\end{itemize}
\end{multicols}
\vspace{-2em}
In 2016, Foscolo and Haskins \cite{FH17} produced two new examples of nearly Kähler structures on $S^6$ and $S^3\times S^3$ using cohomogeneity one methods. Finally, Cortés and Vásquez considered finite quotients of the homogeneous examples in \cite{CV15}.

The term nearly parallel $G_2$ is used here to refer to any Riemannian manifold whose metric cone has holonomy contained in $\Spin(7)$. In particular, due to the natural inclusions $\Sp(2) \subseteq \SU(4) \subseteq \Spin(7)$, all Sasaki-Einstein and 3-Sasaki manifolds in dimension $7$ are also nearly parallel $G_2$-manifolds. Sasakian geometry has been studied extensively (cf. \cite{BG08}, \cite{Sparks11}), and a great number of examples are known. When the cone of a nearly parallel $G_2$-manifold has holonomy exactly $\Spin(7)$, the structure is called a proper nearly parallel $ G_2$-structure. Two distinct families of examples are known. First, on every 3-Sasaki manifold, the second Einstein metric is a proper nearly parallel $G_2$ metric, as proved in \cite{FKMS97}. This metric is obtained by squashing the $\SU(2)$-fibres of a 3-Sasaki manifold, referred to as squashed nearly parallel $G_2$-manifolds. Secondly, the homogeneous examples were classified in \cite{FKMS97}:
    \begin{itemize}
        \item $(S^7, g_{squashed}) \cong \Sp(2) \times \Sp(1)/ \Sp(1) \times \Sp(1)$,
        \item $N(k,l)\cong \SU(3)/S^1_{k,l}$ for $k,l \in \Z$,  where $S^1 \rightarrow \SU(3)$ as $z \mapsto (z^k, z^l, z^{-(k+l)})$, and
        \item $\SO(5)/\SO(3)$, where the isotropy representation is the unique seven-dimensional irreducible representation $\SO(3) \rightarrow G_2 \subseteq \SO(7)$.
    \end{itemize}
Notice that $(S^7, g_{squashed})$ and $N(1,1)$ also belong to the family of squashed $G_2$ metrics. Podestà \cite{Pode21} and Singhal \cite{singhal23} have made recent progress in constructing cohomogeneity one proper nearly parallel $G_2$-manifolds.

In the early 2000s, Hitchin \cite{Hitchin00} \cite{Hitchin01} introduced a variational approach to nearly Kähler and nearly parallel $G_2$-structures. He defined volume functionals $\mathcal{L}$ (resp. $\mathcal{P}$) over the space of stable forms on $M^6$ (resp. $M^7$) and showed that critical points were precisely nearly Kähler (resp. nearly parallel $G_2$) structures. In this notes, we continue the study of these functionals and show that their gradient flow for a suitable indefinite inner product corresponds to special holonomy metrics on the product space $\R_+ \times M$ , as detailed in Propositions \ref{SU3coneflow} and \ref{G2coneflow}. We describe the spectral decomposition of the Hessian of these two functionals, recovering the results for infinitesimal deformations of Moroianu, Nagy and Semmelmann \cite{MNS08}, and Alexandrov and Semmelmann \cite{AS12}.

We introduce two new Hitchin-type functionals, $\mathcal{Q}$ and $\mathcal{T}$ that generalise the original functionals $\mathcal{L}$ and $\mathcal{P}$. In the case of nearly Kähler structures, the domain of the functional will be 
$$ \mathcal{U}=  \left\{ \omega \in \Omega^2 \st d \omega \textrm{~is~stable,~} \omega \textrm{~is~stable~and~positive,~} \omega^2 \textrm{~is~exact} \right\}\;.$$
In contrast with Hitchin's original framework, where the metric conditions are guaranteed by the Euler--Lagrange equations, every point in $\mathcal{U}$ carries a natural $\SU(3)$-structure (cf. Prop. \ref{extraordianrysu3}), which motivates the geometric interest in the new Hitchin functional. Moreover, we have 
\begin{theorem*} The new Hitchin functional $\mathcal{Q}: \mathcal{U}\rightarrow \R$ satisfies the following.
    \begin{enumerate}[label=(\roman*)]
        \item The Einstein--Hilbert action is a lower bound for $\mathcal{Q}$. The two only coincide along rescalings of nearly Kähler structures (cf. Prop. \ref{compareNK_EH} ).
        \item Critical points have a well-defined index with respect to a natural indefinite inner product(cf. Thm. \ref{specHess6}), which provides a lower bound for the Einstein co-index (cf. Prop. \ref{index_bound_NK}).
        \item The gradient flow for this inner product is not parabolic, even after a DeTurck trick.
        \item There is an explicit connection between the spectrum of $\delta^2 \mathcal{L}$ and $\delta^2 \mathcal{Q}$ (cf. Prop. \ref{index2index}).
    \end{enumerate}
\end{theorem*}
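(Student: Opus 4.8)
The statement is a summary theorem whose four items are each certified by the proposition or theorem cited beside them, so the plan is to prove those four results separately and then assemble them; I sketch the strategy for each in turn. For item (i), I would argue by a pointwise comparison of the two integrands. By Prop.~\ref{extraordianrysu3} every $\omega\in\mathcal{U}$ carries a canonical $\SU(3)$-structure, so both the integrand defining $\mathcal{Q}$ and the total scalar curvature entering the Einstein--Hilbert action can be rewritten in terms of the intrinsic torsion of that structure. Decomposing the torsion into its $\SU(3)$-irreducible components and substituting the standard scalar-curvature formula, the difference $\mathcal{Q}-\mathrm{EH}$ should reduce to a manifestly nonnegative combination (a sum of squares) of the torsion components that are \emph{not} of nearly Kähler type. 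A Cauchy--Schwarz-type algebraic inequality then yields $\mathcal{Q}\geq\mathrm{EH}$, with equality exactly when all such components vanish, i.e.\ along rescalings of nearly Kähler structures; this is Prop.~\ref{compareNK_EH}.

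For item (ii) I would compute the second variation $\delta^2\mathcal{Q}$ at a critical point and diagonalise it. The tangent space to $\mathcal{U}$ splits into $\SU(3)$-irreducible summands, and on each summand the Hessian should act as an explicit function of the relevant Laplace and curvature eigenvalues, reusing the eigenvalue bookkeeping of Moroianu--Nagy--Semmelmann \cite{MNS08}. Pairing these summands through the indefinite inner product, the resulting quadratic form has only finitely many negative directions, which defines the Hitchin index (Thm.~\ref{specHess6}). The co-index bound then follows formally from item (i): since $\mathcal{Q}-\mathrm{EH}\geq 0$ vanishes at the nearly Kähler critical point, that point is a minimum of the difference, so $\delta^2(\mathcal{Q}-\mathrm{EH})\geq 0$ as quadratic forms; hence any subspace on which $\delta^2\mathcal{Q}$ is negative definite is mapped, via the induced variation of the underlying metric, into a subspace on which $\delta^2\mathrm{EH}$ is negative definite. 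Comparing the dimensions of these maximal negative subspaces gives the inequality of Prop.~\ref{index_bound_NK}.

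For item (iii) the plan is to linearise the gradient flow and read off its principal symbol. The DeTurck trick removes the degeneracy coming from diffeomorphism invariance by adding a gauge-fixing vector field, exactly as for Ricci flow, but here the gradient is formed with respect to an indefinite inner product. I expect to show that the leading symbol, computed from the top-order part of the linearised operator on the $\SU(3)$-decomposition of the deformation space, carries eigenvalues of both signs; exhibiting a single irreducible summand on which the symbol has the wrong sign then shows that no choice of DeTurck vector field can make the flow (weakly) parabolic. For item (iv) I would exploit that $\mathcal{Q}$ essentially factors through the exterior derivative: a variation $\dot\omega$ induces the variation $d\dot\omega$ of the stable $3$-form $d\omega$ that is the argument of $\mathcal{L}$. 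Pushing $\delta^2\mathcal{Q}$ forward along this map and matching the $\SU(3)$-summands on which both Hessians were diagonalised should produce the explicit eigenvalue correspondence of Prop.~\ref{index2index}.

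The main obstacle I anticipate is the Hessian diagonalisation underlying items (ii) and (iv). Obtaining the eigenvalue on each irreducible summand requires tracking precisely how the exterior derivative, the indefinite pairing, and the Hitchin duality $\rho\mapsto\hat{\rho}$ interact, and the representation-theoretic input of \cite{MNS08} must be fitted to the variational structure of the new functional rather than to the torsion-deformation problem it was developed for. The symbol computation in item (iii) is delicate for the same reason — the indefinite inner product is exactly what obstructs parabolicity — but once the Hessian is understood the non-parabolicity and the spectral correspondence should follow with comparatively routine work.
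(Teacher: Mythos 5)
Your four-part plan mirrors the paper's structure, and for (i) and (iv) it is essentially the paper's own argument: Prop.~\ref{compareNK_EH} is proved by expressing both $\mathcal{Q}$ and $\widehat{\mathcal{S}}$ through the torsion of the $\SU(3)$-structure supplied by Prop.~\ref{extraordianrysu3} and the scalar curvature formula of Lemma~\ref{scalar_curv_NK}, giving $\mathcal{Q}-\widehat{\mathcal{S}}=\frac{1}{10}\int_M|\widehat{\tau}_2|^2\vol_g$ (a sum of squares, equality forcing $\widehat{\tau}_2=0$, hence $df=0$ and a rescaled nearly Kähler structure); and Prop.~\ref{index2index} follows by reducing both Hessians to Laplace eigenvalue problems, $\Delta\gamma_0=\frac{(\mu+12)^2}{12}\gamma_0$ on $\Omega^3_{12,exact}$ for $\mathcal{L}$ versus $\Delta\chi_0=(\mu+12)\chi_0$ on $\Omega^4_{8,exact}$ for $\mathcal{Q}$, the quadratic-versus-linear dependence on $\mu$ producing the two-to-one correspondence. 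For the diagonalisation in (ii) the paper's key tool is Foscolo's Dirac-type Hodge decomposition (Theorem~\ref{hodgedecom6}) together with a slice for the $\operatorname{Diff}_0(M)$-action, rather than the bookkeeping of \cite{MNS08}, but your sketch is the same kind of computation; your outline of (iii) is likewise consistent with the actual mechanism (the leading symbol has opposite signs on different $\SU(3)$-summands because of the indefinite pairing, which no DeTurck term can repair, cf.\ Prop.~\ref{nonparabolicity}).

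The genuine gap is your argument for the Einstein co-index bound in (ii): it conflates the Morse index of $\delta^2\mathcal{Q}$ with the Hitchin index. Because the pairing $[\cdot,\cdot]$ is indefinite --- in fact negative definite on $\Omega^4_{8,exact}$, where $\mathcal{K}$ acts as $-*$ --- a \emph{negative} eigenvalue of the endomorphism $\mathcal{H}^{\mathcal{Q}}$ corresponds to a \emph{positive} direction of the quadratic form: on the gauge slice $\delta^2\mathcal{Q}=-\tfrac{1}{3}\int_M\langle(\Delta-12)\chi_0,\chi_0\rangle+8\int_M\langle(\Delta-6)f,f\rangle$, so the Hitchin index counts eigenvalues $\lambda\in(0,12)$ and equals the Morse \emph{co-index} of $\delta^2\mathcal{Q}$ on $\Omega^4_{8,exact}$ (Prop.~\ref{index_vs_coindex}); the paper flags exactly this subtlety after Definition~\ref{NKindex}. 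Your soft argument --- $\mathcal{Q}-\widehat{\mathcal{S}}\geq 0$ with equality at the critical point, hence $\delta^2\mathcal{Q}\geq\delta^2\widehat{\mathcal{S}}$, hence negative directions of $\delta^2\mathcal{Q}$ push forward to negative directions of $\delta^2\widehat{\mathcal{S}}$ --- therefore bounds the wrong quantity: the Morse index, which is infinite for both functionals (take $\chi_0$ with $\Delta$-eigenvalue above $12$, respectively TT-tensors with large Lichnerowicz eigenvalue), so the dimension comparison is vacuous. Worse, for the quantity actually at stake the inequality runs the wrong way: from $\delta^2\mathcal{Q}\geq\delta^2\widehat{\mathcal{S}}$, a direction where $\delta^2\mathcal{Q}>0$ gives no information about the sign of $\delta^2\widehat{\mathcal{S}}$, so no minimality argument of this type can yield $\operatorname{Ind}_{(\omega,\rho)}\leq$ Einstein co-index. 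What Prop.~\ref{index_bound_NK} requires is an \emph{exact} comparison of the two Hessians on the slice: identifying the metric variations induced by $\Omega^4_{8,exact}$ with TT-tensors via $\Omega^2_8\cong\Sym^2_+$ and relating the Hodge Laplacian on coclosed forms of type $\Omega^2_8$ to the Lichnerowicz Laplacian, so that the spectral window $(0,12)$ maps into the destabilising window for the Einstein--Hilbert action.
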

In the nearly parallel $G_2$ case, we consider the domain $\mathcal{V}=\{ \psi \in \Omega^4 \st \psi \textrm{~is~stable~and~exact} \}$. We have 
\newline
\begin{theorem*}The new Hitchin functional $\mathcal{T}: \mathcal{V} \rightarrow \R $ satisfies the following.
    \begin{enumerate}[label=(\roman*)]
        \item The Einstein--Hilbert action is a lower bound for $\mathcal{T}$. The two only coincide along rescalings of nearly parallel $G_2$ structures (cf. Prop. \ref{compareG2_EH}).
        \item Critical points have a well-defined index with respect to a natural indefinite inner product, which provides a lower bound for the Einstein co-index (cf. Prop. \ref{index_bound_GS}).
        \item The gradient flow with respect to this inner product is not parabolic.
        \item There is an explicit connection between the spectrum of $\delta^2 \mathcal{P}$ and $\delta^2 \mathcal{T}$ (cf. Prop. \ref{2nd21st}).
    \end{enumerate}
\end{theorem*}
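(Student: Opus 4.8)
The proof establishes the four items through the referenced propositions, and I outline the geometric substance behind each. The unifying tool is the dictionary, furnished by the stable-form correspondence, between a point $\psi\in\mathcal{V}$ and the $G_2$-structure it induces: its metric $g_\psi$, Hodge star, dual $3$-form $\phi$, and the four intrinsic torsion forms $\tau_0,\tau_1,\tau_2,\tau_3$ valued in the irreducible $G_2$-modules $\Omega^0$, $\Omega^1$, $\Omega^2_{14}$, $\Omega^3_{27}$. The nearly parallel locus is precisely $\tau_1=\tau_2=\tau_3=0$, and all four parts are read off from how each functional and its variations decompose along these types.

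For (i) the plan is to write both $\mathcal{T}(\psi)$ and the Einstein--Hilbert action $\int_M \mathrm{Scal}(g_\psi)\,\dvol$ as integrals of quadratic expressions in the torsion forms, using the Bryant-type scalar-curvature formula and discarding the total-divergence term $\delta\tau_1$ upon integration. Subtracting, the difference $\mathcal{T}-\mathrm{EH}$ collapses to a combination of the $L^2$-norms of $\tau_1$, $\tau_2$ and $\tau_3$ with signs rendering it manifestly nonnegative; equality forces $\tau_1=\tau_2=\tau_3=0$, which is exactly the nearly parallel condition, giving Proposition \ref{compareG2_EH}. For (ii), at a nearly parallel critical point $\psi_0$ I would linearise within $T_{\psi_0}\mathcal{V}$, writing deformations as exact stable $4$-forms and decomposing them into $G_2$-types. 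With respect to the natural indefinite pairing the second variation $\delta^2\mathcal{T}$ becomes self-adjoint and splits, type by type, into Laplace-type operators whose spectra are discrete and bounded below on each summand; hence only finitely many negative eigenvalues occur and the Hitchin index is well defined. To bound the Einstein co-index I would compare $\delta^2\mathcal{T}$ with the second variation of the Einstein--Hilbert functional: the inequality of (i), saturated at $\psi_0$, forces any direction making $\delta^2\mathcal{T}$ negative to descend to an Einstein-destabilising metric variation, so the Hitchin index does not exceed the Einstein co-index, giving Proposition \ref{index_bound_GS}.

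For (iii) and (iv) I would compute principal symbols and eigenvalues explicitly. The gradient flow is $\partial_t\psi$ equals the $\mathcal{T}$-gradient taken for the indefinite inner product; its linearisation has a principal symbol that, precisely because the pairing is indefinite, is itself indefinite on the symbol bundle, so no sign choice and no diffeomorphism gauge-fixing can render it definite, and the flow therefore fails to be parabolic. For (iv), both $\delta^2\mathcal{P}$ and $\delta^2\mathcal{T}$ at $\psi_0$ are expressible through the same $G_2$-representation decomposition and the same underlying Laplacians; diagonalising in a common eigenbasis yields an explicit eigenvalue correspondence, establishing Proposition \ref{2nd21st}.

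The hard part will be the Hessian computation underlying (ii)--(iv): expressing $\delta^2\mathcal{T}$ cleanly in terms of $G_2$-irreducible types and matching it against standard Laplace-type operators requires carefully tracking the linearisation of the nonlinear map $\psi\mapsto\phi$ together with the torsion identities. It is exactly here that the indefinite inner product plays its double role, being indispensable for the self-adjointness that makes the index meaningful while simultaneously being the source of the symbol indefiniteness that destroys parabolicity.
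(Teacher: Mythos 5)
Your outline reproduces the paper's skeleton for (i) and (iv), but three of your steps contain genuine errors. In (i), the claim that $\mathcal{T}-\widehat{\mathcal{S}}$ ``collapses to a combination of the $L^2$-norms of $\tau_1$, $\tau_2$ and $\tau_3$ with signs rendering it manifestly nonnegative'' is false: in Bryant's formula (Lemma \ref{scalar_curv_G2}) the term $30\abs{\tau_1}^2$ enters the scalar curvature with a \emph{positive} sign, so the difference of the functionals contains $-5\int_M\abs{\tau_1}^2\vol_g$ and the inequality fails for a general $G_2$-structure. The proof of Proposition \ref{compareG2_EH} works only because the domain $\mathcal{V}$ consists of \emph{exact} stable $4$-forms: $d\psi=0$ forces $\tau_1=\tau_2=0$ identically by Proposition \ref{torsion7}, leaving $\mathcal{T}-\widehat{\mathcal{S}}=\frac{1}{12}\int_M\abs{\tau_3}^2\vol_g$. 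Equality then forces only $\tau_3=0$; constancy of $\tau_0$ follows separately from $d^2\phi=0$, not from the inequality.

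The most serious gap is your argument for the Einstein co-index bound in (ii). You treat the Hitchin index as the Morse index of $\delta^2\mathcal{T}$, i.e.\ as counting directions where $\delta^2\mathcal{T}<0$. It is not: the index is the number of negative eigenvalues of the endomorphism $\mathcal{H}^\mathcal{T}$ associated to $\delta^2\mathcal{T}$ via the \emph{indefinite} pairing $Q$, and on the directions it counts ($d*\chi_0=\lambda\chi_0$ with $\lambda\in(-4,0)$, Corollary \ref{Qindex_G2}) one has $\delta^2\mathcal{T}\big\vert_{\mathcal{E}(\lambda)}=(\lambda+4)\norm{\chi_0}^2>0$ while $Q<0$; indeed the paper points out that the genuine Morse index and co-index of $\delta^2\mathcal{T}$ are both \emph{infinite} in the $G_2$ case, so no finite count can be extracted from the sign of $\delta^2\mathcal{T}$ alone. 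Moreover, even for the quantity you do control the comparison runs the wrong way: $\mathcal{T}\geq\widehat{\mathcal{S}}$ with equality at the critical point gives $\delta^2\mathcal{T}\geq\delta^2\widehat{\mathcal{S}}$, so negative directions of $\delta^2\mathcal{T}$ descend to negative directions of the Einstein--Hilbert action, whereas the co-index counts destabilising directions, whose existence an upper bound on $\delta^2\widehat{\mathcal{S}}$ can never certify. What is actually needed (and what the paper does, via its second-variation computation for the Einstein--Hilbert action in the appendix) is an explicit spectral identification, through $i_\phi$, of the eigenspaces $\mathcal{E}(\lambda)$, $\lambda\in(-4,0)$, with transverse-traceless metric deformations on which the Lichnerowicz operator lies below the destabilising threshold. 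Finally, in (iii) the mechanism is not an indefinite principal symbol inherited from the pairing --- an inference you do not justify, and which is contradicted by the nearly Kähler flow, which with respect to an indefinite pairing takes the heat-like form $\partial_t\sigma=\Delta\sigma-12\sigma$. The correct and more elementary reason is that the gradient flow \eqref{npg2flow} involves $\tau_0$ and $d\tau_0$, which already depend on first derivatives of $\psi$, so the flow is \emph{third order} in $\psi$ and an odd-order flow cannot be parabolic. Part (iv) is right in spirit, though the precise mechanism of Proposition \ref{2nd21st} is that on $\Omega^4_{27,exact}$ the operator $\mathcal{H}^\mathcal{T}$ is a quadratic polynomial in $\mathcal{H}^\mathcal{P}$, which is exactly what produces the two-to-one correspondence of eigenforms.
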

Since the gradient flows of these new functionals are well-posed initial value problems, one could naively hope to flow to a critical point under suitable starting conditions. However, in Proposition \ref{nonparabolicity}, we prove that the nearly Kähler flow is not strictly parabolic, even after using DeTurck's trick. In particular, we can not guarantee the existence of short-term solutions for these flows. 

The Hitchin index for nearly Kähler structures was explored by Karigiannis and Lotay in their study of the moduli space of \( G_2 \)-conifolds in \cite{LK20}. In their analysis, the Hitchin index naturally emerged through analytical techniques, contributing to the virtual dimension of the moduli space. This suggests, together with our results, that the Hitchin index for \( G_2 \)-holonomy conifolds is analogous to the stability index defined by Joyce \cite{Joy032} for special Lagrangians and by Lotay \cite{Lot07} for coassociative submanifolds. A more detailed discussion of this analogy is provided in the outlook. We conjecture that a similar result will hold for \( \Spin(7) \)-holonomy structures, where only the asymptotically conical case has been explored by Lehmann \cite{Leh21}.

Computing the index implies understanding the spectrum of a second-order partial differential operator. In the homogeneous case, one can reduce the computation of the Hitchin index to an algebraic problem, using the Peter-Weyl theorem and Frobenius reciprocity. Karigiannis and Lotay \cite{LK20} (cf. \cite{MS10}) use this to compute the Hitchin index for the four homogeneous nearly Kähler structures. In combination with the work of Schwahn \cite{Sch22}, we have
\begin{theorem*}
    The homogeneous nearly Kähler structures are Hitchin stable. Their Einstein index is the sum of the second and third Betti numbers.
\end{theorem*}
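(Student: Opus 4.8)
The plan is to turn both the Hitchin-stability claim and the index formula into a single spectral computation, and then to run that computation representation-theoretically on each homogeneous model, reading the answer off from cohomology.

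I would begin from the spectral decomposition of the Hessian in Theorem \ref{specHess6}. This expresses the eigenvalues of $\delta^2\mathcal{Q}$ — and hence, through Proposition \ref{index2index}, of $\delta^2\mathcal{L}$ — in terms of eigenvalues of natural second-order operators acting on the $\SU(3)$-irreducible pieces of the variation space (coclosed primitive $(1,1)$-forms, primitive $(2,1)+(1,2)$-forms, and the trivial summand). The Hitchin index is the number of strictly negative eigenvalues for the indefinite pairing, and Proposition \ref{index_bound_NK} tells us how these negative directions sit inside the destabilising directions of the Einstein problem on transverse–traceless tensors. Because every operator appearing is polynomial in the Hodge Laplacian, on a homogeneous model $G/H$ its spectrum is governed by Peter–Weyl and Frobenius reciprocity: each $G$-type $V_\gamma$ occurs with multiplicity $\dim\operatorname{Hom}_H(V_\gamma,W)$ for the relevant isotropy module $W$, with eigenvalue fixed by the Casimir eigenvalue of $V_\gamma$. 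This is exactly the set-up used by Karigiannis and Lotay \cite{LK20}.

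For the Hitchin-stability statement I would invoke their computation directly: for $S^6$, $\C P^3$, $\mathbb{F}_{1,2}$ and $S^3\times S^3$ they show that no $G$-type contributes a negative eigenvalue to the Hitchin Hessian, so the Hitchin index is $0$. For the Einstein index I would argue by a two-sided bound. The lower bound is the general nearly Kähler mechanism: since $b_1=0$ on these manifolds, a harmonic $2$-form is primitive of type $(1,1)$ and produces, via $\beta\mapsto\beta(J\,\cdot\,,\,\cdot\,)$, a $J$-Hermitian transverse–traceless eigentensor strictly below the stability threshold, while a harmonic $3$-form produces a $J$-anti-Hermitian one; these give $b_2$ and $b_3$ independent destabilising directions of the Einstein operator. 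The matching upper bound is what the explicit spectrum of Theorem \ref{specHess6}, evaluated on the homogeneous data of \cite{LK20} and combined with Schwahn's stability analysis \cite{Sch22}, supplies: once every $G$-type below the threshold has been enumerated, the only destabilising modes are the harmonic ones. Adding the two contributions yields an Einstein index of $b_2+b_3$, returning $0,1,2,2$ on the four models.

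The step I expect to be the main obstacle is the precise matching of thresholds. One must verify that the harmonic forms land \emph{strictly} below the Einstein stability threshold — so that they count as genuine negative directions of the Einstein problem — while simultaneously sitting in the \emph{kernel} of the Hitchin Hessian, so that they contribute nothing to the Hitchin index; and one must check that no further $G$-type of the homogeneous decomposition slips below either threshold. This is the delicate bookkeeping where the spectral decomposition of this paper has to be reconciled with the external computations of \cite{LK20} and \cite{Sch22} (and the infinitesimal deformation description of \cite{MNS08}); granting that reconciliation, both Hitchin stability and the identity with $b_2+b_3$ follow at once.
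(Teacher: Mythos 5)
There is a genuine gap, and it sits exactly where the paper's own proof does its work: the round $S^6$. You invoke \cite[Prop.~6.3]{LK20} for all four homogeneous models, but Karigiannis--Lotay's computation covers only $\C P^3$, $S^3\times S^3$ and $\mathbb{F}_{1,2}$ (the cone over the round $S^6$ is flat $\R^7$, so it never appears as a conifold link in their setting). As a result your proposal contains no argument at all for Hitchin stability of $G_2/\SU(3)$, which is precisely the case the paper proves by hand: using Lemma \ref{MS1} to identify the Hodge Laplacian with the canonical Laplacian on coclosed forms in $\Omega^2_8$, then Proposition \ref{MS2} (Peter--Weyl plus Frobenius reciprocity, with $\Delta^{can}=-12\operatorname{Cas}^{G}$) and the Freudenthal formula for the $G_2$ Casimir to see that the only irreducible representation with eigenvalue below the threshold $12$ is the fundamental $7$-dimensional one, $V_{1,0}$; finally $\operatorname{Hom}_{\SU(3)}\left(V_{1,0},\mathfrak{su}(3)\right)=0$ by dimension count, so no eigenvalue in $(0,12)$ occurs and $S^6$ is Hitchin stable. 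Note also a second, related slip: you propose to start from Theorem \ref{specHess6}, but that theorem (and the gauge slice behind it) explicitly excludes the round $6$-sphere; for $S^6$ one must work directly with the eigenvalue characterisation \eqref{NKindexeq} of the index, as the paper remarks. Relatedly, your phrase ``every operator appearing is polynomial in the Hodge Laplacian'' with ``eigenvalue fixed by the Casimir'' conflates the Hodge and canonical Laplacians; on a naturally reductive space it is the canonical Laplacian that Peter--Weyl diagonalises, and the Weitzenböck comparison of Lemma \ref{MS1} is the ingredient that lets you transfer this to the Hodge Laplacian on coclosed $\Omega^2_8$.

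Your treatment of the Einstein index is essentially sound and matches how the paper assembles it: the destabilising directions coming from harmonic $2$- and $3$-forms give the lower bound $b_2+b_3$ (these lie at eigenvalue $0$, hence outside the open interval $(0,12)$, so they do not contribute to the Hitchin index), and Schwahn's computation \cite{Sch22} supplies the matching upper bound, yielding $0,1,2,2$ on the four models. So the representation-theoretic strategy and the division of labour between \cite{LK20} and \cite{Sch22} are correct; what is missing is the self-contained $G_2/\SU(3)$ computation, which cannot be outsourced to either reference.
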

In the sequel of this work \cite{ESF24c}, we study the index problem in the cohomogeneity one examples of Foscolo and Haskins \cite{FH17}.  

\section*{Acknowledgments}
The author is indebted to his PhD supervisors, Simon Donaldson and Lorenzo Foscolo, for their continuous support, guidance and
encouragement, throughout the completion of this project. We also thank Uwe Semmelmann for his comments on an early version of the manuscript.

This work was supported by the Engineering and Physical Sciences Research Council [EP/S021590/1]. The EPSRC Centre for Doctoral Training in Geometry and Number Theory (The London School of Geometry and Number Theory), University College London.

\section{Stable forms and Hitchin functionals}\label{Hitchin}
In the early 2000s', Nigel Hitchin \cite{Hitchin00} \cite{Hitchin01} showed how some geometric structures can be realised as critical points of suitable functionals over a class of suitably generic forms called stable. Hitchin classified all the possible cases in his original papers.
\begin{definition}
    Let $V^n$ be a n-dimensional real vector space. A form $w\in \Lambda^p(V^*)$ is stable if the orbit of $w$ under the induced $\GL(V)$ action is open.
\end{definition}
If the stabiliser of a stable form is a subgroup of $\SL(V)$, there is an invariant volume form associated with each stable form. Similarly, if the stabiliser is compact, there is an invariant inner product associated with the stable form.

We assume that $\operatorname{Stab}(w) \subseteq \SL(V)$ for the remainder of the section. Assigning a stable form its invariant volume form defines a $\GL(V)$-invariant map $\vol: \Lambda_+^p(V^*) \rightarrow \Lambda_+^n(V^*)\cong \R^*$. In particular, taking $\mu \in \R$, this map satisfies
$$\vol(\mu^p w)= \mu^n \vol(w)\;.$$
In other words, $\vol$ is homogeneous of degree $n/p$. Its derivative defines an invariant element $\widehat{w} \in (\Lambda^p V^*)^*\otimes \Lambda^nV^* \cong \Lambda^{n-p}V^*$,  
 \begin{equation} \label{volumederivative}
     \frac{\delta}{ \delta  \alpha } \vol(w) =  \alpha  \wedge \widehat{w} \;.
 \end{equation} 
 We call $\widehat{w}$ the Hitchin dual of $w$. Using Euler's formula for $\alpha =w$, we obtain the relation 
\begin{equation} \label{eulerfactor}
    w \wedge \widehat{w}= \frac{n}{p} \vol(w)\;.
\end{equation}
 This discussion extends naturally to the setting of smooth manifolds. For convenience, we will reduce our discussion to the set-up where $M$ is a smooth closed oriented manifold. We say a smooth $p$-form $\rho\in \Omega^p(M)$ is stable, if the restriction of $\rho$ at a point $x\in M$, $\rho_x$, is stable for every point.

The existence of stable smooth forms is only obstructed by the reduction of the frame bundle to the corresponding stabiliser. The set of stable forms will be denoted by $\Omega_+^p(M)$, which will be open in $\Omega^p(M)$ whenever it is non-empty. 
The map $\vol$ above extends to a smooth map $\vol: \Omega^p_+(M) \rightarrow \Omega^n(M)$ and we can define the corresponding volume functional $V$ by integrating against the fundamental class of the manifold, known as the Hitchin functional. Since stable forms form an open set, one can study the variational properties of the functional $V$. The main result, due to Hitchin, is an application of Stokes' theorem.
\begin{theorem}[\cite{Hitchin00}]
    A closed stable form $\rho\in \Omega^p_+(M)$ is a critical point of $V$ within its cohomology class if and only if its Hitchin dual is closed; i.e. $d\widehat{\rho}=0$.
\end{theorem}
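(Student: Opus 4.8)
The plan is to compute the first variation of $V$ along an arbitrary admissible direction and to reduce the criticality condition to the pointwise identity $d\widehat{\rho}=0$ via Stokes' theorem together with the non-degeneracy of the wedge pairing. First I would identify the admissible variations. Since $\rho$ is closed and $V$ is restricted to the cohomology class $[\rho]$, the tangent space to this constraint at $\rho$ is exactly the space of exact forms $d\,\Omega^{p-1}(M)$: the velocity of any smooth path of closed forms in a fixed class is closed with vanishing cohomology class, hence exact. Because $\Omega^p_+(M)$ is open, for each $\beta\in\Omega^{p-1}(M)$ the straight path $\rho+t\,d\beta$ is closed, stays in $[\rho]$, and remains stable for small $t$, so it is a legitimate competitor. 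Thus $\rho$ is critical within its cohomology class if and only if $\left.\frac{d}{dt}\right|_{t=0}V(\rho+t\,d\beta)=0$ for every $\beta\in\Omega^{p-1}(M)$.

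Next I would differentiate under the integral sign, which is legitimate since $M$ is compact and $\vol$ is smooth on the open set of stable forms. Using the defining property \eqref{volumederivative} of the Hitchin dual pointwise in the direction $\alpha=d\beta$, I obtain
\begin{equation*}
\left.\frac{d}{dt}\right|_{t=0}V(\rho+t\,d\beta)=\int_M\left.\frac{d}{dt}\right|_{t=0}\vol(\rho+t\,d\beta)=\int_M d\beta\wedge\widehat{\rho}\;.
\end{equation*}
Then I would integrate by parts. From the Leibniz rule $d(\beta\wedge\widehat{\rho})=d\beta\wedge\widehat{\rho}+(-1)^{p-1}\beta\wedge d\widehat{\rho}$ and the fact that $M$ is closed, Stokes' theorem yields $\int_M d\beta\wedge\widehat{\rho}=(-1)^p\int_M\beta\wedge d\widehat{\rho}$. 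Hence $\rho$ is critical if and only if $\int_M\beta\wedge d\widehat{\rho}=0$ for all $\beta\in\Omega^{p-1}(M)$, where $d\widehat{\rho}\in\Omega^{n-p+1}(M)$ so that the integrand is a top form.

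Finally comes the one genuinely substantive step: the bilinear pairing $\Omega^{p-1}(M)\times\Omega^{n-p+1}(M)\to\R$, $(\beta,\gamma)\mapsto\int_M\beta\wedge\gamma$, is non-degenerate. This is the fundamental lemma of the calculus of variations in this setting, verified by testing a non-vanishing $\gamma$ against bump forms supported in a local frame so that $\int_M\beta\wedge\gamma\neq 0$ for a suitable $\beta$. Applying this with $\gamma=d\widehat{\rho}$ shows that the vanishing of the first variation for all $\beta$ is equivalent to $d\widehat{\rho}=0$, giving both implications at once. I expect the main obstacle (such as it is) to be precisely this non-degeneracy, since the remainder is sign bookkeeping and the openness and compactness hypotheses that justify differentiating under the integral; the argument is otherwise a direct application of Stokes' theorem, as advertised.
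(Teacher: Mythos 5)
Your proof is correct and is exactly the argument the paper has in mind: it presents this theorem as ``an application of Stokes' theorem,'' citing Hitchin, and your computation (first variation via \eqref{volumederivative}, integration by parts, and the fundamental lemma applied to the wedge pairing) is precisely that standard argument, with the signs and the identification of admissible variations with exact forms handled correctly.
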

Let us look at two concrete instances of the Hitchin functional, described in detail in \cite{Hitchin00}.
\begin{example}[6-dimensions \cite{Hitchin00}]
If we are in the case $n=6$, $p=3$, we have a locally decomposable complex volume form $\rho+i\widehat{\rho}$. The critical point condition implies that the complex volume form is closed and that the almost complex structure it induces is integrable. 
\end{example}
\begin{example}[7-dimensions \cite{Hitchin00}]
Let $\varphi$ be a stable 3-form on a 7-manifold. Then $\varphi$ induces a metric on $M$ by $g_\varphi(X,Y)=\frac{1}{6}(X \lrcorner \varphi)\wedge (Y \lrcorner \varphi) \wedge \varphi$. Critical points are holonomy $G_2$-manifolds. Moreover, they are all local maxima. 
\end{example}
These examples illustrate the interest in studying these Hitchin functionals. As a further motivation, one can look at the gradient flow of the volume functional in the case of $G_2$-structures along a fixed cohomology class. The corresponding flow is called the $G_2$ Laplacian flow. Short-time existence and uniqueness of this flow were proved by Bryant and Xu \cite{BX11}. It remains a central object of study in special holonomy.

In the examples above, there is the critical assumption that the cohomology class over which we are trying to optimise is non-trivial. Otherwise,
critical points cannot exist by standard Hodge theory.

If one wants to restrict to stable exact forms, one needs to impose a further non-degeneracy condition, in the form of a Lagrange multiplier. In dimensions 6 and 7, Hitchin \cite{Hitchin01} obtained two new functionals for the exact stable case and showed that the critical points of these functional are nearly Kähler and nearly parallel $G_2$-structures, respectively. These notes study these functionals and their variations further, comparing them to two new examples of Hitchin-like functionals. 
\section{Nearly Kähler structures}
We begin by recalling some well-known results on $\SU(3)$-structures and nearly Kähler manifolds. These results are classic and have been collected for convenience. 
\begin{definition}
    An $SU(3)$-structure on a manifold $M^6$ is a reduction of its frame bundle to an $\SU(3)$-
    principal bundle. A manifold equipped with a choice of frame reduction is called an $\SU(3)$-manifold.
\end{definition}
Equivalently, $M^6$ is equipped with a pair of stable differential forms $(\omega, \rho)\in \Omega^2(M)\times\Omega^3(M)$, with $\operatorname{Stab}(\rho)= \SL(3, \C)$, satisfying the following algebraic constraints:
\begin{equation} \omega \wedge \rho=0 ~~~~~~~~~~~~~~~~~~~ \frac{1}{3!} \omega^3 = \frac{1}{4} \rho \wedge  \widehat{\rho} \;, \label{algebraic_constraints}
\end{equation}
with $\omega$ positive with respect to the complex structure induced by $\rho$. The algebraic constraints \eqref{algebraic_constraints} guarantee that the stabiliser of a stable pair is precisely $\SU(3)=\Sp(6,\R)\cap\Sl(3, \C)$. Similarly, one could have chosen a pair $(\rho, \sigma)\in \Omega^3 \times \Omega^4$ with  $\widehat{\sigma}=\omega$ and $\sigma= \frac{\omega^2}{2}$, satisfying the above conditions. 

The inclusion $\SU(3) \subset \SO(6)$ implies that $M$ inherits a metric $g$ form the $\SU(3)$-structure. Indeed, let $J$ be the almost complex structure induced by $\rho$. Then the condition $\omega \wedge \rho=0$ is equivalent to $\omega$ is of type $(1,1)$ with respect to $J$. Then $g \coloneqq \omega(\cdot, J\cdot)$ is the required metric, and its induced volume form coincides with $\frac{1}{3!}\omega^3$. Moreover, since $\SU(3)\subset\SU(4)\cong \Spin(6)$ is the stabiliser of any $v\in \C^4\setminus\{0\}$, an $\SU(3)$-structure is equivalent to the choice of a metric and spin structure on $M$, together with a nowhere vanishing spinor. In particular, the only obstruction for a manifold to admit an $\SU(3)$-structure is for $M$ to be orientable and spinnable, i.e. $w_1(M)=w_2(M)=0$.

Using the metric, we will identify  $T^*M$ and $TM$. In particular, for $X$ a vector field, when we write $JX$ and treat it as a 1–form we mean $g(JX, \cdot)$. To avoid confusion arising from the fact that $g(JX, \cdot)=-Jg(X, \cdot)$, we will instead treat $X$ as a $1$-form throughout and instead distinguish between $df$ and $\nabla f$, as in \cite{Fos17}.

The reduction of the structure group of $M$ to $\SU(3)$ leads to a decomposition of $\Lambda^* T^*M$ into irreducible $\SU(3)$-representations. This decomposition
is well-known and most commonly phrased in terms of the $(p,q)$-decomposition of the complexification $\Lambda^* T^*M \otimes \C$, induced by the almost complex structure $J$. However, for our purposes, it is more convenient to use real irreducible representations.
\begin{lemma} \label{decompositionSU(3)}
Let $(M,J,\omega, \rho)$ be an $SU(3)$-manifold. We have the following decomposition into irreducible $\SU(3)$-representations:
$$\Lambda^2 T^*M = \Lambda^2_1 \oplus \Lambda^2_6 \oplus \Lambda^2_8\;,$$
with $\Lambda^2_1= \langle \omega \rangle$, $\Lambda^2_6= \{X \lrcorner \rho \mid X \in TM \}$ and $\Lambda^2_8$ is the space of primitive $(1,1)$ forms. Similarly, we get
$$\Lambda^3 T^*M = \Lambda^3_{1\oplus1} \oplus \Lambda^3_6 \oplus \Lambda^3_{12}\;,$$
where $\Lambda^3_{1\oplus1}= \operatorname{Span} \{\rho , \widehat{\rho}\}$, $\Lambda^3_6= \{X \wedge \omega \mid X \in T^*M \}$ and $\Lambda^3_{12}$  is the space of primitive $(1,2)+(2,1)$ forms. Using the Hodge star, we get the decomposition for $\Lambda^4 T^*M$. Finally, we have
$$ Sym^2(TM)\cong \R \oplus Sym^2_+ \oplus Sym^2_- \cong \Lambda^0 \oplus \Lambda^2_8 \oplus \Lambda^3_{12}$$
where $Sym^2_+= \{ S \in Sym^2(TM) \mid JS=S \;, \;\operatorname{tr}(S)=0 \}$ and $Sym^2_-= \{ S \in Sym^2(TM) \mid JS=-S\}$, and the last isomorphism is an isomorphism of representations.
\end{lemma}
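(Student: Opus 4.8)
The plan is to reduce everything to representation theory of $\SU(3)$ at a single point $x\in M$, since all the bundles in question are associated to the $\SU(3)$-reduction of the frame bundle and the claimed summands are cut out pointwise by $\SU(3)$-invariant tensors; the pointwise statement then globalises verbatim. Working on $T_xM\cong\R^6$, the complex structure $J$ turns it into the fundamental complex representation $\mathbf 3=\C^3$, and I would complexify $T_x^*M\otimes\C=\Lambda^{1,0}\oplus\Lambda^{0,1}\cong\mathbf 3\oplus\bar{\mathbf 3}$, where complex conjugation implements the real structure and swaps the two summands. The essential $\SU(3)$-invariant is the complex volume form trivialising $\Lambda^3\mathbf 3\cong\mathbf 1$, equivalently $\rho+i\widehat\rho$.

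First I would expand each exterior power via the $(p,q)$-type decomposition together with the standard exterior-power and tensor-product rules for $\SU(3)$: for instance $\Lambda^2\mathbf 3\cong\bar{\mathbf 3}$, $\Lambda^{1,1}=\mathbf 3\otimes\bar{\mathbf 3}\cong\mathbf 1\oplus\mathbf 8$, and for three-forms $\Lambda^{2,1}=\Lambda^2\mathbf 3\otimes\bar{\mathbf 3}\cong\bar{\mathbf 3}\otimes\bar{\mathbf 3}\cong\mathbf 3\oplus\bar{\mathbf 6}$. Taking real points (the conjugation-invariants of conjugation-paired complex summands) then produces the real dimensions $2=1\oplus1$, $6$, $8$ and $12$ advertised in the statement, and exhibits each piece as $\SU(3)$-irreducible over $\R$; in particular the real $\mathbf{12}$ is the underlying real representation of $\mathbf 6\oplus\bar{\mathbf 6}$.

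The remaining task is to match these abstract summands with the explicit forms. I would verify that $\langle\omega\rangle$ is the trivial factor of $\Lambda^{1,1}$; that $X\mapsto X\lrcorner\rho$ is an $\SU(3)$-equivariant map $T^*M\to\Lambda^2$ whose image is the real part of $\Lambda^{2,0}\oplus\Lambda^{0,2}$, hence by irreducibility and injectivity an isomorphism onto $\Lambda^2_6\cong\mathbf 6$, leaving the primitive $(1,1)$-forms as $\Lambda^2_8\cong\mathbf 8$; that $\langle\rho,\widehat\rho\rangle$ spans the real part of $\Lambda^{3,0}\oplus\Lambda^{0,3}$; and that $X\mapsto X\wedge\omega$ is equivariant and injective with image $\Lambda^3_6$, the complement being the primitive $(2,1)+(1,2)$-forms $\Lambda^3_{12}$. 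For $\Lambda^4$ I would simply transport the $\Lambda^2$ decomposition through the $\SU(3)$-equivariant Hodge star.

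For the symmetric tensors, I would decompose $\Sym^2(T_xM)\otimes\C=\Sym^2\mathbf 3\oplus(\mathbf 3\otimes\bar{\mathbf 3})\oplus\Sym^2\bar{\mathbf 3}\cong\mathbf 6\oplus(\mathbf 1\oplus\mathbf 8)\oplus\bar{\mathbf 6}$. The $J$-invariant part is the Hermitian block $\mathbf 1\oplus\mathbf 8$, whose trivial factor is $g$ and whose $\mathbf 8$ is $\Sym^2_+$, while the $J$-anti-invariant part is the real $\mathbf{12}$ underlying $\mathbf 6\oplus\bar{\mathbf 6}$, giving $\Sym^2_-$; comparing isotypic labels then yields the representation isomorphisms $\Sym^2_+\cong\Lambda^2_8$ and $\Sym^2_-\cong\Lambda^3_{12}$. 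The only genuine subtlety, and the step I would treat most carefully, is the bookkeeping of the real structure: correctly pairing conjugate complex summands and confirming that $\mathbf 6\oplus\bar{\mathbf 6}$ (and likewise $\mathbf 3\oplus\bar{\mathbf 3}$) descends to a \emph{single} irreducible real representation rather than splitting, since this is precisely what underlies the claimed real irreducibility of $\Lambda^2_6$, $\Lambda^3_{12}$ and $\Sym^2_-$.
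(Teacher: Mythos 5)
The paper itself offers no proof of this lemma: it is stated as a collection of classical facts (``These results are classic and have been collected for convenience''), with the surrounding literature (Gray--Hervella, Chiossi--Salamon, Bedulli--Vezzoni) as the implicit source. So there is nothing to compare line-by-line; what you have written is precisely the standard representation-theoretic argument those references rely on, and it is correct. Your pointwise reduction, the $(p,q)$-expansion with $\Lambda^2\mathbf 3\cong\bar{\mathbf 3}$ and $\bar{\mathbf 3}\otimes\bar{\mathbf 3}\cong\mathbf 3\oplus\bar{\mathbf 6}$ via the trivialisation of $\Lambda^3\mathbf 3$ by $\rho+i\widehat\rho$, the matching of explicit generators ($\omega$, $X\lrcorner\rho$, $X\wedge\omega$, $\rho$, $\widehat\rho$) by equivariance and dimension, the transport of the $\Lambda^2$ picture to $\Lambda^4$ by the Hodge star, and the splitting of $\Sym^2$ into $J$-commuting and $J$-anticommuting blocks all go through exactly as you describe, and the dimension counts ($15$, $20$, $21$) confirm nothing is missed. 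The one step you deliberately left as a flag --- that $\mathbf 3\oplus\bar{\mathbf 3}$ and $\mathbf 6\oplus\bar{\mathbf 6}$ descend to \emph{single} real irreducibles --- is indeed the only point requiring an argument beyond bookkeeping: it follows from the standard criterion that a complex irreducible representation $V$ with $V\not\cong\bar V$ (which holds for $\mathbf 3$ and $\mathbf 6$, as neither is self-conjugate) has irreducible underlying real representation, while $\mathbf 8$ is of real type and contributes the genuinely real summand $\Lambda^2_8$. With that sentence added, your sketch is a complete proof.
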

The isomorphisms  $I: \Sym^2_+ \rightarrow \Lambda^2_8$ and $\Upsilon : \Sym^2_- \rightarrow \Lambda^3_{12}$ are given by  
$I(S)= S_*(\omega)$ and $\Upsilon(S)= S_*(\rho)$, where an endomorphism $S\in \operatorname{End}(\R^6)$ acts on a $k$-form $\alpha $ by 
\begin{equation} \label{endaction}
    S_*(\alpha)(X_1, \dots, X_k) \coloneqq - \sum_{i=1}^k \alpha(X_1, \dots, SX_i, \dots , X_k) \;.
\end{equation}
These decompositions carry over to the spaces of smooth sections of each bundle. We denote $\Omega^k_m:=\Gamma(\Lambda^k_m)$.

We can identify $\Lambda^2_6$ with $\Lambda^1$. The adjoint operator to the contraction $X \mapsto X \lrcorner \rho$, denoted by $c: \Lambda^2_6 \rightarrow \Lambda^1$, is given explicitly by $c(\beta)=-*(\beta\wedge \widehat{\rho})$. This allows us to introduce an auxiliary differential operator, which can be understood as a generalisation of the curl for $\SU(3)$-structures.
\begin{definition}
    Let $(M,\omega, \rho)$ be an $\SU(3)$-manifold. We define the curl operator
\begin{align*}
    \curl:\Omega^1 & \rightarrow \Omega^1\\
    X &\mapsto c(dX)=-*(dX\wedge \widehat{\rho})\;.
\end{align*}
\end{definition}
More generally, the Hodge star operator is compatible with the $\SU(3)$-structure and therefore induces an isomorphism between the representations appearing in different degrees. 

Given an $\SU(3)$-structure, one can study its intrinsic torsion. The torsion can be decomposed into irreducible representation and identified with the components of $d\omega$, $d\rho$ and $d\widehat{\rho}$, as discussed by Gray and Hervella in \cite{GH80} (cf. \cite{CS02}):
\begin{proposition}\label{torsion6}
    Let $(\omega, \rho)$ be an $SU(3)$-structure. Then there exists forms $\tau_0, \widehat{\tau}_0 \in \Omega^0$, 
    $\tau_1, \widehat{\tau}_1\in \Omega^1$, $\tau_2, \widehat{\tau}_2 \in \Omega^2_8$ and $\tau_3\in \Omega^3_{12}$ such that 
    \begin{align*}
        d\omega &= 3 \tau_0 \rho + 3 \widehat{\tau}_0 \widehat{\rho}+ \tau_1\wedge \omega +\tau_3 \;,\\
        d\rho &= 2 \widehat{\tau}_0 \omega^2 + \widehat{\tau}_1 \wedge \rho + \tau_2 \wedge \omega \;,\\
        d\widehat{\rho}&=-2 \tau_0 \omega^2 - J\widehat{\tau}_1 \wedge \widehat{\rho} + \widehat{\tau}_2\wedge \omega \;.
    \end{align*}
These fully describe the torsion of the $\SU(3)$-structure.
\end{proposition}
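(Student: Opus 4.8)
The plan is to extract the torsion from the exterior derivatives of the defining forms, using the irreducible decompositions of Lemma \ref{decompositionSU(3)} together with the algebraic constraints \eqref{algebraic_constraints}. The intrinsic torsion of an $\SU(3)$-structure is a section of $\Lambda^1 \otimes \mathfrak{su}(3)^\perp$, where $\mathfrak{su}(3)^\perp \cong \Lambda^2_1 \oplus \Lambda^2_6$ is the orthogonal complement of $\mathfrak{su}(3) \cong \Lambda^2_8$ inside $\mathfrak{so}(6) \cong \Lambda^2$; this bundle has rank $6 \times 7 = 42$ and splits under $\SU(3)$ into precisely the modules carrying $\tau_0, \widehat{\tau}_0 \in \Omega^0$, $\tau_1, \widehat{\tau}_1 \in \Omega^1$, $\tau_2, \widehat{\tau}_2 \in \Omega^2_8$ and $\tau_3 \in \Omega^3_{12}$. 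Since the Levi-Civita connection is torsion-free and $\omega, \rho$ are $\SU(3)$-invariant, each of $d\omega, d\rho, d\widehat{\rho}$ is obtained by antisymmetrising a covariant derivative and is therefore pointwise linear in the torsion, which is in fact determined already by $d\omega$ and $d\rho$. The strategy is thus to write down the most general $\SU(3)$-equivariant expression for each derivative and then pin down the shared coefficients.

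First I would decompose $d\omega \in \Omega^3 = \Omega^3_{1\oplus1} \oplus \Omega^3_6 \oplus \Omega^3_{12}$. The $\Omega^3_{1\oplus1} = \operatorname{Span}\{\rho, \widehat{\rho}\}$ component defines the scalars $\tau_0, \widehat{\tau}_0$ (with the conventional factor $3$), the $\Omega^3_6 = \{X \wedge \omega\}$ component defines $\tau_1$ through $\tau_1 \wedge \omega$, and the remainder is $\tau_3 \in \Omega^3_{12}$; this is the first identity, essentially by definition. Using the Hodge star isomorphism $\Omega^4 \cong \Omega^2$, the same bookkeeping applied to $d\rho$ and to $d\widehat{\rho}$ produces an $\langle \omega^2 \rangle$-component, an $\Omega^4_6$-component — which, since $X \mapsto X \wedge \rho$ is an $\SU(3)$-isomorphism onto $\Omega^4_6$, can be written as a $1$-form wedged with $\rho$ (resp. $\widehat{\rho}$) — and an $\Omega^4_8$-component of the form $\tau_2 \wedge \omega$ (resp. $\widehat{\tau}_2 \wedge \omega$). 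This fixes the shape of all three equations, with a priori independent coefficients.

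The real content is matching the shared coefficients across the three lines. For the scalars I would differentiate $\omega \wedge \rho = 0$: from $d\omega \wedge \rho + \omega \wedge d\rho = 0$, using $\rho \wedge \rho = 0$, the vanishing of $\tau_3 \wedge \rho$ by type, the primitivity identity $\tau_2 \wedge \omega^2 = 0$, and the normalisation $\omega^3 = \tfrac32 \rho \wedge \widehat{\rho}$ from \eqref{algebraic_constraints}, all terms cancel except a multiple of $\rho \wedge \widehat{\rho}$, forcing the $\langle \omega^2 \rangle$-coefficient of $d\rho$ to equal $\widehat{\tau}_0$; differentiating $\omega \wedge \widehat{\rho} = 0$ identically forces the $\langle \omega^2 \rangle$-coefficient of $d\widehat{\rho}$ to equal $-\tau_0$. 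The remaining, and most delicate, point is that the $1$-form in $d\widehat{\rho}$ is not independent but equals $-J\widehat{\tau}_1$ for the same $\widehat{\tau}_1$ occurring in $d\rho$. For this I would use that $\widehat{\rho}$ and $\rho$ are interchanged (up to a constant) by the complex structure $J$ acting via \eqref{endaction}, so that $\nabla\widehat{\rho}$ is determined by $\nabla\rho$ and $\nabla J$; equivalently, passing to the complex volume form $\Omega = \rho + i\widehat{\rho}$ and matching the $\Lambda^{3,1}+\Lambda^{1,3}$ (i.e. $\Omega^4_6$) part of $d\Omega$ exhibits a single torsion module appearing as $\widehat{\tau}_1 \wedge \rho$ in $d\rho$ and as $-J\widehat{\tau}_1 \wedge \widehat{\rho}$ in $d\widehat{\rho}$. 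I expect this $J$-twisted identification to be the main obstacle, since the scalar matchings are mechanical whereas this one requires controlling $\nabla J$. The argument then closes by the dimension count $1+1+6+6+8+8+12 = 42 = \operatorname{rank}(\Lambda^1 \otimes \mathfrak{su}(3)^\perp)$, which shows the seven listed forms exhaust the torsion, so that no further components can occur.
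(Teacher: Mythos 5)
The paper does not prove this proposition at all: it is quoted from Gray--Hervella \cite{GH80} and Chiossi--Salamon \cite{CS02}. So the only meaningful comparison is with the standard representation-theoretic argument of those references, and that is essentially the route you take: intrinsic torsion as a section of $\Lambda^1\otimes\mathfrak{su}(3)^{\perp}$, decomposition into the seven irreducible modules, and identification of the components of $d\omega$, $d\rho$, $d\widehat{\rho}$. Your outline is sound, and the two scalar matchings you extract by differentiating $\omega\wedge\rho=0$ and $\omega\wedge\widehat{\rho}=0$ are correct and complete as stated (the vanishing $\tau_3\wedge\rho=0$ by Schur, the primitivity identity $\tau_2\wedge\omega^2=0$, and the normalisation $\omega^3=\tfrac32\,\rho\wedge\widehat{\rho}$ do exactly the work you say they do).

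Two steps are nevertheless genuinely incomplete. First, the step you yourself flag as the main obstacle --- that the $\Omega^4_6$-component of $d\widehat{\rho}$ is $-J\widehat{\tau}_1\wedge\widehat{\rho}$ for the \emph{same} $\widehat{\tau}_1$ appearing in $d\rho$ --- is never actually established. Your second proposed mechanism is the right one and is purely algebraic: since $\Lambda^{4,0}=0$ in complex dimension three, $d\Omega\in\Lambda^{3,1}\oplus\Lambda^{2,2}$ for $\Omega=\rho+i\widehat{\rho}$; writing the $\Lambda^{3,1}$-part as $\Omega\wedge\bar\theta$ with $\bar\theta$ the $(0,1)$-part of a real $1$-form and taking real and imaginary parts produces both $\Omega^4_6$-components from a single real $1$-form, using only the pointwise identity relating $\alpha\wedge\rho$ and $J\alpha\wedge\widehat{\rho}$. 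No control of $\nabla J$ is needed, so your first proposed mechanism is a detour; but carrying out this computation is where the entire sign and convention content of the third displayed equation lives, and without it the statement is not proved. Second, the closing dimension count $1+1+6+6+8+8+12=42$ does not by itself show that the seven forms ``fully describe the torsion'': equality of dimensions of the source and target of an equivariant linear map does not give injectivity. Both spaces contain isotypic pieces of multiplicity two (two trivial summands, two copies of $\Lambda^1$, two of $\Lambda^2_8$), so by Schur's lemma one must check that the corresponding $2\times 2$ matrices of equivariant coefficients are invertible --- equivalently, that no irreducible torsion component is invisible in $(d\omega, d\rho, d\widehat{\rho})$. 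This module-by-module verification, which is what \cite{CS02} actually carry out, is precisely what the word ``fully'' requires, and it is missing from your argument.
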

We are interested in the induced $G_2$-structure on the metric cone. In particular, we can consider the following two classes of $\SU(3)$-structures, characterised in terms of the Gray-Hervella torsion decomposition. If the $G_2$-structure on the cone is closed, i.e. $d\phi=0$, we say $M$ carries a closed $\SU(3)$-structure. The torsion is concentrated in $\tau_0=1$ and $\widehat{\tau_2}$. These structures were originally studied by physicists in the context of string theory, under the name of LT-structures, introduced in \cite{LT05}. If the $G_2$-structure on the cone is both closed and coclosed, $M$ carries a nearly Kähler structure and the only non-vanishing torsion term is $\tau_0=1$.

Since torsion-free $\SU(3)$-structures (i.e. Calabi-Yau 3-folds) are Ricci flat, the Bianchi identities imply that the Ricci tensor of an $\SU(3)$-structure can be fully described in terms of its torsion, as exploited by Bedulli and Vezzoni in \cite{BV07}. In particular, they get the following expression for the scalar curvature. 
\begin{lemma}[{\cite[Thm 3.4]{BV07}}]\label{scalar_curv_NK}
    Let $(M, \omega, \rho)$ be an $\SU(3)$-structure. The scalar curvature of the associated metric is given by
    $$s_g= 30 (\tau_0^2 +\widehat{\tau_0}^2)+  2d^*(\tau_1+ \widehat{\tau_1}) - \abs{\tau_1}^2  + 4 \langle \tau_1, \widehat{\tau_1}\rangle- \frac{1}{2} \left(\abs{\tau_2}^2+\abs{\widehat{\tau_2}}^2 + \abs{\tau_3}^2 \right)\;,$$
    where $\tau_i$ and $\widehat{\tau_i}$ are the torsion forms of Proposition \ref{torsion6}.
\end{lemma}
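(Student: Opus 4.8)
The plan is to compute the scalar curvature directly from the intrinsic torsion, which Proposition \ref{torsion6} already packages for us, and to organise the computation around the principle that a torsion-free $\SU(3)$-structure is Ricci-flat. First I would pass from the exterior derivatives $d\omega, d\rho, d\widehat\rho$ to the full covariant derivatives $\nabla^g \omega, \nabla^g \rho$ of the defining forms. Writing $\xi \coloneqq \nabla^g - \bar\nabla$ for the intrinsic torsion relative to any $\SU(3)$-connection $\bar\nabla$, one has $\xi \in \Omega^1 \otimes \mathfrak{su}(3)^\perp$, and the alternation map sending $\xi$ to $(d\omega, d\rho, d\widehat\rho)$ is injective on this space. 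Hence the seven forms $\tau_0, \widehat\tau_0, \tau_1, \widehat\tau_1, \tau_2, \widehat\tau_2, \tau_3$ determine $\xi$ completely, and inverting Proposition \ref{torsion6} gives $\xi$ explicitly. I would fix the normalisations of the $\tau_i$ so that their inner products match those induced by Lemma \ref{decompositionSU(3)}.

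The guiding principle is that the torsion-free model (a Calabi--Yau structure) is Ricci-flat, so the intrinsic curvature $R^{\bar\nabla}$ of the canonical $\SU(3)$-connection does not enter the Ricci, and hence scalar, curvature of $g$: the latter is a universal $\SU(3)$-equivariant expression in $\xi$ and its covariant derivative $\bar\nabla\xi$. Because $s_g$ is scalar-valued, equivariance restricts it to a linear combination of the first-order divergences $d^*\tau_1, d^*\widehat\tau_1$ and of the quadratic contractions $\tau_0^2, \widehat\tau_0^2, |\tau_1|^2, |\widehat\tau_1|^2, \langle\tau_1,\widehat\tau_1\rangle, |\tau_2|^2, |\widehat\tau_2|^2, \langle\tau_2,\widehat\tau_2\rangle, |\tau_3|^2$. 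This pins the \emph{shape} of the formula but not the coefficients; notably, representation theory permits more candidate invariants (e.g. $|\widehat\tau_1|^2$ and $\langle\tau_2,\widehat\tau_2\rangle$) than actually appear, so the vanishing of several coefficients is itself content to be extracted from the computation.

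To pin the coefficients I would expand the Levi-Civita curvature $R^g = R^{\bar\nabla} + d^{\bar\nabla}\xi + \xi\wedge\xi$ and take the double (scalar) trace. The $R^{\bar\nabla}$ term drops by the principle above; the first-order term $d^{\bar\nabla}\xi$ produces the divergences, and applying $d$ to the three structure equations of Proposition \ref{torsion6} (the $d^2=0$ Bianchi relations) supplies the identities that collapse these into the single combination $2 d^*(\tau_1 + \widehat\tau_1)$; the zeroth-order term $\xi\wedge\xi$ produces the quadratic invariants. Evaluating the traces with the endomorphism action \eqref{endaction} and the identifications $I, \Upsilon$ of Lemma \ref{decompositionSU(3)} is what finally delivers the coefficients $30$, $-1$, $+4$, $-\tfrac12$.

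The principal obstacle is the bookkeeping in the quadratic term $\xi\wedge\xi$: one must decompose the $\SU(3)$-symmetric square of the torsion, extract exactly its trivial component, and track every contraction carefully enough to land the precise coefficients — in particular the asymmetric pairing $4\langle\tau_1,\widehat\tau_1\rangle$ with \emph{no} matching $|\widehat\tau_1|^2$, and the uniform $-\tfrac12$ on the $(\tau_2, \widehat\tau_2, \tau_3)$ block with no $\langle\tau_2,\widehat\tau_2\rangle$ cross term. A spinorial derivation, using the structure spinor $\eta$ (the reformulation of the $\SU(3)$-structure recalled above) together with the Lichnerowicz identity $\slashed{D}^2 = \nabla^*\nabla + \tfrac{s}{4}$, gives an independent route that often streamlines this coefficient count. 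In either approach the nearly Kähler specialisation $\tau_0 = 1$ with all other torsion vanishing must return $s_g = 30$, matching the Einstein constant $\lambda = 5$; this is the cleanest consistency check on the normalisations.
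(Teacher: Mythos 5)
The paper does not prove this lemma at all: it is imported verbatim from Bedulli--Vezzoni \cite{BV07} (their Theorem 3.4), so there is no internal argument to compare against. Your sketch does follow the same strategy as that cited source, which adapts Bryant's intrinsic-torsion method for $G_2$-structures to $\SU(3)$: identify the intrinsic torsion $\xi$ with the components of $(d\omega, d\rho, d\widehat{\rho})$ (your injectivity claim is correct, and the dimension count $42 = 1+1+6+6+8+8+12$ is why it holds), expand the Levi-Civita curvature relative to an $\SU(3)$-connection, and take traces.

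As a proof, however, the proposal has two genuine gaps. First, the step ``the $R^{\bar\nabla}$ term drops by the principle above'' is false as literally stated: the scalar curvature of the canonical connection does not vanish and does contribute to $s_g$ (already for nearly Kähler structures $s^{\bar\nabla} \neq s_g$). What is true is that its contribution is itself \emph{determined by the torsion}, and this needs an argument you never supply: since $\bar\nabla$ has torsion, $R^{\bar\nabla}$ fails the first Bianchi identity by an expression in $\xi$ and $\bar\nabla\xi$; decomposing $\Lambda^2\otimes\mathfrak{su}(3)$ into the space $\mathcal{K}(\mathfrak{su}(3))$ of formal curvature tensors plus a complement on which the Bianchi map is injective, the complementary component of $R^{\bar\nabla}$ is torsion-determined, while the Ricci contraction annihilates $\mathcal{K}(\mathfrak{su}(3))$ --- this last vanishing being exactly the ``torsion-free implies Ricci-flat'' property you invoke. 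Without this Bianchi bookkeeping, your universality principle is a heuristic, not a proof, and the first-order divergence terms you attribute to $d^{\bar\nabla}\xi$ cannot be cleanly separated from the curvature contribution. Second, and more decisively, the entire content of the lemma is the specific coefficients $30$, $2$, $-1$, $4$, $-\tfrac12$, together with the vanishing of admissible invariants such as $|\widehat{\tau}_1|^2$ and $\langle\tau_2,\widehat{\tau}_2\rangle$ (which you correctly flag) and $\tau_0\widehat{\tau}_0$ (which you omit even from the candidate list). Your proposal defers precisely this computation (``evaluating the traces \dots is what finally delivers the coefficients''), so beyond the single consistency check $s_g=30$ for nearly Kähler structures, the formula itself is never actually derived; one would have to carry out the equivariant contractions, or simply cite \cite{BV07} as the paper does.
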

We have an explicit formula for the linearisation of Hitchin's duality map from Section \ref{Hitchin} in terms of irreducible representations. We collect the result here as it is useful for computations in the next section:
\begin{proposition}[\cite{Hitchin00} Section 3.3] \label{linear6}
Given $(\rho, \sigma)$defining an $\SU(3)$-structure, consider $\chi= \chi_1+\chi_6+\chi_8 \in \Omega^4$ and $\gamma=\gamma_{1\oplus1}+\gamma_6+\gamma_{12} \in \Omega^3$. Then 
\begin{enumerate}[label=(\roman*)]
    \item The derivative of the Hitchin dual map at $\sigma$ in the direction of $\chi$ is
    $$\frac{d}{dt} \widehat{(\sigma+ t \chi)} \Bigr|_{t=0} \coloneqq \mathcal{K}(\chi)= \frac{1}{2}*\chi_1 + *\chi_6 - *\chi_8\;.$$
    \item The derivative of the Hitchin dual map at $\rho$ in the direction of $\gamma$ is
    $$\frac{d}{dt} \widehat{(\rho+ t \gamma)} \Bigr|_{t=0} \coloneqq \mathcal{I}(\gamma)= *\gamma_{1\oplus1} + *\gamma_6 - *\gamma_{12}\;.$$
\end{enumerate}
\end{proposition}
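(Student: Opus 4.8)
The strategy is to exploit that both $\mathcal{I}$ and $\mathcal{K}$ are linearisations of the $\GL(6,\R)$-natural Hitchin duality map at an $\SU(3)$-invariant form, so they are $\SU(3)$-equivariant and preserve the splittings of Lemma \ref{decompositionSU(3)}. I would pin down each component using three ingredients: the homogeneity relation $\widehat{\lambda w}=\lambda^{n/p-1}\widehat w$ coming from \eqref{volumederivative}, the (anti)involutivity of the duality in these degrees, and the infinitesimal naturality $\widehat{A_*w}=A_*\widehat w$ valid for $A\in\mathfrak{sl}(6,\R)$, which one obtains by differentiating $\widehat{g^*w}=g^*\widehat w$ (itself a consequence of the $\SL$-invariance of $\vol$), with $A_*$ the action \eqref{endaction}.

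For part (i), the key simplification is that in degrees $2$ and $4$ the duality is a genuine involution: $\widehat\omega=\tfrac12\omega^2=\sigma$ and $\widehat\sigma=\omega$. Writing $F(\omega)=\widehat\omega$, one computes $DF_\omega(\beta)=\omega\wedge\beta=:L\beta$, and differentiating $\widehat{\widehat\omega}=\omega$ gives $\mathcal{K}\circ L=\operatorname{id}$, that is $\mathcal{K}=L^{-1}$, the inverse Lefschetz isomorphism $\Lambda^4\to\Lambda^2$. It then remains only to rewrite $L^{-1}$ through the Hodge star on each summand. On $\Lambda^4_1=\langle\omega^2\rangle$ we have $*\omega^2=2\omega=2L^{-1}\omega^2$, while the Weil--Lefschetz identities for primitive forms give $\omega\wedge\beta=*\beta$ for $\beta\in\Lambda^2_6$ (of type $(2,0)+(0,2)$) and $\omega\wedge\beta=-*\beta$ for $\beta\in\Lambda^2_8$ (primitive of type $(1,1)$). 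Since each $\chi_m=L\beta$ with $\beta=\mathcal{K}(\chi_m)\in\Lambda^2_m$, these three facts yield $\mathcal{K}=\tfrac12*$ on $\Lambda^4_1$, $\mathcal{K}=*$ on $\Lambda^4_6$ and $\mathcal{K}=-*$ on $\Lambda^4_8$, as claimed.

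For part (ii) the duality on $3$-forms is only an anti-involution, $\widehat{\widehat\rho}=-\rho$, which forces $\mathcal{I}^2=-\operatorname{id}$ but does not by itself fix the signs; here I would use naturality instead. Since the stable $3$-forms form a single open $\GL(6,\R)$-orbit, every $\gamma\in\Lambda^3$ is $A_*\rho$ for some $A\in\mathfrak{gl}(6,\R)$, and for the trace-free part naturality gives $\mathcal{I}(A_*\rho)=A_*\widehat\rho$, while the scaling direction $A=\operatorname{id}$ is handled by homogeneity ($\mathcal{I}\rho=\widehat\rho=*\rho$). Decomposing $\mathfrak{gl}(6,\R)=\Sym^2\oplus\mathfrak{so}(6)$ and matching with Lemma \ref{decompositionSU(3)}, the stabiliser $\mathfrak{sl}(3,\C)=\Sym^2_+\oplus\Lambda^2_8$ annihilates $\rho$; the map $A\mapsto A_*\rho$ restricts to isomorphisms $\Sym^2_-\xrightarrow{\sim}\Lambda^3_{12}$ (this is $\Upsilon$) and $\Lambda^2_6\xrightarrow{\sim}\Lambda^3_6$; and $J\in\mathfrak{so}(6)$ satisfies $J_*\rho=3\widehat\rho$, $J_*\widehat\rho=-3\rho$. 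The summand $\Lambda^3_{1\oplus1}$ is then immediate, since naturality applied to $A=J$ gives $3\,\mathcal{I}\widehat\rho=\mathcal{I}(J_*\rho)=J_*\widehat\rho=-3\rho$, so $\mathcal{I}\widehat\rho=-\rho=*\widehat\rho$.

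The remaining, and genuinely computational, step is to compare $A_*\widehat\rho$ with $*(A_*\rho)$ on the two nontrivial summands and establish $A_*\widehat\rho=*(A_*\rho)$ for $A\in\Lambda^2_6$ and $A_*\widehat\rho=-*(A_*\rho)$ for $A\in\Sym^2_-$. Writing $\widehat\rho=\tfrac13 J_*\rho$ and using the representation identity $[J_*,A_*]=(JA-AJ)_*$ reduces this to understanding the operators $J_*$ and $(JA)_*$ on $\Lambda^3_6$ and $\Lambda^3_{12}$; these are $\SU(3)$-equivariant complex structures, so by Schur they equal $\pm*$, and the precise signs — the degree-$3$ analogue of the type-dependent Hodge-star bookkeeping used in part (i) — are fixed by a short calculation on the flat model $\C^3$ with $\rho=\Re(dz_1\wedge dz_2\wedge dz_3)$. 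This sign determination is where the real work lies; once it is in place both formulae follow. I expect the $\Lambda^3_{12}$ sign to be the most delicate, since there $A$ is symmetric and mixes the $(3,0)$ and $(2,1)$ types of $\Omega=\rho+i\widehat\rho$.
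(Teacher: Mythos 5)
The paper offers no proof of this proposition at all: it is quoted verbatim from Hitchin's paper (Section 3.3), where the formulae are obtained by direct computation on the flat model, so the only question is whether your argument stands on its own. Part (i) does: the identity $\widehat{\omega}=\tfrac12\omega^2=\sigma$ together with $\widehat{\widehat{\omega}}=\omega$ gives $\mathcal{K}\circ L=\operatorname{id}$ for $L=\omega\wedge\cdot$, and the Weil identities $*\beta=\omega\wedge\beta$ on $\Lambda^2_6$, $*\beta=-\omega\wedge\beta$ on $\Lambda^2_8$ and $*\omega^2=2\omega$ then pin down all three coefficients. That is a complete and rather clean proof of (i), arguably slicker than brute-force computation.

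Part (ii), however, has a genuine gap located exactly where the content of the statement lies. Your reduction via infinitesimal naturality $\mathcal{I}(A_*\rho)=A_*\widehat{\rho}$ (which in fact holds for all $A\in\mathfrak{gl}(6,\R)$, not only $\mathfrak{sl}$, consistently with homogeneity), the identification of the stabiliser algebra with $\Sym^2_+\oplus\Lambda^2_8$, and the computation $J_*\rho=3\widehat{\rho}$, $J_*\widehat{\rho}=-3\rho$ correctly settle the $\Lambda^3_{1\oplus1}$ component. But the two remaining coefficients --- the assertions $A_*\widehat{\rho}=*(A_*\rho)$ for $A\in\Lambda^2_6$ and $S_*\widehat{\rho}=-*(S_*\rho)$ for $S\in\Sym^2_-$ --- \emph{are} the signs the proposition claims, and you never establish them: you defer them to ``a short calculation on the flat model'' that you do not carry out, and you even flag uncertainty about the $\Lambda^3_{12}$ sign. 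Schur's lemma cannot supply these signs: the equivariant endomorphism algebra of each of $\Lambda^3_6$ and $\Lambda^3_{12}$ is a copy of $\C$ spanned by $\operatorname{id}$ and $*$, so equivariance plus squaring to $-\operatorname{id}$ only forces $\mathcal{I}=\pm*$ on each summand, leaving a four-fold ambiguity that only an explicit evaluation (e.g.\ of $S_*\rho$ and $S_*\widehat{\rho}$ for one diagonal $S\in\Sym^2_-$, and of $A_*\rho$, $A_*\widehat{\rho}$ for one $A$ dual to an element of $\Lambda^2_6$, with $\rho=\Re(dz_1\wedge dz_2\wedge dz_3)$) can resolve. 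Until that computation is done, your argument proves (ii) only up to two undetermined signs, i.e.\ it is an outline rather than a proof. A minor further point: ``$\mathcal{I}^2=-\operatorname{id}$'' conflates linearisations at different base points; differentiating $\widehat{\widehat{\rho}}=-\rho$ gives $\mathcal{I}_{\widehat{\rho}}\circ\mathcal{I}_{\rho}=-\operatorname{id}$, though nothing in your argument relies on this.
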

As a straightforward corollary, we get
\begin{lemma}\label{hitchintrick6}
Let $\mathcal{I}: \Omega^3 \rightarrow \Omega^3$ and $\mathcal{K}: \Omega^4 \rightarrow \Omega^2$ be the maps defined in Proposition \ref{linear6}. For any $X \in \Omega^1$, we have $\lie_X \widehat{\rho}=  \mathcal{I} \lie_X \rho$ and $\lie_X \widehat{\sigma}=  \mathcal{K} \lie_X \sigma$.
\end{lemma}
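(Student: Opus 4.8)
The plan is to exploit the fact that the Hitchin duality map is a pointwise, purely algebraic operation which is equivariant under the $\GL$-action, and therefore commutes with pullback by diffeomorphisms. Concretely, writing $\rho \mapsto \widehat{\rho}$ for the dual map on $\Omega^3_+$, for any diffeomorphism $\Phi$ of $M$ one has $\Phi^* \widehat{\rho} = \widehat{\Phi^* \rho}$: at each point $x$ the value $\widehat{\rho}_x$ depends only on $\rho_x$ through the $\GL(T_x M)$-equivariant algebraic duality of \eqref{volumederivative}, and pullback acts fibrewise by the differential $d\Phi_x$, so the two operations interchange. The identical naturality holds for the degree-four dual $\sigma \mapsto \widehat{\sigma}$, which sends a stable $4$-form to its algebraic ``square root'' $\omega$.

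First I would let $\{\Phi_t\}$ be the flow generated by $X \in \Omega^1 \cong \Gamma(TM)$, so that $\Phi_0 = \mathrm{id}$ and $\frac{d}{dt}\big|_{t=0} \Phi_t^* \alpha = \lie_X \alpha$ for every form $\alpha$. Applying naturality along the flow gives $\Phi_t^* \widehat{\rho} = \widehat{\Phi_t^* \rho}$ for all small $t$. Differentiating at $t=0$, the left-hand side yields $\lie_X \widehat{\rho}$, while the right-hand side is, by the chain rule, the derivative of the duality map evaluated at $\Phi_0^* \rho = \rho$ applied to $\frac{d}{dt}\big|_{t=0}\Phi_t^*\rho = \lie_X \rho$. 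By Proposition \ref{linear6}(ii) this derivative at $\rho$ is precisely $\mathcal{I}$, giving $\lie_X \widehat{\rho} = \mathcal{I}(\lie_X \rho)$.

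The second identity $\lie_X \widehat{\sigma} = \mathcal{K}(\lie_X \sigma)$ follows verbatim, now invoking the naturality of $\sigma \mapsto \widehat{\sigma}$ and Proposition \ref{linear6}(i) for its linearisation $\mathcal{K}$ at $\sigma$. The only point requiring care, and the one I would state explicitly, is that the Hitchin dual is constructed solely from the algebraic datum of the form, with no dependence on an auxiliary fixed metric or frame; this metric-independence is exactly what makes it commute with arbitrary diffeomorphisms and lets the chain rule reproduce precisely the linearisations of Proposition \ref{linear6}, evaluated at the reference structure because the flow fixes it at $t=0$. A direct alternative would be to decompose $\lie_X \rho$ and $\lie_X \sigma$ into their $\SU(3)$-irreducible components and apply the explicit formulas for $\mathcal{I}$ and $\mathcal{K}$ termwise, but this is more laborious and obscures the conceptual reason the statement holds.
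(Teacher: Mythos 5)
Your proof is correct and is precisely the argument the paper leaves implicit when it calls the lemma ``a straightforward corollary'' of Proposition \ref{linear6}: the Hitchin dual is a pointwise $\GL$-equivariant construction, hence natural under diffeomorphisms, and differentiating $\Phi_t^*\widehat{\rho}=\widehat{\Phi_t^*\rho}$ (resp.\ $\Phi_t^*\widehat{\sigma}=\widehat{\Phi_t^*\sigma}$) along the flow of $X$ at $t=0$ identifies $\lie_X\widehat{\rho}$ with $\mathcal{I}(\lie_X\rho)$ and $\lie_X\widehat{\sigma}$ with $\mathcal{K}(\lie_X\sigma)$. Your explicit attention to the metric-independence of the dual and to the chain rule landing exactly on the linearisations at the reference structure fills in the details the paper omits.
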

Finally, nearly Kähler manifolds enjoy an adapted Hodge decomposition akin to complex manifolds, which will be key in studying the second variations of the Hitchin functionals. Such decomposition is obtained by studying a Dirac-type operator and its mapping properties. The main decomposition result is due to Foscolo \cite{Fos17}.
\begin{theorem}[\cite{Fos17} Proposition 3.22]\label{hodgedecom6}
    Let $(M^6, \omega, \rho)$ be a nearly Kähler manifold that is not isometric to the round $6$-sphere, and denote by $\mathrm{K}$ the set of Killing fields of $(M^6,g)$. Then, the following holds.
    \begin{enumerate}[label=(\roman*)]
        \item $\Omega^3 = \{X \wedge \omega \st X \in \mathrm{K}\} \oplus d\Omega^2_{1\oplus 6} \oplus d^*\Omega^4_{1} \oplus \Omega^3_{12}$. In particular, there is an $L^2$-orthogonal decomposition  $\Omega^3_{exact} = d\Omega^2_{1\oplus 6} \oplus \Omega^3_{12, exact}$.
        \item For every $\chi\in \Omega^4$, there exists unique $X\in \mathrm{K}$, $Y\in \mathrm{K}^\perp$, $f\in \Omega^0$ and $\chi_0\in \Omega^4_8$ such that
        $$\chi = (X \wedge \widehat{\rho}) + d(JY \wedge \omega + f\widehat{\rho}) + \chi _0\;, $$
        where $\mathrm{K}^\perp$ is the space $L^2$ complement to the subspace of Killing fields. In particular, there is an $L^2$-orthogonal decomposition $\Omega^4_{exact} = d\Omega^3_{1\oplus 6} \oplus \Omega^4_{8, exact}$.
    \end{enumerate}
\end{theorem}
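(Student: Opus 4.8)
The plan is to reduce the statement to the mapping properties of $d$ and $d^*$ on the $\SU(3)$-irreducible summands of Lemma \ref{decompositionSU(3)}, using the nearly Kähler structure equations $d\omega = 3\rho$, $d\rho = 0$, $d\widehat{\rho} = -2\,\omega^2$ (the specialisation of Proposition \ref{torsion6} to $\tau_0 = 1$ with all other torsion forms vanishing). First I would compute, component by component, the images of the relevant operators: $d(f\omega) = df\wedge\omega + 3f\rho$ splits across $\Omega^3_6$ and $\Omega^3_{1\oplus1}$; $d(X\lrcorner\rho) = \lie_X\rho$ (since $d\rho = 0$) contributes to all three types, in particular \emph{leaking} into $\Omega^3_{12}$; and $d^*(f\,\omega^2)$, computed through $*d*$, lands in $\Omega^3_{1\oplus1}\oplus\Omega^3_6$. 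This produces a diagram of first-order operators between the representation bundles and makes clear that (i) is not a representation-type-preserving splitting but a genuine analytic direct-sum statement about images and kernels, with $\Omega^3_{12}$ the only summand of its isotypical type.

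The analytic heart is the identification of the low eigenspaces of these operators, for which the two inputs special to nearly Kähler geometry are decisive. The first is that $M^6$ is Einstein with $\lambda = 5$ and positive scalar curvature, so $b_1 = 0$ and the harmonic $3$-forms lie inside the $\Omega^3_{12}$ summand. The second is the Bochner characterisation of Killing fields: $X \in \mathrm{K}$ precisely when its dual $1$-form is coclosed and lies in the lowest eigenspace $\Delta X = 2\lambda X$, equivalently when it lies in the kernel of a first-order (curl-type) operator built from $d$ and $\widehat{\rho}$. The correspondence $X \mapsto X\wedge\omega$ then realises $\mathrm{K}$ as the required finite-dimensional summand. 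The hard part will be proving that nothing larger occurs — that the kernels of the small-representation operators are exhausted by $\mathrm{K}$ together with the exact and coexact pieces — which rests on the sharp lower eigenvalue estimates for coclosed forms on nearly Kähler manifolds. This is exactly where the hypothesis $M \not\cong S^6$ is used: on the round sphere the nearly Kähler automorphism group jumps to $G_2$ and additional eigenforms saturate the estimate, enlarging the kernels and breaking the clean splitting.

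With the kernels pinned down, assembling (i) is a matter of verifying spanning, directness and the orthogonality claim. Spanning follows from the operator diagram of the first paragraph together with the Hodge decomposition; triviality of the pairwise intersections uses the Killing characterisation and the elementary fact that a coexact closed form vanishes, so that the coexact summand $d^*\Omega^4_1$ meets $\ker d$ trivially, and the same holds for the Killing summand when $M\not\cong S^6$. Intersecting the full decomposition with $\ker d$ and discarding these two summands yields $\Omega^3_{exact} = d\Omega^2_{1\oplus 6}\oplus\Omega^3_{12, exact}$; its $L^2$-orthogonality follows from integration by parts once one checks, from the structure equations, that $d^*$ maps $\Omega^3_{12}$ into $\Omega^2_8$, so that $d\Omega^2_{1\oplus6}$ is $L^2$-orthogonal to the exact primitive forms.

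Finally, part (ii) is the parallel statement for $\Omega^4 = *\,\Omega^2$, and I would prove it by running the same scheme on the pair $(\Omega^3_{1\oplus6}, \Omega^4)$: the Killing contribution is now $X\wedge\widehat{\rho}$, the exact part is generated by the potentials $JY\wedge\omega + f\widehat{\rho}$ with $Y\in\mathrm{K}^\perp$ (the $J$ and the orthogonality to $\mathrm{K}$ being forced precisely to avoid redundancy with the Killing summand), and $\Omega^4_8 = *\,\Omega^2_8$ is the generic type. The linearised Hitchin duals $\mathcal{I},\mathcal{K}$ of Proposition \ref{linear6} together with Lemma \ref{hitchintrick6} (which gives $\lie_X\widehat{\rho} = \mathcal{I}\lie_X\rho$ and $\lie_X\widehat{\sigma} = \mathcal{K}\lie_X\sigma$) intertwine the $3$-form and $4$-form pictures and let one transport the kernel computation of (i) to this setting, after which the extraction of $\Omega^4_{exact} = d\Omega^3_{1\oplus6}\oplus\Omega^4_{8, exact}$ is identical to the $\Omega^3$ case. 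As before, the genuine obstacle is entirely contained in the second paragraph: the sharp eigenvalue analysis on coclosed forms and the verification that equality forces the round sphere.
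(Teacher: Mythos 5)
Your outline has the right ingredients for the kernel analysis (Obata on functions, the Bochner characterisation of Killing $1$-forms, harmonic $3$-forms lying in $\Omega^3_{12}$), but it has a genuine gap at the step you dismiss most quickly: spanning. You assert that surjectivity of the four-fold decomposition in (i) ``follows from the operator diagram of the first paragraph together with the Hodge decomposition.'' It does not. The standard Hodge decomposition produces potentials of mixed $\SU(3)$-type: given $\gamma = d\beta$, the component $d\beta_8$ must be traded for an element of $d\Omega^2_{1\oplus 6} \oplus \Omega^3_{12,exact}$, and this trade is precisely the non-trivial mapping property the theorem asserts; invoking Hodge theory here is circular. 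What is actually needed is that the first-order operator obtained by projecting $d$ (and $d^*$) onto the small isotypical summands $\Omega^3_{1\oplus 1}\oplus\Omega^3_6 \cong \Omega^0\oplus\Omega^0\oplus\Omega^1$ is elliptic and (formally) self-adjoint, so that it has closed range and its cokernel is identified with its kernel; only then do the infinite-dimensional direct sums and the $L^2$-orthogonal splittings of $\Omega^3_{exact}$ and $\Omega^4_{exact}$ follow. Your proposal never establishes ellipticity or Fredholmness of this mixed-type operator, and pointwise representation theory plus eigenvalue estimates cannot substitute for it.

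This missing idea is exactly how the proof referenced by the paper works: Foscolo identifies the relevant type-projected operator with a twisted Dirac operator on the spinor bundle $\slashed{S}\cong \Lambda^0\oplus\Lambda^0\oplus\Lambda^1$ (the nearly Kähler spinor being a Killing spinor), which yields ellipticity and self-adjointness for free; the kernel is then pinned down by Obata's theorem and the Bochner argument, which is where $M\not\cong S^6$ enters. The same scheme is carried out in full in this paper for the $G_2$ analogue (Theorem \ref{hodgedecom7} via Proposition \ref{kerDirac7}), so you can see the template there. Two smaller inaccuracies: the failure on the round sphere is caused by Obata eigenfunctions $\Delta f = 6f$ entering the Dirac kernel, not by the automorphism group ``jumping to $G_2$''; and $d^*$ maps only \emph{closed} forms of type $\Omega^3_{12}$ into $\Omega^2_8$ (the dual of Proposition \ref{useful6-2}), not all of $\Omega^3_{12}$, so the orthogonality argument in your third paragraph should be stated with that hypothesis.
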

Furthermore, we have the following type characterisation that will also be useful: 
\begin{proposition}[{\cite[Prop. 3.6 \& Lemma 3.7]{Fos17}}]\label{useful6-2}
    Let $(M, \omega, \rho)$ be a nearly Kähler manifold.  For $\beta \in \Omega^2_{8,coclosed}$, $d\beta \in \Omega^3_{12}$ and for $\gamma\in \Omega^3_{12,coclosed}$, $d \gamma \in \Omega^4_8$.
\end{proposition}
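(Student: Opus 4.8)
The plan is to run both parts on the nearly Kähler structure equations, obtained from Proposition~\ref{torsion6} by setting every torsion form to zero except $\tau_0=1$:
$$ d\omega = 3\rho,\qquad d\rho = 0,\qquad d\widehat{\rho}=-2\,\omega^2. $$
The strategy is the same for each claim: to show $d$ lands in the advertised summand, I show that the components of $d\beta$ (resp.\ $d\gamma$) in each of the \emph{other} $\SU(3)$-summands vanish. The components along the ``scalar'' directions $\omega,\rho,\widehat{\rho}$ are detected pointwise by wedging into the top degree, whereas the components in the six-dimensional summands $\Omega^3_6$ (resp.\ $\Omega^4_6$) are precisely the ones controlled by the coclosedness hypothesis.

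\emph{First claim.} For $\beta\in\Omega^2_8$ I would record three algebraic facts about a primitive $(1,1)$-form: by type, $\beta\wedge\rho=\beta\wedge\widehat{\rho}=0$; by primitivity, $\beta\wedge\omega^2=0$; and the Weil identity gives $*\beta=-\beta\wedge\omega$. Using $d\rho=0$ one gets $0=d(\beta\wedge\rho)=d\beta\wedge\rho$, and using $d\widehat{\rho}=-2\omega^2$, $0=d(\beta\wedge\widehat{\rho})=d\beta\wedge\widehat{\rho}-2\,\beta\wedge\omega^2=d\beta\wedge\widehat{\rho}$. Since $\rho\wedge\widehat{\rho}$ is a nonzero multiple of the volume form while $\rho\wedge\rho=\widehat{\rho}\wedge\widehat{\rho}=0$ and both $\Omega^3_6,\Omega^3_{12}$ pair trivially with $\rho,\widehat{\rho}$, these two identities say exactly that the $\Omega^3_{1\oplus1}$-component of $d\beta$ vanishes. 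For the $\Omega^3_6$-component I compute, using $*\beta=-\beta\wedge\omega$, $\beta\wedge\rho=0$ and $d\omega=3\rho$,
$$ d^*\beta=-*d*\beta=*\,d(\beta\wedge\omega)=*\bigl(d\beta\wedge\omega+3\,\beta\wedge\rho\bigr)=*(d\beta\wedge\omega). $$
As $*$ is an isomorphism, $d^*\beta=0$ is equivalent to $d\beta\wedge\omega=0$; since $(X\wedge\omega)\wedge\omega=X\wedge\omega^2$ while every primitive $3$-form wedges $\omega$ to zero, this is exactly the vanishing of the $\Omega^3_6$-component. Hence a coclosed $\beta\in\Omega^2_8$ has $d\beta\in\Omega^3_{12}$.

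\emph{Second claim.} For $\gamma\in\Omega^3_{12}$ I peel off the scalar direction first: primitivity gives $\gamma\wedge\omega=0$ and, being of type $(2,1)+(1,2)$, $\gamma\wedge\rho=0$, whence
$$ d\gamma\wedge\omega=d(\gamma\wedge\omega)+\gamma\wedge d\omega=0+3\,\gamma\wedge\rho=0. $$
Because wedging a $4$-form with $\omega$ detects precisely its $\Omega^4_1=\langle\omega^2\rangle$ component (both $\Omega^4_6$ and $\Omega^4_8$ wedge $\omega$ to zero), this shows $d\gamma\in\Omega^4_6\oplus\Omega^4_8$. It remains to kill the $\Omega^4_6$-component, and this is the step that genuinely uses $d^*\gamma=0$; I expect it to be the main obstacle.

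The cleanest route I see is representation-theoretic. Both assignments $\gamma\mapsto\bigl(\Omega^4_6\text{-component of } d\gamma\bigr)$ and $\gamma\mapsto\bigl(\Omega^2_6\text{-component of } d^*\gamma\bigr)$ are $\SU(3)$-natural first-order operators from $\Omega^3_{12}=\Gamma(\mathbf{12})$ to the six-dimensional bundle $\Gamma(\mathbf{6})$. Expressing $d$ and $d^*$ through the canonical Hermitian connection $\bar\nabla$, which is $\SU(3)$-parallel in the nearly Kähler case, their zeroth-order torsion parts are equivariant bundle maps $\mathbf{12}\to\mathbf{6}$ and hence vanish by Schur's lemma. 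A weight computation shows that $\mathbf{6}$ occurs with multiplicity one in $T^*M\otimes\Lambda^3_{12}\cong\mathbf{6}\otimes\mathbf{12}$, so both operators factor, up to a bundle automorphism of $\mathbf 6$, through the single operator $\pi_{\mathbf 6}\circ\bar\nabla$. The $\mathbf 6$-part of the symbol of $d^*$ on $\Omega^3_{12}$, namely $\xi\mapsto\pi_{\mathbf 6}(\xi\lrcorner\,\cdot)$, is non-vanishing, so the relevant automorphism is invertible; therefore $d^*\gamma=0$ forces $\pi_{\mathbf 6}(\bar\nabla\gamma)=0$, and the $\Omega^4_6$-component of $d\gamma$ vanishes as well, giving $d\gamma\in\Omega^4_8$. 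The hard part is exactly this last step—the multiplicity-one count together with the verification that the two $\mathbf 6$-valued operators are related by an \emph{invertible} map; a self-contained alternative is to compute $d\gamma$ and $d^*\gamma$ directly in a local $\SU(3)$-adapted coframe from the structure equations, as in the cited work of Foscolo.
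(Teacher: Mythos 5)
The paper itself offers no proof of this proposition --- it is imported verbatim from Foscolo \cite{Fos17} --- so your argument has to stand on its own, and it essentially does. Your first claim is proved completely and correctly: the three algebraic facts about $\beta\in\Omega^2_8$ (type gives $\beta\wedge\rho=\beta\wedge\widehat{\rho}=0$, primitivity gives $\beta\wedge\omega^2=0$, and $*\beta=-\beta\wedge\omega$, which is consistent with the paper's convention $\mathcal{K}(\chi_8)=-*\chi_8$), the structure equations $d\omega=3\rho$, $d\rho=0$, $d\widehat{\rho}=-2\omega^2$, and the observation that wedging a $3$-form with $\rho,\widehat{\rho},\omega$ detects exactly its $\Lambda^3_{1\oplus1}$- and $\Lambda^3_6$-components are all correct, as are the signs in $d^*\beta=*(d\beta\wedge\omega)$. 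For the second claim, your elimination of the $\Omega^4_1$-component is likewise correct, and so is your diagnosis that the $\Omega^4_6$-component cannot be reached this way: wedging a $4$-form with $\omega$ only sees $\pi_1$, and wedging with $\rho$ or $\widehat{\rho}$ produces $7$-forms, so a genuinely different mechanism is required there.

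The representation-theoretic argument you sketch for that last step is sound, and the two facts you flag as unverified are both true, so the gap is routine rather than genuine. Multiplicity one: complexifying, $\mathbf{6}\otimes\mathbf{12}$ becomes $\bigl((1,0)\oplus(0,1)\bigr)\otimes\bigl((2,0)\oplus(0,2)\bigr)$ in highest-weight notation, and $(1,0)$, $(0,1)$ each occur exactly once (inside $(0,1)\otimes(2,0)$ and $(1,0)\otimes(0,2)$ respectively), so the $\mathbf{6}$-isotypic part is a single copy. Non-vanishing of the $\mathbf{6}$-part of the symbol of $d^*$: in the flat model, $\gamma=\Re(dz^1\wedge dz^2\wedge d\bar z^3)$ lies in $\Lambda^3_{12}$, and contracting with $\partial_{\bar z^3}$ produces a non-zero $(2,0)$-form, i.e.\ a non-zero element of $\Lambda^2_6$. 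Two points you should make explicit to close the argument: (a) the zeroth-order terms vanish by Schur precisely because the torsion of the canonical connection \eqref{canonicalconnection} is pointwise $\SU(3)$-invariant, being built from $\widehat{\rho}$; and (b) ``non-zero implies invertible'' for the intertwiner holds because $\operatorname{End}_{\SU(3)}(\mathbf{6})\cong\C$ is a division algebra, $\mathbf{6}$ being irreducible of complex type. With those verifications written out, your proof is complete; it uses the same canonical-connection/Schur technology that the paper itself invokes later (Eq.~\eqref{canonicalconnection}, Lemma~\ref{MS1}), whereas the cited proof in \cite{Fos17} derives the statements from explicit first-order identities between the type components of $d$ and $d^*$ on a nearly Kähler manifold.
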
 

\subsection{The nearly Kähler Hitchin functional}
Let us assume that $M^6$ is a closed spinnable manifold, so it admits an $\SU(3)$-structure. In $6$-dimensions, we have a non-degenerate pairing between $\Omega^3_{exact}$ and $\Omega^4_{exact}$, defined as follows:
\begin{align*}
    P: \Omega^3_{exact} \times \Omega^4_{exact} & \rightarrow \R \\
        (\gamma, \chi) & \mapsto  \int_M \beta \wedge \chi =   - \int_M \gamma \wedge \xi  \;,
\end{align*}
where $d\beta=\gamma$ and $d\xi=\chi$. This pairing follows from the Stokes' 
 theorem and the identification $\Omega^3 / \Omega^3_{closed}\cong \Omega^4_{exact}$.  With it, one can construct an indefinite inner product on $\Omega^3_{exact} \times \Omega^4_{exact}$:
$\{(\gamma_1, \chi_1),(\gamma_2, \chi_2)\}\coloneqq P(\gamma_1, \chi_2) + P(\gamma_2, \chi_1)\;.$

Let $\mathcal{R} \subseteq \Omega^3_{exact} \times \Omega^4_{exact}$ be the space of stable exact forms $(\rho, \sigma)$, with $\omega= \hat{\sigma}$ positive with respect to $\rho$. In \cite{Hitchin01}  introduced the functional that plays the analogue role for nearly Kähler structures as the examples described in the introduction:
\begin{equation} \label{hitchin_functional_6}
    \begin{aligned}
    \mathcal{L}: \mathcal{R} & \rightarrow \R \\
        (\rho, \sigma) & \mapsto  3\int_M \vol_\rho+ 4\int_M \vol_\sigma-12 P(\rho,\sigma)\;.
\end{aligned}
\end{equation}
 He showed that its critical points are nearly Kähler structures. Indeed, let $\delta\rho= \gamma= d\eta \in \Omega^3_{exact}$ and $\delta\sigma= \chi= d\xi \in \Omega^4_{exact}$. Then $\delta \vol_\rho = \gamma \wedge \widehat{\rho}$ and $\delta \vol_\sigma = \chi \wedge \widehat{\sigma}= \chi \wedge \omega$, so 
 \begin{align*}
     \delta \mathcal{L}&= -3 \int_M  \big(\widehat{\rho} +4 \beta \big) \wedge \gamma  +4 \int_M \big( \omega -3\alpha) \wedge \chi= -3 \int_M  \big(d\widehat{\rho} +4 \sigma \big) \wedge \eta  - 4 \int_M \big( d\omega -3\rho) \wedge \xi \;,
 \end{align*}
 where $d\alpha =\rho$ and $d\beta=\sigma$. Thus, the Euler--Lagrange equations are
 \begin{equation} \label{hitchineq}
     d\widehat{\rho}= -4 \sigma ~~~~~~~~~~~ d\omega = 3 \rho\;.
 \end{equation} 
\begin{proposition}[Thm. 6 \cite{Hitchin01}]
    The critical points of $\mathcal{L}$ are nearly Kähler structures.
\end{proposition}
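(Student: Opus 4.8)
The plan is to feed the Euler–Lagrange equations already isolated above, $d\omega = 3\rho$ and $d\widehat\rho = -4\sigma$, into the Gray–Hervella torsion identities of Proposition \ref{torsion6} and show that together they force every torsion form to vanish except $\tau_0 \equiv 1$, which is the definition of nearly Kähler used here. First I would rewrite the equations in terms of $(\omega,\rho)$ alone: since $\sigma$ is a stable $4$-form with Hitchin dual $\omega = \widehat\sigma$, the pointwise relation $\sigma = \tfrac{1}{2}\omega^2$ holds, so the second equation reads $d\widehat\rho = -2\omega^2$. I would then record the compatibility $\omega\wedge\rho = 0$, which is the part of \eqref{algebraic_constraints} that comes for free: exactness of $\sigma$ makes it closed, so $d(\omega^2) = 2\,d\sigma = 0$, whence $\omega\wedge\rho = \tfrac{1}{3}\,\omega\wedge d\omega = \tfrac{1}{6}\,d(\omega^2) = 0$; thus $\omega$ is of type $(1,1)$ for the almost complex structure $J$ induced by $\rho$.

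The core of the argument is then to compare components in Proposition \ref{torsion6}, using the uniqueness of the $\SU(3)$-irreducible decompositions. Matching $d\omega = 3\rho$ against $d\omega = 3\tau_0\rho + 3\widehat{\tau}_0\widehat\rho + \tau_1\wedge\omega + \tau_3$ forces $\tau_0 = 1$, $\widehat{\tau}_0 = 0$, $\tau_1 = 0$ and $\tau_3 = 0$ simultaneously, where I use the linear independence of $\rho,\widehat\rho$ and the injectivity of $X \mapsto X\wedge\omega$ on $\Lambda^1$ (hard Lefschetz). Substituting $\tau_0 = 1$ into the identity for $d\widehat\rho$ and comparing with $d\widehat\rho = -2\omega^2$ leaves the two terms $-J\widehat{\tau}_1\wedge\widehat\rho$ and $\widehat{\tau}_2\wedge\omega$, which sit in the distinct summands $\Lambda^4_6$ and $\Lambda^4_8$ and therefore vanish separately, yielding $\widehat{\tau}_1 = 0$ and $\widehat{\tau}_2 = 0$. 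Finally, $\rho = \tfrac{1}{3}d\omega$ is exact, hence closed, so the identity for $d\rho$ collapses to $\tau_2\wedge\omega = 0$ (its other terms already being zero), and injectivity of $\wedge\,\omega$ on $\Lambda^2_8$ gives $\tau_2 = 0$. Only $\tau_0 = 1$ survives, i.e. the structure is nearly Kähler.

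The step I expect to be the real obstacle is not this bookkeeping but verifying that a critical point is a bona fide $\SU(3)$-structure in the first place, so that Proposition \ref{torsion6} may be invoked at all. Compatibility is free, as shown, but the volume normalisation $\tfrac{1}{3!}\omega^3 = \tfrac{1}{4}\rho\wedge\widehat\rho$ of \eqref{algebraic_constraints} is a pointwise identity, whereas the variational input only produces its integrated version: pairing $d\omega = 3\rho$ with $\widehat\rho$, applying Stokes and using $d\widehat\rho = -2\omega^2$ together with \eqref{eulerfactor} yields $\int_M \vol_\rho = 2\int_M\vol_\sigma$. Promoting this to the pointwise normalisation — or else checking that the definition of $\mathcal{R}$ already restricts to normalised positive pairs, so that the torsion computation applies verbatim — is the delicate point, and is where I would concentrate the technical work.
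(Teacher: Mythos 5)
Your first step (compatibility) coincides with the paper's: exactness/closedness of $\sigma$ plus $d\omega=3\rho$ gives $\omega\wedge\rho=\tfrac{1}{6}d(\omega^2)=0$, hence $\omega$ is of type $(1,1)$ for $J_\rho$. The subsequent torsion bookkeeping via Proposition \ref{torsion6} is correct but essentially automatic once both constraints of \eqref{algebraic_constraints} hold; the paper does not even need to spell it out, since with the $\SU(3)$ conditions in place the Euler--Lagrange equations $d\omega=3\rho$, $d\widehat\rho=-4\sigma$ \emph{are} the nearly Kähler structure equations.

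The genuine gap is exactly the step you defer: the pointwise normalisation $\tfrac{1}{3!}\omega^3=\tfrac14\rho\wedge\widehat\rho$. Note that the definition of $\mathcal{R}$ does not include it (only stability, exactness, and positivity), so your "or else" escape route is not available; and your assessment that the variational equations only yield the integrated identity $\int_M\vol_\rho=2\int_M\vol_\sigma$ is mistaken. The paper closes this pointwise, with no integration: since $\omega\wedge\rho=0$ forces $\omega$ to be of type $(1,1)$, one also has $\omega\wedge\widehat\rho=0$ \emph{pointwise} (wedging a $(1,1)$-form with the $(3,0)$-form $\rho+i\widehat\rho$ vanishes identically), and then the Leibniz rule together with both Euler--Lagrange equations gives
\begin{equation*}
\frac{\omega^3}{3!}=\frac{1}{3}\,\omega\wedge\sigma
=-\frac{1}{12}\,\omega\wedge d\widehat\rho
=-\frac{1}{12}\Bigl(d\bigl(\omega\wedge\widehat\rho\bigr)-d\omega\wedge\widehat\rho\Bigr)
=\frac{1}{4}\,\rho\wedge\widehat\rho\;,
\end{equation*}
since $d(\omega\wedge\widehat\rho)=d(0)=0$ and $d\omega=3\rho$. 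So the "delicate point" where you proposed to concentrate the technical work is resolved by a two-line algebraic observation, and without it your appeal to Proposition \ref{torsion6} is not yet licensed; as written, the proposal is incomplete at its crux.
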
 
\begin{proof}
We need to check that equations \eqref{hitchineq} imply that $(\omega, \rho)$ satisfy the $\SU(3)$ conditions. Indeed, we have
$$\omega \wedge \rho = \frac{1}{3} \omega \wedge d\omega = \frac{1}{3} d\sigma = \frac{-1}{12} d^2 \widehat{\rho}=0\;,$$
so $\omega$ is of type $(1,1)$ with respect of the complex structure defined by $\rho$. Similarly, we have
$$\frac{\omega^3}{3!}= \frac{1}{3}\omega \wedge \sigma = \frac{-1}{12}\omega \wedge d\widehat{\rho}= \frac{-1}{12}\Big(d(\omega \wedge \widehat{\rho}) - d\omega \wedge \widehat{\rho}\Big) = \frac{1}{4} \rho \wedge \widehat{\rho}\;. \hfill \qedhere$$
\end{proof}
We find it convenient to work with the gradient flow of $\mathcal{L}$ with respect to the pairing $\{\cdot, \cdot\}$ rescaled it by $(1/3, 1/4)$:
\begin{equation} \label{NKconeflow}
\frac{\partial \sigma}{\partial t}= -\left(d\widehat{\rho}+ 4 \sigma \right) ~~~~~~~~~~~ \frac{\partial \rho}{\partial t}= \left(d\omega - 3 \rho\right)\;.
\end{equation}
We have no good argument for this rescaling, beyond that it has some desirable properties and allows us to motivate the study of this functional. Notice that a global rescaling can be obtained by suitably rescaling $\mathcal{L}$ (or the inner product $\{\cdot, \cdot\}$. However, the relevance of the rescaling is that is different on 3-forms and 4-forms. We have
\begin{proposition}
The rescaled gradient flow preserves the $\SU(3)$-condition.
\end{proposition}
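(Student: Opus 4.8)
The statement asserts that if the initial datum $(\rho_0,\sigma_0)\in\mathcal{R}$ is a genuine $\SU(3)$-structure, then the solution of \eqref{NKconeflow} remains one. Write $\omega=\widehat{\sigma}$; since $\sigma=\tfrac12\omega^2$, the induced $J$ and $\widehat{\rho}$ are algebraic functions of the stable forms, and stability together with positivity are open conditions, hence preserved for short time. The content is therefore the preservation of the two pointwise algebraic constraints \eqref{algebraic_constraints}, which I encode as
$$Q_1:=\omega\wedge\rho\in\Omega^5,\qquad Q_2:=4\,\omega\wedge\sigma-3\,\rho\wedge\widehat{\rho}\in\Omega^6.$$
The plan is to show that the constraint locus $\mathcal{N}=\{Q_1=0,\,Q_2=0\}$ is invariant by checking that the flow vector field is tangent to it, i.e. that $\dot Q_1$ and $\dot Q_2$ vanish on $\mathcal{N}$. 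First I record that the flow preserves exactness: $d\omega$ and $d\widehat{\rho}$ are exact, so $\dot\rho\in\Omega^3_{exact}$ and $\dot\sigma\in\Omega^4_{exact}$, whence $d\rho=d\sigma=0$ throughout. Working on $\mathcal{N}$, where a genuine $\SU(3)$-structure is available, lets me use Proposition \ref{linear6} (so $\dot\omega=\mathcal{K}(\dot\sigma)$ and $\dot{\widehat{\rho}}=\mathcal{I}(\dot\rho)$) together with $\mathcal{I}(\rho)=\widehat{\rho}$ and $\mathcal{K}(\sigma)=\tfrac12\omega$, both immediate from the homogeneity of the Hitchin duals.

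For the first constraint, $\dot Q_1=\dot\omega\wedge\rho+\omega\wedge\dot\rho$. Using $\dot\rho=d\omega-3\rho$ and $\omega\wedge d\omega=\tfrac12 d(\omega^2)=d\sigma=0$, the second term is $-3Q_1$; using $\dot\omega=\mathcal{K}(\dot\sigma)=-\mathcal{K}(d\widehat{\rho})-2\omega$, the first term is $-\mathcal{K}(d\widehat{\rho})\wedge\rho-2Q_1$. Now I decompose $\mathcal{K}(d\widehat{\rho})$ according to Lemma \ref{decompositionSU(3)}: its $\Lambda^2_6$ part is of the form $Z\lrcorner\rho$ and $(Z\lrcorner\rho)\wedge\rho=\tfrac12 Z\lrcorner(\rho\wedge\rho)=0$, while the primitive $\Lambda^2_8$ part annihilates $\rho$ for type reasons, so only the $\Lambda^2_1$ part contributes and $\mathcal{K}(d\widehat{\rho})\wedge\rho=a\,Q_1$ for a scalar function $a$. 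Hence $\dot Q_1=-(5+a)\,Q_1$, a pointwise linear equation in $t$, so $Q_1=0$ is preserved.

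For the second constraint I exploit the symmetry of the $\pm\ast$-pairings induced by $\mathcal{I}$ and $\mathcal{K}$: since each acts as a fixed sign times $\ast$ on the irreducible summands, $\rho\wedge\mathcal{I}(\gamma)=\gamma\wedge\mathcal{I}(\rho)$ and $\dot\omega\wedge\sigma=\dot\sigma\wedge\mathcal{K}(\sigma)$. Combined with $\mathcal{I}(\rho)=\widehat{\rho}$, $\mathcal{K}(\sigma)=\tfrac12\omega$ and the flow equations, a short computation collapses $\dot Q_2$ to $-6Q_2-6\,d(\omega\wedge\widehat{\rho})$. The final point is that $Q_1\equiv0$ on $M$ means $\omega$ is of type $(1,1)$, which is equivalent to $\omega\wedge\widehat{\rho}\equiv0$; the offending exact term then vanishes, leaving $\dot Q_2=-6Q_2$. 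Thus $\mathcal{N}$ is invariant, and together with the openness of stability and positivity this gives the claim. The main obstacle is precisely this tangency verification, with its two delicate points: that $\mathcal{I}$, $\mathcal{K}$ and the decomposition of Lemma \ref{decompositionSU(3)} only make sense at a genuine $\SU(3)$-structure, forcing the computation to be organised on $\mathcal{N}$ rather than along an arbitrary trajectory; and that the spatial derivative $d(\omega\wedge\widehat{\rho})$ appearing in $\dot Q_2$ must be removed using the equivalence $Q_1=0\Leftrightarrow\omega\wedge\widehat{\rho}=0$ rather than a naive pointwise cancellation.
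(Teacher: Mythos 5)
Your overall strategy (tangency of the flow to the constraint locus, plus openness of stability and positivity) matches the paper's, and your treatment of the volume constraint $Q_2$ is correct and essentially identical to the paper's computation: both reduce to $\dot Q_2=-6Q_2-6\,d(\omega\wedge\widehat{\rho})$ and remove the exact term via $Q_1\equiv 0\Leftrightarrow\omega\wedge\widehat{\rho}\equiv 0$. However, your treatment of $Q_1$ contains a genuine error: the identity $(Z\lrcorner\rho)\wedge\rho=\tfrac12 Z\lrcorner(\rho\wedge\rho)=0$ is false. The contraction Leibniz rule gives $Z\lrcorner(\rho\wedge\rho)=(Z\lrcorner\rho)\wedge\rho-\rho\wedge(Z\lrcorner\rho)$, and since the $2$-form $Z\lrcorner\rho$ commutes with the $3$-form $\rho$, this is the tautology $0=0$ and carries no information. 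In fact, $\beta\mapsto\beta\wedge\rho$ is an $\SU(3)$-equivariant map from $\Lambda^2_6$ to $\Lambda^5$, both copies of the standard $6$-dimensional representation, and it is an isomorphism rather than zero: with $\Omega=dz_1\wedge dz_2\wedge dz_3$, $\rho=\Re\Omega$ and $Z=\partial_{x_1}$, one computes
$$(Z\lrcorner\rho)\wedge\rho=\tfrac12\,\Re\left((Z\lrcorner\Omega)\wedge\overline{\Omega}\right)=\tfrac12\,\Re\left(dz_2\wedge dz_3\wedge d\bar z_1\wedge d\bar z_2\wedge d\bar z_3\right)\neq 0\;.$$
So the $\Lambda^2_6$-component of $\mathcal{K}(d\widehat{\rho})$ does not drop out of $\dot\omega\wedge\rho$ for pointwise algebraic reasons, and your tangency claim for $Q_1$ is unproved as written.

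That component is exactly the term requiring the differential hypothesis you recorded but never used, namely $d\rho=0$. By Proposition \ref{torsion6}, the $\Lambda_6$-components of $d\rho$ and $d\widehat{\rho}$ are governed by one and the same torsion form: $\pi_6(d\rho)=\widehat{\tau}_1\wedge\rho$ while $\pi_6(d\widehat{\rho})=-J\widehat{\tau}_1\wedge\widehat{\rho}$. Exactness (hence closedness) of $\rho$ forces $\widehat{\tau}_1=0$, so $\pi_6(d\widehat{\rho})=0$, and only then does $\mathcal{K}(d\widehat{\rho})\wedge\rho$ reduce to a multiple of $Q_1$. This is precisely how the paper disposes of the term ("since $\rho$ is exact, $\widehat{\tau}_1=\pi_6(d\widehat{\rho})=J\pi_6(d\rho)=0$"), and it is a statement about the intrinsic torsion of $\SU(3)$-structures, not a pointwise identity valid for arbitrary forms. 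Substituting this argument for your false identity closes the gap and makes your proof essentially coincide with the paper's.
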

\begin{proof}
Since $0=d\omega^2/2= \omega\wedge d\omega$, it follows that $\tau_1=\pi_6(d\omega)=\pi_6(d\sigma)=0$. Similarly, since $\rho$ is exact, we have $\widehat{\tau_1} =\pi_6(d\widehat{\rho})= J\pi_6(d\rho)=0$. Thus
$$ \frac{\partial}{\partial t}(\omega\wedge \rho)= \frac{\partial}{\partial t} \omega \wedge \rho + \omega \wedge \frac{\partial}{\partial t} \rho= - * \big(\pi_6(d\widehat{\rho})\big)\wedge \rho+\pi_6(d\omega) \wedge \omega=0\;, $$
proving the condition $\omega \wedge \rho=0$ is preserved. Now, by Equation \eqref{eulerfactor}, we have $\vol_\rho = \frac{1}{2} \rho \wedge \widehat{\rho}$ and $\vol_\sigma =  \frac{2}{3} \sigma  \wedge \omega = \frac{1}{3} \omega^3$. Thus, it suffices to check that
$\vol_\rho = \vol_\sigma$  is preserved under the flow. By Equation \eqref{volumederivative}, we have 
$$
    \frac{\partial}{\partial t} \vol_\rho = \frac{\partial \rho}{\partial t} \wedge \widehat{\rho}= 
   (d\omega -3 \rho) \wedge \widehat{\rho} = -d \widehat{\rho} \wedge \omega - 3\rho \wedge \widehat{\rho}= \frac{\partial \sigma}{\partial t}\wedge \omega + 4 \sigma \wedge \omega - 3\rho \wedge \widehat{\rho} =
     \frac{\partial}{\partial t} \vol_\sigma + 6 \left( \vol_\sigma - \vol_\rho \right)\;. \qedhere$$ 
\end{proof}
The main result that motivates our study of the Hitchin functions is its relation with metric cones with special holonomy. Explicitly, we have
\begin{proposition}\label{SU3coneflow}
 Let $\big(\rho(t), \sigma(t)\big)$, $t\in(a,b)$, be a family of exact stable forms on $M^6$ defining an $\SU(3)$-structure, with associated metric $g(t)$. Then the metric $\overline{g}=dr^2+r^2g(log(r))$ in $(e^a,e^b) \times M$ has holonomy inside $G_2$ if and only if the $\SU(3)$-structures satisfy the rescaled gradient evolution equations.
 
\end{proposition}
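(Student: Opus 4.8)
The plan is to reduce the statement to the Fernández–Gray characterisation of torsion-free $G_2$-structures: a $G_2$-structure with defining $3$-form $\phi$ has holonomy contained in $G_2$ if and only if $\phi$ is closed and coclosed, $d\phi=0$ and $d*\phi=0$. So I first exhibit the $G_2$-structure on the cone canonically determined by the family $(\rho(t),\sigma(t))$ and check that it induces the metric $\overline{g}$; then I compute $d\phi$ and $d*\phi$ and match the resulting equations against the rescaled flow \eqref{NKconeflow}. Concretely, on $(e^a,e^b)\times M$ with radial coordinate $r$ and $t=\log r$, I set
$$\phi = r^2\,dr\wedge\omega(t) + r^3\rho(t), \qquad \psi = r^4\sigma(t) - r^3\,dr\wedge\widehat{\rho}(t),$$
where $\omega=\widehat{\sigma}$ and $\sigma=\tfrac12\omega^2$, with $(\omega,\rho)$ the $\SU(3)$-structure of $g(t)$.

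To justify that $\phi$ induces $\overline{g}$ and that $\psi=*_{\overline{g}}\phi$, I would pass through the Riemannian cylinder $(dt^2+g(t))$ carrying the product structure $\phi_{cyl}=dt\wedge\omega+\rho$, for which the pointwise inclusion $\SU(3)\subset G_2$ gives $*_{cyl}\phi_{cyl}=\tfrac12\omega^2-dt\wedge\widehat{\rho}$ and the metric $dt^2+g$. Since $\overline{g}=dr^2+r^2g=e^{2t}(dt^2+g)$ is the conformal rescaling of the cylinder metric by $e^{2t}=r^2$, the compatible $G_2$-form scales with weight $3$, so $\phi=e^{3t}\phi_{cyl}=r^2\,dr\wedge\omega+r^3\rho$ as above, and the conformal weight of the Hodge star on $3$-forms in dimension $7$ gives $*_{\overline{g}}\phi=e^{4t}*_{cyl}\phi_{cyl}=r^4\sigma-r^3\,dr\wedge\widehat{\rho}=\psi$. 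This conformal viewpoint also accounts for the lower-order linear terms that distinguish \eqref{NKconeflow} from the ordinary Hitchin flow.

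Next I would compute the two exterior derivatives directly, splitting $d=d_M+dr\wedge\partial_r$ and using $\partial_r=\tfrac1r\partial_t$ together with $r^2\,dr=\tfrac13 d(r^3)$ and $r^3\,dr=\tfrac14 d(r^4)$. A short calculation gives
$$d\phi = r^3\,d_M\rho + r^2\,dr\wedge\bigl(\partial_t\rho + 3\rho - d_M\omega\bigr),$$
$$d\psi = r^4\,d_M\sigma + r^3\,dr\wedge\bigl(\partial_t\sigma + 4\sigma + d_M\widehat{\rho}\bigr).$$
Because $\rho$ and $\sigma$ are exact on $M$ for every $t$, the purely horizontal terms $d_M\rho$ and $d_M\sigma$ vanish identically, so $d\phi=0$ and $d\psi=0$ reduce to $\partial_t\rho=d_M\omega-3\rho$ and $\partial_t\sigma=-(d_M\widehat{\rho}+4\sigma)$, which is exactly the system \eqref{NKconeflow}. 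Conversely, if the flow equations hold then both derivatives vanish, $\phi$ is torsion-free, hence parallel, and $\overline{g}$ has holonomy inside $G_2$ with $\phi$ as its parallel form. Observe that the asymmetric weights $3$ and $4$ are forced by the scaling degrees of $\rho$ (a $3$-form) and $\sigma$ (a $4$-form) on the cone, which is the geometric origin of the $(1/3,1/4)$-rescaling of the pairing.

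The only non-formal step, and the main thing to get right, is the pointwise linear algebra of the second paragraph: verifying that $\phi$ induces precisely the cone metric and that its Hodge dual is $\psi$, rather than some other combination of $\omega^2$ and $dr\wedge\widehat{\rho}$ with different constants. This is where the compatibility and positivity of $(\omega,\rho)$ enter, and where the conventions (orientation, the normalisation $*_6\rho=\widehat{\rho}$, and the constants in \eqref{algebraic_constraints}) must be fixed once and for all so that the coefficients $3$ and $4$ emerge correctly. Once this identification is secured, the remainder is a mechanical bookkeeping of $r$-powers and the equivalence follows.
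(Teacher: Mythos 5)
Your proof is correct and takes essentially the same route as the paper: one builds the conical $G_2$-structure $\phi = r^2\,dr\wedge\omega + r^3\rho$, $\psi = r^4\sigma - r^3\,dr\wedge\widehat{\rho}$, uses that holonomy inside $G_2$ is equivalent to $d\phi = d\psi = 0$, and matches the resulting evolution equations (after $r = e^t$, with $d_M\rho = d_M\sigma = 0$ from exactness) against the rescaled flow \eqref{NKconeflow}. The only difference is that you justify the pointwise identification of $\overline{g}$ and $\psi = *_{\overline{g}}\phi$ via conformal rescaling of the cylinder structure, a point the paper simply asserts.
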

\begin{proof}
Given an $\SU(3)$-structure on $\Sigma$, we get a $G_2$-structure on the cone $C(\Sigma) $ by setting $\phi = dr\wedge r^2 \omega + r^3\rho $ and $\psi=*\phi= -dr\wedge r^3\widehat{\rho} + r^4\sigma\;.$  The condition $\operatorname{Hol}(g_\phi)\subseteq G_2$ is equivalent to the 3-form $\phi$ being closed and coclosed. Thus, by differentiating, we get
\begin{eqnarray*}
    0=d\phi= - dr \wedge r^2 d_\Sigma \omega + 3 r^2 dr \wedge \rho + r^3 dr \wedge \frac{\partial \rho}{\partial r} ~~~~ \implies & ~~~~~ r \frac{\partial \rho}{\partial r}= d_\Sigma \omega - 3 \rho  \\
    0=d\psi= dr \wedge r^3 d_\Sigma \widehat{\rho} + 4 r^3 dr \wedge \sigma + r^4 dr \wedge \frac{\partial \sigma}{\partial r} ~~~~ \implies& ~~~~~ r \frac{\partial \sigma}{\partial r}= - d_\Sigma \widehat{\rho} - 4 \sigma\;.
\end{eqnarray*}
where $d_\Sigma$ is just the restriction of the exterior differential $d$ along $\Lambda ^* T^*\Sigma$ and we used that $d_\Sigma \rho= d_\Sigma \sigma=0$. This condition is precisely the rescaled gradient flow equations under the change of variables $r=e^t$. The converse follows. 
\end{proof}
\begin{remark}
Theorem 8 in \cite{Hitchin01} is very similar to the above propositions. The method is essentially the same, but Hitchin applies it to a different functional and considers unweighted metrics, $\overline{g}=dt^2+ g_\Sigma(t)$. It is worth comparing the two. We can replace our Lagrange multiplier form $12$ to $12\lambda$ and consider the metric cone with angle $2\pi \lambda$, with $G_2$-structure given by $\phi =\frac{dr}{\lambda} r^2 \omega + r^3 \rho$ and metric $g_\lambda= \Big(\frac{dr}{\lambda}\Big)^2 +r^2g_\sigma$. The condition that the cone has holonomy in $G_2$ is then equivalent to the rescaled gradient flow that now depends on $\lambda$. In this case, the required relation between $r$ and $t$ becomes $r=e^{\lambda t}$. The resulting metric is conformal to the metric $dt^2 + g_\Sigma$ by a factor of $e^{2\lambda t}$. Thus, after suitable rescaling, the limiting metric $\lambda\rightarrow 0$ recovers Hitchin's result.
\end{remark}
\begin{remark}
In his proof, Hitchin considers a Hamiltonian flow induced by the symplectic pairing induced by $P$ rather than the gradient flow approach. With the Hamiltonian approach, one can see the vanishing condition  $\omega \wedge \rho=0$ as the vanishing of the moment map induced by the $\operatorname{Diff}(M)$ action. This approach would have worked equally well in our setup.
\end{remark}
We now focus on the second variation of $\mathcal{L}$:
\begin{proposition}
Let $(\gamma_1,\chi_1),(\gamma_2,\chi_2) \in \Omega^3_{exact} \times \Omega^4_{exact}$, with $\gamma_i=d\eta_i$ and $\chi_i=d\xi_i$ for $i=1,2$. The second variation of $\mathcal{L}$ is given by
$$\delta^2 \mathcal{L}=\int_M -3(d\mathcal{I}\gamma_2 +4\chi_2)\wedge \eta_1  -4 (d\mathcal{K}\chi_2 -3 \gamma_2)\wedge \xi_1\;.$$
In particular the Hessian of $\mathcal{L}$ at a critical point with respect to the pairing $\{ \cdot, \cdot\} $ is
$$H^\mathcal{L}(\gamma, \chi) = \Big(4d\mathcal{K}\chi -12 \gamma, -3d\mathcal{I}\gamma -12\chi\Big)\;. $$
\end{proposition}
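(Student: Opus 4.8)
The plan is to obtain $\delta^2\mathcal{L}$ by differentiating the first-variation formula a second time, and then to extract the Hessian operator by comparing against the definition of the indefinite pairing $\{\cdot,\cdot\}$. Concretely, I would fix a test direction $(\gamma_1,\chi_1)=(d\eta_1,d\xi_1)$ and differentiate
$$\delta\mathcal{L}_{(\gamma_1,\chi_1)}=-3\int_M(\widehat{\rho}+4\beta)\wedge\gamma_1+4\int_M(\omega-3\alpha)\wedge\chi_1$$
along a second curve with velocity $(\gamma_2,\chi_2)=(d\eta_2,d\xi_2)$. Since $(\gamma_1,\chi_1)$ is held fixed, only the base-point-dependent quantities $\widehat{\rho}$, $\omega=\widehat{\sigma}$ and the Lagrange-multiplier (pairing) terms contribute to the derivative.

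The key inputs are the linearisations of the two Hitchin duality maps from Proposition \ref{linear6}: differentiating $\widehat{\rho}$ in the direction $\gamma_2$ produces $\mathcal{I}(\gamma_2)$, and differentiating $\omega=\widehat{\sigma}$ in the direction $\chi_2$ produces $\mathcal{K}(\chi_2)$. For the pairing contribution, rather than differentiate the chosen primitives $\alpha,\beta$ (which are only defined up to closed forms), I would use that $P$ is bilinear, so the second variation of $-12P(\rho,\sigma)$ in the two directions is simply $-12\bigl(P(\gamma_1,\chi_2)+P(\gamma_2,\chi_1)\bigr)$; this sidesteps the ambiguity in the primitives, and one checks it agrees with the naive route $\delta\beta=\xi_2$, $\delta\alpha=\eta_2$ after a Stokes integration. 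Collecting terms gives the manifestly symmetric expression
$$\delta^2\mathcal{L}=3\int_M\gamma_1\wedge\mathcal{I}(\gamma_2)+4\int_M\chi_1\wedge\mathcal{K}(\chi_2)-12P(\gamma_1,\chi_2)-12P(\gamma_2,\chi_1).$$

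Next I would substitute $\gamma_1=d\eta_1$, $\chi_1=d\xi_1$ and integrate by parts via Stokes' theorem on the closed manifold $M$, moving the exterior derivative off $\eta_1,\xi_1$ and onto $\mathcal{I}(\gamma_2),\mathcal{K}(\chi_2)$, while using $P(\gamma_1,\chi_2)=\int_M\eta_1\wedge\chi_2$ and $P(\gamma_2,\chi_1)=-\int_M\gamma_2\wedge\xi_1$ to re-express the pairing terms. Careful degree bookkeeping (the relevant forms sit in degrees $2,3,4$, so the Koszul signs in $d(\eta_1\wedge\mathcal{I}\gamma_2)$ and $d(\xi_1\wedge\mathcal{K}\chi_2)$ and in commuting factors must be tracked) then reassembles the two integrands into $-3(d\mathcal{I}\gamma_2+4\chi_2)\wedge\eta_1$ and $-4(d\mathcal{K}\chi_2-3\gamma_2)\wedge\xi_1$, yielding the stated formula for $\delta^2\mathcal{L}$.

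Finally, to read off $H^{\mathcal{L}}$, I would compare with the definition $\{(\gamma_1,\chi_1),(A,B)\}=P(\gamma_1,B)+P(A,\chi_1)=\int_M\eta_1\wedge B-\int_M A\wedge\xi_1$, so that the Hessian operator is characterised by $\delta^2\mathcal{L}=\{(\gamma_1,\chi_1),H^{\mathcal{L}}(\gamma_2,\chi_2)\}$. Matching the $\wedge\eta_1$ integrand identifies the $\Omega^4_{exact}$-component $B=-3d\mathcal{I}\gamma_2-12\chi_2$, and matching the $\wedge\xi_1$ integrand identifies the $\Omega^3_{exact}$-component $A=4d\mathcal{K}\chi_2-12\gamma_2$, giving $H^{\mathcal{L}}(\gamma,\chi)=(4d\mathcal{K}\chi-12\gamma,\,-3d\mathcal{I}\gamma-12\chi)$. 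I expect the only genuine obstacle to be disciplined sign-tracking in the Stokes integrations, together with verifying that the primitive-independent route via bilinearity of $P$ coincides with differentiating the explicit first-variation formula; both are routine but error-prone.
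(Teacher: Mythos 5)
Your proposal is correct and follows essentially the same route as the paper: differentiate the first-variation formula using the linearisations $\mathcal{I}$, $\mathcal{K}$ from Proposition \ref{linear6}, then read off $H^{\mathcal{L}}$ by matching against the pairing $\{\cdot,\cdot\}$. The paper's proof is just a terse version of this; your extra care with the bilinearity of $P$ (avoiding the primitive ambiguity) and the Koszul signs fills in exactly the details the paper leaves implicit.
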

\begin{proof}
By Proposition $\ref{linear6}$, if $\delta\rho=\gamma$, then $\delta \widehat{\rho}=\mathcal{I}\gamma$. Similarly, if $\delta\sigma=\chi$, then $\delta \omega=\mathcal{K}\omega$. Combining this with our formula for the first variation, we get the desired formula. The computation of the Hessian with respect to the pairing $\{\cdot, \cdot\}$ is now immediate.
\end{proof}
We want to study the spectral properties of $H^\mathcal{L}$. More concretely, the equations
\begin{equation} \label{hess6}
\begin{array}{r@{}l}
    -3d\mathcal{I} \gamma &=(\mu +12 )\chi\\
    4d\mathcal{K} \chi &=(\mu+12 )\gamma\;, 
\end{array}
\end{equation}
for $\gamma \in \Omega^3_{exact}$ and $\sigma\in \Omega^4_{exact}$.
Since the functional $\mathcal{L}$ is invariant under $\operatorname{Diff}(M)$, it is convenient to work on a slice to the orbit of the diffeomorphism group. We use the same strategy as \cite{Fos17}.

Let $(\omega, \rho)$ be a nearly Kähler structure not isometric to the round $S^6$ and $\mathcal{O}$ be the orbit of $\operatorname{Diff}_0(M)$ in $\Omega^3_{exact} \times \Omega^4_{exact}$ going through $(\omega, \rho)$. The tangent space to this orbit is spanned by $(\lie_X \rho, \lie_X \sigma)$, for $X\in \mathrm{K}^\perp \subseteq \Omega^1$, where $\mathrm{K}$ is the set of Killing fields and the complement is taken with respect to the $L^2$ metric. 
Using the Hodge decomposition of Theorem \ref{hodgedecom6}, we can parametrise $(\gamma, \chi)\in \Omega^3_{exact} \times \Omega^4_{exact}$ explicitly by 
$$\gamma = \lie_X \rho + d(f\omega) +\gamma_0 ~~~~~~~~~~ \chi = \lie_Y \sigma + d(g\widehat{\rho}) +\chi_0\;;$$
with $f,g \in \Omega^0$, $X,Y \in \mathrm{K}^\perp$, $\gamma_0 \in \Omega^3_{12,exact}$  and $\chi_0 \in \Omega^4_{8,exact}$. In particular, it follows that taking $X=0$ or $Y=0$ defines a complement to the tangent space of the diffeomorphism action. Let 
$$\mathcal{W}=\big\{\big(d(f\omega) +\gamma_0, \lie_Y \sigma + d(g\widehat{\rho}) +\chi_0\big)\big\} \subseteq \Omega^3_{exact} \times \Omega^4_{exact}\;,$$
for $f, g \in \Omega^0$ , $Y\in \mathcal{K}^\perp$, $\gamma_0 \in \Omega^3_{12, exact}$ and $\chi_0\in \Omega^4_{8, exact}$. Taking the appropriate Hölder norm completions, we get
\begin{proposition}[\cite{Nor08} Theorem 3.1.4 and 3.1.7]
There exists a slice to the diffeomorphism group action in $\Omega^3_{exact} \times \Omega^4_{exact}$, whose tangent space is given by $\mathcal{W}$.
\end{proposition}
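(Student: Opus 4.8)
The plan is to reduce the statement to the abstract slice theorem of Nordström \cite{Nor08}, whose two cited results produce a local slice for an action on a space of forms once one knows that the infinitesimal orbit admits a closed, explicitly described complement. Thus the whole task splits into identifying the tangent space $T_{(\rho,\sigma)}\mathcal{O}$ to the $\operatorname{Diff}_0(M)$-orbit, checking that $\mathcal{W}$ is a complement to it inside $\Omega^3_{exact}\times\Omega^4_{exact}$, and confirming the analytic hypotheses (smoothness of the action, closed range) in suitable Hölder completions. The geometric heart is the complementarity, and the remainder is the abstract machinery.

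First I would pin down $T_{(\rho,\sigma)}\mathcal{O}$. For a nearly Kähler structure the torsion formulae of Proposition \ref{torsion6} with $\tau_0=1$ and all other torsion forms vanishing give $d\rho=0$ and, since $\omega\wedge\rho=0$, also $d\sigma=d(\omega^2/2)=\omega\wedge d\omega=3\,\omega\wedge\rho=0$. Cartan's formula then collapses the Lie derivatives to exact forms, $\lie_X\rho=d(X\lrcorner\rho)$ and $\lie_X\sigma=d(X\lrcorner\sigma)$, so that
$$T_{(\rho,\sigma)}\mathcal{O}=\big\{\,\big(d(X\lrcorner\rho),\,d(X\lrcorner\sigma)\big)\st X\in\mathrm{K}^\perp\,\big\}\;.$$
Under the identifications $X\lrcorner\rho\in\Omega^2_6$ and $X\lrcorner\sigma=(X\lrcorner\omega)\wedge\omega\in\Omega^3_6$, the first factor of the orbit lies in $d\Omega^2_6$ and the second in $d\Omega^3_6$.

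Next I would match this against the Hodge decompositions of Theorem \ref{hodgedecom6}, namely $\Omega^3_{exact}=d\Omega^2_{1\oplus 6}\oplus\Omega^3_{12,exact}$ and $\Omega^4_{exact}=d\Omega^3_{1\oplus 6}\oplus\Omega^4_{8,exact}$: the parametrisation of $(\gamma,\chi)$ recorded before the statement is precisely these two decompositions written out, with $d(f\omega)$ spanning the $\Omega^2_1$-part of the first factor and $\lie_Y\sigma+d(g\widehat{\rho})$ spanning the $\Omega^3_{1\oplus 6}$-part of the second. By construction $\mathcal{W}$ retains every Hodge summand except the $d\Omega^2_6$-part of the first factor, which is exactly the summand carrying the orbit. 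Surjectivity of $T_{(\rho,\sigma)}\mathcal{O}+\mathcal{W}$ is then immediate: writing an arbitrary element as $\gamma=\lie_X\rho+d(f\omega)+\gamma_0$, $\chi=\lie_Y\sigma+d(g\widehat{\rho})+\chi_0$, one splits it as $(\lie_X\rho,\lie_X\sigma)+\big(d(f\omega)+\gamma_0,\ \lie_{Y-X}\sigma+d(g\widehat{\rho})+\chi_0\big)$, using that $\mathrm{K}^\perp$ is linear so $Y-X\in\mathrm{K}^\perp$. For the direct-sum property, the two defining equations of $\mathcal{W}$ decouple in $X$ and $Y$, and comparing Hodge components forces $\lie_X\rho=0$ and $\lie_Y\sigma=0$; since Theorem \ref{hodgedecom6} presents these as faithful internal direct sums, the infinitesimal action is injective on $\mathrm{K}^\perp$ — equivalently the stabiliser Lie algebra of $(\rho,\sigma)$ is exactly $\mathrm{K}$ — so $X=Y=0$ and $T_{(\rho,\sigma)}\mathcal{O}\cap\mathcal{W}=\{0\}$. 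This is also where the exclusion of the round $S^6$ enters, inherited from the hypotheses of Theorem \ref{hodgedecom6}.

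The hard part is upgrading this linear splitting to an honest slice, which I would delegate to Theorems 3.1.4 and 3.1.7 of \cite{Nor08}. After completing in a suitable Hölder norm, one needs the $\operatorname{Diff}_0(M)$-action on $\Omega^3_{exact}\times\Omega^4_{exact}$ to be smooth, the orbit to be locally embedded, and the linearised orbit map $X\mapsto(\lie_X\rho,\lie_X\sigma)$ to have closed range with topological complement $\mathcal{W}$; the closed range is exactly the $L^2$-orthogonality supplied by Theorem \ref{hodgedecom6}. Granting these inputs, Nordström's construction returns a local slice whose tangent space at $(\rho,\sigma)$ is $\mathcal{W}$, as claimed. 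The only nearly Kähler-specific ingredient beyond the abstract theorem is the Hodge-theoretic complementarity established above; everything else is formal.
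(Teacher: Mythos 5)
Your proposal is correct and follows essentially the same route as the paper: the paper likewise identifies the orbit tangent space as $\{(\lie_X\rho,\lie_X\sigma)\st X\in\mathrm{K}^\perp\}$, reads off from the Hodge decomposition of Theorem \ref{hodgedecom6} that setting $X=0$ in the first factor yields the complement $\mathcal{W}$, and then delegates the actual construction of the slice in Hölder completions to Theorems 3.1.4 and 3.1.7 of \cite{Nor08}. Your write-up merely makes explicit the surjectivity/trivial-intersection bookkeeping and the role of the non-round-$S^6$ hypothesis, which the paper leaves implicit in the uniqueness statement of Theorem \ref{hodgedecom6}.
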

We can now study the spectral properties of the second variation of the functional $\mathcal{L}$. We have
\begin{proposition}\label{hess6-1}
Assume $(M^6, \omega, \rho)$ is not isometric to the round $6$-sphere. Under the Hodge decomposition, the equations \eqref{hess6} are equivalent to
\begin{subequations}
\begin{align} 
    -8g&= (\mu+12)f\;, \label{1st}\\
    -9f &= (\mu+12)g\;, \label{2nd}\\
    Y+\frac{1}{3}dg &= \frac{\mu+12}{12}X\;, \label{3rd}\\
    X-\frac{1}{4}df &= \frac{\mu+12}{12}Y\;,   \label{4th}\\
    d*\gamma_0&= \frac{(\mu+12)}{3}\chi_0\;, \label{5th}\\
    d*\chi_0 &= -\frac{(\mu+12)}{4}\gamma_0\;. \label{6th}
\end{align}
\end{subequations}
\end{proposition}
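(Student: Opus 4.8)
The plan is to substitute the Hodge-decomposed forms
$\gamma = \lie_X\rho + d(f\omega) + \gamma_0$ and $\chi = \lie_Y\sigma + d(g\widehat{\rho}) + \chi_0$
into the eigenvalue system \eqref{hess6}, compute $d\mathcal{I}\gamma$ and $d\mathcal{K}\chi$ by applying the linearised duality operators of Proposition \ref{linear6} summand by summand, and then read off the six relations by matching components in the (unique) decomposition of Theorem \ref{hodgedecom6}. Throughout I would use the nearly Kähler structure equations $d\omega = 3\rho$, $d\widehat{\rho} = -4\sigma$, $d\rho = 0$, $d\sigma = 0$, together with $*\rho = \widehat{\rho}$ and $*\omega = \sigma$. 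The computation organises into three sectors — the scalar pair $(f,g)$, the coclosed-vector pair $(X,Y)$, and the primitive pair $(\gamma_0,\chi_0)$ — and the resulting system is essentially block-triangular: the $\langle\rho\rangle$- and $\langle\sigma\rangle$-projections close up in $(f,g)$ alone, while the vector projections are additionally driven by the scalar gradients $df, dg$.

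For the Lie-derivative pieces, Lemma \ref{hitchintrick6} gives $\mathcal{I}\lie_X\rho = \lie_X\widehat{\rho}$ and $\mathcal{K}\lie_Y\sigma = \lie_Y\omega$; differentiating and commuting $d$ with $\lie$ then turns these into the clean multiples $-4\lie_X\sigma$ and $3\lie_Y\rho$ via the structure equations. For the scalar pieces I would expand $d(f\omega) = df\wedge\omega + 3f\rho$ and $d(g\widehat{\rho}) = dg\wedge\widehat{\rho} - 4g\sigma$ and apply $\mathcal{I},\mathcal{K}$ to each irreducible summand; the pure chains $3f\rho \mapsto 3f\widehat{\rho} \mapsto -12f\sigma$ (under $\mathcal{I}$ then $d$) and $-4g\sigma \mapsto -2g\omega \mapsto -6g\rho$ (under $\mathcal{K}$ then $d$) contribute exactly the $\langle\sigma\rangle$- and $\langle\rho\rangle$-coefficients which, after the factors $-3$ and $4$, give \eqref{2nd} and \eqref{1st}. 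The primitive sector decouples cleanly: since $\gamma_0\in\Omega^3_{12}$ and $\chi_0\in\Omega^4_8$ we have $\mathcal{I}\gamma_0 = -*\gamma_0$ and $\mathcal{K}\chi_0 = -*\chi_0$, and because $\gamma_0,\chi_0$ are exact, hence closed, the forms $*\gamma_0$ and $*\chi_0$ are coclosed (using $*^2=-1$ on $\Omega^3$ and $*^2=1$ on $\Omega^4$). Proposition \ref{useful6-2} then yields $d*\gamma_0\in\Omega^4_8$ and $d*\chi_0\in\Omega^3_{12}$, so these terms map into their own sector and produce precisely \eqref{5th} and \eqref{6th}.

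The main obstacle is the vector sector, equations \eqref{3rd} and \eqref{4th}, where the scalar and vector data mix. Here I must collect all contributions to the $\lie_X\rho$- and $\lie_Y\sigma$-slots: not only the genuine Lie terms carrying $12X$ and $3Y$, but also the $\Omega^3_6$- and $\Omega^4_6$-valued remainders produced by differentiating the scalar pieces, namely $df\wedge\widehat{\rho}$, $*(df\wedge\omega)$, $dg\wedge\omega$ and their $\mathcal{I}$/$\mathcal{K}$-images. The delicate step is to rewrite these gradient terms in the basis furnished by Theorem \ref{hodgedecom6}, isolating, up to the correct signs, the coclosed-vector part carried by $df$ and $dg$; this is what generates the $\frac{1}{3}dg$ and $\frac{1}{4}df$ in \eqref{3rd}–\eqref{4th}. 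It requires the standard $\SU(3)$ contraction identities relating $\alpha\wedge\widehat{\rho}$, $\alpha\wedge\omega$ and $J\alpha$ (as in Lemma \ref{decompositionSU(3)}), careful sign bookkeeping for $*$ on $\Omega^3_6$, and the uniqueness of the decomposition of Theorem \ref{hodgedecom6} — which is exactly where the exclusion of the round $S^6$ enters. Once these identifications are in place, matching the coclosed-vector components on both sides of \eqref{hess6} yields \eqref{3rd} and \eqref{4th}, completing the equivalence.
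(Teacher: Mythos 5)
Your proposal is correct and follows essentially the same route as the paper: substitute the Hodge decomposition, apply $\mathcal{I}$ and $\mathcal{K}$ summand by summand via Proposition \ref{linear6} and Lemma \ref{hitchintrick6}, differentiate using the nearly Kähler structure equations, and match the three slots by uniqueness of the decomposition in Theorem \ref{hodgedecom6}, with Proposition \ref{useful6-2} ensuring the primitive sector stays in its own slot. The only streamlining you miss is that the ``delicate'' vector sector is immediate in the paper: since $Jdf\wedge\omega = df\lrcorner\sigma$ and $dg\lrcorner\rho$ are contractions into closed forms, their exterior derivatives are exactly $\lie_{df}\sigma$ and $\lie_{dg}\rho$, so the gradient remainders absorb directly into the Lie-derivative slots as $-4\lie_{X-\frac{1}{4}df}\sigma$ and $3\lie_{Y+\frac{1}{3}dg}\rho$, yielding \eqref{3rd}--\eqref{4th} without any further rewriting.
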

\begin{proof}
As above, let 
\begin{align*}
    \gamma &= \lie_X \rho + d(f\omega) +\gamma_0= \lie_X \rho + df \wedge \omega+ 3f \rho +\gamma_0\;,\\
    \chi &=\lie_Y \sigma + d(g\widehat{\rho}) +\chi_0 = \lie_Y \sigma + dg\wedge \widehat{\rho} - 4g \sigma  +\chi_0\;;
\end{align*}
with $f,g \in \Omega^0$, $X,Y \in \mathrm{K}^\perp$, $\gamma_0 \in \Omega^3_{12,exact}$  and $\chi_0 \in \Omega^4_{8,exact}$, in virtue of Theorem \ref{hodgedecom6}. By the definition of $\mathcal{I}$ and $\mathcal{K}$, and Lemma \ref{hitchintrick6}, we get
$$\mathcal{I}\gamma = \lie_X \widehat{\rho} + Jdf \wedge \omega +3f\wedge \widehat{\rho} - *\gamma_0 ~~~~~~~~~~ 
\mathcal{K}\chi = \lie_Y \omega + dg \lrcorner \rho - 2g\omega -*\chi_0\;.$$
Now, since $(\omega, \rho)$ is nearly Kähler, we get 
\begin{align*}
    d\mathcal{I}\gamma &= -4 \lie_X \sigma + d(Jdf \wedge \omega) +d(3f\wedge \widehat{\rho}) -d*\gamma_0 = -4 \lie_{X-\frac{1}{4}df} \sigma+ d(3f\wedge \widehat{\rho}) -d*\gamma_0\;,\\
    d\mathcal{K}\chi &= 3\lie_Y \rho +  d(dg \lrcorner \rho) - d(2g\omega) -d(*\chi_0) =  3\lie_{Y + \frac{1}{3}dg} \rho - d(2g\omega) -d(*\chi_0) \;.
\end{align*}
Plugging this back in \eqref{hess6} and since the Hodge decomposition is orthogonal, the system \eqref{1st}-\eqref{6th} follows.
\end{proof}
\begin{proposition} \label{indexaslaplaceeigenvalueNK}
The eigenforms of $H^\mathcal{L}$ are constant functions and solutions to 
\begin{equation} \label{lap6}
    \Delta \gamma =  \frac{(\mu+12)^2}{12} \gamma
\end{equation}
for $\gamma\in \Omega^3_{12, exact}$. In particular, the spectrum of $\mathcal{H}^\mathcal{L}$ is discrete and has finite multiplicity for each $\mu$.
\end{proposition}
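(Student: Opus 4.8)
The plan is to exploit the block structure that Proposition~\ref{hess6-1} imposes on the eigenvalue equation $H^{\mathcal{L}}(\gamma,\chi)=\mu(\gamma,\chi)$: under the Hodge decomposition of Theorem~\ref{hodgedecom6} it splits into the scalar pair \eqref{1st}--\eqref{2nd} in $(f,g)$, the vector pair \eqref{3rd}--\eqref{4th} in $(X,Y)$, and the primitive pair \eqref{5th}--\eqref{6th} in $(\gamma_0,\chi_0)$, the last of which decouples entirely. Because $\mathcal{L}$ is $\operatorname{Diff}(M)$-invariant I carry out the analysis on the slice $\mathcal{W}$, that is, I impose $X=0$; this is exactly what removes the infinite-dimensional degeneracies of $H^{\mathcal{L}}$ and makes a discreteness statement possible. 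Throughout I use the standing hypothesis that $M$ is not the round $S^6$, inherited from Theorem~\ref{hodgedecom6}.

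For the primitive block the argument is short. Since $\gamma_0\in\Omega^3_{12,exact}$ is exact it is closed, so $d\gamma_0=0$ and hence $\Delta\gamma_0=dd^*\gamma_0$. Applying $*$ to \eqref{5th} and using the dimension-six identity $d^*=-*d*$ gives $d^*\gamma_0=-\tfrac{\mu+12}{3}*\chi_0$; differentiating and substituting \eqref{6th} yields
$$ \Delta\gamma_0 = dd^*\gamma_0 = -\tfrac{\mu+12}{3}\,d*\chi_0 = \tfrac{(\mu+12)^2}{12}\,\gamma_0 , $$
which is precisely \eqref{lap6}. Conversely, given $\gamma_0$ solving \eqref{lap6} one recovers a solution of \eqref{5th}--\eqref{6th} by setting $\chi_0=\tfrac{3}{\mu+12}\,d*\gamma_0$, which lands in $\Omega^4_{8,exact}$ by Proposition~\ref{useful6-2}; so the primitive eigenforms correspond bijectively to the exact primitive Laplace eigenforms.

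For the scalar and vector blocks I would show no eigenforms survive on the slice beyond constants. Setting $X=0$ in \eqref{3rd}--\eqref{4th} gives $Y=-\tfrac13 dg$ and $df=\tfrac{\mu+12}{9}dg$, whereas \eqref{2nd} forces $df=-\tfrac{\mu+12}{9}dg$; comparing, $(\mu+12)\,dg=0$. Either $\mu=-12$, in which case \eqref{1st}--\eqref{2nd} kill $(f,g)$ and \eqref{5th}--\eqref{6th} make $\gamma_0$ coclosed, hence zero by exactness, so nothing is contributed; or $dg=0$, so $f,g$ are constant, $Y=0$, and \eqref{1st}--\eqref{2nd} force $(\mu+12)^2=72$. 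These are the finitely many constant eigenforms, spanned by the rescaling directions $\rho$ and $\sigma$. The role of the slice is exactly to discard the genuine off-slice families of the vector block---the diffeomorphism directions $(\lie_X\rho,\lie_X\sigma)$ at $\mu=0$ and their reflections $(\lie_X\rho,-\lie_X\sigma)$ at $\mu=-24$, all of which have $X\neq0$.

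Finally, discreteness and finite multiplicity follow by reassembling the blocks. The constant part is finite-dimensional, and the primitive part is governed by the Hodge Laplacian $\Delta$ on the closed manifold $M$ restricted to $\Omega^3_{12,exact}$, whose spectrum is discrete with finite multiplicities by standard elliptic theory; since $\mu\mapsto\tfrac{(\mu+12)^2}{12}$ is finite-to-one, each $\mu$ meets only a single Laplace eigenvalue and so has finite multiplicity. I expect the main obstacle to be the scalar/vector bookkeeping---checking cleanly that imposing $X=0$ eliminates precisely the spurious infinite-dimensional families while keeping the constant directions---rather than the primitive computation, which collapses to \eqref{lap6} as soon as one observes that exactness makes the $d^*d$ term vanish.
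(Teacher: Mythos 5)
Your proof is correct and follows essentially the same route as the paper: it reduces the eigenvalue problem to the block system of Proposition~\ref{hess6-1}, imposes the gauge $X=0$, eliminates the scalar/vector block down to the constant solutions with $(\mu+12)^2=72$, and converts the primitive system \eqref{5th}--\eqref{6th} into the Laplace equation \eqref{lap6} via $d^*=-*d*$ and exactness, with the same converse construction $\chi_0=\tfrac{3}{\mu+12}d*\gamma_0$ (including the separate treatment of $\mu=-12$, where exact harmonic forms vanish). The only differences are cosmetic: you derive $(\mu+12)\,dg=0$ before splitting cases where the paper first extracts $(\mu+12)^2=72$ from $72fg=(\mu+12)^2fg$, and you add a correct but inessential identification of the off-slice eigenvalues $\mu=0,-24$.
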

\begin{proof}
First, equations \eqref{1st} and \eqref{2nd} imply $72fg=(\mu+12)^2fg$. The only solution to this equation with $fg\neq0$ corresponds to $\mu= -12 \pm 6\sqrt{2}$. If we further impose the gauge fixing condition $X=0$, equations \eqref{3rd} and \eqref{4th} become
\begin{eqnarray*}
    Y+\frac{1}{3}dg =0 ~~~~~~~~~~~~~~~
    3df \pm 6\sqrt{2} Y = \pm \frac{\sqrt{2}}{2}\left(Y - \frac{1}{3}dg\right)=0\;,
\end{eqnarray*}
since $f= \mp \frac{2\sqrt{2}}{3} g$ by equation \eqref{2nd}. Thus, $Y=df=dg=0$, so $f$ and $g= \pm \tfrac{3\sqrt{2}}{4} f$ must be constant, with associated eigenvalue $\mu=12 \pm 6\sqrt{2}$. We have reduced our spectral problem to the PDE system \eqref{5th}-\eqref{6th}
\begin{equation} \label{sys61}
    d*\gamma_0= \frac{(\mu+12)}{3}\chi_0 ~~~~~~~~~~~~~~~~~~~~~~~~~~    d*\chi_0 = -\frac{(\mu+12)}{4}\gamma_0\;, 
\end{equation} 
with $(\gamma_0, \chi_0) \in \Omega^3_{12,exact} \times \Omega^4_{8,exact}$. If $\mu=-12$, $(\gamma_0, \chi_0)$ are harmonic exact forms and thus zero. Thus, we may assume $ \mu\neq 12$. In this case, this PDE system is equivalent to \eqref{lap6}. If $\gamma_0$ satisfies \eqref{sys61}, then 
$$ \Delta \gamma_0 =  d d^* \gamma_0= - \frac{(\mu+12)}{3} d* \chi_0 = \frac{(\mu+12)^2}{12} \gamma_0\;.$$
Conversely, if $\gamma_0$ satisfies \eqref{lap6} and $\mu\neq -12$, the pair $\big(\gamma_0, \frac{3}{(\mu+12)}d*\gamma_0\big)$ satisfies \eqref{sys61}:
$$d*\chi_0= \frac{3}{(\mu+12)} d*d*\gamma_0 = \frac{-3}{(\mu+12)} \Delta \gamma_0 = -\frac{(\mu+12)}{4}\gamma_0\;. \qedhere$$
\end{proof}
\begin{remark}
    The case $\mu=0$ corresponds to the nullity of $\mathcal{H}^\mathcal{L}$, i.e. infinitesimal deformations of the nearly Kähler structure. As expected, we recover the result of \cite{MNS08} and \cite{Fos17} on infinitesimal deformations of nearly Kähler structures.
\end{remark}
\subsection{The closed Hitchin functional} \label{sectionclosedSU3}
The Euler--Lagrange equations associated with the functional $\mathcal{L}$ resemble the first variation of a Hamiltonian functional. This similarity suggests a natural approach: to seek out and analyse Lagrangians that correspond to the Hitchin functional $\mathcal{L}$ when interpreted in a Hamiltonian framework. In particular, we will treat the exact $3$-form $\rho$ as the moment variable within this setting. To formalise this approach, consider the map
\begin{align*}
Cl :\Omega^2  & \rightarrow \Omega^3 \times \Omega^4\\
\omega &\mapsto \Big(\frac{1}{3}d\omega, \frac{1}{2}\omega^2\Big)\;,
\end{align*}
and let $\mathcal{U}:= Cl^{-1}(\mathcal{R})$ be the preimage of exact stable forms $(\rho, \sigma)$. The pullback of the Hitchin functional $\mathcal{L}$ under $Cl$ will be the corresponding Lagrangian functional. 

The space $\mathcal{U}$ is a priori quite mysterious. In particular, important questions to answer are under which conditions the space is non-empty, whether it is path-connected or simply connected. The following key result shows that $\mathcal{U}$ has a very natural geometric description:
\begin{proposition}\label{extraordianrysu3}
There is a one-to-one map between $\mathcal{U}$ and the 
 set of $\SU(3)$-structure with torsion supported in the classes $\tau_0= e^f$, $\widehat{\tau}_1$ and $\widehat{\tau}_2$.
\end{proposition}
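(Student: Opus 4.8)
The plan is to exhibit the correspondence explicitly as the assignment $\omega \mapsto (\omega, \rho)$, show it lands in the advertised class of $\SU(3)$-structures by a short computation together with the Bianchi identity $d^2=0$, and then produce the inverse directly.

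First I would set up the forward map. Given $\omega \in \mathcal{U}$, put $\rho_0 = \tfrac13 d\omega$; by hypothesis this is a stable $3$-form, so it induces an almost complex structure $J$ and a Hitchin dual $\widehat{\rho_0}$. Since $\omega^2$ is exact, hence closed, we have $\omega \wedge \rho_0 = \tfrac13 \omega \wedge d\omega = \tfrac16 d(\omega^2) = 0$, so $\omega$ is of type $(1,1)$ with respect to $J$; combined with positivity (built into $\mathcal{U}$) this gives a metric $g = \omega(\cdot, J\cdot)$ with volume $\tfrac{1}{3!}\omega^3$. The pair $(\omega, \rho_0)$ satisfies the first constraint in \eqref{algebraic_constraints} but in general fails the normalisation: both $\tfrac{1}{3!}\omega^3$ and $\tfrac14 \rho_0 \wedge \widehat{\rho_0}$ are positive multiples of the same volume form, so there is a unique smooth $f$ with $\tfrac14 \rho_0 \wedge \widehat{\rho_0} = e^{2f}\,\tfrac{1}{3!}\omega^3$. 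Rescaling $\rho := e^{-f}\rho_0$ leaves $J$ unchanged (the induced complex structure is invariant under positive conformal rescaling of the defining $3$-form) and arranges $\tfrac14 \rho \wedge \widehat{\rho} = \tfrac{1}{3!}\omega^3$, so $(\omega, \rho)$ is a genuine $\SU(3)$-structure, which I take to be the image of $\omega$.

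Next I would read off the torsion. By construction $d\omega = 3\rho_0 = 3e^f \rho$, so comparison with the first line of Proposition \ref{torsion6} gives at once $\tau_0 = e^f$, $\widehat{\tau}_0 = 0$, $\tau_1 = 0$ and $\tau_3 = 0$. The remaining components are forced by $d^2\omega = 0$: differentiating $d\omega = 3e^f\rho$ gives $0 = 3e^f(df \wedge \rho + d\rho)$, hence $d\rho = -df\wedge \rho$. Matching against the second line of Proposition \ref{torsion6} (with $\widehat{\tau}_0 = 0$) and using that $df \wedge \rho$ lies in $\Lambda^4_6$ while $\tau_2 \wedge \omega \in \Lambda^4_8$, I conclude $\widehat{\tau}_1 = -df$ and $\tau_2 = 0$. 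Thus the only possibly non-vanishing torsion forms are $\tau_0 = e^f$, $\widehat{\tau}_1 = -df$ and $\widehat{\tau}_2$, exactly as claimed, with $\widehat{\tau}_2$ genuinely unconstrained (it is the $\Lambda^4_8$-component of $d\widehat{\rho}$). For the inverse, I would start from any $\SU(3)$-structure with torsion supported in these classes: then $\widehat{\tau}_0 = \tau_1 = \tau_2 = \tau_3 = 0$ forces $d\omega = 3\tau_0 \rho = 3e^f\rho$, so $\tfrac13 d\omega = e^f\rho$ is a positive rescaling of the stable form $\rho$, hence stable and manifestly exact, while $\omega$ is stable and positive. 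Recovering $\rho$ as the unique $\SU(3)$-normalised rescaling of $\tfrac13 d\omega$ shows the two assignments are mutually inverse, giving the bijection.

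The main obstacle is twofold. The first delicate point is the Bianchi step: one must check cleanly that $d^2\omega=0$ \emph{pins} $\widehat{\tau}_1 = -df$ and \emph{kills} $\tau_2$, since this is what confines the torsion to precisely the three advertised classes rather than merely permitting them. The second is the global exactness of $\omega^2$: the pointwise torsion data only yields $d(\omega^2) = 2\omega \wedge d\omega = 6e^f\,\omega\wedge\rho = 0$, i.e. closedness, so the genuinely cohomological condition $\sigma = \tfrac12\omega^2 \in \Omega^4_{exact}$ distinguishing $\mathcal{U}$ must be carried along as part of the characterisation for the inverse map to land back in $\mathcal{U}$.
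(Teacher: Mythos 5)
Your proof is correct and follows essentially the same route as the paper: set $\tilde{\rho}=\tfrac{1}{3}d\omega$, use closedness of $\omega^2$ to get $\omega\wedge\tilde{\rho}=0$, conformally rescale by the unique factor $e^{-f}$ restoring the volume normalisation, and read off $\tau_0=e^f$, $\widehat{\tau}_1=-df$, $\widehat{\tau}_2$ — the paper dispatches this last step with ``it follows easily,'' whereas you actually carry out the $d^2\omega=0$ and type-decomposition argument, which is a worthwhile addition. Your closing caveat about exactness of $\omega^2$ is also well taken: the paper's converse only verifies stability of $d\omega=3\tau_0\rho$ and silently passes over the fact that the torsion conditions yield closedness, not exactness, of $\omega^2$, so carrying the cohomological condition along in the characterisation, as you do, is the more precise formulation.
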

This connection with $\SU(3)$-structures justifies the choice of $\rho$ as the moment variable, rather than $\sigma$, for which a result like Proposition \ref{extraordianrysu3} is not available.
\begin{proof}
Let $\omega\in \mathcal{U}$. Then the 3-form $\tilde{\rho}\coloneqq \frac{1}{3}d\omega$ is stable and satisfies $\omega \wedge \tilde{\rho}= \frac{1}{6}d\omega^2=0$ since $\omega\in \mathcal{U}$. Thus, the pair $(\omega, \tilde{\rho})$ defines an $\SU(3)$-structure modulo the volume compatibility condition.  Now, let $u=e^f\in \mathcal{C}^\infty(M)$ be the unique function such that
$$ \frac{\omega^3}{3!}= \frac{1}{4u^2} \tilde{\rho} \wedge \widehat{\tilde{\rho}} = \frac{1}{4}\frac{\tilde{\rho}}{u}  \wedge \widehat{\left(\frac{\tilde{\rho}}{u} \right)} \;.$$
Then the pair $(\omega,\rho)=(\omega, \frac{1}{u} \tilde{\rho})=(\omega, \frac{1}{3u} d\omega)$ defines an $\SU(3)$-structure. It follows easily that the torsion of this $\SU(3)$-structure is given by $\tau_0=u=e^f$, $\widehat{\tau}_1=-df$ and $\widehat{\tau}_2$, with $d^*\widehat{\tau}_2= Jdf$. 

Conversely, given an $\SU(3)$-structure $(\omega, \rho)$ with these torsion classes, it is clear that $d\omega =3\tau_0 \rho$ is stable, provided $\tau_0$ is everywhere non-zero.
\end{proof}
In particular, closed $\SU(3)$-structures are a closed subset of $\mathcal{U}$, obtained by enforcing $f=0$. Thus, we could think of $\mathcal{U}$ as conformally closed $\SU(3)$-structures.

Let us study the pullback of the Hitchin functional under $Cl$. We denote this pullback by $\mathcal{Q}$. We have 
\begin{equation} \label{new_hitchin_functional_6}
        \mathcal{Q}= Cl^*\mathcal{L}= 3 \int_M vol_{1/3 d\omega}+ 8\int_M vol_{\omega} - 12P\Big(\frac{1}{3}d\omega, \frac{\omega^2}{2}\Big)= \frac{1}{3}\int_M \vol_{d\omega} - 4 \int_M\vol_\omega\;,
\end{equation}
where used the fact that $\vol_\sigma = 2\vol_\omega$ as a straightforward application of \eqref{eulerfactor}. Similarly, we can pull back the inner product. Let $[\cdot, \cdot]= \frac{1}{2}Cl^*\{\cdot, \cdot\}$. For $\alpha, \beta \in T\mathcal{U}$, we have $$[\alpha, \beta ]=  \frac{1}{3}\int_M \alpha \wedge \beta \wedge \omega =  \frac{1}{3}\int_M \alpha \wedge \mathcal{K}^{-1}(\beta)\;,$$ 
where $\mathcal{K}$ is the linearisation of the Hitchin dual map from Proposition \ref{linear6} with respect to the $\SU(3)$-structure from Proposition \ref{extraordianrysu3}. This follows from noticing that, for any $4$-form $\chi$, the $2$-form $\mathcal{K}(\chi)$ is the unique form that satisfies $\mathcal{K}(\chi) \wedge \omega= \chi$. The following result further motivates the interest in the functional $\mathcal{Q}$.
\begin{proposition}\label{compareNK_EH}
Consider the  map $F: \mathcal{U}\rightarrow \operatorname{Met}(M)$ that maps the $\SU(3)$-structure to its underlying metric, and let  $\hat{\mathcal{S}}= F^*\mathcal{S}$ the pullback of the Einstein--Hilbert action \eqref{EH-def}. The Hitchin functional $\mathcal{Q}$ is bounded below by $\widehat{\mathcal{S}}$. Moreover, the two functionals coincide if and only if the $\SU(3)$-structure is a constant multiple of a nearly Kähler structure.
\end{proposition}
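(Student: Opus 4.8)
The plan is to pull both functionals back to the torsion data of Proposition \ref{extraordianrysu3} and to recognise their difference as a manifestly nonnegative curvature integral whose vanishing is equivalent to the nearly Kähler condition.

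First I would use Proposition \ref{extraordianrysu3} to represent a point of $\mathcal{U}$ by an $\SU(3)$-structure whose only nonzero torsion forms are $\tau_0 = e^f$, $\widehat\tau_1 = -df$ and $\widehat\tau_2$, subject to the constraint $d^*\widehat\tau_2 = Jdf$. Since $d\omega = 3e^f\rho$, the homogeneity relation \eqref{eulerfactor} together with the normalisations $\vol_\rho = \vol_\sigma = 2\vol_g$ turns \eqref{new_hitchin_functional_6} into
$$\mathcal{Q} = 6\int_M e^{2f}\,\vol_g - 4\int_M\vol_g\;.$$
Feeding the same torsion into Lemma \ref{scalar_curv_NK} gives the pointwise identity $s_g = 30\,e^{2f} - 2\Delta f - \tfrac12\abs{\widehat\tau_2}^2$, and since $\int_M\Delta f\,\vol_g = 0$ on the closed manifold $M$ the total scalar curvature reduces to
$$\int_M s_g\,\vol_g = 30\int_M e^{2f}\,\vol_g - \tfrac12\int_M\abs{\widehat\tau_2}^2\,\vol_g\;.$$

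Next I would compare the two. The coefficients are arranged so that, once $\hat{\mathcal{S}}$ is expressed through $\int_M s_g\vol_g$ in the correct normalisation, the $\int_M e^{2f}\vol_g$ contributions cancel identically, leaving
$$\mathcal{Q} - \hat{\mathcal{S}} = \big(\text{positive constant}\big)\int_M\abs{\widehat\tau_2}^2\,\vol_g \;\geq\; 0\;,$$
which establishes the lower bound. Because the Einstein--Hilbert action is scale invariant while $\mathcal{Q}$ is only scale covariant under $\omega\mapsto c\omega$, this step first requires using the scaling freedom on $\mathcal{U}$ to fix a canonical representative in each homothety class, after which the two scale weights match; depending on the chosen normalisation, a residual Jensen-type gap for the convex integrand $e^{2f}$ (nonnegative, vanishing exactly when $f$ is constant) may survive alongside the curvature term, and both are controlled in the same way.

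Finally, for the equality case, $\mathcal{Q} = \hat{\mathcal{S}}$ forces $\widehat\tau_2 \equiv 0$. Here the constraint $d^*\widehat\tau_2 = Jdf$ from Proposition \ref{extraordianrysu3} is decisive: it immediately yields $df = 0$, so that $f$ is constant and all torsion save $\tau_0$ vanishes. By the Gray--Hervella description this is precisely a constant multiple of a nearly Kähler structure, and conversely such structures give equality. The step I expect to be the main obstacle is the scaling and normalisation bookkeeping: reconciling the scale covariance of $\mathcal{Q}$ with the scale invariance of $\hat{\mathcal{S}}$, and verifying that in the normalised gauge the conformal-factor integrals cancel exactly against the total-scalar-curvature term so that only the sign-definite remainder survives. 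Tracking the precise constants — the factors of $2$ relating $\vol_\rho$, $\vol_\sigma$ and $\vol_g$, and the coefficient $30$ of Lemma \ref{scalar_curv_NK} — is where an error is most likely to enter; the conceptual heart, by contrast, is the single observation that within $\mathcal{U}$ the vanishing of $\widehat\tau_2$ already propagates, through $d^*\widehat\tau_2 = Jdf$, to the full nearly Kähler condition.
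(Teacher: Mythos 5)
Your proof is correct and takes essentially the same route as the paper: express both functionals through the torsion data of Proposition \ref{extraordianrysu3}, apply Lemma \ref{scalar_curv_NK} (noting, as you do, that the $2d^*\widehat{\tau}_1$ term integrates to zero), and identify the difference as $\mathcal{Q}-\widehat{\mathcal{S}}=\tfrac{1}{10}\int_M\abs{\widehat{\tau}_2}^2\vol_g\geq 0$, with the equality case propagating through $d^*\widehat{\tau}_2=Jdf$ exactly as you describe. One correction to your hedging step: the action \eqref{EH-def} used in the paper is $\mathcal{S}(g)=\tfrac{1}{5}\int_M (s_g-20)\vol_g$, which contains a cosmological term and is therefore \emph{not} scale invariant, so the $6\int_M e^{2f}\vol_g$ contributions cancel identically with no homothety gauge-fixing and no residual Jensen-type gap; the entire discrepancy is the $\abs{\widehat{\tau}_2}^2$ term.
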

\begin{proof}
Using the formula for the scalar curvature in Lemma \ref{scalar_curv_NK}, the pulled-back Einstein--Hilbert action \eqref{EH-def} can be written as: 
$$\widehat{\mathcal{S}}(\omega)=  \frac{1}{5}\int_M  s_g -20 \vol_g = \frac{1}{5}\int_M \left(30 \tau_0^2  -\frac{1}{2}\abs{\widehat{\tau_2}}^2\right)- 20\vol_g  = \int_M 6 \tau_0^2 -4-\frac{1}{10}\abs{\widehat{\tau_2}}^2 \vol_g \;.$$

Similarly for $\mathcal{Q}$, we have $d\omega= 3 \tau_0  \rho$, and so, $\vol_{d\omega} = (3 \tau_0)^2 \vol_\rho$. Substituting in the definition of $\mathcal{Q}$:
$$ \mathcal{Q}= \frac{1}{3} \int_M \vol_{d\omega}- 4 \int_ M \vol_{\omega} = \int_M 6 \tau_0^2 -4  \vol_g \;,$$
where we used $\vol_g= \frac{1}{2}\vol_\rho= \vol_\omega$ . The claim follows. 
\end{proof}
Let us study the variational properties of the Lagrangian functional $\mathcal{Q}$. The first variation of $\mathcal{Q}$ along $\beta$ is 
\begin{align*}
    \delta \mathcal{Q}&= \int_M \frac{1}{3} d\beta \wedge \widehat{d\omega} - 4 \frac{\omega^2}{2} \wedge \beta =-\frac{1}{3}\int_{M}\Big( d(\widehat{d\omega}) +12 \frac{\omega^2}{2}\Big)\wedge \beta\;.
\end{align*}
The gradient flow of $\mathcal{Q}$ with respect to $[\cdot, \cdot]$ is $\partial_t \omega = -\mathcal{K}(d(\widehat{d\omega})-6 \omega\;.$ This flow becomes slightly more enlightening if we consider the induced flow for $\sigma=\frac{\omega^2}{2}$:
\begin{equation} \label{lapflow}
    \frac{\partial \sigma}{\partial t} = - d(\widehat{d*\sigma}) -12 \sigma = dd^*\sigma -12 \sigma = \Delta \sigma - 12 \sigma \;,
\end{equation}
since $\widehat{d\omega}= *d\omega$. We call this flow the nearly Kähler Laplacian flow. 
\begin{proposition}
    The critical points of $\mathcal{Q}$ are nearly Kähler structures.
\end{proposition}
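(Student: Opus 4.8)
The plan is to turn the Euler--Lagrange equation of $\mathcal{Q}$ into a pointwise statement about the intrinsic torsion of the canonical $\SU(3)$-structure attached to $\omega$ by Proposition \ref{extraordianrysu3}, and then to read off the nearly Kähler condition directly from the Gray--Hervella decomposition of Proposition \ref{torsion6}.

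First I would record the critical point equation. From the first-variation formula just computed, and because $\mathcal{K}$ is an isomorphism (Proposition \ref{linear6}), the gradient of $\mathcal{Q}$ for $[\cdot,\cdot]$ vanishes exactly when
$$ d\big(\widehat{d\omega}\big) + 6\,\omega^2 = 0. $$
Next I would feed in the geometry: by Proposition \ref{extraordianrysu3} a critical $\omega$ carries an $\SU(3)$-structure with $d\omega = 3\tau_0\rho$, torsion $\tau_0 = e^f$, $\widehat\tau_1 = -df$, and one further term $\widehat\tau_2$, all other torsion forms vanishing. Since the Hitchin dual is homogeneous of degree one in the fibre and $\tau_0>0$ is a function, I get $\widehat{d\omega} = \widehat{3\tau_0\rho} = 3\tau_0\widehat\rho$, and I expand $d(3\tau_0\widehat\rho) = 3\,d\tau_0\wedge\widehat\rho + 3\tau_0\,d\widehat\rho$ using $d\tau_0 = \tau_0\,df = -\tau_0\widehat\tau_1$ together with the identity $d\widehat\rho = -2\tau_0\omega^2 - J\widehat\tau_1\wedge\widehat\rho + \widehat\tau_2\wedge\omega$ of Proposition \ref{torsion6}.

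The decisive step is to project this identity onto $\Lambda^4 = \Lambda^4_1\oplus\Lambda^4_6\oplus\Lambda^4_8$. I would argue that $\omega^2\in\Lambda^4_1$, that $\widehat\tau_2\wedge\omega\in\Lambda^4_8$ since $\wedge\omega\colon\Omega^2_8\to\Omega^4_8$ is an isomorphism, and that both $\widehat\tau_1\wedge\widehat\rho$ and $J\widehat\tau_1\wedge\widehat\rho$ lie in $\Lambda^4_6$, because $X\mapsto X\wedge\widehat\rho$ is an equivariant map out of the irreducible $6$-dimensional representation $\Lambda^1$ and so must land in the unique matching summand. The $\Lambda^4_1$-component then decouples as $-6\tau_0^2\omega^2 = -6\omega^2$, forcing $\tau_0\equiv 1$ (recall $\tau_0 = e^f>0$); this in turn gives $df = 0$, hence $\widehat\tau_1 = 0$, and the remaining equation collapses to $3\,\widehat\tau_2\wedge\omega = 0$, so $\widehat\tau_2 = 0$. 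All torsion thus reduces to $\tau_0 = 1$, i.e. nearly Kähler; the converse is a one-line check that a nearly Kähler structure satisfies $d\widehat{d\omega} = 3\,d\widehat\rho = -6\omega^2$ and is therefore critical.

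I expect the only delicate point to be the representation-theoretic bookkeeping---chiefly confirming that the $\widehat\tau_1$-terms contribute only to $\Lambda^4_6$, so that they cannot interfere with the $\Lambda^4_1$ projection that pins down $\tau_0$---together with keeping the signs in the homogeneity relation $\widehat{3\tau_0\rho} = 3\tau_0\widehat\rho$ and in Proposition \ref{torsion6} consistent. One could instead analyse the $\Lambda^4_6$-projection to obtain $\widehat\tau_1 = 0$ on its own, but this is redundant once $\tau_0\equiv 1$ is known.
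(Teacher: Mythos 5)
Your proof is correct and follows essentially the same route as the paper: both expand the critical-point equation $d\widehat{d\omega}+6\omega^2=0$ (equivalently, the fixed-point equation $\Delta\sigma=12\sigma$ of the gradient flow) using the torsion of the induced $\SU(3)$-structure from Propositions \ref{extraordianrysu3} and \ref{torsion6}, and then read off $\tau_0=1$ and $\widehat{\tau}_1=\widehat{\tau}_2=0$ from the decomposition into irreducible summands. If anything, you are more careful than the paper's own display, which silently drops the $\Lambda^4_6$-terms $-3\tau_0(df+Jdf)\wedge\widehat{\rho}$; as you correctly argue, these lie in a different irreducible summand and therefore cannot interfere with the $\Lambda^4_1$ and $\Lambda^4_8$ projections that force $\tau_0\equiv 1$ and $\widehat{\tau}_2=0$, after which $\widehat{\tau}_1=-df=0$ follows automatically.
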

\begin{proof}
With respect to the induced $\SU(3)$-structure, the fixed points of the gradient flow are 
$$0=\Delta \sigma -12 \sigma = -3 d (\tau_0 \widehat{\rho}) -12 \sigma = 12\tau_0^2\sigma - 3\tau_0 \widehat{\tau_2}\wedge \omega -12 \sigma\;, $$ which implies $\tau_0=1$ and $\widehat{\tau_2}=\widehat{\tau_1}=0$, as needed.
\end{proof}
The second variation of $\mathcal{Q}$ at a nearly Kähler structure is given by 
\begin{equation} \label{2ndvariationNK2}
    \frac{ \partial ^2 \mathcal{Q}}{\partial \alpha \partial \beta} = \frac{1}{3}\int_M d\alpha \wedge \mathcal{I}d\beta  - 12 \omega \wedge \alpha\wedge\beta = \frac{-1}{3}\int_M \alpha \wedge (d\mathcal{I}d\beta  + 12 \omega \wedge \beta) \;.
\end{equation}
We can associate a symmetric endomorphism $\mathcal{H}^\mathcal{Q}$ to the second variation via the pairing $[\cdot, \cdot]$, which we refer to as the Hessian of $\mathcal{Q}$. Before studying the spectral properties of $\mathcal{H}^\mathcal{Q}$, it is convenient to get a more manageable description of $T_\omega\mathcal{U}$.
\begin{proposition} \label{kiso}
There is an isomorphism $T_\omega\mathcal{U} \cong \mathcal{K} \big( \Omega^4_{exact} \big)$
\end{proposition}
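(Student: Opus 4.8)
The plan is to compute $T_\omega\mathcal{U}$ directly as a linear subspace of $\Omega^2$ and then to recognise it as $\mathcal{K}(\Omega^4_{exact})$ via the pointwise Lefschetz isomorphism. First I would note that, of the three conditions defining $\mathcal{U}$ — that $d\omega$ be stable, that $\omega$ be stable and positive, and that $\omega^2$ be exact — the first two are open and so place no constraint on admissible variations, while the exactness of $\frac13 d\omega$ holds automatically. Hence the only condition that survives linearisation is ``$\omega^2$ exact''. Since $\frac{d}{dt}\big|_{t=0}\tfrac12(\omega+t\beta)^2 = \omega\wedge\beta$, this yields
$$ T_\omega\mathcal{U} = \big\{\beta\in\Omega^2 \st \omega\wedge\beta \in \Omega^4_{exact}\big\}\;. $$

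Next I would invoke the Lefschetz map $L\colon\Omega^2\to\Omega^4$, $\beta\mapsto\beta\wedge\omega$, which is a fibrewise isomorphism on a $6$-manifold and whose inverse is precisely $\mathcal{K}$, since $\mathcal{K}(\chi)$ is characterised by $\mathcal{K}(\chi)\wedge\omega=\chi$ (the defining property recorded after Proposition \ref{linear6}). With this identification the description above reads $T_\omega\mathcal{U}=L^{-1}(\Omega^4_{exact})$, and the two inclusions are immediate: if $\beta\in T_\omega\mathcal{U}$ then $\chi:=\omega\wedge\beta$ is exact and $\beta=\mathcal{K}(\chi)\in\mathcal{K}(\Omega^4_{exact})$; conversely, if $\beta=\mathcal{K}(\chi)$ with $\chi\in\Omega^4_{exact}$, then $\omega\wedge\beta=\chi$ is exact, so $\beta\in T_\omega\mathcal{U}$. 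Thus $T_\omega\mathcal{U}=\mathcal{K}(\Omega^4_{exact})$, and since $\mathcal{K}$ is injective its restriction to $\Omega^4_{exact}$ realises this as an isomorphism.

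This is mostly bookkeeping, and I do not anticipate a serious obstacle. The one point deserving care is the claim that $L$ is a genuine isomorphism, so that $\mathcal{K}$ is a two-sided and not merely a one-sided inverse: surjectivity of $L$ already follows from $L\circ\mathcal{K}=\mathrm{id}$, and bijectivity then follows from equality of fibre ranks, equivalently from hard Lefschetz for the underlying $\SU(3)$-structure (or directly from the explicit formula for $\mathcal{K}$ in Proposition \ref{linear6}); this persists on the relevant section spaces. It is also worth confirming, via Proposition \ref{extraordianrysu3}, that $\mathcal{K}$ is computed with respect to the $\SU(3)$-structure canonically attached to $\omega\in\mathcal{U}$, so that the statement is well posed at every point of the domain.
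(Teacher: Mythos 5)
Your proof is correct and takes essentially the same route as the paper: the paper likewise observes that stability and positivity are open conditions, so only the exactness of $\omega^2$ constrains the tangent space, and it identifies the linearisation $\delta\omega^2 = 2\,\omega\wedge\beta$ with $2\mathcal{K}^{-1}(\beta)$, which is exactly your Lefschetz-map argument. The additional care you take in verifying that $\omega\wedge\cdot$ is a two-sided inverse of $\mathcal{K}$ and that $\mathcal{K}$ is taken with respect to the $\SU(3)$-structure of Proposition \ref{extraordianrysu3} is left implicit in the paper but is consistent with it.
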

\begin{proof}
Recall that $\mathcal{U}= Cl^{-1}(\mathcal{R}) = \big\{ \omega \in \Omega^2 \st \omega~\mathrm{is~stable}, ~ d\omega ~\mathrm{is~stable},\omega^2 ~\mathrm{is~exact} \big\}$. The stability and positivity conditions are open, so we only need to study the constraint of $\omega^2$ being exact. Its linearisation along $\delta \omega = \alpha$ is given by $ \delta \omega^2 =2 \omega \wedge \alpha= 2\mathcal{K}^{-1}(\alpha)$. 
\end{proof}
Since $\mathcal{K}$ is a pointwise linear isomorphism, we will instead study the spectral properties of $\overline{\mathcal{H}}^\mathcal{Q}\coloneqq \mathcal{K}^{-1} \circ \mathcal{H}^\mathcal{Q} \circ \mathcal{K}: \Omega^4_{exact} \rightarrow \Omega^4_{exact}$. Explicitly, we want to solve the equation 
\begin{equation} \label{spec7}
    d\mathcal{I} d \mathcal{K} \chi =  -(\mu+12) \chi
\end{equation} 
for $\mu \in \R$ and $ \chi \in \Omega^4_{exact}$. Since the functional $\mathcal{Q}$ is invariant under the action of the diffeomorphism group, it is convenient to work on a slice to the orbit of the diffeomorphism group. Let $\omega \in \mathcal{U}$ be a nearly Kähler structure and $\mathcal{O}$ be the orbit of $\operatorname{Diff}_0(M)$ in $T\mathcal{U}$ going through $\omega$. The tangent space to this orbit is spanned by $\lie_X \omega$, for $X\in \mathrm{K}^\perp \subseteq \Omega^1$, where $\mathrm{K}$ is the set of Killing fields and the complement is taken with respect to the $L^2$ metric. Under the isomorphism of Proposition \ref{kiso}, the image of the tangent space of orbit is spanned by $\mathcal{K}^{-1}\lie_X \omega = \lie_X \sigma$, for $X\in \mathrm{K}^\perp \subseteq \Omega^1$, where we used Lemma \ref{hitchintrick6}. Now, by Theorem \ref{hodgedecom6}, we can parametrise $\chi \in \Omega^4_{exact}$ by $ \chi = \lie_X \sigma + d(f\widehat{\rho}) + \chi_0\;,$ where $f\in \Omega^0$, $X \in \mathrm{K}^\perp$ and $\chi_0\in \Omega^4_{8, exact}$. In particular, taking $X=0$, we get that $\mathcal{W}=\big\{ d(f\widehat{\rho}) + \chi_0 \big\} \subseteq \Omega^4_{exact}$ is a complement to the tangent space of the diffeomorphism orbit. Arguing as before and taking the appropriate Hölder norm completions, we can integrate $\mathcal{W}$ into a gauge slice, and we can prove
\begin{theorem}\label{specHess6}
Assume $(M^6, \omega, \rho)$ is not isometric to the round 6-sphere. Under the Hodge decomposition and gauge fixing, Eq. \eqref{spec7} is equivalent to
\begin{subequations}
    \begin{align} \label{sys6}
        (\mu+6)f&=0\;,\\
        df&=0\;, \\
        \Delta \chi_0&= (\mu+12) \chi_0\;;
    \end{align}
\end{subequations}
 where $f\in \Omega^0$ and $\chi_0\in \Omega^4_{8, exact}$. Solutions are $f=C$ with $C\in \R$ for $\mu=-6$ and the solutions to 
 $$\Delta \chi_0 = (\mu+12) \chi_0 ~~~~~~~ d\chi_0 = 0\;.$$
 In particular, the spectrum is discrete and has finite multiplicities. 
\end{theorem}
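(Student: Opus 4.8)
The plan is to substitute the explicit parametrisation $\chi = \lie_X \sigma + d(f\widehat{\rho}) + \chi_0$ into the spectral equation \eqref{spec7}, gauge-fix by setting $X=0$, and then track how the operator $d\mathcal{I}d\mathcal{K}$ acts on each of the three orthogonal summands, using the $L^2$-orthogonality of the Hodge decomposition in Theorem \ref{hodgedecom6}(ii) to split \eqref{spec7} into independent scalar and $\Omega^4_8$ equations. The computation mirrors Proposition \ref{hess6-1}, so I would recycle its intermediate formulas. First I would compute $\mathcal{K}\chi$ on the gauge slice $\mathcal{W}=\{d(f\widehat{\rho})+\chi_0\}$: since $\mathcal{K}=\mathcal{K}^{-1}$ applied the other way and $\widehat{\sigma}=\omega$, Lemma \ref{hitchintrick6} gives $\mathcal{K}\,d(f\widehat{\rho})$ in terms of $f$ and $\nabla f$, while $\mathcal{K}\chi_0 = -{*}\chi_0 \in \Omega^2_8$ by Proposition \ref{linear6}(i). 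Then I apply $d$, then $\mathcal{I}$ (again via Proposition \ref{linear6}(ii) and Proposition \ref{useful6-2} to identify the representation types of the resulting forms), and finally $d$ once more.

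The scalar part is the crux of the first two equations. Tracking the $f$-terms through $d\mathcal{I}d\mathcal{K}$ produces, on the $\widehat{\rho}$-component, a purely algebraic multiple of $f$ together with possible $df$-contributions. Matching against $-(\mu+12)\chi$ on the summand $d(f\widehat{\rho})$ should force the pair of conditions $(\mu+6)f=0$ and $df=0$: the constant $6$ arises from the nearly Kähler structure equations \eqref{hitchineq}, specifically $d\omega=3\rho$ and $d\widehat{\rho}=-4\sigma$, which fix the numerical coefficients exactly as they did in the eigenvalue bookkeeping of Proposition \ref{hess6-1}. The condition $df=0$ then says the $f$-mode is constant, and $(\mu+6)f=0$ forces either $f=0$ or $\mu=-6$, giving the stated solution $f=C$ at $\mu=-6$.

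For the $\chi_0\in\Omega^4_{8,exact}$ part, the key observation is that $\mathcal{K}\chi_0=-{*}\chi_0\in\Omega^2_8$, and applying $d\mathcal{I}d$ to a coclosed form in $\Omega^2_8$ lands back in $\Omega^4_8$ by the type-preservation of Proposition \ref{useful6-2}; composing the sign conventions of $\mathcal{I}$ and $\mathcal{K}$ on the $\mathbf{8}$-component (both act as $-{*}$ there) converts \eqref{spec7} into $dd^*\chi_0=(\mu+12)\chi_0$. Since $\chi_0$ is exact, $d^*d\chi_0$ contributes nothing and this is exactly the Hodge Laplacian eigenvalue equation $\Delta\chi_0=(\mu+12)\chi_0$ together with the closedness $d\chi_0=0$ that defines $\Omega^4_{8,exact}$. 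Discreteness and finite multiplicity follow immediately because $\Delta$ is a self-adjoint elliptic operator on the closed manifold $M$, so its spectrum is discrete with finite-dimensional eigenspaces, and there are only finitely many scalar modes.

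The step I expect to be the main obstacle is verifying that no cross-terms survive between the scalar summand $d(f\widehat{\rho})$ and the $\chi_0$ summand after applying the full third-order operator $d\mathcal{I}d\mathcal{K}$; a priori the intermediate forms could mix representation types, and one must confirm that $\mathcal{I}$ and $d$ respect the decomposition closely enough that the $\mathbf{1}$ and $\mathbf{8}$ channels stay decoupled. Here I would lean on Proposition \ref{useful6-2} (which controls exactly how $d$ moves between $\Omega^2_8$, $\Omega^3_{12}$ and $\Omega^4_8$) and on the $L^2$-orthogonality in Theorem \ref{hodgedecom6}, projecting the equation onto each summand rather than attempting to match forms identically. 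The gauge-fixing choice $X=0$ is what removes the Lie-derivative summand and makes this projection clean, exactly as in the proof of Proposition \ref{hess6-1}.
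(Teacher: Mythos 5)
Your proposal follows essentially the same route as the paper's proof: parametrise $\chi$ via Theorem \ref{hodgedecom6}, gauge-fix $X=0$, reuse the intermediate formulas of Proposition \ref{hess6-1} together with Proposition \ref{useful6-2} to compute $d\mathcal{I}d\mathcal{K}\chi$, and split Eq. \eqref{spec7} into components by uniqueness of the Hodge decomposition. The only small imprecision is that $df=0$ is forced not by matching on the $d(f\widehat{\rho})$-summand itself but by the Lie-derivative term $-6\lie_{df}\sigma$ that the computation produces, which lies outside the slice $\mathcal{W}$ and has no counterpart on the right-hand side --- this is precisely the decoupling mechanism you flag as the main obstacle, and it works exactly as you anticipate.
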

\begin{proof}
Let $$ \chi = \lie_X \sigma + d(f\widehat{\rho}) + \chi_0\;,$$
with $f\in \Omega^0$, $X \in \mathrm{K}^\perp$ and $\chi_0\in \Omega^4_{8, closed}$. As in the proof of Proposition \ref{hess6-1}, we have 
 $$ d\mathcal{K}\chi = 3\lie_X \rho +  d(df \lrcorner \rho) - d(2f\omega) -d(*\chi_0) =  3\lie_{X + \frac{1}{3}df} \rho - 2df \wedge \omega - 6f\wedge \rho -d(*\chi_0) \;.$$
 Since $\chi_0$ is closed, $d(*\chi_0) \in \Omega^3_{12}$, by Proposition \ref{useful6-2}. Thus, acting by $\mathcal{I}$, we get
 $$ \mathcal{I}d\mathcal{K}\chi=  3\lie_{X + \frac{1}{3}df} \widehat{\rho} - 2Jdf \wedge \omega - 6f\widehat{\rho} +*d(*\chi_0) \;,$$
  and finally, acting by $d$ once more, we get 
 $$ d\mathcal{I}d\mathcal{K}\chi=  -12\lie_{X + \frac{1}{3}df} \sigma  - d(2Jdf \wedge \omega) - 6d(f\widehat{\rho}) + d*d(*\chi_0) = -12\lie_{X + \frac{1}{2}df} \sigma - 6d(f\widehat{\rho}) -\Delta \chi_0\;.$$
 Plugging this back in, and since the Hodge decomposition is unique, the system follows.
\end{proof}
As before, the case $\mu=0$ recovers the infinitesimal deformations of the $\SU(3)$-structure. Moreover, the following is a straightforward corollary of Proposition \ref{indexaslaplaceeigenvalueNK}: 
\begin{proposition}\label{index2index}There is a two-to-one correspondence between the eigenforms of $\mathcal{H}^\mathcal{L}$ and $\mathcal{H}^\mathcal{Q}$.
\end{proposition}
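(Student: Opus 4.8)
The plan is to compare the two reduced spectral problems produced by Proposition \ref{indexaslaplaceeigenvalueNK} and Theorem \ref{specHess6}, and to show that after gauge fixing both are governed by a single Laplace eigenvalue problem on a common space of primitive exact forms. The only difference between the two will be the algebraic relation between the Hessian eigenvalue $\mu$ and the Laplace eigenvalue $\Lambda$: this relation is linear for $\mathcal{H}^\mathcal{Q}$ and quadratic for $\mathcal{H}^\mathcal{L}$, and the claimed two-to-one correspondence is precisely the fact that the quadratic equation has two roots $\mu$ for each admissible $\Lambda$.

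Concretely, Proposition \ref{indexaslaplaceeigenvalueNK} presents the non-constant eigenforms of $\mathcal{H}^\mathcal{L}$ as pairs $(\gamma_0,\chi_0)\in\Omega^3_{12,exact}\times\Omega^4_{8,exact}$ solving the first-order system \eqref{sys61}, whose $\gamma_0$-component satisfies $\Delta\gamma_0=\tfrac{(\mu+12)^2}{12}\gamma_0$, whereas Theorem \ref{specHess6} presents the non-constant eigenforms of $\mathcal{H}^\mathcal{Q}$ as the forms $\chi_0\in\Omega^4_{8,exact}$ with $\Delta\chi_0=(\mu+12)\chi_0$. First I would produce a linear isomorphism between the relevant Laplace eigenspaces using the operator $d*$. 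Since an exact form is closed and $*$ preserves the $\SU(3)$-type decomposition, $*\gamma_0\in\Omega^3_{12}$ is coclosed, so Proposition \ref{useful6-2} gives $d*\gamma_0\in\Omega^4_{8}$; symmetrically $*\chi_0\in\Omega^2_8$ is coclosed and $d*\chi_0\in\Omega^3_{12}$. Both images are exact, and as $d*$ commutes with $\Delta$ they are $\Delta$-eigenforms with the same eigenvalue; since $d*\,d*=-\Delta$ on closed forms in dimension six, the map is invertible. Thus $d*$ identifies the $\Lambda$-eigenspace of $\Delta$ on $\Omega^3_{12,exact}$ with the $\Lambda$-eigenspace on $\Omega^4_{8,exact}$ for every $\Lambda>0$; the value $\Lambda=0$ plays no role, as exact harmonic forms vanish, which also disposes of the degenerate case $\mu=-12$.

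With this identification the correspondence reduces to comparing eigenvalue relations. A Laplace eigenform $\chi_0$ of eigenvalue $\Lambda>0$ yields a single $\mathcal{H}^\mathcal{Q}$-eigenform with $\mu=\Lambda-12$, whereas the companion relation $(\mu+12)^2=12\Lambda$ has exactly the two roots $\mu=-12\pm2\sqrt{3\Lambda}$, each producing an $\mathcal{H}^\mathcal{L}$-eigenform whose $\Omega^4_8$-component is proportional to the same $\chi_0$. Sending each $\mathcal{H}^\mathcal{L}$-eigenform to its $\Omega^4_8$-component is therefore a two-to-one map onto the $\mathcal{H}^\mathcal{Q}$-eigenforms. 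The constant sector fits the same pattern with $\Lambda=6$: the scalar system \eqref{1st}--\eqref{2nd} forces $(\mu+12)^2=72$, giving the two $\mathcal{H}^\mathcal{L}$-eigenvalues $\mu=-12\pm6\sqrt{2}$, while the single constant eigenform of $\mathcal{H}^\mathcal{Q}$ sits at $\mu=-6=\Lambda-12$, so the constant mode of $\mathcal{Q}$ likewise corresponds to two constant modes of $\mathcal{L}$.

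The step demanding the most care, and the main potential obstacle, is verifying that $d*$ is genuinely an isomorphism between the two Laplace eigenspaces: one must confirm that it lands in the correct $\SU(3)$-type component (where Proposition \ref{useful6-2} is essential), that it preserves exactness and the eigenvalue (using that $d*$ commutes with $\Delta$), and, keeping careful track of the Hodge-star signs, that $d*\,d*$ is a nonzero multiple of the identity on each eigenspace rather than degenerate. Once this is established, extracting the two eigenvalue branches from the quadratic relation and matching the constant modes is routine.
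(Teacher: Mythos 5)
Your proposal is correct and follows essentially the same route the paper intends: it combines Proposition \ref{indexaslaplaceeigenvalueNK} and Theorem \ref{specHess6}, using the operator $d\,{*}$ (which already appears in the system \eqref{sys61} pairing $\gamma_0$ with $\chi_0$) to identify the Laplace eigenspaces on $\Omega^3_{12,exact}$ and $\Omega^4_{8,exact}$, and then reads off the two-to-one counting from the quadratic relation $(\mu+12)^2=12\Lambda$ versus the linear relation $\mu=\Lambda-12$, with the constant modes matching $\mu=-12\pm6\sqrt{2}$ to $\mu=-6$. Your careful verification that $d\,{*}$ lands in the correct type (via Proposition \ref{useful6-2}), preserves eigenvalues, and satisfies $d{*}\,d{*}=-\Delta$ on closed forms is exactly the content the paper leaves implicit in calling this a straightforward corollary.
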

This result motivates the following definition:
\begin{definition}\label{NKindex}
    Let $(M^6,\omega, \rho)$ a nearly Kähler manifold. We define the Hitchin index of the nearly Kähler structure $\operatorname{Ind}_{(\omega, \rho)}$ as the number of negative eigenvalues of the Hessian endomorphism $\mathcal{H}^\mathcal{Q}: \mathcal{W}\rightarrow \mathcal{W}$ at $(\omega, \rho)$ minus one.
\end{definition}
Equivalently, let $\mathcal{E}(\lambda)=  \left\{ \beta \in \Omega^2_8 \; \Bigr| ~d^* \beta =0, \; \Delta \beta= \lambda \beta \right\}$. Then the Hitchin index is 
\begin{equation} \label{NKindexeq}
 \operatorname{Ind}_{(\omega, \rho)}=  \sum_{\lambda \in (0,12)} \dim~ \mathcal{E}(\lambda)\;. 
\end{equation}
Notice that a prior this definition of the index is different from the usual Morse definition as the index of the quadratic form $ \delta ^2 \mathcal{Q}$, since the pairing used to define the endomorphism $\mathcal{H}^\mathcal{Q}$ is indefinite. However, we will show that the two quantities are connected in this case. We prove 
\begin{proposition} \label{index_vs_coindex}
The Morse co-index of the second variation $\delta^2 \mathcal{Q}$ restricted to $\Omega^4_{8, exact}$ equals the Hitchin index. 
\end{proposition}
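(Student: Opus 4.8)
The approach is to reconcile two a priori different notions of sign. The eigenvalues $\mu$ of the Hessian endomorphism $\overline{\mathcal{H}}^\mathcal{Q}$ are defined through the \emph{indefinite} pairing $[\cdot,\cdot]$, whereas the Morse co-index is a statement about the signature of the quadratic form $\delta^2\mathcal{Q}$ itself. Since $\delta^2\mathcal{Q}(\alpha,\beta)=[\mathcal{H}^\mathcal{Q}\alpha,\beta]$, an eigenform $\chi_0\in\Omega^4_{8,exact}$ with $\overline{\mathcal{H}}^\mathcal{Q}\chi_0=\mu\chi_0$ (so $\mathcal{H}^\mathcal{Q}\mathcal{K}\chi_0=\mu\mathcal{K}\chi_0$) satisfies
$$\delta^2\mathcal{Q}(\mathcal{K}\chi_0,\mathcal{K}\chi_0)=\mu\,[\mathcal{K}\chi_0,\mathcal{K}\chi_0].$$
Thus the sign of the form along this direction is the product of the sign of $\mu$ with the sign of $[\mathcal{K}\chi_0,\mathcal{K}\chi_0]$, and the plan is to first pin down the latter on $\Omega^4_8$ and then feed in the spectral reduction of Theorem \ref{specHess6}. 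Throughout I identify $\Omega^4_{8,exact}$ with a subspace of $T_\omega\mathcal{U}$ via $\mathcal{K}$, so that $\delta^2\mathcal{Q}$ restricted to $\Omega^4_{8,exact}$ means the pulled-back form $\chi\mapsto\delta^2\mathcal{Q}(\mathcal{K}\chi,\mathcal{K}\chi)$.

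First I would compute the restriction of the pairing to $\Omega^4_8$. Transporting $[\cdot,\cdot]$ to $\Omega^4_{exact}$ via $\mathcal{K}$ and using $\mathcal{K}^{-1}(\beta)=\beta\wedge\omega$ gives $[\mathcal{K}\chi,\mathcal{K}\chi']=\tfrac{1}{3}\int_M\mathcal{K}\chi\wedge\chi'$. By Proposition \ref{linear6} the map $\mathcal{K}$ acts as $-*$ on the $\Omega^4_8$ component, so for $\chi,\chi'\in\Omega^4_8$ the identity $\alpha\wedge*\beta=\langle\alpha,\beta\rangle\dvol$ yields
$$[\mathcal{K}\chi,\mathcal{K}\chi']=-\tfrac{1}{3}\int_M *\chi\wedge\chi'=-\tfrac{1}{3}\langle\chi,\chi'\rangle_{L^2}.$$
Hence the pairing is \emph{negative definite} on $\Omega^4_8$, and consequently $\delta^2\mathcal{Q}(\mathcal{K}\chi_0,\mathcal{K}\chi_0)=-\tfrac{\mu}{3}\|\chi_0\|^2_{L^2}$ on every eigenform. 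The quadratic form is therefore positive precisely along the eigendirections with $\mu<0$, which is the sign reversal at the heart of the statement.

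Next I would diagonalise. By Theorem \ref{specHess6}, on $\Omega^4_{8,exact}$ the eigenvalue equation reduces to $\Delta\chi_0=(\mu+12)\chi_0$ with $\chi_0$ closed, and the corresponding $\Delta$-eigenspaces are $L^2$-orthogonal, of finite multiplicity, and complete. Because the pairing is a fixed negative multiple of the $L^2$ product on $\Omega^4_8$, this same decomposition simultaneously diagonalises $[\cdot,\cdot]$ and $\delta^2\mathcal{Q}$; in particular the Morse co-index is well defined and equals the total multiplicity of the eigenforms with $\mu<0$. Exactness rules out $\lambda=0$, so $\mu<0$ is exactly $\lambda=\mu+12\in(0,12)$. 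Finally, $*\colon\Omega^2_8\to\Omega^4_8$ commutes with $\Delta$ and carries coclosed forms to closed forms, so it identifies $\mathcal{E}(\lambda)$ with the closed $\Omega^4_8$ eigenforms of eigenvalue $\lambda$. Summing multiplicities exhibits the co-index as $\sum_{\lambda\in(0,12)}\dim\mathcal{E}(\lambda)$, which is $\operatorname{Ind}_{(\omega,\rho)}$ by \eqref{NKindexeq}.

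The main obstacle is the sign bookkeeping of the first two steps: the entire equality rests on the pairing being negative, not positive, definite on $\Omega^4_8$, since this is what turns a count of \emph{negative} eigenvalues of the endomorphism into a count of \emph{positive} directions of the quadratic form. Once $\mathcal{K}|_{\Omega^4_8}=-*$ and the orientation conventions in $\alpha\wedge*\beta$ are fixed with care, the orthogonality and completeness needed for the diagonalisation are immediate from the ellipticity of $\Delta$ together with Theorem \ref{specHess6}, and no additional analytic input is required.
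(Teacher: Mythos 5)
Your proof is correct and is essentially the paper's own argument: both come down to showing that on $\Omega^4_{8,exact}$ the form $\delta^2\mathcal{Q}$ is a negative multiple of $\langle(\Delta-12)\chi_0,\chi_0\rangle_{L^2}$, so its positive directions are exactly the $\Delta$-eigenforms with eigenvalue in $(0,12)$, which is the Hitchin index by Theorem \ref{specHess6} and Eq.~\eqref{NKindexeq}. The only difference is organisational — the paper computes $\delta^2\mathcal{Q}$ on the gauge slice directly, reusing the operator computations behind Theorem \ref{specHess6}, whereas you factor it on eigendirections as eigenvalue times pairing and separately pin down that $[\cdot,\cdot]$ is negative definite on $\mathcal{K}(\Omega^4_8)$ — but this is the same sign-reversal mechanism, differently packaged.
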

Let us evaluate $\delta^2\mathcal{Q}(\beta, \beta)$ for $\beta\in T_\omega \mathcal{U}$. Equation \eqref{2ndvariationNK2} and Proposition \ref{kiso} yield
\begin{equation}
    \delta^2 \mathcal{Q} =- \frac{1}{3} \int_M \mathcal{K}( \chi) \wedge \left( d \mathcal{I} d \mathcal{K}(\chi) +12 \chi \right)\;,
\end{equation}
for $\chi\in \Omega^4_{exact}$. Fixing the diffeomorphism slice, we can take $\chi= d(f\widehat{\rho}) + \chi_0$ for $f \in \Omega^0$ and $\chi \in \Omega^4_{8,closed}$. By the computations of the proof of Proposition \ref{hess6-1}, we have
\begin{align*}
   \delta^2 \mathcal{Q} & = -\frac{1}{3}\int_M \langle \Delta \chi_0 -12 \chi_0, \chi_0\rangle - 48 \int_M f^2 \vol_g + 2 \int_M (df \lrcorner \rho) \wedge\left[d (Jdf \wedge \omega) -  df \wedge \widehat{ \rho} \right] \\
   &= -\frac{1}{3}\int_M \langle \Delta \chi_0 -12 \chi_0, \chi_0\rangle + 8 \int_M\langle \Delta f -6f, f \rangle \;.
\end{align*}
Thus, the second variation has two distinct behaviours on the two subspaces of $\mathcal{W}= \{d(f\hat{\rho}) \}  \oplus \Omega^4_{8, exact}$, similar to the Einstein--Hilbert case (cf. Theorem \ref{Einstein2ndvar}). We refer to the first subspace as the conformal deformations since they give rise to conformal changes of the metric.

An interesting first result is the Hitchin stability of the homogeneous examples.
\begin{theorem} \label{index_homogeneous_NK}
    Let $(M^6, \omega, \rho)$ be one of the four homogeneous nearly Kähler manifolds. Then 
    $$\operatorname{Ind}_{(\omega,\rho)} =0 \;.$$
\end{theorem}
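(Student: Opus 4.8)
The plan is to reduce the statement to a sharp eigenvalue bound and then to a finite computation in representation theory on each of the four spaces. By the reformulation \eqref{NKindexeq} of Definition \ref{NKindex}, the Hitchin index is $\sum_{\lambda\in(0,12)}\dim\mathcal{E}(\lambda)$, where $\mathcal{E}(\lambda)=\{\beta\in\Omega^2_8\mid d^*\beta=0,\ \Delta\beta=\lambda\beta\}$ consists of coclosed primitive $(1,1)$-eigenforms; equivalently, by Theorem \ref{specHess6} a Hessian eigenvalue $\mu=\lambda-12$ is negative precisely when $\lambda<12$, while exactness forces $\lambda>0$. Thus it suffices to prove that for each homogeneous $M=G/H$ the Hodge--Laplacian has no coclosed eigenform in $\Omega^2_8$ with eigenvalue in $(0,12)$; that is, its first nonzero eigenvalue on coclosed primitive $(1,1)$-forms is at least $12$ in the normalization where the nearly Kähler metric is Einstein with constant $5$.

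To establish this I would use harmonic analysis on $G/H$. By Lemma \ref{decompositionSU(3)} the fibre $\Lambda^2_8$ is the $8$-dimensional real $\SU(3)$-representation of primitive $(1,1)$-forms, which is the adjoint representation $\mathfrak{su}(3)$; pulling back along the isotropy homomorphism $H\to\SU(3)$ exhibits $\Lambda^2_8$ as the homogeneous bundle $G\times_H\mathfrak{su}(3)$. The Peter--Weyl theorem gives a $\Delta$-invariant $L^2$-decomposition $\Gamma(\Lambda^2_8)=\bigoplus_{\gamma\in\hat G}V_\gamma\otimes\operatorname{Hom}_H(V_\gamma|_H,\mathfrak{su}(3))$, and Frobenius reciprocity identifies the multiplicity of each irreducible $V_\gamma$ with $\dim\operatorname{Hom}_H(V_\gamma|_H,\mathfrak{su}(3))$. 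Since $\Delta$ is $G$-invariant it acts as a scalar on each isotypic block, and a Weitzenböck argument expresses this scalar as an explicit affine function of the Casimir eigenvalue $\operatorname{Cas}_G(\gamma)$ once the invariant metric has been fixed. The coclosedness constraint $d^*\beta=0$ then selects a subspace of each multiplicity space, described through the branching of $V_\gamma$ to $H$.

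The problem thereby reduces to a finite search: enumerate the $G$-types $V_\gamma$ whose Casimir is small enough that the associated Laplace eigenvalue could drop below the threshold $12$, and for each decide, via the branching rule and the action of $d^*$, whether it actually contributes a coclosed primitive $(1,1)$-form. For $S^6=G_2/\SU(3)$, $\C P^3$, $S^3\times S^3=\SU(2)^3/\triangle\SU(2)$ and $\mathbb{F}_{1,2}=\SU(3)/T^2$ the relevant branchings and Casimir values are explicit, and the spectra computed by Moroianu--Semmelmann \cite{MS10} and Karigiannis--Lotay \cite{LK20}, together with the refinements of Schwahn \cite{Sch22}, show that in every case the smallest eigenvalue occurring on coclosed $\Omega^2_8$ is at least $12$. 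Since the interval $(0,12)$ is open, even eigenforms realizing the value $12$ do not contribute, so $\sum_{\lambda\in(0,12)}\dim\mathcal{E}(\lambda)$ vanishes and $\operatorname{Ind}_{(\omega,\rho)}=0$.

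The main difficulty is concentrated in two places. First is the normalization bookkeeping: the entire conclusion hinges on whether candidate eigenvalues land strictly below, exactly at, or above the sharp value $12$, so one must pin down the affine Casimir-to-Laplacian relation precisely in the Einstein-constant-$5$ normalization --- and for $\C P^3$ and $\mathbb{F}_{1,2}$ the nearly Kähler metric is not the symmetric (normal) one, which complicates this step. Second, and more delicate, is correctly imposing coclosedness: a single irreducible $G$-type may contribute several copies of $\Omega^2_8$, only some of which are coclosed, so one must simultaneously diagonalize $\Delta$ and $d^*$ on each multiplicity space. It is precisely this careful accounting, carried out in \cite{MS10} and \cite{LK20} and completed in \cite{Sch22}, that rules out eigenvalues in $(0,12)$ on the less symmetric spaces.
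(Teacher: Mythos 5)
Your proposal is correct and follows essentially the same route as the paper: reduce via \eqref{NKindexeq} to showing that the Laplacian on coclosed $\Omega^2_8$ has no eigenvalue in the open interval $(0,12)$, then run Peter--Weyl, Frobenius reciprocity and Casimir computations on each $G/H$, deferring the case-by-case spectral work to \cite{MS10}, \cite{LK20} and \cite{Sch22}. The one discrepancy is that the paper invokes \cite[Prop. 6.3]{LK20} only for $\C P^3$, $S^3\times S^3$ and $\mathbb{F}_{1,2}$, and settles the remaining case $S^6=G_2/\SU(3)$ by hand --- computing the $G_2$ Casimir via Freudenthal's formula, using the Weitzenb\"ock identity $\Delta=\Delta^{can}$ on coclosed $\Omega^2_8$ together with $\Delta^{can}=-12\operatorname{Cas}^G$, and checking $\operatorname{Hom}_{\SU(3)}(V_{1,0},\mathfrak{su}(3))=0$ --- so your blanket citation slightly overstates what the literature covers, but the finite Casimir-plus-branching search you outline is exactly the argument needed to close that case.
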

The main tool we need is a version of the Peter-Weyl for naturally reductive homogeneous spaces and a comparison between the Hodge Laplacian and the canonical Laplacian. On a nearly Kähler structure, besides the Levi-Civita connection, there exists another metric connection, called the canonical connection, with the property that $\operatorname{Hol}\left(\nabla^{can}\right) \subseteq \SU(3)$. The relationship between these two connections is given explicitly by 
\begin{equation}
    \nabla ^{can} = \nabla ^{LC} -  \frac{1}{2}\widehat{\rho} \;. \label{canonicalconnection}
\end{equation}
The canonical Laplacian is the connection Laplacian associated with this connection, $\Delta^{can} = \left( \nabla^{can}\right)^* \nabla^{can}$. 

Both results mentioned above are due to Moroianu and Semmelmann. They were collected in \cite{MS10} and \cite{MS11} in their investigation of infinitesimal nearly Kähler and Einstein deformations.
\begin{lemma} \cite[Prop. 4.5]{MS11}  \label{MS1}
    Let $(M^6, \omega , \rho)$ be a nearly Kähler manifold, ${\Delta}^{can}$ the induced connection Laplacian. 
    For $\beta \in \Omega^2_8$, we have the Weitzenböck-type formula:
    $$ (\Delta -  {\Delta}^{can}) \beta = (J d^*\beta) \lrcorner \rho\;.$$
    In particular, both Laplacians coincide on coclosed forms of type $\Omega^2_8$.
\end{lemma}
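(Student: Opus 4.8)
The plan is to compare the two Laplacians through the difference tensor relating the connections by \eqref{canonicalconnection}, writing $\nabla^{can}_X = \nabla^{LC}_X - \tfrac12 A_X$ with $A_X := (X\lrcorner\widehat{\rho})_*$, the endomorphism action \eqref{endaction} of the $2$-form $X\lrcorner\widehat{\rho}$. Since both $\Delta$ and $\Delta^{can}=(\nabla^{can})^*\nabla^{can}$ have the same principal symbol (that of the rough Laplacian), their difference is a \emph{first-order} natural operator $\Omega^2_8\to\Omega^2$; the content of the lemma is to compute it and identify it with $\beta\mapsto (J d^*\beta)\lrcorner\rho$. First I would record the structure equations in the normalisation $\tau_0=1$, namely $d\omega=3\rho$, $d\rho=0$, $d\widehat{\rho}=-2\omega^2$, together with the $\nabla^{can}$-parallelism $\nabla^{can}\omega=\nabla^{can}\rho=\nabla^{can}\widehat{\rho}=0$. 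These yield the Levi-Civita derivatives $\nabla^{LC}_X\omega=X\lrcorner\rho$ (consistent with $d\omega=\sum_i e^i\wedge\nabla^{LC}_{e_i}\omega=3\rho$) and $\nabla^{LC}_X\widehat{\rho}=\tfrac12 A_X\widehat{\rho}$, as well as $d^*\widehat{\rho}=0$, all of which are pointwise $\SU(3)$-identities.

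Next I would expand $\Delta^{can}$ in a frame that is $\nabla^{LC}$-normal at the point. Substituting $\nabla^{can}_{e_i}=\nabla^{LC}_{e_i}-\tfrac12 A_{e_i}$ into $(\nabla^{can})^*\nabla^{can}=-\sum_i\nabla^{can}_{e_i}\nabla^{can}_{e_i}$ (the trace-of-torsion term vanishing since $A$ is skew) gives
\begin{equation*}
\Delta^{can}\beta=(\nabla^{LC})^*\nabla^{LC}\beta+\tfrac12\sum_i(\nabla^{LC}_{e_i}A)_{e_i}\beta+\sum_i A_{e_i}\nabla^{LC}_{e_i}\beta-\tfrac14\sum_i A_{e_i}A_{e_i}\beta .
\end{equation*}
In parallel I would invoke the standard Weitzenböck identity $\Delta=(\nabla^{LC})^*\nabla^{LC}+q(R)$ for the Hodge Laplacian on $2$-forms and subtract, so that the rough Laplacians cancel and $\Delta-\Delta^{can}$ is expressed purely through the curvature endomorphism $q(R)$ and the three torsion terms above.

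The third step is the simplification on $\Omega^2_8$. The term $\sum_i(\nabla^{LC}_{e_i}A)_{e_i}$ is governed by $\nabla^{LC}\widehat{\rho}$, hence by the parallelism identity and $d^*\widehat{\rho}=0$; the genuinely first-order term $\sum_i A_{e_i}\nabla^{LC}_{e_i}\beta$ is the one that, after contracting $\widehat{\rho}$ against $\nabla^{LC}\beta$ and re-expressing through Proposition \ref{torsion6}, produces the codifferential $d^*\beta$ paired into $\rho$; and the algebraic Casimir $\sum_i A_{e_i}A_{e_i}$ together with $q(R)$ assemble the zeroth-order part. Using that $\Omega^2_8\cong\mathfrak{su}(3)$ is the adjoint representation, that $M$ is Einstein, and that the canonical curvature is $\mathfrak{su}(3)$-valued, I would show the zeroth-order contributions cancel identically on $\Omega^2_8$, leaving exactly the first-order piece $(J d^*\beta)\lrcorner\rho\in\Omega^2_6$.

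The main obstacle is precisely this last bookkeeping: matching $q(R)$ against the Casimir $\sum_i A_{e_i}^2$ on the eight-dimensional component, and extracting the correct constant and the $J$ in front of $d^*\beta$ from the contraction identities for $\widehat{\rho}$. A structurally cleaner route, which I would use to fix conventions, is to argue by equivariance: since $\Delta-\Delta^{can}$ is a natural first-order $\SU(3)$-equivariant operator out of $\Omega^2_8$, Schur's lemma forces it to be a constant multiple of the unique such operator $\beta\mapsto (J d^*\beta)\lrcorner\rho$ (the only first-order invariant built from $d^*\beta\in\Omega^1\cong\Omega^2_6$, which then includes into $\Omega^2$), and the constant is pinned down by evaluating on a single convenient $\beta$. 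Either way the final assertion is immediate: when $\beta$ is coclosed, $d^*\beta=0$, so the right-hand side vanishes and $\Delta\beta=\Delta^{can}\beta$.
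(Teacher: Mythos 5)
The paper itself does not prove this lemma — it quotes it from Moroianu--Semmelmann — so your argument has to stand entirely on its own. Its overall shape (expand $(\nabla^{can})^*\nabla^{can}$ through the difference tensor, subtract the Weitzenböck formula $\Delta=(\nabla^{LC})^*\nabla^{LC}+q(R)$, and identify the remainder as a universal first-order operator) is the right one, and your structure equations and the frame expansion are correct. But the step you dismiss as ``bookkeeping'' hides a genuine obstruction: with $\Delta^{can}$ taken literally as the bare connection Laplacian $(\nabla^{can})^*\nabla^{can}$ (the paper's wording, which you follow), the zeroth-order terms do \emph{not} cancel, and the identity you are trying to prove is in fact false. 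After subtraction you are left with $q(R)$ plus algebraic torsion terms; splitting $q(R)=q(\bar R)+q(R-\bar R)$, where $\bar R$ is the curvature of the canonical connection, the piece $q(R-\bar R)$ and the torsion terms are universal (the canonical torsion is $\nabla^{can}$-parallel), but $q(\bar R)$ is a genuine curvature operator, not determined pointwise by the $\SU(3)$-structure, and it does not vanish on $\Omega^2_8$. Concretely, on a naturally reductive nearly Kähler space one has $(\nabla^{can})^*\nabla^{can}=\operatorname{Cas}^G-\operatorname{Cas}^H$ (fibrewise isotropy Casimir), whereas $(\nabla^{can})^*\nabla^{can}+q(\bar R)=\operatorname{Cas}^G$; since the fibre of $\Lambda^2_8$ is the adjoint representation of $\SU(3)$, the discrepancy $q(\bar R)=\operatorname{Cas}^{\SU(3)}(\mathfrak{su}(3))\cdot\operatorname{id}$ is a nonzero multiple of the identity there. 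The operator in Moroianu--Semmelmann's Prop.~4.5 — and the one the paper actually needs, since Prop.~\ref{MS2} identifies $\Delta^{can}$ with the full Casimir with no fibre correction — is the Hermitian (standard) Laplacian $\bar\Delta=(\nabla^{can})^*\nabla^{can}+q(\bar R)$. Only after including $q(\bar R)$ does the difference $\Delta-\bar\Delta$ become a universal operator, and only then can your cancellation scheme possibly close up.

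Your Schur's lemma fallback has a second, independent gap: even in the universal setting the relevant Hom-space is not one-dimensional. Because $\Lambda^1\cong\Lambda^2_6$ is an irreducible real representation of \emph{complex} type, $\operatorname{Hom}_{\SU(3)}(\Lambda^1,\Lambda^2_6)\cong\C$, so both $\beta\mapsto(d^*\beta)\lrcorner\rho$ and $\beta\mapsto(Jd^*\beta)\lrcorner\rho$ are first-order equivariant operators $\Omega^2_8\to\Omega^2$; in addition $\beta\mapsto\beta$ spans the zeroth-order equivariant maps ($\Lambda^2_8$ being of real type). Equivariance therefore leaves a three-parameter family, and ``evaluating on a single convenient $\beta$'' cannot pin down the operator — in particular it cannot distinguish the correct answer $(Jd^*\beta)\lrcorner\rho$ from $(d^*\beta)\lrcorner\rho$ or from a multiple of $\beta$. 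So the identification of the first-order term, which is the entire content of the lemma and which your write-up explicitly defers, still has to be carried out by an honest computation (contracting $\sum_i A_{e_i}\nabla^{LC}_{e_i}\beta$ and the curvature difference against the $\SU(3)$-decomposition), or by testing against enough independent examples to fix all three constants. As written, the proposal is a plausible plan built on a false cancellation, not a proof; the only step that is complete is the trivial final implication that $d^*\beta=0$ makes the right-hand side vanish.
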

\begin{proposition} \cite[Lemmas 5.2 \& 5.4]{MS10}\label{MS2}
    Let $(G/H, \omega, \rho)$ be a naturally reductive nearly Kähler manifold and consider $\rho: H \rightarrow \aut(E)$ a representation of $H$ and $EM=G \times_\rho E$ the induced vector bundle. Then, the Peter-Weyl formalism and Frobenius reciprocity imply
    \begin{equation*}
        L^2(EM) = \overline{\bigoplus_{\gamma \in \operatorname{Irr}(G)} V_\gamma \times \operatorname{Hom}_H(V_\gamma, E)}\;,
    \end{equation*}
    where $\operatorname{Irr}(G)$ denotes the set of irreducible $G$-representations. Under this decomposition, the canonical Laplacian is given by ${\Delta}^{can}=-12 \operatorname{Cas}^G_\gamma$, where $\operatorname{Cas}^G_\gamma$ is the Casimir of the representation $V_\gamma$, computed with respect to the Killing form.
\end{proposition}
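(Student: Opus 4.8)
The plan is to prove the two assertions separately: first the Peter--Weyl/Frobenius description of $L^2(EM)$, and then the identification of the connection Laplacian $\Delta^{can}=(\nabla^{can})^*\nabla^{can}$ with a scalar multiple of the Casimir on each isotypic summand.

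For the decomposition I would begin from the standard identification of sections of the associated bundle with equivariant functions, $\Gamma(EM)\cong\{f\in C^\infty(G,E)\mid f(gh)=\rho(h)^{-1}f(g)\}=(C^\infty(G)\otimes E)^H$, where the invariants are taken for the right $H$-action. Passing to $L^2$ and invoking the Peter--Weyl theorem gives $L^2(G)=\overline{\bigoplus_{\gamma}V_\gamma\otimes V_\gamma^*}$ as a $G\times G$-module, the two factors carrying the left and right regular representations. Tensoring with $E$ and taking right $H$-invariants only affects the $V_\gamma^*$ factor, producing $(V_\gamma^*\otimes E)^H$, which Frobenius reciprocity (equivalently Schur's lemma) identifies with $\operatorname{Hom}_H(V_\gamma,E)$; the untouched $V_\gamma$ factors retain the left $G$-action. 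This yields the stated $L^2(EM)=\overline{\bigoplus_{\gamma\in\operatorname{Irr}(G)}V_\gamma\otimes\operatorname{Hom}_H(V_\gamma,E)}$.

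For the Laplacian the point is to realise $\nabla^{can}$ in the equivariant-function model. Using the reductive splitting $\mathfrak{g}=\mathfrak{h}\oplus\mathfrak{m}$ underlying the naturally reductive structure, the canonical connection is the invariant connection whose Christoffel symbols vanish in the invariant frame; concretely $\nabla^{can}_X f=\widetilde{X}f$ for $X\in\mathfrak{m}$, where $\widetilde{X}$ denotes the associated left-invariant vector field (the $\operatorname{Ad}(H)$-mixing of the $\mathfrak{m}$-index is exactly the equivariance of $\nabla^{can}f$ as a section of $T^*M\otimes EM$). Natural reductivity guarantees $\sum_i\nabla^{can}_{e_i}e_i=0$ for an invariant orthonormal frame $\{e_i\}$ of $\mathfrak{m}$, so the first-order term drops and $\Delta^{can}f=-\sum_{i}\widetilde{e}_i^{\,2}f$, summed over $\mathfrak{m}$ only. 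I would then complete $\{e_i\}$ to a basis of $\mathfrak{g}$ and use centrality (bi-invariance) of the Casimir element: the full sum $-\sum_{a\in\mathfrak{g}}\widetilde{Y}_a^{\,2}$ is the Casimir, hence acts by the scalar $\operatorname{Cas}^G_\gamma$ on each summand $V_\gamma\otimes\operatorname{Hom}_H(V_\gamma,E)$, agreeing whether computed via the left or right regular representation. Along the $\mathfrak{h}$-directions the left-invariant fields act through $-\rho_*$, contributing the isotropy Casimir, and rescaling from the Killing form to the nearly Kähler metric on $\mathfrak{m}$ supplies the universal constant, giving $\Delta^{can}=-12\operatorname{Cas}^G_\gamma$ on each piece.

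The main obstacle I anticipate lies entirely in this last step and is twofold: organising the isotropy ($\mathfrak{h}$) contribution together with the metric rescaling so that the result is expressed purely through $\operatorname{Cas}^G_\gamma$, and pinning down the precise constant $-12$, which encodes the ratio between the nearly Kähler metric and the negative Killing form on $\mathfrak{m}$ under the Einstein normalisation $\operatorname{Ric}=5g$. Verifying that $\Delta^{can}$ reduces to a pure sum of squares of invariant fields with no residual curvature correction rests essentially on natural reductivity, so I would take care to isolate exactly where that hypothesis is used.
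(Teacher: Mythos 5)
Your first half — the identification $L^2(EM)\cong\overline{\bigoplus_\gamma V_\gamma\otimes\operatorname{Hom}_H(V_\gamma,E)}$ via equivariant functions, Peter--Weyl and Frobenius reciprocity — is correct and is the standard argument. The genuine gap is in the second half, and it sits exactly at the step you flagged as ``the main obstacle'': the isotropy contribution cannot be absorbed by rescaling, and for the operator you chose to work with (the bare rough Laplacian $(\nabla^{can})^*\nabla^{can}$) the clean identity you are trying to prove is not what your computation yields. Realising the Casimir as the differential operator $\sum_{a\in\mathfrak{g}}\widetilde{Y}_a\widetilde{Y}^a$ (dual bases with respect to the ad-invariant form) and splitting $\mathfrak{g}=\mathfrak{h}\oplus\mathfrak{m}$, the $\mathfrak{h}$-part acts on equivariant functions through $\rho_*$ and produces the isotropy Casimir $\operatorname{Cas}^{\mathfrak{h}}_E\in\operatorname{End}(E)$; hence the sum over $\mathfrak{m}$ alone — which is what the rough Laplacian is — equals, up to the normalisation fixed by the metric, the \emph{difference} $\operatorname{Cas}^G-\operatorname{Cas}^{\mathfrak{h}}_E$ and not $\operatorname{Cas}^G$. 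The extra term is a nonzero constant for every nontrivial irreducible $H$-module $E$, and rescaling the metric multiplies both Casimirs by the same factor, so no choice of constant can remove it. This matters concretely: the bundle this proposition is applied to is $E=\Lambda^2_8\cong\mathfrak{su}(3)$, the adjoint representation of the isotropy group, where the discrepancy is a strictly positive shift of the whole spectrum; with that shift, the eigenvalue computations in Theorem \ref{index_homogeneous_NK} (e.g. deformations corresponding to eigenvalue $12$) would come out wrong.

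The missing idea is a curvature correction, and it is in essence the content of the two lemmas of \cite{MS10} being cited. The operator considered there is the Hermitian (standard) Laplacian $\bar{\Delta}=(\nabla^{can})^*\nabla^{can}+q(\bar{R})$: one lemma computes the rough Laplacian as the Casimir difference above, and the other identifies the curvature term $q(\bar{R})$ of the canonical connection — whose curvature is $\bar{R}(X,Y)=-\rho_*([X,Y]_{\mathfrak{h}})$ — with precisely the isotropy Casimir $\operatorname{Cas}^{\mathfrak{h}}_E$, so that the two terms add up to the pure $G$-Casimir, i.e. the scalar $-12\operatorname{Cas}^G_\gamma$ in the stated normalisation. (The statement should therefore be read with $\Delta^{can}$ meaning this standard Laplacian; it is for that operator that Lemma \ref{MS1} gives agreement with the Hodge Laplacian on coclosed forms in $\Omega^2_8$, which is what the application requires.) To repair your proof, keep your computation of $(\nabla^{can})^*\nabla^{can}$, prove $q(\bar{R})=\operatorname{Cas}^{\mathfrak{h}}_E$ using ad-invariance of the form — this is where natural reductivity enters a second time — and add the two identities; a pure sum-of-squares argument cannot produce the result.
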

\begin{proof}[Proof of Theorem \ref{index_homogeneous_NK}]
Using the computations of Moroianu and Semmelmann in \cite{MS10}, Karigiannis and Lotay \cite[Prop. 6.3]{LK20} showed that the homogeneous nearly Kähler structures in $\C P^3 \cong $, $S^3\times S^3\cong \SU(2)^3/\Delta \SU(2)$ and the flag $F_{1,2}\cong \SU(3)/\operatorname{T}^2)$ are stable. Thus, only the case of the round sphere $S^6\cong G_2/\SU(3)$ remains. We start by computing the Casimir operator of $G_2$. Let $\omega_1$ and $\omega_2$ be the short and long fundamental weights respectively, so $V_{1,0}$ is the fundamental 7-dimensional $G_2$-representation and $V_{0,1}$ is its adjoint representation.

Since $G$ is simple, the Freudenthal formula (cf. \cite{MS10}) allows us to compute the value of the Casimir operator on a representation of highest weight $\gamma$. We have $\operatorname{Cas}^G_\gamma- \langle \gamma , \gamma +2 \rho\rangle_B$, where $\rho$ is the half-sum of positive roots and $\langle \cdot, \cdot\rangle_B$ is the Killing form. In the case of $G_2$-structure, $\rho= \omega_1 +\omega_2$, and so, for an irreducible representation of highest
weight $(\lambda, \mu)$, its Casimir operator is given by 
\begin{align*}
    \operatorname{Cas}_{G_2}(\lambda, \mu) &= -\lambda(\lambda+2) \norm{\omega_1}^2_B + \mu(\mu+2)\norm{\omega_2}^2_B + 2(\lambda \mu + \lambda + \mu) \langle \omega_1, \omega_2 \rangle\\
     &=  -\frac{1}{12} \left( \lambda(\lambda+2) + 3\mu(\mu+2) + 3(\lambda \mu + \lambda + \mu) \right) 
     \;.
\end{align*}
    Therefore, in virtue of Lemma \ref{MS1} and Proposition \ref{MS2}, the Hodge Laplacian on coclosed forms of type $\Omega^2_8$ is given by
    $$ \Delta \beta =  \sum_{(\lambda, \mu) \in \operatorname{Irr}(G)} \left( \lambda(\lambda+2) + 3\mu(\mu+2) + 3(\lambda \mu + \lambda + \mu) \right) \pi_\gamma(\beta)\;. $$
    In particular, the only highest weight for which the eigenvalue of the canonical Laplacian is smaller than $12$ is $(1,0)$, the fundamental $7-$ dimensional representation.
    The space of primitive $(1,1)$-forms can be identified with the adjoint representation of $\SU(3)$. By dimensional reasons it is clear that $\operatorname{Hom}_{\SU(3)} \left(V_{1,0},  \mathfrak{su}(3) \right)=0$, and so $(S^6, g_{round})$ is stable.
\end{proof}
\begin{remark}
    Notice that a prior, this computation is only valid if one defines the Hitchin index using Eq. \eqref{NKindexeq} since the gauge slice is not valid in the round sphere case. However, one can retrace the proof of Theorem \ref{hodgedecom6} and show that the discussion can be adapted without significant changes. We omit the details.
\end{remark}
The natural next question is the study of the Hitchin functionals and the index problem for the remaining two known examples of nearly Kähler structures. Some results for these examples were proved by the author \cite{ESF24c}.

\section{Nearly parallel $G_2$-structure}
We start by recalling well-known results on $G_2$-structures and nearly parallel $G_2$-manifolds. 
\begin{definition}
    A $G_2$-structure on a manifold $M^7$ is a reduction of its frame bundle to a $G_2$-principal bundle. A manifold equipped with such a frame reduction is called a $G_2$-manifold \footnote{The term $G_2$-manifold in the literature is sometimes reserved for manifolds carrying a metric with holonomy contained in $G_2$}.
\end{definition}
Equivalently, $M^7$ is equipped with a smooth, stable differential form $\varphi \in \Omega^3_+(M)$, in the sense of Hitchin. Similarly, we could have fixed an orientation choice and chosen a stable 4-form $\psi$, where $\phi$ and $\psi$ will be Hitchin duals to each other. Notice that $\phi\in \Omega^3_+$ determines an orientation, whilst $\psi \in \Omega^4_+$ does not.

Moreover, since $G_2\subset \Spin(7)$ is the stabiliser of any $v\in \R^7 \cong \operatorname{Im}(\mathbb{O})$, a $G_2$-structure is equivalent to the choice of a spin structure together with a nowhere vanishing spinor. In particular, the only obstruction for $M^7$ to admit a $G_2$-structure is for $M$ to be orientable and spinnable. Finally, the inclusion $G_2\subseteq \SO(7)$ implies that $M$ inherits a metric $g$ from the $G_2$-structure. Indeed, for $X, Y$ vector fields, we can define $g_\phi: \operatorname{Sym}^2(TM) \rightarrow \R$ as
    $$g_\phi(X,Y) \vol_\phi= \frac{1}{6} (X \lrcorner \phi) \wedge (Y \lrcorner \phi) \wedge \phi\;.$$
It is worth noting that sometimes, the opposite orientation convention is chosen. We follow the same convention as Bryant \cite{Bry05}, Joyce \cite{Joy00}, Salamon and Walpuski \cite{SW10} and Dwivedi and Singhal \cite{SS20}. Bryant-Salamon \cite{BS89}, Harvey-Lawson \cite{HL82} and Karigiannis and Lotay\cite{LK20} follow the opposite convention. Using the associated metric, we will henceforth identify $TM$ and $T^*M$ and identify vector fields and $1$-forms without further distinction.
\subsubsection*{Form decomposition and torsion}
The reduction of the structure group to $G_2$ leads to a decomposition of $\Lambda^*(T^*M)$ into irreducible representations of $G_2$. 
\begin{lemma}\label{formdecom7}
Let $(M^7, \varphi)$ be a $G_2$-manifold. The spaces $\Lambda^0T^*M\cong \R$ and $\Lambda^1 T^*M \cong \R^7$ are irreducible with respect to the induced action of $G_2$.
The spaces $\Lambda^2$ and $\Lambda^3$ decompose orthogonally as 
$$\Lambda^2= \Lambda^2_7 \oplus \Lambda^2_{14} ~~~~~~~~~~~~~~~~~~~~~~ \Lambda^3= \Lambda^3_1 \oplus \Lambda^3_7 \oplus \Lambda^3_{27}\;,$$
where $\Lambda^k_l$ has pointwise dimension $l$. They are described by 
\begin{align*}
    \Lambda^2_7= &\{X \lrcorner \varphi \st X\in \Gamma(TM)\}= \{\beta\in \Lambda^2 \st \star (\varphi \wedge \beta)=2\beta\}\\
    \Lambda^2_{14}=& \{\beta \in \Lambda^2 \st \beta \wedge \psi=0\}= \{\beta\in \Lambda^2 \st \star (\varphi \wedge \beta)=-\beta\}\\
\Lambda^3_1=  \Lambda^0 \cong \langle \varphi \rangle ~~~~~ & 
    \Lambda^3_7= \{X \lrcorner \psi \st X\in \Gamma(TM)\} ~~~~~
    \Lambda^3_{27}= \{\gamma \in \Lambda^3 \st \gamma \wedge \psi=0, \gamma\wedge \varphi=0 \}
\end{align*}
The decomposition for $\Lambda^k$ for $k>3$ follows from the fact that the metric is $G_2$-invariant and $\Lambda^k= \star \Lambda^{7-k}$. Similarly, we have
$$ \Sym^2(T^*M)\cong \langle g \rangle \oplus  Sym^2_0(TM) \cong \Lambda^0 \oplus \Lambda^3_{27}\;,$$
where $\Sym^2_0(T^*M)$ denotes the traceless symmetric tensors and the last isomorphism is an isomorphism of representations, $i_\phi: \Sym^2_0 \rightarrow \Lambda^3_{27}$, given explicitly by $i_\phi (S) = S_*(\phi)$, as in Eq. \eqref{endaction}.
\end{lemma}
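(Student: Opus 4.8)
The statement is a pointwise, purely representation-theoretic fact: it records the decomposition of the exterior algebra of the fundamental $7$-dimensional representation $V_7\cong\R^7$ of $G_2$ into irreducibles, together with explicit $G_2$-invariant models for each summand built from $\varphi$ and $\psi$. Since $\varphi$, $\psi$, $g$, the Hodge star $\star$, and all the contraction and wedge maps below are $G_2$-invariant, it suffices to verify every identity at a single point for the flat model form $\varphi_0$ on $\R^7$ and then transport it over $M$ by $G_2$-equivariance. Only four irreducibles occur: the trivial $\mathbf{1}$, the fundamental $V_7$, the adjoint $\mathfrak{g}_2\cong V_{14}$, and the $27$-dimensional $V_{27}\cong\Sym^2_0(\R^7)$. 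The whole argument is then dimension counting combined with the construction of equivariant maps onto the appropriate copies, together with Schur's lemma.

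The plan is as follows. The space $\Lambda^0\cong\mathbf{1}$ is irreducible trivially, and $\Lambda^1\cong V_7$ is irreducible because $G_2$ acts transitively on the unit sphere $S^6=G_2/\SU(3)$. For $\Lambda^2$, I would use the $G_2$-equivariant identification $\Lambda^2\cong\mathfrak{so}(7)$ furnished by the metric, together with the reductive splitting $\mathfrak{so}(7)=\mathfrak{g}_2\oplus\mathfrak{g}_2^\perp$ with $\dim\mathfrak{g}_2=14$; Schur's lemma forces $\mathfrak{g}_2^\perp\cong V_7$. To match this with the stated descriptions, consider the equivariant self-adjoint endomorphism $L\beta:=\star(\varphi\wedge\beta)$, which one checks on the model to have eigenvalues $2$ and $-1$; the $+2$-eigenspace is $\{X\lrcorner\varphi\}$ and the $-1$-eigenspace is $\{\beta:\beta\wedge\psi=0\}$, giving $\Lambda^2_7\cong V_7$ and $\Lambda^2_{14}\cong\mathfrak{g}_2$ respectively. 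The equality of the eigenspace and wedge/contraction descriptions again follows by Schur applied to the equivariant maps $X\mapsto X\lrcorner\varphi$ and $\beta\mapsto\beta\wedge\psi\in\Lambda^6\cong V_7$.

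For $\Lambda^3$, of dimension $35$, the invariant form $\varphi$ spans a copy of $\mathbf{1}$, namely $\Lambda^3_1$, and the injective equivariant map $X\mapsto X\lrcorner\psi$ produces $\Lambda^3_7\cong V_7$. The orthogonal complement has dimension $35-1-7=27$ and is cut out by the two equivariant linear conditions $\gamma\wedge\varphi=0$ (whose kernel is $\Lambda^3_1\oplus\Lambda^3_{27}$ by Schur, since $\gamma\mapsto\gamma\wedge\varphi\in\Lambda^6\cong V_7$) and $\gamma\wedge\psi=0$ (whose kernel is $\Lambda^3_7\oplus\Lambda^3_{27}$, since $\gamma\mapsto\gamma\wedge\psi\in\Lambda^7\cong\mathbf{1}$); their intersection is precisely $\Lambda^3_{27}$. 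To identify this piece with $V_{27}$, and simultaneously to prove the final displayed isomorphism, I would use the equivariant map $i_\varphi(S)=S_*(\varphi)$ from the statement: the identity endomorphism $g$ maps to a nonzero multiple of $\varphi$, so the trace line $\langle g\rangle\cong\Lambda^0$ is sent into $\Lambda^3_1$, while on the traceless part $\Sym^2_0$ one checks at the model point that $S_*(\varphi)$ satisfies both wedge conditions $S_*(\varphi)\wedge\varphi=0$ and $S_*(\varphi)\wedge\psi=0$ and that $i_\varphi$ is injective; by the dimension count $27=\dim\Sym^2_0$ it is then an isomorphism $\Sym^2_0\cong\Lambda^3_{27}$. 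The decompositions for $k>3$ then follow at once from $\Lambda^k=\star\Lambda^{7-k}$ and the fact that $\star$ is a $G_2$-equivariant isometry.

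The only genuinely computational step, and hence the main obstacle, is verifying the explicit pointwise identities for the model form $\varphi_0$: the eigenvalues of $L$, the coincidence of the eigenspace and wedge descriptions of $\Lambda^2_7$ and $\Lambda^2_{14}$, and the facts that $S_*(\varphi)$ lies in $\Lambda^3_{27}$ and that $i_\varphi$ is injective on $\Sym^2_0$. Each of these reduces to a finite linear-algebra calculation in an adapted orthonormal coframe, after which $G_2$-invariance promotes it to a global statement on $M$; the abstract irreducibility of $V_{14}$ and $V_{27}$ I would simply quote from the representation theory of $G_2$.
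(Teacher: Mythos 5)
The paper itself offers no proof of this lemma: like its $\SU(3)$ counterpart (Lemma \ref{decompositionSU(3)}), it is stated as a classical fact collected from the references whose conventions the paper adopts (Bryant, Joyce, Salamon--Walpuski). Your proposal is therefore not competing with an argument in the text; judged on its own terms, it is the standard representation-theoretic proof and its outline is sound: reduce to the flat model by $G_2$-invariance, realise $\Lambda^2_7$ and $\Lambda^2_{14}$ as the $+2$ and $-1$ eigenspaces of the self-adjoint equivariant operator $\beta \mapsto \star(\varphi \wedge \beta)$, cut out $\Lambda^3_{27}$ by the two wedge conditions, and identify it with $\Sym^2_0$ via $S \mapsto S_*(\varphi)$ using injectivity plus the count $35 = 1 + 7 + 27$. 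Two points deserve attention. First, the remark that ``Schur's lemma forces $\mathfrak{g}_2^\perp \cong V_7$'' is loose as stated: Schur alone does not identify an abstract complement; what actually makes this step work is what you do next, namely exhibiting the injective equivariant map $X \mapsto X \lrcorner \varphi$, whose image is a $7$-dimensional irreducible complement to $\mathfrak{g}_2$. Second, your argument is a blueprint rather than a verification, since the eigenvalue computation for $\star(\varphi \wedge \cdot\,)$, the injectivity of $i_\varphi$ on $\Sym^2_0$, and the membership $S_*(\varphi) \in \Lambda^3_{27}$ are all deferred to ``finite linear algebra at the model point''; note that the last of these needs no computation at all, because $\Sym^2_0 \cong V_{27}$ admits no nonzero equivariant map to $\Lambda^6 \cong V_7$ or to $\Lambda^7 \cong \R$, so both wedge conditions hold automatically by Schur. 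The deferred checks are genuinely routine, so the proposal is an acceptable proof of this classical statement and matches how the cited literature establishes it.
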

These decompositions carry over to the spaces of smooth sections of each of the bundles. We denote $\Omega^k_m = \Gamma(\Lambda^k_m)$. Using the metric, we canonically identify the space of 1-forms with smooth vector fields. Using the metric we can identify $\Lambda^2_7$ and $\Lambda^3_7$ with $\Lambda^1$ using the maps $X\mapsto X \lrcorner \phi$ and $X\mapsto X \lrcorner \psi$. In particular, we can define the generalised curl operator, originally introduced in \cite{Kar08}.
\begin{definition}
Let $(M^7, \phi)$ be a $G_2$-structure. We define the curl of a $1$-form as
\begin{align*}
    \curl:\Omega^1 & \rightarrow \Omega^1\\
    X &\mapsto *(dX\wedge \psi)\;.
\end{align*}
\end{definition}
Similarly, one obtains useful identities from the different incarnations of each representation. A comprehensive list can be found in \cite[Lemma 4.37]{SW10}.

Following the discussion on \cite{CS02}, we have the following decomposition of the torsion in the irreducible representation of $d\phi$ and $d\psi$.
\begin{proposition}\label{torsion7}
    Let $(\phi, \psi)$ be an $G_2$-structure. Then there exists forms $\tau_0\in \Omega^0$, $\tau_1 \in \Omega^1$, $\tau_2 \in \Omega^2_{14}$ and $\tau_3\in \Omega^3_{27}$ such that 
 $$ d\phi = 4\tau_0 \psi + 3\tau_1 \wedge \phi + *\tau_3 ~~~~~~~~~~~~ d\psi = 4\tau_1 \wedge \psi + \tau_2\wedge \phi $$
\end{proposition}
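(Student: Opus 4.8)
The plan is to decompose $d\phi \in \Omega^4$ and $d\psi \in \Omega^5$ into their irreducible $G_2$-components using Lemma \ref{formdecom7}, and then to match each component with one of the claimed torsion forms. Applying the Hodge star to $\Lambda^3 = \Lambda^3_1 \oplus \Lambda^3_7 \oplus \Lambda^3_{27}$ yields $\Lambda^4 = \Lambda^4_1 \oplus \Lambda^4_7 \oplus \Lambda^4_{27}$, where $\Lambda^4_1 = \langle \psi \rangle$, where $\Lambda^4_7 = \{\alpha \wedge \phi \mid \alpha \in \Omega^1\}$ (the image of the injective equivariant map $\alpha \mapsto \alpha\wedge\phi$, whose range is the unique $7$-dimensional summand), and where $\Lambda^4_{27} = \star\Lambda^3_{27}$. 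Likewise $\Lambda^5 = \Lambda^5_7 \oplus \Lambda^5_{14}$ with $\Lambda^5_7 = \{\alpha\wedge\psi \mid \alpha\in\Omega^1\}$ and $\Lambda^5_{14} = \{\beta\wedge\phi \mid \beta\in\Omega^2_{14}\}$. Since $\Lambda^2_{14}$ does not appear in $\Lambda^4$, and neither the trivial summand nor $\Lambda^3_{27}$ appears in $\Lambda^5$, this already forces $d\phi$ to have no $\tau_2$-component and $d\psi$ to have no $\tau_0$- or $\tau_3$-component. Writing the three components of $d\phi$ as $4\tau_0\psi$, $3\tau_1\wedge\phi$, $\star\tau_3$, and the two of $d\psi$ as $4\tau_1'\wedge\psi$, $\tau_2\wedge\phi$, produces the stated form, with the sole gap that a priori the one-forms $\tau_1$ and $\tau_1'$ are unrelated.

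The heart of the matter is thus to prove $\tau_1 = \tau_1'$, and the clean way is to pass to the full intrinsic torsion rather than treat $d\phi$ and $d\psi$ separately. Because $\phi$ is $G_2$-invariant, $\nabla^{LC}\phi$ takes values in $\Omega^1 \otimes \Lambda^3_7$, and under the identification $\Lambda^3_7 \cong \Lambda^1 \cong \mathfrak{g}_2^\perp \subset \mathfrak{so}(7)$ this is the intrinsic torsion $T \in \Omega^1 \otimes \Omega^1 = \Omega^0 \oplus \Omega^2_7 \oplus \Omega^2_{14} \oplus \Sym^2_0$. Using the isomorphisms of Lemma \ref{formdecom7} ($\Lambda^2_7 \cong \Lambda^1$ and $\Sym^2_0 \cong \Lambda^3_{27}$), the four components of $T$ are exactly a function, a one-form, an element of $\Omega^2_{14}$, and an element of $\Omega^3_{27}$; in particular $T$ contains a single copy of $\Lambda^1$, which is $\tau_1$. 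Now both $d\phi = \operatorname{alt}(\nabla^{LC}\phi)$ and $d\psi = \operatorname{alt}(\nabla^{LC}\psi) = \operatorname{alt}(\star\,\nabla^{LC}\phi)$ are $G_2$-equivariant linear functions of $T$, since $\nabla^{LC}$ commutes with the Hodge star. As $\Lambda^1$ has multiplicity one in $T$, in $\Lambda^4$, and in $\Lambda^5$, Schur's lemma forces the $\Lambda^4_7$-part of $d\phi$ and the $\Lambda^5_7$-part of $d\psi$ to be fixed scalar multiples of $\tau_1\wedge\phi$ and $\tau_1\wedge\psi$ for one and the same $\tau_1$, which is precisely the claimed coincidence.

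The only remaining task is to pin down the numerical constants $4,3,1,4,1$. These are not representation-theoretic: they depend on the chosen normalisations of the isomorphisms $\Lambda^2_7 \cong \Lambda^1$ and $\Sym^2_0 \cong \Lambda^3_{27}$ and must be read off from an explicit evaluation of $\nabla^{LC}\phi$ in an adapted coframe, or matched directly against the conventions of \cite{CS02}. I expect this bookkeeping of coefficients to be the only genuinely laborious part of the argument; the structural content — which torsion classes can occur in $d\phi$ versus $d\psi$, and the fact that $\tau_1$ is shared — is forced by the decomposition of Lemma \ref{formdecom7} together with the multiplicity-one observation above.
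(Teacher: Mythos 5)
The paper offers no proof of this proposition: it is stated as a quotation of the Gray--Fern\'andez/Chiossi--Salamon torsion decomposition, citing \cite{CS02}, so there is no internal argument to compare against. Your proposal is essentially the standard proof underlying that citation, and it is structurally sound: $\nabla^{LC}\phi$ takes values in $\Lambda^1\otimes\Lambda^3_7\cong\Lambda^1\otimes\Lambda^1$ because $G_2$ stabilises $\phi$ and the orbit map gives $\mathfrak{so}(7)\cdot\phi\cong\mathfrak{so}(7)/\mathfrak{g}_2\cong\Lambda^3_7$; both $d\phi=\operatorname{alt}(\nabla^{LC}\phi)$ and $d\psi=\operatorname{alt}(\star\,\nabla^{LC}\phi)$ are then $G_2$-equivariant linear images of the intrinsic torsion; and since the $7$-dimensional representation occurs with multiplicity one in $\Lambda^1\otimes\Lambda^1$, in $\Lambda^4$ and in $\Lambda^5$ (all summands involved being irreducible of real type, so real Schur does produce genuine scalars), the two $1$-form components are universal multiples of a single $1$-form. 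The only thing separating this from a complete proof is the point you flag yourself: the proposition asserts not mere proportionality but that with the normalisations $3\tau_1\wedge\phi$ and $4\tau_1\wedge\psi$ one gets literally the same $\tau_1$, and Schur reduces this to evaluating one universal constant. You can avoid coframe computations entirely here: since the constant is universal, it suffices to evaluate it on a single structure with $\tau_1\neq 0$ (the nearly parallel examples are useless for this, as they have $\tau_1=0$). Take a torsion-free structure $(\phi_0,\psi_0)$, e.g.\ the flat one on $\R^7$, and rescale conformally: $\phi_f=e^{3f}\phi_0$ has induced metric $e^{2f}g_0$ and Hitchin dual $\psi_f=e^{4f}\psi_0$, whence $d\phi_f=3\,df\wedge\phi_f$ and $d\psi_f=4\,df\wedge\psi_f$, i.e.\ pure $\tau_1$-torsion with $\tau_1=df$ appearing with exactly the coefficients $3$ and $4$ of the statement. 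This fixes the constant for all $G_2$-structures and completes your argument.
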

\begin{proposition}
Let $C(M)$ be a metric cone whose holonomy is contained in $\Spin(7)$. Then, the $G_2$-structure on the link $M$ has torsion concentrated in $\tau_0=1$. 
\end{proposition}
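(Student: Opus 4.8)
The plan is to mirror the method of Proposition \ref{SU3coneflow}, replacing the $\SU(3)$--$G_2$ correspondence on the cone by the $G_2$--$\Spin(7)$ one. A $\Spin(7)$-structure on an $8$-manifold is encoded by its self-dual Cayley $4$-form $\Phi$, and the holonomy of the underlying metric is contained in $\Spin(7)$ precisely when $\Phi$ is parallel; since $\Phi$ is self-dual this is equivalent to the single condition $d\Phi=0$ (the $\Spin(7)$ analogue of the closed-and-coclosed criterion used in Proposition \ref{SU3coneflow}). On the cone $C(M)=(\R_+\times M,\,dr^2+r^2g)$ the natural $\Spin(7)$-structure inducing the cone metric has Cayley form
$$\Phi = r^3\, dr\wedge\phi + r^4\,\psi,$$
the powers of $r$ being forced by the requirement that $\Phi$ be homogeneous of the correct weight and self-dual, exactly as the $G_2$-form was normalised in Proposition \ref{SU3coneflow}. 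So the first step is simply to record this normal form and reduce $\operatorname{Hol}(C(M))\subseteq\Spin(7)$ to $d\Phi=0$.

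Next I would differentiate. Since $\phi$ and $\psi$ are pulled back from $M$ and $d(r^3\,dr)=0$, a direct computation gives
$$d\Phi = r^3\, dr\wedge(4\psi - d\phi) + r^4\, d\psi.$$
Every form on the cone decomposes uniquely as $dr\wedge(\text{pullback from }M)+(\text{pullback from }M)$, so $d\Phi=0$ is equivalent to the two independent equations $d\phi=4\psi$ and $d\psi=0$ on $M$.

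Finally I would feed these into the torsion decomposition of Proposition \ref{torsion7}. Writing $d\phi = 4\tau_0\psi + 3\tau_1\wedge\phi + *\tau_3$ as its decomposition in the orthogonal sum $\Omega^4 = \Omega^4_1\oplus\Omega^4_7\oplus\Omega^4_{27}$ and comparing with $d\phi = 4\psi\in\Omega^4_1$, matching isotypic components forces $\tau_0=1$, $\tau_1=0$ and $\tau_3=0$ simultaneously. It then remains only to kill $\tau_2$: with $\tau_1=0$ the second structure equation collapses to $d\psi=\tau_2\wedge\phi$, and using the characterisation $\star(\phi\wedge\beta)=-\beta$ of $\Lambda^2_{14}$ from Lemma \ref{formdecom7} one has $\tau_2\wedge\phi=-*\tau_2$; hence $d\psi=0$ gives $\tau_2=0$. (Equivalently, $d\psi=\tfrac14 d^2\phi=0$ is automatic once $d\phi=4\psi$, and the same identity then yields $\tau_2=0$.) This leaves $\tau_0=1$ as the only surviving torsion component, as claimed.

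The computation is essentially routine; the only genuine points requiring care are the correct powers of $r$ in $\Phi$ (so that it truly induces $\overline{g}$ and is self-dual, which is what allows $d\Phi=0$ alone to characterise holonomy in $\Spin(7)$) and the clean identification of the isotypic pieces of $d\phi$ with the summands of $\Omega^4$, which is exactly what makes the single equation $d\phi=4\psi$ annihilate three of the four torsion forms at once. I do not anticipate any substantive obstacle beyond this bookkeeping.
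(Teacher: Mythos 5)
Your proposal is correct and is essentially the paper's own approach: the paper states this proposition without proof, but its proof of the neighbouring Proposition \ref{G2coneflow} uses exactly your computation (the Cayley form $\Phi = r^3\, dr\wedge\phi + r^4\psi$, closed and automatically coclosed by self-duality, split into its $dr\wedge(\cdot)$ and pullback-from-$M$ components), of which this proposition is the static case. Your final step, matching $d\phi = 4\psi$ and $d\psi = 0$ against the torsion decomposition of Proposition \ref{torsion7} to kill $\tau_1$, $\tau_2$, $\tau_3$, is precisely the bookkeeping the paper leaves implicit.
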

As in the case of $\SU(3)$-structures, since holonomy $\Spin(7)$-metrics are Ricci flat, we can use the Bianchi identities to describe the Ricci and scalar curvature in terms of the torsion. 
\begin{lemma}[{\cite[Eq. (4.28)]{Bry05}}] \label{scalar_curv_G2}
    Let $(M, \phi)$ be a $G_2$-structure. The scalar curvature of the associated metric is given by
    $$s_g= 42\tau_0^2 + 12 d^*\tau_1 + 30 \abs{\tau_1}- \frac{1}{2}\abs{\tau_2}^2 - \frac{1}{2}\abs{\tau_3}^2\;.$$
\end{lemma}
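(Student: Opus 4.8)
The plan is to reconstruct Bryant's computation, following the same philosophy as the $\SU(3)$ formula of Lemma \ref{scalar_curv_NK}: encode the intrinsic torsion in a single \emph{full torsion tensor}, express the Ricci curvature through a $G_2$-Bianchi identity, and then take the trace.

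First I would set up the full torsion tensor. Since $G_2 \subset \SO(7)$ and $\abs{\varphi}$ is constant, for each vector field $X$ the Levi-Civita derivative $\nabla_X\varphi$ lies in $\Lambda^3_7$ (the intrinsic torsion takes values in $T^*M \otimes \mathfrak{g}_2^\perp$ with $\mathfrak{g}_2^\perp \cong \Lambda^2_7 \cong T^*M$). Hence there is a tensor $T \in \Gamma(T^*M \otimes T^*M)$ with $\nabla_a \varphi_{bcd} = T_a{}^e \psi_{ebcd}$. Decomposing $T$ under $T^*M \otimes T^*M = (\Lambda^0 \oplus \Sym^2_0) \oplus (\Lambda^2_7 \oplus \Lambda^2_{14})$ and using Lemma \ref{formdecom7}, the four blocks are precisely the torsion forms of Proposition \ref{torsion7}: the trace is $\tau_0$, the traceless symmetric part is $\tau_3 \in \Omega^3_{27}$, and the two skew parts are $\tau_1 \in \Omega^2_7 \cong \Omega^1$ and $\tau_2 \in \Omega^2_{14}$. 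I would pin down this dictionary, together with the normalisation constants, by antisymmetrising $\nabla\varphi$ and $\nabla\psi$ to recover $d\varphi$ and $d\psi$ and matching against Proposition \ref{torsion7}.

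Next, I would differentiate $\nabla_a \varphi_{bcd} = T_a{}^e \psi_{ebcd}$ once more and antisymmetrise in the first two indices. On the left the Ricci identity expresses $[\nabla_a,\nabla_b]\varphi_{cde}$ as a contraction of the Riemann curvature with $\varphi$; on the right one gets $\nabla_{[a}T_{b]c}$ plus a quadratic term $T*T$ contracted with $\psi$. This is the $G_2$-Bianchi identity. Contracting it with $\varphi$ and $\psi$ through the standard $G_2$ contraction identities (such as $\varphi_{abc}\varphi_{dbc}=6g_{ad}$ together with the $\varphi\psi$ and $\psi\psi$ identities) isolates the Ricci tensor as a first-order operator in $T$ plus a quadratic expression in $T$.

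Finally I would take the metric trace. By the contraction identities, every first-order term in $\nabla T$ either antisymmetrises away or collapses to the single divergence $d^*\tau_1$, and the quadratic terms reassemble, once sorted by irreducible type, into $\tau_0^2$, $\abs{\tau_1}^2$, $\abs{\tau_2}^2$ and $\abs{\tau_3}^2$ with the stated coefficients. The main obstacle is purely the bookkeeping: carrying the $G_2$ contraction identities consistently through the quadratic terms and maintaining a fixed normalisation for the $\tau_i$, since the numerical coefficients $42, 12, 30, -\tfrac12, -\tfrac12$ are highly convention-sensitive. Thus the real content of the proof is verifying that the representation-theoretic projections annihilate every first-order term except $d^*\tau_1$ and reproduce exactly these constants.
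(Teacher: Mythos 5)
The paper does not actually prove this lemma: it is quoted verbatim from Bryant \cite[Eq.\ (4.28)]{Bry05}, so there is no internal argument to compare against. Your plan is a correct reconstruction of the computation behind that citation, and it follows the route later systematised by Karigiannis (full torsion tensor $T$ defined by $\nabla_a\varphi_{bcd}=T_a{}^e\psi_{ebcd}$, the $G_2$-Bianchi identity obtained from the Ricci identity, contraction with $\varphi$ and $\psi$ to isolate the Ricci tensor, then the metric trace); this is equivalent to, though organised differently from, Bryant's own moving-frames/representation-theoretic derivation. Two small points. First, when you differentiate $\nabla\varphi=T\cdot\psi$ a second time, the quadratic term comes from $T\cdot\nabla\psi$, and since $\nabla\psi$ is itself linear in $T$ contracted with $\varphi$, the $T\ast T$ term appears contracted with $\varphi$ rather than with $\psi$; this does not affect the structure of the argument. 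Second, your derivation would yield $30\abs{\tau_1}^2$, which silently corrects a typo in the statement as printed ($30\abs{\tau_1}$). The only caveat is that what you have written is a plan rather than a proof: the ``bookkeeping'' you defer --- fixing the normalisation constants in the dictionary between $T$ and $(\tau_0,\tau_1,\tau_2,\tau_3)$ and carrying the contraction identities through the quadratic terms --- is exactly where the coefficients $42$, $12$, $30$, $-\tfrac12$, $-\tfrac12$ come from, and is the bulk of the work. But every step of the plan is sound and is known to go through.
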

Finally, we have an explicit formula for the linearisation of Hitchin's duality map in terms of irreducible representations. 
\begin{proposition}[{\cite[Lemma 20]{Hitchin00}, \cite[Sect. 6]{Bry05}}] \label{linear7}
Given $\psi\in \Omega^4(M)$ defining a $G_2$-structure, consider $\chi= \chi_1+\chi_7+\chi_{27} \in \Omega^4$ of small $C^0$-norm so that $\psi+\chi$ is still a stable $4$-form. Then the image $\widehat{\chi}$ of $\chi$ under the linearisation of Hitchin’s duality map at $\psi$ is
    $$ \frac{d}{dt}\left(\widehat{ \psi + t \chi}\right)\coloneqq \mathcal{J}(\chi)= \frac{3}{4}*\chi_1 + *\chi_7 - *\chi_{27}\;.$$
Similarly, the metric $g_\phi$ changes by 
$$\delta g_\phi =  \frac{1}{2} \chi_1 + \frac{1}{2}i_\phi^{-1}\chi_{27}\;,$$
where $i_\phi^{-1}: \Omega^3_{27} \rightarrow \Gamma( Sym^2_0(TM))$ is the inverse of the smooth extension of the map defined in Lemma \ref{formdecom7}.
\end{proposition}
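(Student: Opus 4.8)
\emph{Strategy.} The identity is purely pointwise and algebraic, so I would fix a point and regard $\J$ and $\delta g_\phi$ as $G_2$-equivariant linear maps between the relevant fibres. The plan is to first reduce each of them to a short list of undetermined scalars using Schur's lemma, and then to fix those scalars by differentiating the $\GL(7)$-equivariance of Hitchin's duality map along one representative endomorphism from each irreducible summand of $\mathfrak{gl}(7)$.

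\emph{Reduction to scalars.} First I would record that, with Hitchin's normalization of the invariant volume, the Hitchin dual of the $G_2$ four-form coincides with its Hodge dual, $\widehat{\psi}=*\psi=\phi$; this follows from Euler's relation \eqref{eulerfactor} together with $\psi\wedge*\psi=\abs{\psi}^2\dvol$. Next, both $\J\colon\Lambda^4\to\Lambda^3$ and $\delta g_\phi\colon\Lambda^4\to\Sym^2$ are $G_2$-equivariant, being the linearizations at the $G_2$-fixed form $\psi$ of $\GL(7)$-equivariant maps. By Lemma \ref{formdecom7} we have $\Lambda^4\cong\Lambda^3\cong\mathbf{1}\oplus\mathbf{7}\oplus\mathbf{27}$ and $\Sym^2\cong\mathbf{1}\oplus\mathbf{27}$, each irreducible occurring with multiplicity one. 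Schur's lemma then forces $\J=\lambda_1*\oplus\lambda_7*\oplus\lambda_{27}*$ for scalars $\lambda_i$, taking $*$ as the reference isomorphism $\Lambda^4_i\to\Lambda^3_i$; and it forces $\delta g_\phi$ to annihilate the $\chi_7$ component (there is no $\mathbf 7$ in $\Sym^2$) and to act by a scalar on each of $\chi_1,\chi_{27}$. This already accounts for the shape of both formulas, so that everything is reduced to computing a handful of constants.

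\emph{Computing the constants.} To evaluate them I would differentiate the equivariance identity $\J(A_*\psi)=A_*(\widehat{\psi})=A_*(*\psi)$ along $A\in\mathfrak{gl}(7)$, where $A_*$ is the action on forms of \eqref{endaction}, and split $\mathfrak{gl}(7)=\langle\mathrm{Id}\rangle\oplus\Sym^2_0\oplus\mathfrak{so}(7)$ with $\mathfrak{so}(7)=\mathbf 7\oplus\mathfrak{g}_2$, treating one representative per summand. For $A=\mathrm{Id}$, the scaling direction spanning $\mathbf 1$, the action $\mathrm{Id}_*$ is multiplication by $-4$ on $\Lambda^4$ and by $-3$ on $\Lambda^3$, so $\J(\psi)=\tfrac34*\psi$ and $\lambda_1=\tfrac34$; the parallel homogeneity count shows $g_\phi$ has degree $\tfrac12$ in $\psi$, yielding the coefficient $\tfrac12$ on $\chi_1$. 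For $A\in\mathbf 7\subset\mathfrak{so}(7)$, the endomorphism $A$ is skew, so $A_*$ is an infinitesimal isometry and commutes with $*$, whence $\J(A_*\psi)=A_*(*\psi)=*(A_*\psi)$ and $\lambda_7=1$, while $A$ being Killing gives $\delta g_\phi=0$ on $\mathbf 7$, exactly as required.

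\emph{The main obstacle.} The one genuinely computational input concerns $A\in\Sym^2_0=\mathbf{27}$, and is the identity that $A_*$ \emph{anticommutes} with $*$ for traceless symmetric $A$. I would verify this in an orthonormal eigenbasis $e_1,\dots,e_7$ of $A$ with eigenvalues $a_i$, $\sum_i a_i=0$: the operator $A_*$ acts on a basis $k$-form $e_I$ by the scalar $-\sum_{i\in I}a_i$, and on $*e_I=\pm e_{I^{c}}$ by $+\sum_{i\in I}a_i$, the sign flip being precisely the trace-free condition $\sum_{i\in I^{c}}a_i=-\sum_{i\in I}a_i$. Feeding this into the equivariance identity gives $\J(A_*\psi)=A_*(*\psi)=-*(A_*\psi)$, hence $\lambda_{27}=-1$, and the induced metric variation $\delta g_\phi(A_*\psi)=-2A$ converts, under the isomorphism $i_\phi$ of Lemma \ref{formdecom7}, into the stated coefficient $\tfrac12$ on $\chi_{27}$. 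I expect the real difficulty to lie not in this identity but in the bookkeeping that turns the clean scalars $(\tfrac34,1,-1)$ and the metric factors into the exact constants of the statement: the sign convention in \eqref{endaction}, the normalization of $i_\phi$ and of Hitchin's volume, and the factor relating $A_*\psi$ to $\chi_{27}$ must all be tracked consistently, exactly as in the six-dimensional analogue of Proposition \ref{linear6}.
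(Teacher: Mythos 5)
The paper does not actually prove this proposition; it is quoted verbatim from Hitchin and Bryant, so your argument stands or falls on its own. For the formula for $\J$ your route is sound and complete, and it is a genuinely different (more conceptual) derivation than Bryant's explicit computation in the cited source: differentiating the $\GL(7)$-equivariance $\widehat{F^*\psi}=F^*\widehat{\psi}$ along $F_t=\exp(-tA)$ gives $\J(A_*\psi)=A_*\phi$; the splitting $\mathfrak{gl}(7)=\langle\mathrm{Id}\rangle\oplus\Sym^2_0\oplus\mathbf{7}\oplus\mathfrak{g}_2$ together with openness of the orbit (so $A\mapsto A_*\psi$ surjects onto $\Lambda^4$ with kernel $\mathfrak{g}_2$, hitting $\Lambda^4_1,\Lambda^4_{27},\Lambda^4_7$ summand by summand, by Schur) reduces everything to your three evaluations $\mathrm{Id}_*\psi=-4\psi$, commutation of $A_*$ with $*$ for skew $A$, and anticommutation for traceless symmetric $A$. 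The anticommutation lemma, which you correctly isolate as the only real computation, is proved correctly in the eigenbasis, and $(\lambda_1,\lambda_7,\lambda_{27})=(\tfrac34,1,-1)$ follows.

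The gap is exactly in the ``bookkeeping'' you deferred for the metric formula, and it does not resolve the way you assert. Your own (correct) computations give, along the path $\exp(-tA)^*$ with $A\in\Sym^2_0$: $\chi_{27}=A_*\psi$ and $\delta g=-2A^\flat$. Now convert using the paper's definition $i_\phi(S)=S_*\phi$ from Lemma \ref{formdecom7} and Eq. \eqref{endaction}: anticommutation gives $*\chi_{27}=-A_*\phi=-i_\phi(A)$, hence $A=-i_\phi^{-1}(*\chi_{27})$ and therefore
$$\delta g \;=\; -2A^\flat \;=\; 2\,\bigl[i_\phi^{-1}(*\chi_{27})\bigr]^\flat\;,$$
i.e.\ coefficient $2$, not $\tfrac12$ (note also that $i_\phi^{-1}$ can only be applied to the $4$-form $\chi_{27}$ after a Hodge star, a type mismatch already present in the statement). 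The constant $\tfrac12$ is Bryant's, and it is attached to his identification $\mathbf{j}_\phi(\gamma)(v,w)=*\bigl((v\lrcorner\phi)\wedge(w\lrcorner\phi)\wedge\gamma\bigr)$ of $\Omega^3_{27}$ with $\Sym^2_0$, which satisfies $\mathbf{j}_\phi\circ i_\phi=-4\,\mathrm{Id}$ on $\Sym^2_0$ (check it on a diagonal traceless $A$ against the standard $\phi$), so $\mathbf{j}_\phi=-4\,i_\phi^{-1}$ on $\Omega^3_{27}$. In other words, with the paper's literal normalisation of $i_\phi$ the correct statement is $\delta g_\phi=\tfrac12\chi_1+2\,i_\phi^{-1}(*\chi_{27})$, and your claim that $-2A$ ``converts into the stated coefficient $\tfrac12$'' silently absorbs this factor of $-4$; as written it is false. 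To close the argument you must either carry out the conversion with Bryant's $\mathbf{j}_\phi$ as the identification (recovering his $\tfrac12$), or record that the proposition's constant is inconsistent with the paper's own definition of $i_\phi$ — the discrepancy lies in the statement's normalisation, not in your method.
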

The following lemma is a useful consequence of this result in combination with the Lie derivative. 
\begin{lemma}\label{hitchintrick}
Let $\J(\chi)=*\big(\frac{3}{4}\pi_1+\pi_7-\pi_{27}\big)(\chi)$ from $\Omega^4$ to $\Omega^3$ defined above. For any $X \in \Omega^1$, we have 
\begin{subequations}
    \begin{align}
            \lie_X \phi &=  \J \lie_X \psi \\
            \lie_X g &=  \frac{1}{2} \pi_1 (\lie_X \phi) g+  \frac{1}{2}i^{-1}_\phi \left( \pi_{27}(\lie_X \phi) \right)\;.
    \end{align}
\end{subequations}
\end{lemma}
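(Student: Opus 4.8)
The plan is to deduce both identities from a single principle, the \emph{Hitchin trick}: the Hitchin duality map $\widehat{(\cdot)}:\Omega^4_+\to\Omega^3$ and the metric map $\phi\mapsto g_\phi$ are each defined by a pointwise, $\GL$-equivariant algebraic formula, so both commute with pullback by diffeomorphisms. Writing $\Phi_t$ for the flow generated by $X$ and differentiating such an equivariance relation at $t=0$, the chain rule converts a Lie derivative of the \emph{output} into the linearisation of the map applied to the Lie derivative of the \emph{input}. The two relevant linearisations are precisely those recorded in Proposition \ref{linear7}, so the whole argument reduces to bookkeeping with that proposition's formulae.

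For the first identity I would use naturality of the duality map, $\widehat{\Phi_t^*\psi}=\Phi_t^*\widehat{\psi}=\Phi_t^*\phi$ for all $t$, and differentiate at $t=0$. The right-hand side yields $\lie_X\phi$, while the left-hand side, by the chain rule together with Proposition \ref{linear7} (which identifies the linearisation of $\widehat{(\cdot)}$ at $\psi$ with $\J$), yields $\J(\lie_X\psi)$. Since $\tfrac{d}{dt}\big|_0\Phi_t^*\psi=\lie_X\psi$ and $\Phi_0=\mathrm{id}$, so that the linearisation is taken exactly at $\psi$, this gives $\lie_X\phi=\J\lie_X\psi$ immediately.

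For the second identity I would run the same argument on the metric: from $g_{\Phi_t^*\phi}=\Phi_t^*g_\phi$, differentiating gives $\lie_X g = Dg|_\phi(\lie_X\phi)$, and it remains to identify $Dg|_\phi$ with the stated right-hand side. I would obtain this from the metric-variation formula of Proposition \ref{linear7} (phrased in terms of a $4$-form variation $\chi=\lie_X\psi$) and then convert to the $3$-form variation $\lie_X\phi$ using the first identity. Because $\J$ acts blockwise by $\tfrac34*$, $*$, $-*$ on the three summands of the $\Lambda^\bullet_1\oplus\Lambda^\bullet_7\oplus\Lambda^\bullet_{27}$ decomposition, the $\pi_1$ and $\pi_{27}$ components of $\lie_X\psi$ and $\lie_X\phi$ are proportional (using $*^2=1$ on forms in dimension $7$), and the $\pi_7$ component drops out since it spans the kernel of $Dg|_\phi$ — geometrically, the $\SO(7)/G_2$ directions that move $\phi$ without changing $g_\phi$. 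Substituting these proportionalities into Proposition \ref{linear7} produces a formula in $\pi_1(\lie_X\phi)$ and $\pi_{27}(\lie_X\phi)$ alone.

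The genuine work — and the only place an error could creep in — is confirming that the numerical coefficients collapse to $\tfrac12$ in both terms. This requires tracking three identifications that the statement uses implicitly: $\Lambda^3_1\cong\Lambda^0\cong\langle g\rangle$ for the conformal part, the isomorphism $i_\phi:\Sym^2_0\to\Lambda^3_{27}$ for the trace-free part, and the Hodge star relating $\Lambda^4_l$ to $\Lambda^3_l$. I would verify the constants on each irreducible type separately — testing the conformal coefficient against the degree-$2/3$ homogeneity of $\phi\mapsto g_\phi$, and the $\Lambda_{27}$ coefficient against the definition of $i_\phi$ — after which both lines follow.
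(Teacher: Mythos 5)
Your proposal is correct and coincides with the paper's own (implicit) proof: the paper states this lemma without argument, as a consequence of Proposition \ref{linear7} "in combination with the Lie derivative", which is precisely the diffeomorphism-equivariance-plus-chain-rule reasoning you give (naturality of the pointwise $\GL$-equivariant duality and metric maps, differentiated along the flow of $X$). Your flagged coefficient check is the only real content beyond this, and your homogeneity test is the right way to pin down the identifications $\Lambda^3_1\cong\Lambda^0\cong\langle g\rangle$ and $\Lambda^4_{27}\cong\Lambda^3_{27}$ (via $*$) that the paper's statement leaves implicit.
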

\subsubsection*{Nearly parallel $G_2$-identities and Dirac operator}
Nearly parallel $G_2$-structures enjoy a similar Hodge decomposition as Theorem \ref{hodgedecom6} for nearly Kähler structures. First, we list some useful identities for the exterior differential in terms of the irreducible representations.
\begin{proposition}\label{xfd7}
Let $f\in C^\infty$, $X\in \Omega^1$, $\beta_0\in \Omega^2_{14}$ and $\gamma_0\in \Omega^3_{27}$. We have 
\begin{multicols}{2}
    \begin{enumerate}[label=(\roman*)]
    \item $dX=\frac{1}{3}{\curl(X)} \lrcorner \phi + \pi_{14}(dX)$,
    \item $\curl(\curl(X))= d^*dX +4\curl(X)$,
    \item $\pi_7(d\beta_0) = \frac{1}{4} d^*\beta_0 $,
    \item $\pi_7(d^*\gamma_0)=\frac{4}{3}\pi_7(d\gamma_0)$ and 
    \end{enumerate}
\end{multicols}
\begin{enumerate}[label= (\roman*)] \setcounter{enumi}{4}
    \item $d*(X\wedge \phi)= \frac{4}{7}(d^*X)\psi +(\frac{1}{2}\curl(X)+X)\wedge \phi +2*i_\phi( \lie_X g)$.       
\end{enumerate}
\end{proposition}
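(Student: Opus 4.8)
The plan is to derive all five identities from the two structural equations of a nearly parallel $G_2$-structure, namely $d\phi = 4\psi$ and $d\psi = 0$, which follow from Proposition \ref{torsion7} upon setting $\tau_0 = 1$ and all remaining torsion forms to zero (indeed, applying $d$ to $d\phi = 4\psi$ forces $d\psi = 0$ automatically). Beyond these, the only inputs are the pointwise $G_2$ contraction and wedge identities of Lemma \ref{formdecom7} and \cite[Lemma 4.37]{SW10}: the projector characterisations $*(\phi\wedge\beta) = 2\beta$ on $\Omega^2_7$ and $*(\phi\wedge\beta) = -\beta$ on $\Omega^2_{14}$, the vanishings $\phi\wedge\phi = 0$ and $\Omega^3_{27}\wedge\phi = 0 = \Omega^4_{27}\wedge\phi$, and the normalisations $*((Y\lrcorner\phi)\wedge\psi) = 3Y$ and $*((Y\lrcorner\psi)\wedge\phi) = \pm 4Y$ for the two copies of $\Omega^1$ sitting inside $\Omega^2$ and $\Omega^3$. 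Identity (i) is purely algebraic and holds for any $G_2$-structure; the other four genuinely use $d\psi = 0$ or $d\phi = 4\psi$.

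For (i) I would write $dX = \pi_7(dX) + \pi_{14}(dX)$ and wedge with $\psi$: since $\Omega^2_{14}\wedge\psi = 0$ by Lemma \ref{formdecom7}, only the $\Omega^2_7$ part survives, so that $\curl X = *(dX\wedge\psi) = *(\pi_7(dX)\wedge\psi)$. Writing $\pi_7(dX) = Y\lrcorner\phi$ and applying $*((Y\lrcorner\phi)\wedge\psi) = 3Y$ gives $\curl X = 3Y$, hence $\pi_7(dX) = \tfrac13(\curl X)\lrcorner\phi$. Identity (iii) runs along the same lines but differentiates: for $\beta_0\in\Omega^2_{14}$ one has $*\beta_0 = -\beta_0\wedge\phi$, so $d*\beta_0 = -d\beta_0\wedge\phi + \beta_0\wedge d\phi = -d\beta_0\wedge\phi$, using $d\phi = 4\psi$ together with $\beta_0\wedge\psi = 0$. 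Since $\Omega^3_1\wedge\phi$ and $\Omega^3_{27}\wedge\phi$ vanish, only $\pi_7(d\beta_0)$ contributes to $d*\beta_0$, and the wedge identity for $\Omega^3_7$ produces the factor $\tfrac14$ after applying $d^* = \pm*d*$.

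Identities (ii) and (iv) are of Weitzenböck type. For (ii) I would first note that $*\,\curl X = dX\wedge\psi$, so that $d(*\,\curl X) = dX\wedge d\psi = 0$ shows $\curl X$ is coclosed; expanding $\curl(\curl X) = *(d\,\curl X\wedge\psi)$ via (i) and the Hodge Laplacian then produces $d^*dX$, with the coefficient $4$ entering precisely through $d\phi = 4\psi$. For (iv), given $\gamma_0\in\Omega^3_{27}$, I would differentiate the defining relation $\gamma_0\wedge\phi = 0$ and use $*\gamma_0\in\Omega^4_{27}$ to compare the $\Omega^2_7$-component of $d^*\gamma_0 = \pm*d*\gamma_0$ with the $\Omega^4_7$-component of $d\gamma_0$; representation theory forces these to be proportional, and tracking the $G_2$ wedge constants together with $d\phi = 4\psi$ yields the ratio $\tfrac43$.

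Identity (v) is the most involved and is where I expect the main obstacle. Using the Hodge-star identity $*(X\wedge\phi) = -X\lrcorner\psi$ and Cartan's formula with $d\psi = 0$, one gets $d*(X\wedge\phi) = -d(X\lrcorner\psi) = -\lie_X\psi$, so the claim reduces to the irreducible decomposition of $\lie_X\psi$ in $\Omega^4_1\oplus\Omega^4_7\oplus\Omega^4_{27}$. The $\Omega^4_1$ part is fixed by the trace of $\lie_X g$, which is proportional to $d^*X$ and yields the $\tfrac47(d^*X)\psi$ term; the $\Omega^4_7$ part is read off against $\phi$ and reassembled as $(\tfrac12\curl X + X)\wedge\phi$ using (i); and the $\Omega^4_{27}$ part is matched to $2*i_\phi(\lie_X g)$ through Lemma \ref{hitchintrick}, which relates $\pi_{27}(\lie_X\phi)$ to the traceless part of $\lie_X g$. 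The delicate point throughout, and especially here, is the consistent bookkeeping of the dimension-$7$ Hodge-star signs and the normalisations of the contraction identities: the constants $\tfrac47$ and $\tfrac12$ in (v) leave no slack, so the decomposition of $\lie_X\psi$ and its matching with $\lie_X g$ via $i_\phi$ must be carried out with care.
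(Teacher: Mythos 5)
Your overall plan---deriving everything from $d\phi=4\psi$, $d\psi=0$ and the pointwise $G_2$-wedge identities---is essentially the approach the paper takes: the paper proves only (iv) explicitly and defers (i), (ii), (iii), (v) to differentiation of the identities in \cite[Lemma 4.37]{SW10}, Lemma \ref{hitchintrick} and the coordinate computations of \cite{SS20}. Your treatments of (i), (iii) and (v) are correct and in that same spirit; in particular, reducing (v) to the type decomposition of $\lie_X\psi$, matched against $\lie_X g$ through Lemma \ref{hitchintrick}, is exactly the intended argument. However, two steps do not go through as written. For (ii), expanding $\curl(\curl X)=*(d\,\curl X\wedge\psi)$ ``via (i)'' is circular: applying (i) to $\curl X$ gives $d\,\curl X\wedge\psi=\frac13\big((\curl\curl X)\lrcorner\phi\big)\wedge\psi=*\curl\curl X$, a tautology in which $d^*dX$ never appears. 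The working route runs the other way: use (i) and the wedge identities to write $*dX=\curl X\wedge\psi-dX\wedge\phi$, then differentiate, using $d\psi=0$ and $d\phi=4\psi$, to obtain $d*dX=*\curl\curl X-4*\curl X$, which is (ii) after applying $*$.

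For (iv)---the one identity the paper proves in full---your route is genuinely different from the paper's and has a gap at the decisive point. Differentiating $\gamma_0\wedge\phi=0$ only yields $\pi_1(d\gamma_0)=0$ (that is the $\pi_1$ computation in Corollary \ref{coclosed-type}, not (iv)), and ``representation theory forces these to be proportional'' requires more than you state: you need that $\Lambda^1$ occurs with multiplicity one in $\Lambda^1\otimes\Lambda^3_{27}$ (so the two symbols are proportional), that $\operatorname{Hom}_{G_2}(\Lambda^3_{27},\Lambda^1)=0$ (so the nearly parallel torsion contributes no zeroth-order correction), and then an evaluation on an explicit element to fix the constant $\tfrac43$; ``tracking wedge constants'' does not by itself produce it. The paper instead argues globally: pair $\pi_7(d\gamma_0)$ against an arbitrary $1$-form $X$ via $X\wedge\phi$ (producing the factor $4$), integrate by parts using $d(X\lrcorner\psi)=\lie_X\psi$, replace $\lie_X\psi=\J^{-1}\lie_X\phi$ by Lemma \ref{hitchintrick} (where $\J^{-1}$ acts as $-1$ against $\gamma_0\in\Omega^3_{27}$), expand $\lie_X\phi$ by Cartan's formula (the $X\lrcorner d\phi$ term dies against $\gamma_0$ by type), and integrate by parts back to $3\int_M\langle X,\pi_7(d^*\gamma_0)\rangle$. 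Either adopt that short pairing argument or complete the Schur-type argument as indicated; as it stands, your (iv) is a plausible plan rather than a proof.
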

\begin{proof}
The identities follow from differentiating the identities in \cite[Lemma 4.37]{SW10}, Lemma \ref{hitchintrick} and some algebraic yoga. A proof using coordinates of these identities can be found in \cite{SS20}. We only provide a coordinate-free proof of $(iv)$ using
Lemma \ref{hitchintrick}. For $X\in \Omega^1$, we have
\begin{align*}
4\int_M\langle X, \pi_7(d\gamma_0) \rangle\vol &= \int_M\langle X \wedge \phi, d\gamma_0 \rangle \vol= -\int_M d\gamma_0 \wedge (X \lrcorner \psi)= - \int_M \gamma_0 \wedge \lie_X \psi  \\
&=- \int_M \gamma_0 \wedge \J^{-1}\left(\lie_X \phi \right)
= \int_M \langle \gamma_0, d(X \lrcorner\phi) \rangle + \langle \gamma_0, X \lrcorner d\phi \rangle \vol \\
&=\int_M \langle d^*\gamma_0 , *(X \wedge \psi) \rangle \vol = 3\int_M\langle X, \pi_7(d^*\gamma_0)\rangle \vol \;,
\end{align*}
where $\J^{-1}$ just acted as $(-1)$ since $\gamma_0\in\Omega^3_{27}$. Since $X$ was arbitrary, the claim now follows.
\end{proof}
The identities $(iii)$ and $(iv)$ yield the following useful observation. 
\begin{corollary} \label{coclosed-type}
    Let $\beta_0 \in \Omega^2_{14}$ coclosed. Then $d\beta_0 \in \Omega^3_{27}$. Similarly, for $\gamma_0 \in \Omega^3_{27}$ coclosed,  $d\gamma_0 \in \Omega^4_{27}$.
\end{corollary}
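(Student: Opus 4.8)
The plan is to expand $d\beta_0$ in the decomposition $\Omega^3 = \Omega^3_1 \oplus \Omega^3_7 \oplus \Omega^3_{27}$ and to show that the $\Omega^3_1$ and $\Omega^3_7$ components vanish, working throughout in the nearly parallel setting, where the torsion is concentrated in $\tau_0 = 1$ and hence $d\phi = 4\psi$ and $d\psi = 0$ by Proposition \ref{torsion7}. The $\Omega^3_7$ component is immediate: identity $(iii)$ of Proposition \ref{xfd7} gives $\pi_7(d\beta_0) = \tfrac14 d^*\beta_0$, which vanishes since $\beta_0$ is coclosed.

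For the $\Omega^3_1 = \langle\phi\rangle$ component I would argue pointwise. Since $\pi_1(d\beta_0)$ is determined by $\langle d\beta_0, \phi\rangle\,\vol = d\beta_0 \wedge *\phi = d\beta_0 \wedge \psi$, it suffices to show this seven-form vanishes. The key input is the wedge-product characterisation of $\Omega^2_{14}$ in Lemma \ref{formdecom7}, namely $\beta_0 \wedge \psi = 0$ as an identity of forms. Differentiating this identity and using $d\psi = 0$ yields $0 = d(\beta_0\wedge\psi) = d\beta_0\wedge\psi + \beta_0\wedge d\psi = d\beta_0\wedge\psi$. Hence $\pi_1(d\beta_0) = 0$, and combined with the previous step $d\beta_0 \in \Omega^3_{27}$.

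The second statement follows the same template in degree four. The $\Omega^4_7$ component vanishes by identity $(iv)$ of Proposition \ref{xfd7}, $\pi_7(d^*\gamma_0) = \tfrac43\pi_7(d\gamma_0)$, together with coclosedness of $\gamma_0$. For the $\Omega^4_1 = \langle\psi\rangle$ component I would compute $\langle d\gamma_0, \psi\rangle\,\vol = d\gamma_0 \wedge *\psi = d\gamma_0\wedge\phi$; now $\gamma_0 \in \Omega^3_{27}$ satisfies \emph{both} $\gamma_0\wedge\phi = 0$ and $\gamma_0\wedge\psi = 0$ by Lemma \ref{formdecom7}, so differentiating $\gamma_0\wedge\phi = 0$ and using $d\phi = 4\psi$ gives $d\gamma_0\wedge\phi = \gamma_0\wedge d\phi = 4\,\gamma_0\wedge\psi = 0$. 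Thus $\pi_1(d\gamma_0) = 0$ and $d\gamma_0 \in \Omega^4_{27}$.

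The computation is elementary and entirely pointwise; there is no genuine analytic obstacle. The one point to watch is that the characterisations of $\Omega^2_{14}$ and $\Omega^3_{27}$ used here are the \emph{pointwise} wedge conditions of Lemma \ref{formdecom7}, so that differentiating them is legitimate and produces pointwise vanishing rather than a mere integral identity, and that the nearly parallel relations $d\psi = 0$ and $d\phi = 4\psi$ are genuinely in force (as they are, since Proposition \ref{xfd7} is stated in this setting).
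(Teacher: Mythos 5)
Your proof is correct and follows essentially the same route as the paper: the $\pi_7$ components are killed by identities $(iii)$ and $(iv)$ of Proposition \ref{xfd7} together with coclosedness, and the $\pi_1$ components vanish by applying the Leibniz rule to the pointwise wedge identities $\beta_0\wedge\psi=0$ and $\gamma_0\wedge\phi=0$ from Lemma \ref{formdecom7}, using the nearly parallel relations $d\psi=0$ and $d\phi=4\psi$. Your version is in fact slightly more careful than the paper's displayed computation (which drops the factor $4$ in $\gamma_0\wedge d\phi=4\gamma_0\wedge\psi$, harmlessly, since that term vanishes anyway).
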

\begin{proof}
 It only remains to verify that $\pi_1(d\beta_0)=0$ and $\pi_1(d\gamma_0)=0$. Indeed, by the Leibniz rule and Lemma \ref{formdecom7}, we have
 $$ 7\pi_1(d\beta_0) = d\beta_0 \wedge \psi = d(\beta_0 \wedge \psi) =0 ~~~~~~~~~~~~~~~~~~~~ 7 \pi_1(d\gamma_0)  =  d\gamma_0 \wedge \phi = d(\gamma_0 \wedge \phi) + \gamma_0 \wedge \psi =0 \;. ~~ \qedhere$$
\end{proof}
Let us now investigate the adapted Hodge decomposition. We do this by studying a suitably twisted Dirac operator, following the same strategy as Foscolo in \cite{Fos17}. In \cite{SS20}, Dwivedi and Singhal also use twisted Dirac operators to obtain a Hodge-like decomposition. The twisted Dirac here is different, and we obtain a different Hodge decomposition, which is more suitable for our purposes. 

Recall the choice of a $G_2$-structure is equivalent to the choice of a spin structure, together with the choice of a unit spinor. By the work of Bär\cite{Bar93}, the nearly parallel $G_2$ condition can be rephrased as the unit spinor $\Phi$ satisfying the real Killing spinor condition:
\begin{equation} \label{kspin7}
    \nabla_X \Phi = \frac{1}{2} X \cdot \Phi\;,
\end{equation}
where $\cdot$ denotes Clifford multiplication and $\nabla$ is the connection induced by the Levi-Civita connection on the spinor bundle. 

In terms of $G_2$-representations, we identify the real spinor bundle $\slashed{S}$ with $ \Lambda^0 \oplus \Lambda ^1$, where the isomorphism is given by 
$(f, X) \mapsto f\Phi + X \cdot \Phi$.
The Dirac operator $\slashed{D}$ under this isomorphism becomes
\begin{subequations}
\begin{align}
    \slashed{D}(f\Phi) &= -\frac{7}{2}f\Phi +\nabla f \cdot \Phi\;,\\
    \slashed{D}(X \cdot\Phi) &= \sum_{i=1}^7 e_i \nabla_{e_i}X \cdot \Phi - X \cdot \Phi - X \cdot \slashed{D}\Phi = dX \cdot \Phi + (d^*X) \Phi + \frac{5}{2}X \cdot \Phi \nonumber \\ &= d^*X\Phi + \left(\curl(X) +\frac{5}{2} X\right)\cdot \Phi  \;,   
\end{align}
\end{subequations}\;
where we used the identities from Proposition \ref{xfd7} and the fact that, the Clifford multiplication of  2-form $\beta=Y \lrcorner \phi + \beta_0$ is given by $\beta \cdot \Phi =  3 Y \cdot \Phi$ (cf. \cite[Sect. 4.2]{Kar08}).

Now, consider the operator
\begin{align*}
    \check{D}: \Omega^3_1 \oplus \Omega^3_7 & \rightarrow \Omega^4_1 \oplus \Omega^4_7 \\
    \gamma = ( f\phi, 2X \lrcorner \phi) & \mapsto \big( \pi_{1} (d \gamma) ,  \pi_{7} (d \gamma) \big) \;.
\end{align*}
Using the identities in Proposition \ref{xfd7}, we identify $\check{D}$ with the operator $D: \Omega^0 \oplus \Omega^1 \rightarrow \Omega^0 \oplus \Omega^1$ 
$$ D (f, X) =\big( \frac{8}{7}d^*X + 4f, df + \curl(X) +2X \big)\;.$$
First, notice that $D$ is an elliptic self-adjoint operator since $D$ and $\slashed{D}$ coincide up to rescaling and a self-adjoint term of order zero. We compute its kernel.
\begin{proposition}\label{kerDirac7}
Let $(M^7,\phi)$ be a complete nearly parallel $G_2$–manifold that is not isometric to the round 7–sphere. Then $\ker(D)=\{ X\in \Omega^1(M) \st \lie_X \psi=0 \}$. 
\end{proposition}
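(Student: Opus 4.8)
The plan is to translate membership in $\ker(D)$ into a pair of first-order equations, compare them with the type decomposition of $\lie_X\psi$, and then close the remaining gap with two global arguments (Obata's theorem and a Bochner identity). Throughout I use that a complete nearly parallel $G_2$-manifold is Einstein with $\operatorname{Ric}=6g$, hence compact by Myers' theorem, so that Hodge theory and integration by parts are available.

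First I would record that $D(f,X)=0$ is equivalent to the system
\[
\tfrac{8}{7}d^*X+4f=0, \qquad df+\curl(X)+2X=0 .
\]
Since $M$ is nearly parallel, $d\psi=0$, so Cartan's formula gives $\lie_X\psi=d(X\lrcorner\psi)$, and as $X\lrcorner\psi=\star(X\wedge\phi)$ Proposition \ref{xfd7}(v) applies. This decomposes $\lie_X\psi$ into its $\Omega^4_1$, $\Omega^4_7$ and $\Omega^4_{27}$ parts, namely $\tfrac{4}{7}(d^*X)\psi$, $(\tfrac12\curl(X)+X)\wedge\phi$ and $2\star i_\phi(\lie_X g)$. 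Hence $\lie_X\psi=0$ if and only if $d^*X=0$, $\curl(X)=-2X$ and $\lie_X g=0$; in particular such an $X$ solves the system above with $f=0$, which immediately gives the inclusion $\{X:\lie_X\psi=0\}\subseteq\ker(D)$.

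For the reverse inclusion the first task — and the only place where the hypothesis is used — is to show $f=0$. Applying $d^*$ to the second equation and using that $d^*\circ\curl=0$ (because $\star\curl(X)=dX\wedge\psi$ and $d(dX\wedge\psi)=dX\wedge d\psi=0$), together with $d^*X=-\tfrac72 f$ from the first equation, yields the scalar equation $\Delta f=7f$. Since $\operatorname{Ric}=6g=(\dim M-1)g$, the Lichnerowicz–Obata theorem gives $\lambda_1(\Delta)\ge 7$ with equality only on the round $S^7$; as $M$ is assumed not to be the round sphere, the eigenvalue $7$ does not occur, so $f\equiv 0$. With $f=0$ the two equations reduce to $d^*X=0$ and $\curl(X)=-2X$, which already kill the $\Omega^4_1$ and $\Omega^4_7$ components of $\lie_X\psi$, so it remains only to prove $\lie_X g=0$. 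Feeding $\curl(X)=-2X$ into Proposition \ref{xfd7}(ii) gives $d^*dX=12X$, hence $\Delta X=12X=2\operatorname{Ric}(X)$. Writing $\nabla X=S+A$ for its symmetric and antisymmetric parts and combining the Weitzenböck formula $\Delta=\nabla^*\nabla+\operatorname{Ric}$ with $\norm{dX}_{L^2}^2=\langle\Delta X,X\rangle_{L^2}$ and $\operatorname{tr}S=-d^*X=0$, I obtain
\[
\norm{S}_{L^2}^2=\tfrac12\norm{dX}_{L^2}^2-\int_M\operatorname{Ric}(X,X)=6\norm{X}_{L^2}^2-6\norm{X}_{L^2}^2=0,
\]
so $S=\tfrac12\lie_X g=0$ and therefore $\lie_X\psi=0$, completing the reverse inclusion.

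The main obstacle is precisely this reverse inclusion $\ker(D)\subseteq\{X:\lie_X\psi=0\}$, which is genuinely global and splits into the two delicate points above: extracting the scalar eigenvalue equation and invoking Obata to force $f=0$ (where the exclusion of the round sphere is essential), and then upgrading the algebraic curl-eigenvalue condition $\curl(X)=-2X$ to the Killing condition $\lie_X g=0$ via the Bochner cancellation, in which the eigenvalue $12$ conspires with $\operatorname{Ric}=6g$ to make the symmetric part of $\nabla X$ vanish identically. The remaining verifications — the precise constants in the type decomposition of $\lie_X\psi$ and the identity $d^*\circ\curl=0$ — are routine consequences of Proposition \ref{xfd7} and $d\psi=0$.
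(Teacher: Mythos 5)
Your proof is correct and follows essentially the same route as the paper's: derive $\Delta f = 7f$ from the kernel equations and invoke Obata to force $f=0$, feed $\curl(X)=-2X$ into Proposition \ref{xfd7}(ii) to get $\Delta X = 12X$, conclude $X$ is Killing via the Bochner--Weitzenböck argument, and finish with the type decomposition of Proposition \ref{xfd7}(v). The only differences are that you make explicit two points the paper leaves implicit (the identity $d^*\circ\curl=0$, which justifies passing from the kernel equations to $\Delta f = 7f$, and the integral Bochner identity showing the symmetric part of $\nabla X$ vanishes), and that you also verify the easy inclusion $\{X \st \lie_X\psi=0\}\subseteq\ker(D)$, which the paper's proof omits.
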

\begin{proof}
Let $(f, X) \in \ker(D)$. Then 
$$ d^*X = -\frac{7}{2}f ~~~~~~~~~~~~~~~~~~~~~~~~ df= -\curl(X)-2X\label{l72}\;. $$
Acting by $d^*$ on the right equation and combing with the left one,  we arrive at
$$ \Delta f = - 2 d^*X = 7 f\;.$$
By Obata's theorem, $f=0$ under the assumption that $(M^7, \phi)$ is not isometric to $ (S^7, g_{round})$. It remains to show that a vector satisfying $\curl(X)=-2X$ must preserve the $G_2$-structure. By $(ii)$ in Proposition \ref{xfd7}, we have 
$$\Delta X = d^*dX = \curl(\curl(X))- 4 \curl (X) =  12 X \;.$$
The Bochner-Weitzenböck identity on 1-forms implies $X$ is a Killing field, since $\operatorname{Ric}_g=6g$. Thus, by $(v)$ in Proposition \ref{xfd7}
$$\lie_X \psi =  d( X\lrcorner \psi) = -\frac{4}{7} (d^*X)\psi - \Big(\frac{1}{2}\curl(X) +X \Big) - 2 * i_\phi( \lie_X \g) = 0 \;. ~~ \qedhere $$
\end{proof}
\begin{remark}
For the round 7–sphere, the kernel of $D$ consists of elements of the form $(f, X - \nabla f)$, where $X$ satisfies $\lie_X \phi = 0$ and $f$ satisfies $\Delta f= 7f$.
\end{remark}
\begin{remark}
Contrary to the case of nearly Kähler structure, there might exist Killing fields that do not preserve the $G_2$-structure, for instance, when the associated metric cone has holonomy strictly contained in $\Spin(7)$.
\end{remark}
\begin{theorem}
    Let $(M^7, \psi)$ be a nearly parallel $G_2$-manifold that is not isometric to the round $7$-sphere, and denote by $\mathrm{K}$ the set of vector fields of $(M^7,\psi)$ that satisfy $\lie_X \psi=0$. Then the following holds.
    \begin{enumerate}[label=(\roman*)]
        \label{hodgedecom7}
        \item $\Omega^4 = \{X \wedge \varphi\st X \in \mathrm{K}\} \oplus d\Omega^3_{1\oplus 7} \oplus \Omega^4_{27}$. More concretely, for every $\chi\in \Omega^4$, there exists unique $X\in \mathrm{K}$, $Y\in \mathrm{K}^\perp$, $f\in \Omega^0$ and $\chi_0\in \Omega^4_{27}$ such that
        $$\chi = (X \wedge \varphi) + d( f \varphi + *(Y\wedge \varphi) + \chi _0\;, $$
        where $\mathrm{K}^\perp$ is the $L^2$-complement to $\mathrm{K}$.
        \item There is an $L^2$-orthogonal decomposition $\Omega^4_{exact} = d\Omega^3_{1\oplus 7} \oplus \Omega^4_{27, exact}$.
    \end{enumerate}
\end{theorem}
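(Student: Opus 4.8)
The plan is to transplant Foscolo's argument for Theorem \ref{hodgedecom6} to the degree-four setting, with the elliptic operator $\check{D}=\pi_{1\oplus 7}\circ d\colon \Omega^3_{1\oplus 7}\to \Omega^4_{1\oplus 7}$ playing the role of the twisted Dirac operator. Under the spinorial identifications $\Omega^3_{1\oplus 7}\cong \Omega^0\oplus\Omega^1\cong \Omega^4_{1\oplus 7}$ this $\check{D}$ becomes the operator $D$ computed above, which is elliptic and self-adjoint (it agrees with $\slashed{D}$ up to rescaling and a zeroth order self-adjoint term). Hence on the appropriate Hölder completions $\check{D}$ has closed range and one obtains an $L^2$-orthogonal Fredholm splitting $\Omega^4_{1\oplus 7}=\operatorname{coker}(\check{D})\oplus \operatorname{im}(\check{D})$. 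The first summand is $\ker(\check{D}^*)$, and since $\check{D}^*$ also corresponds to the self-adjoint $D$, Proposition \ref{kerDirac7} identifies it, for $M$ not the round sphere, with $\{X\wedge\phi\st X\in \mathrm{K}\}$ (the image of $\ker D=\mathrm{K}$ under $\Omega^4_7\cong\Omega^1$, $X\mapsto X\wedge\phi$); the second summand is $\pi_{1\oplus 7}(d\,\Omega^3_{1\oplus 7})$. The non-round-sphere hypothesis enters exactly here, as it is what forces the $\Omega^0$-component of $\ker D$ to vanish via the Obata step in Proposition \ref{kerDirac7}.

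Granting this, existence in (i) is pure assembly. Writing $\chi=\pi_{1\oplus 7}\chi+\pi_{27}\chi$ and applying the splitting to $\pi_{1\oplus 7}\chi$ gives $\pi_{1\oplus 7}\chi=X\wedge\phi+\check{D}\gamma$ with $X\in\mathrm{K}$ and $\gamma\in(\ker\check{D})^\perp$. Parametrising $\gamma=f\phi+*(Y\wedge\phi)$, the constraint $\gamma\perp\ker\check{D}=\{*(Y\wedge\phi)\st Y\in\mathrm{K}\}$ leaves $f\in\Omega^0$ free and forces $Y\in\mathrm{K}^\perp$; using $\check{D}\gamma=\pi_{1\oplus 7}(d\gamma)=d\gamma-\pi_{27}(d\gamma)$ one gets $\chi=X\wedge\phi+d\gamma+\chi_0$ with $\chi_0:=\pi_{27}\chi-\pi_{27}(d\gamma)\in\Omega^4_{27}$. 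For uniqueness, suppose such a sum vanishes; applying $\pi_{1\oplus 7}$ leaves $X\wedge\phi+\check{D}\gamma=0$ with the two terms lying in the orthogonal summands $\operatorname{coker}\check{D}$ and $\operatorname{im}\check{D}$, so each vanishes: $X=0$, and $\gamma\in\ker\check{D}$ gives $f=0$ and $Y\in\mathrm{K}$, whence $Y=0$ by $Y\in\mathrm{K}^\perp$, and finally $\chi_0=0$.

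For (ii), I would first kill the $\mathrm{K}$-component of an exact form. Since $*(X\wedge\phi)=X\lrcorner\psi$ (up to sign) and $d\psi=0$ by Proposition \ref{torsion7}, we have $d^*(X\wedge\phi)=\pm* d(X\lrcorner\psi)=\pm*\lie_X\psi=0$ for $X\in\mathrm{K}$, so $X\wedge\phi$ is coclosed. Pairing the decomposition of an exact $\chi$ with $X\wedge\phi$, the $d\gamma$ term drops out after integration by parts and the $\chi_0$ term drops out pointwise ($\Omega^4_{27}\perp\Omega^4_7$), leaving $\|X\wedge\phi\|_{L^2}^2=\langle\chi,X\wedge\phi\rangle_{L^2}=0$, hence $X=0$ and $\chi=d\gamma+\chi_0$ with $\chi_0=\chi-d\gamma\in\Omega^4_{27,exact}$. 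The $L^2$-orthogonality $d\Omega^3_{1\oplus 7}\perp\Omega^4_{27,exact}$ then reduces, via $\langle d\gamma,\chi_0\rangle_{L^2}=\langle\gamma,d^*\chi_0\rangle_{L^2}$, to $\pi_{1\oplus 7}(d^*\chi_0)=0$. The type-one part vanishes for every $\chi_0\in\Omega^4_{27}$ since $\langle d^*\chi_0,f\phi\rangle_{L^2}=\langle\chi_0,df\wedge\phi+4f\psi\rangle_{L^2}=0$ using $\Omega^4_{27}\perp\Omega^4_{1\oplus 7}$; the type-seven part vanishes by the Hodge-star dual of the identity $\pi_7(d^*\gamma_0)=\tfrac43\pi_7(d\gamma_0)$ of Proposition \ref{xfd7}(iv), applied to $\gamma_0=*\chi_0\in\Omega^3_{27}$, which yields $\pi_7(d^*\chi_0)\propto\pi_7(d\chi_0)=0$ because an exact form is closed. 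Thus $d^*\chi_0\in\Omega^3_{27}$, giving the orthogonality, and directness follows from the uniqueness in (i).

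The main obstacle is the first paragraph: making the Fredholm theory rigorous on the correct Hölder completions and, above all, correctly identifying $\operatorname{coker}(\check{D})$ with the geometric space $\{X\wedge\phi\st X\in\mathrm{K}\}$ — this is where both the analysis and the spinorial bookkeeping of Proposition \ref{kerDirac7} (and the non-round-sphere hypothesis) are genuinely used. By comparison, the type computations underlying (ii) are short consequences of Proposition \ref{xfd7} and Corollary \ref{coclosed-type}, exactly as Proposition \ref{useful6-2} functions in the nearly Kähler case.
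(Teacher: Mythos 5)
Your proposal is correct and follows essentially the same route as the paper: part (i) via the Fredholm alternative for the elliptic operator $\check{D}$, identified with $\slashed{D}$ up to zeroth-order terms, combined with the kernel computation of Proposition \ref{kerDirac7}; part (ii) via coclosedness of $X\wedge\phi$ for $X\in\mathrm{K}$ and the type considerations of Proposition \ref{xfd7} and Corollary \ref{coclosed-type}. The paper's own proof is simply a much terser version of this same argument, so your write-up mainly supplies the details it leaves implicit.
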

\begin{proof}
Statement (i) follows from the identification of $\check{D}$ with $\slashed{D}$ up to 0th order terms and Proposition \ref{kerDirac7}. Now, (ii) follows from (i). Notice that, for $X \in \mathrm{K}$, we have $d^*(X\wedge \phi) =- * \lie_X \psi =0 $, so $\{X \wedge \varphi\st X \in \mathrm{K}\}$ is $L^2$-orthogonal to exact forms and pointwise to $\Omega^4_{27}$. Orthogonality follows from Corollary \ref{coclosed-type}, in that if $\chi_0$ is closed, then $d^*\chi_0\in \Omega^3_{27}$.
\end{proof}
\subsection{The nearly parallel $G_2$ Hitchin functional}
For the remainder of this section, we assume that $M^7$ is a closed spinnable manifold, so it admits a $G_2$-structure. In $7$-dimensions, we have a non-degenerate quadratic form on $\Omega^4_{exact}$, defined as follows:
\begin{align*}
    Q: \Omega^4_{exact} & \rightarrow \R \\
        [d\gamma] & \mapsto  \int_M d\gamma \wedge \gamma  \;,
\end{align*}
induced by the isomorphism $(\Omega^4_{exact})^*\cong \Omega^4_{exact}$. 

As before, there is an open subset $\mathcal{V}= \Omega^4_{+}\cap \Omega^4_{exact}$ consisting of stable and exact 4-forms. Given a stable 4-form $\psi$ and a fixed orientation, we will consider the associated volume form $\vol_\psi = \frac{4}{7} \psi \wedge \widehat{\psi}$ and denote its Hitchin dual $\phi = \widehat{\psi}$. Comparing with the identity $\phi \wedge \psi= 7 \vol_g$, we get $\vol_g = \frac{1}{4}\vol_\psi$. In \cite{Hitchin01}, Hitchin introduced the functional 
\begin{align}\label{hitfun7}
    \mathcal{P}: \mathcal{V} & \rightarrow \R \nonumber \\
        \psi & \mapsto  \int_M \vol_\psi- 2 Q(\psi)\;,
\end{align}
and showed that its critical points correspond to nearly parallel $G_2$-structures. Indeed, we have
\begin{proposition}\label{EL7}
The Euler--Lagrange equation of $\mathcal{P}$ is $d \phi - 4 \psi =0 \;.$ In particular, critical points are nearly parallel $G_2$-structures. The gradient of $\mathcal{P}$ induced by $Q$ is given by $\partial_t \psi = d\phi -4\psi$. 
\end{proposition}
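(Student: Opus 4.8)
The plan is to compute the first variation of $\mathcal{P}$ directly, treating the two terms separately, and then to rewrite the outcome as a single pairing against the variation so that the Euler--Lagrange equation and the gradient can both be read off at once. Throughout, a variation $\chi=\delta\psi$ lies in the tangent space $T_\psi\mathcal{V}=\Omega^4_{exact}$ (stability being an open condition), so I may write $\chi=d\xi$ for some $\xi\in\Omega^3$, and the closedness of $M$ permits free integration by parts.

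First I would handle the volume term. By the defining property \eqref{volumederivative} of the Hitchin dual, with $w=\psi$ and $\widehat{\psi}=\phi$, the pointwise variation is $\delta\vol_\psi=\chi\wedge\phi$, so $\delta\int_M\vol_\psi=\int_M\chi\wedge\phi$. For the quadratic term, writing $\psi=d\gamma$ and $\chi=d\xi$ and expanding $Q(\psi+t\chi)=\int_M(\psi+t\chi)\wedge(\gamma+t\xi)$ gives $\delta Q=\int_M(\chi\wedge\gamma+\psi\wedge\xi)$; a single integration by parts, using $d\gamma=\psi$ on a closed manifold, shows $\int_M\psi\wedge\xi=\int_M\chi\wedge\gamma$, so $\delta Q=2\int_M\chi\wedge\gamma$, reflecting that $Q$ is quadratic. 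Collecting both contributions yields $\delta\mathcal{P}=\int_M\chi\wedge\phi-4\int_M\chi\wedge\gamma$, where the factor $4$ combines the explicit $2$ in $-2Q$ with the factor $2$ from the polarisation of $Q$.

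Next I would move the exterior derivative off $\chi=d\xi$ in both terms by Stokes, obtaining $\int_M\chi\wedge\phi=\int_M\xi\wedge d\phi$ and $\int_M\chi\wedge\gamma=\int_M\xi\wedge\psi$, hence $\delta\mathcal{P}=\int_M\xi\wedge(d\phi-4\psi)=\int_M(d\phi-4\psi)\wedge\xi$. Since $d\phi$ and $\psi$ are both exact, $d\phi-4\psi\in\Omega^4_{exact}$, and the last integral is precisely the value of the symmetric bilinear form underlying $Q$ on the pair $(d\phi-4\psi,\chi)$. This identifies $d\phi-4\psi$ as the gradient of $\mathcal{P}$ with respect to $Q$, giving the gradient flow $\partial_t\psi=d\phi-4\psi$. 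Because $Q$ is non-degenerate on $\Omega^4_{exact}$, the vanishing of $\delta\mathcal{P}$ for every exact $\chi$ forces $d\phi-4\psi=0$, the claimed Euler--Lagrange equation.

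Finally I would identify critical points with nearly parallel $G_2$-structures. Comparing $d\phi=4\psi$ with the torsion decomposition $d\phi=4\tau_0\psi+3\tau_1\wedge\phi+*\tau_3$ of Proposition \ref{torsion7} forces $\tau_0=1$ and $\tau_1=\tau_3=0$; differentiating once more gives $d\psi=\tfrac14 d^2\phi=0$, and feeding $\tau_1=0$ into $d\psi=4\tau_1\wedge\psi+\tau_2\wedge\phi$ together with the identity $\beta\wedge\phi=-*\beta$ for $\beta\in\Omega^2_{14}$ (from Lemma \ref{formdecom7}) yields $\tau_2=0$ as well. Thus all torsion is concentrated in $\tau_0=1$, which is exactly the nearly parallel $G_2$ condition. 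The only genuinely delicate points are the bookkeeping of signs and the factor $4$ in the first variation, and the clean rewriting of $\delta\mathcal{P}$ as a $Q$-pairing; once these are in place, the non-degeneracy of $Q$ disposes of the critical-point characterisation with no further analysis.
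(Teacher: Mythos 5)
Your proof is correct and follows essentially the same route as the paper: compute $\delta \vol_\psi = \chi \wedge \phi$ via the Hitchin dual, polarise $Q$, and integrate by parts to arrive at $\delta \mathcal{P} = \int_M \xi \wedge (d\phi - 4\psi)$, from which the gradient and the Euler--Lagrange equation follow by non-degeneracy of the pairing. Your explicit torsion argument ($\tau_0 = 1$, $\tau_1 = \tau_2 = \tau_3 = 0$ via Proposition \ref{torsion7} and Lemma \ref{formdecom7}) is a detail the paper leaves implicit, since $d\phi = 4\psi$ together with exactness of $\psi$ is precisely the nearly parallel condition.
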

\begin{proof}
Let $\delta\psi= \chi= d\eta \in \Omega^4_{exact}$. Then $\delta \vol_\psi = \chi \wedge \widehat{\psi} = \chi \wedge \phi$ and so
$$\delta \mathcal{L}= \int_M \chi \wedge \phi -4 \int_M \eta \wedge \psi= \int_M \eta \wedge (d \phi -4\psi)\;. \qedhere$$
\end{proof}
We again have a nice geometric interpretation of the gradient flow in terms of the induced metric. 
\begin{proposition}\label{G2coneflow}
 Fix an orientation on $M$ and let $\psi(t)$, $t\in(a,b)$, be a family of stable exact $4$-forms and $g(t)$ the associated metric. Then the induced metric $\overline{g}=dr^2+r^2g\left((\log(r)\right)$ on $(e^a,e^b) \times M$ has holonomy contained in $\Spin(7)$ if and only if $\psi(t)$ satisfies the gradient flow equation of $\mathcal{P}$.
\end{proposition}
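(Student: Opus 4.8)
The plan is to mirror the proof of Proposition~\ref{SU3coneflow}, replacing the induced $G_2$-structure on the six-dimensional cone by a $\Spin(7)$-structure on the eight-dimensional cone, and replacing the condition ``closed and coclosed'' by the single closedness condition. A $G_2$-structure $(\phi,\psi)$ on $M^7$, with $\phi=\widehat{\psi}$, induces a $\Spin(7)$-structure on the cone $(e^a,e^b)\times M$ through the Cayley $4$-form
\[
\Theta = r^3\, dr\wedge \phi + r^4\, \psi ,
\]
where the powers of $r$ are dictated by the orthonormal coframe $\{dr,\, r\,e^i\}$ of $\overline{g}=dr^2+r^2 g$, in exact analogy with the choice $\phi = dr\wedge r^2\omega + r^3\rho$ forced in the six-dimensional setting. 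The first step is to record the pointwise algebraic fact that $\Theta$ is the admissible $\Spin(7)$-form compatible with $\overline{g}$ and the induced orientation; this is the standard statement that the metric cone over a $G_2$-manifold carries a canonical $\Spin(7)$-structure.

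The key structural input, and the reason the $\Spin(7)$ case is cleaner than its $G_2$ counterpart, is Fernández's theorem: a $\Spin(7)$-structure has holonomy contained in $\Spin(7)$ if and only if its defining $4$-form is closed, a closed $\Spin(7)$-form being automatically torsion-free. Whereas Proposition~\ref{SU3coneflow} extracted two evolution equations from $d\phi=0$ together with $d\psi=0$, here $\hol(\overline{g})\subseteq \Spin(7)$ reduces to the single equation $d\Theta=0$, matching the single gradient flow equation $\partial_t\psi = d\phi - 4\psi$ of Proposition~\ref{EL7}.

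It then remains to compute $d\Theta$. Splitting the exterior derivative on the cone as $d = dr\wedge\partial_r + d_M$, using $dr\wedge dr=0$, and using that $\psi$ is exact on $M$ (so $d_M\psi=0$, since $\psi\in\mathcal{V}$), one obtains
\[
d\Theta = r^3\, dr\wedge\bigl(-\,d_M\phi + 4\psi + \partial_t\psi\bigr),
\]
after the substitution $r=e^t$, which converts $r\,\partial_r$ into $\partial_t$ and accounts for the $\log r$ in the statement. Setting $d\Theta=0$ yields $\partial_t\psi = d\phi - 4\psi$, precisely the gradient flow of $\mathcal{P}$, and the converse follows by reading the computation backwards. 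The computation itself is routine; the only genuinely non-routine point is the appeal to Fernández's theorem, which is what permits a single evolution equation to capture the full special-holonomy condition.
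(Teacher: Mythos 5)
Your proposal is correct and follows essentially the same route as the paper: the same cone $4$-form $\Phi = r^3\,dr\wedge\phi + r^4\psi$, the same reduction of $\hol(\overline{g})\subseteq\Spin(7)$ to the single condition $d\Phi=0$ (the paper phrases this as coclosedness being automatic by self-duality, which is the same structural fact you attribute to Fern\'andez), and the same computation of $d\Phi$ yielding $\partial_t\psi = d_M\phi - 4\psi$ under $r=e^t$.
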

\begin{proof}
The condition of $\operatorname{Hol}(g_\phi)\subseteq \Spin(7)$ is equivalent to the 4-form $\Phi=dr\wedge r^3 \phi + r^4\psi$ being closed (and coclosed since it is self-dual). Thus, we get
\begin{equation*}
    0=d\Phi= - dr \wedge r^3 d_{M} \phi + 4 r^3 dr \wedge \psi + r^4 dr \wedge \frac{\partial \psi}{\partial r} ~~~~~~~ \implies  ~~~~~~~ r \frac{\partial \psi}{\partial r}= d_M \phi - 4 \psi\;,
\end{equation*}
where $d_{M}$ is just the restriction of the exterior differential $d$ along $\Lambda ^* T^*{M}$ and we used that $d_{M} \psi=0$. This is precisely the gradient flow equations under the change of variables $r=e^t$. The converse follows. 
\end{proof}
By replacing our Lagrange multiplier from $2$ to $2\lambda$ and considering the limit as $\lambda \rightarrow 0$ of the induced conformal metric $e^{2\lambda t}(dt^2 + g_\Sigma(t))$, we recover the result of Hitchin for $\Spin(7)$ metrics.

Similarly, we compute the second variation of $\mathcal{P}$. 
\begin{proposition}
Let $\chi_1, \chi_2 \in \Omega^4_{exact}$ and $\eta_i$ such that $d\eta_i=\chi_i$. The second variation of $\mathcal{L}$ with respect to $\chi_1, \chi_2$ is 
$$\delta^2 \mathcal{P}= \int_M (d\mathcal{J}\chi_2 - 4\chi_2)\wedge \eta_1\;.$$
In particular, the Hessian of $\mathcal{P}$ with respect to the indefinite metric induced by $Q$ is given by
$$H^\mathcal{P}(\chi)=  d \mathcal{J}\chi -4 \chi \;.$$
\end{proposition}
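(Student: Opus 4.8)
The plan is to obtain $\delta^2\mathcal{P}$ simply by differentiating the first-variation formula of Proposition \ref{EL7} a second time, exploiting that $\mathcal{V}=\Omega^4_+\cap\Omega^4_{exact}$ is an open subset of the vector space $\Omega^4_{exact}$. On such an affine domain the Hessian is just the ordinary second derivative in linear coordinates, so we may fix potentials $\eta_1,\eta_2$ with $d\eta_i=\chi_i$ once and for all and treat the first variation as a genuine function of $\psi$ alone.

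Concretely, Proposition \ref{EL7} writes the first variation in the direction $\chi_1=d\eta_1$ as $\delta\mathcal{P}[\chi_1]=\int_M\chi_1\wedge\phi-4\int_M\eta_1\wedge\psi$, where $\phi=\widehat\psi$. I would then differentiate this expression along a path with $\dot\psi=\chi_2$, keeping $\chi_1$ and $\eta_1$ frozen. The only $\psi$-dependence sits in the Hitchin dual $\phi$ in the first integrand and in $\psi$ itself in the second. By Proposition \ref{linear7} the derivative of the dual is $\delta\phi=\mathcal{J}(\chi_2)$, while trivially $\delta\psi=\chi_2$. This gives $\delta^2\mathcal{P}[\chi_1,\chi_2]=\int_M\chi_1\wedge\mathcal{J}\chi_2-4\int_M\eta_1\wedge\chi_2$.

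The remaining step is bookkeeping with Stokes' theorem. Using $\chi_1=d\eta_1$ and integrating by parts, $\int_M d\eta_1\wedge\mathcal{J}\chi_2=\int_M\eta_1\wedge d\mathcal{J}\chi_2$, the sign being $+$ because $(-1)^{\deg\eta_1}=(-1)^3$ passes to the other side; reordering the resulting $4$-form and $3$-form (which costs $(-1)^{12}=1$) yields the asserted $\int_M(d\mathcal{J}\chi_2-4\chi_2)\wedge\eta_1$. For the Hessian endomorphism one then linearises the $Q$-gradient $\mathrm{grad}_Q\mathcal{P}(\psi)=d\phi-4\psi$ of Proposition \ref{EL7}: differentiating in the direction $\chi$ and again using $\delta\phi=\mathcal{J}\chi$ gives $H^\mathcal{P}(\chi)=d\mathcal{J}\chi-4\chi$; equivalently this is the operator representing $\delta^2\mathcal{P}$ through the symmetric bilinear form underlying $Q$, namely $(\chi_1,\chi_2)\mapsto\int_M\eta_1\wedge\chi_2$.

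There is no deep obstacle here: the computation is elementary once the linear structure of $\mathcal{V}$ is invoked. The only points demanding care are tracking the degree-dependent signs in the two integrations by parts and verifying that the outcome is symmetric in $\chi_1,\chi_2$, as it must be for a genuine Hessian. Symmetry of the first term follows from $\int_M\chi_1\wedge\mathcal{J}\chi_2=\langle\chi_1,(\tfrac34\pi_1+\pi_7-\pi_{27})\chi_2\rangle_{L^2}$ together with the self-adjointness of the $G_2$-projections, and symmetry of the second from the same Stokes identity used above; this provides a useful consistency check on the sign conventions.
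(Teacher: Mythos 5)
Your proof is correct and follows essentially the same route as the paper's own (much terser) argument: differentiate the first-variation formula of Proposition \ref{EL7}, using Proposition \ref{linear7} to replace $\delta\phi$ by $\mathcal{J}\chi$, and then integrate by parts. The closing symmetry check via self-adjointness of the $G_2$-projections is a sensible consistency verification but not required for the result.
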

\begin{proof}
By Proposition $\ref{linear7}$, if $\delta\psi=\chi$, then $\delta \widehat{\psi}= \delta \phi =\mathcal{J}\chi$. Combining this with our formula for the first variation, and integrating by parts, the expression follows. 
\end{proof}
We want to study the spectral properties of $H^\mathcal{P}$. Since the functional $\mathcal{L}$ is invariant under $\operatorname{Diff}(M)$, it is convenient to work on a slice to the orbit of the diffeomorphism group. 

Let $(M^7, \psi)$ be a nearly parallel $G_2$-structure that is not isometric to the round $S^7$ and $\mathcal{O}$ be the orbit of $\operatorname{Diff}_0(M)$ in $ \Omega^4_{exact}$ going through $\psi$. The tangent space to this orbit is spanned by $\lie_X \psi$, for $X\in \mathrm{K}^\perp \subseteq \Omega^1$, the $L^2$-complement of vector fields infinitesimally preserving the $G_2$ metric. 

Using the Hodge decomposition of Theorem \ref{hodgedecom7}, we can parameterise $\chi \in \Omega^4_{exact}$ explicitly by 
$$\chi = \lie_X \psi + d(f\phi) +\chi_0\;;$$
with $f \in \Omega^0$, $X \in \mathrm{K}^\perp$, and $\chi_0 \in \Omega^4_{8,exact}$. In particular, it follows that taking $X=0$ defines a complement to the tangent space of the diffeomorphism action. As before, let  
$$\mathcal{W}=\big\{ d(f\phi) + \chi_0 \big\} \subseteq \Omega^4_{exact}\;,$$
for $f\in \Omega^0$ and $\chi_0\in \Omega^4_{27, exact}$. Taking the appropriate Hölder norm completions, we get
\begin{proposition}[\cite{Nor08} Theorem 3.1.4 and 3.1.7] \label{G2gaugeslice}
There exists a slice to the diffeomorphism group action in $\Omega^3_{exact} \times \Omega^4_{exact}$, whose tangent space is given by $\mathcal{W}$.
\end{proposition}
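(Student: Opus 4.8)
The plan is to obtain the slice as an application of the slice theorem for the action of $\operatorname{Diff}_0(M)$, in the form proved by Norton \cite{Nor08} (Theorems 3.1.4 and 3.1.7), which extends Ebin's classical construction and, crucially, remains valid in the presence of nontrivial isotropy. First I would set up the action precisely. A $G_2$-structure is encoded by the pair $(\phi,\psi)=(\widehat{\psi},\psi)$, and $\operatorname{Diff}_0(M)$ acts diagonally on $\Omega^3_{exact}\times\Omega^4_{exact}$ by pullback, $(\phi,\psi)\mapsto(\Psi^*\phi,\Psi^*\psi)$. Since the two components are tied by the Hitchin duality $\phi=\widehat{\psi}$, the orbit through $(\phi,\psi)$ is the image of the infinitesimal action $X\mapsto(\lie_X\phi,\lie_X\psi)$, and by Lemma \ref{hitchintrick} this image lies in the graph of $\mathcal{J}$ over $\Omega^4_{exact}$. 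Hence the product-space orbit is faithfully parametrised by $\{\lie_X\psi\st X\in\Omega^1\}\subseteq\Omega^4_{exact}$, and the whole problem reduces to the $\psi$-component, on which the infinitesimal action is the linear differential operator $X\mapsto\lie_X\psi$.

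Next I would identify the orbit tangent space and its complement. At a nearly parallel $G_2$-structure the isotropy algebra is $\mathrm{K}=\{X\st\lie_X\psi=0\}$, so $X\mapsto\lie_X\psi$ has kernel $\mathrm{K}$ and image $\{\lie_X\psi\st X\in\mathrm{K}^\perp\}$. The decisive input is Theorem \ref{hodgedecom7}, which furnishes the $L^2$-orthogonal splitting $\Omega^4_{exact}=\{\lie_X\psi\st X\in\mathrm{K}^\perp\}\oplus\mathcal{W}$ with $\mathcal{W}=\{d(f\phi)+\chi_0\st f\in\Omega^0,\ \chi_0\in\Omega^4_{27,exact}\}$; in particular the image of the infinitesimal action is exactly the first summand. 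Passing to the $C^{k,\alpha}$ Hölder completions, standard elliptic theory for the twisted operator $X\mapsto\lie_X\psi$ together with this splitting shows that the range is closed and that $\mathcal{W}$ is a closed complement in the Banach category, while $\mathrm{K}$ is finite-dimensional by the ellipticity of $D$ established in Proposition \ref{kerDirac7}.

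Finally I would integrate the infinitesimal complement into an honest local slice. Here I invoke Norton's theorem: given a smooth $\operatorname{Diff}_0(M)$-action on the Hölder completion whose orbit map at the point has closed range and whose infinitesimal orbit admits the closed complement $\mathcal{W}$, the implicit function theorem produces a local submanifold through the structure, transverse to the orbit, tangent to $\mathcal{W}$, and invariant under the (finite-dimensional) stabiliser generated by $\mathrm{K}$. Elliptic regularity then upgrades the slice across Hölder exponents so that it restricts to the smooth category, and the graph of $\mathcal{J}$ carries it back to the product $\Omega^3_{exact}\times\Omega^4_{exact}$ with the asserted tangent space $\mathcal{W}$. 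I expect the \emph{main obstacle} to be the analytic verification that the nonlinear diffeomorphism action is sufficiently smooth on the Banach completions and that its orbit is locally a closed submanifold despite the nontrivial isotropy $\mathrm{K}$; this is precisely the content of \cite{Nor08}, whose hypotheses are met here thanks to the closed-range property supplied by ellipticity and the Hodge decomposition of Theorem \ref{hodgedecom7}.
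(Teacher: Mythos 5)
Your proposal follows essentially the same route as the paper, which gives no independent argument here: it sets up the infinitesimal complement $\mathcal{W}$ via the Hodge decomposition of Theorem \ref{hodgedecom7}, passes to H\"older completions, and cites Norton's slice theorem (\cite{Nor08}, Theorems 3.1.4 and 3.1.7) to integrate $\mathcal{W}$ into an honest slice. Your additional remarks on the reduction to the $\psi$-component via Lemma \ref{hitchintrick}, closed range, and the finite-dimensional isotropy $\mathrm{K}$ simply make explicit the hypotheses the paper leaves implicit when invoking \cite{Nor08}.
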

Going back to the study the spectral properties of $\mathcal{H}^\mathcal{P}$, we have
\begin{proposition}\label{hess7-1}
Assume $(M^7, \psi)$ is not isometric to the round $7$-sphere. Under the Hodge decomposition, the eigenvalue problem for the Hessian is equivalent to
\begin{subequations}\label{hessdecom7}
\begin{align}
    3f&= (\mu+4)f\;, \\
    \mu X - df &= 0\;, \\
    d*\chi_0 &= -(\mu+4)\chi_0\;.
\end{align}
\end{subequations} 
\end{proposition}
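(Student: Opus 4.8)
The plan is to mirror the argument for the nearly Kähler Hessian in Proposition \ref{hess6-1}: substitute the Hodge parametrisation of Theorem \ref{hodgedecom7} into the eigenvalue equation $H^\mathcal{P}\chi = \mu\chi$, which by the formula $H^\mathcal{P}(\chi)=d\J\chi-4\chi$ is the same as $d\J\chi = (\mu+4)\chi$, compute $d\J\chi$ term by term, and then read off the system using the uniqueness of the decomposition. Concretely I write $\chi = \lie_X\psi + d(f\phi) + \chi_0$ with $X\in\mathrm{K}^\perp$, $f\in\Omega^0$ and $\chi_0\in\Omega^4_{27}$ exact.

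First I would compute $\J\chi$ summand by summand. The Lie term is immediate from Lemma \ref{hitchintrick}, giving $\J\lie_X\psi = \lie_X\phi$; the $\Omega^4_{27}$ term gives $\J\chi_0 = -*\chi_0$ by the definition of $\J$; and for the conformal term I expand $d(f\phi) = df\wedge\phi + 4f\psi$, noting $df\wedge\phi\in\Omega^4_7$ and $4f\psi\in\Omega^4_1$, so that $\J(d(f\phi)) = *(df\wedge\phi) + 3f\phi = (df)\lrcorner\psi + 3f\phi$, using $*\psi=\phi$ and the standard $G_2$ identity $*(df\wedge\phi)=(df)\lrcorner\psi$. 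Applying $d$ and using the nearly parallel relations $d\phi=4\psi$ and $d\psi=0$ then collapses everything: $d\J\lie_X\psi = \lie_X d\phi = 4\lie_X\psi$, while $d\J(d(f\phi)) = d((df)\lrcorner\psi) + 3\,d(f\phi) = \lie_{\nabla f}\psi + 3\,d(f\phi)$, the last equality because $\lie_{\nabla f}\psi = d((df)\lrcorner\psi)$ when $d\psi=0$. For $\chi_0$ closed, $*\chi_0$ is coclosed, so $d*\chi_0\in\Omega^4_{27}$ by Corollary \ref{coclosed-type}. Altogether
\[
d\J\chi = \lie_{4X+\nabla f}\,\psi + 3\,d(f\phi) - d*\chi_0 .
\]

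This expression is already organised into the three summands of Theorem \ref{hodgedecom7}: a Lie term, a conformal term $d(\,\cdot\,\phi)$, and an exact $\Omega^4_{27}$ term. Matching it against $(\mu+4)\chi = (\mu+4)\lie_X\psi + (\mu+4)d(f\phi) + (\mu+4)\chi_0$ and invoking uniqueness of the decomposition, the $\Omega^4_{27}$ summand yields $d*\chi_0 = -(\mu+4)\chi_0$; the conformal summand yields $3f=(\mu+4)f$, since $d(h\phi)=0$ forces $h=0$; and the Lie summand yields $4X+\nabla f = (\mu+4)X$, i.e. $\mu X - df = 0$. The one point that needs care — and the main obstacle — is the Lie-summand matching: I must check that $Z\mapsto\lie_Z\psi$ is injective on $\mathrm{K}^\perp$ and that $\nabla f\in\mathrm{K}^\perp$, so that $4X+\nabla f$ and $(\mu+4)X$ can be equated as vector fields. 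Injectivity is the definition of $\mathrm{K}^\perp$ as the $L^2$-complement of $\mathrm{K}=\ker(\lie_{(\cdot)}\psi)$ (Proposition \ref{kerDirac7}), and $\nabla f\perp\mathrm{K}$ holds because elements of $\mathrm{K}$ are Killing, hence divergence-free, so $\langle\nabla f, Z\rangle_{L^2} = -\int_M f\,\mathrm{div}(Z)\,\vol = 0$; since $4X+\nabla f-(\mu+4)X=\nabla f-\mu X$ then lies in $\mathrm{K}\cap\mathrm{K}^\perp=0$, the identity follows. The remaining subtlety is pure bookkeeping: confirming that $d\J(d(f\phi))$ carries no residual $\Omega^4_{27}$ component beyond what is packaged inside $\lie_{\nabla f}\psi$, which is exactly what the clean identity $d\J(d(f\phi)) = \lie_{\nabla f}\psi + 3\,d(f\phi)$ records.
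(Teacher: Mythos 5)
Your proposal is correct and takes essentially the same route as the paper's proof: substitute the parametrisation $\chi = \lie_X\psi + d(f\phi) + \chi_0$ from Theorem \ref{hodgedecom7}, compute $d\J\chi$ term by term using Lemma \ref{hitchintrick} and the nearly parallel relations $d\phi = 4\psi$, $d\psi = 0$, then match summands by uniqueness of the decomposition (your careful justification that $Z\mapsto\lie_Z\psi$ is injective on $\mathrm{K}^\perp$ and that $\nabla f\in\mathrm{K}^\perp$ is exactly what the paper compresses into ``since the Hodge decomposition is orthogonal''). One caveat: in the paper's stated orientation convention (Bryant's), the identity is $*(X\wedge\phi) = -X\lrcorner\psi$ rather than $+X\lrcorner\psi$ as you assert, so a convention-consistent computation gives $d\J\chi = 4\lie_{X-\frac{1}{4}df}\psi + 3d(f\phi) - d{*}\chi_0$ and hence $\mu X + df = 0$ in place of $\mu X - df = 0$; this sign is immaterial to the spectral conclusions, and the discrepancy is present in the paper itself, whose own computation disagrees with its stated equation by exactly this sign.
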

\begin{proof}
As above, let 
\begin{align*}
    \chi &=\lie_X \psi + d(f\phi) +\chi_0 
\end{align*}
with $f \in \Omega^0$, $X \in \mathrm{K}^\perp$ and $\chi_0 \in \Omega^4_{27,exact}$, in virtue of Theorem \ref{hodgedecom7}. By the definition of $\mathcal{J}$ and Lemma \ref{hitchintrick}, we get
$$\mathcal{J}\chi = \lie_X \phi+ *(df \wedge \psi) +3f\phi - *\chi_0 \;.$$
Now, since the $G_2$-structure is nearly parallel, we get 
\begin{align*}
    d\mathcal{J}\chi &= 4\lie_X \psi -  d(df \lrcorner \psi) +3d(f\phi) -d(*\chi_0) = 4\lie_{X-\frac{1}{4}df} \psi +3d(f\phi) -d(*\chi_0)  \;.
\end{align*}
Now, substituting this in $\mathcal{H}^\mathcal{P}$, and since the Hodge decomposition is orthogonal, we get the required system of equations \eqref{hessdecom7}.
\end{proof}
The case $\mu=0$ corresponds to the nullity of $\mathcal{H}^\mathcal{P}$, i.e. infinitesimal deformations of the nearly parallel $G_2$-structure. As expected, we recover the result of \cite{AS12} on infinitesimal deformations of nearly parallel $G_2$-structures (cf. \cite{NS21}). Notice that our functional approach does not detect the infinitesimal deformations arising from Killing fields that do not preserve the $G_2$-structure. That is, those arising from symmetries of the Sasaki-Einstein or $3$-Sasaki structures (cf. \cite{SS20}).
\subsection{The new $G_2$ Hitchin functional} \label{sectionnewG2}
We want to construct an analogue of the closed Hitchin functional. However, in this case, we cannot exploit any Hamiltonian-like behaviour as in the nearly Kähler case. Instead, we make a proposal imitating Proposition \ref{compareNK_EH}.

Recall that $\mathcal{V}$ is the space of stable exact 4-forms in $M^7$. Given a fixed orientation on $M$, the 4-form defines a $G_2$-structure on $M$, with torsion $d\phi = \tau_0 \psi + * \tau_3$. We define the Hitchin functional 
\begin{align*}
    \mathcal{T}:  \mathcal{V}\rightarrow & \R \\
     \psi  \mapsto  & \int_M \frac{7 \tau_0^2- 5}{4} \vol_\psi \;.
\end{align*}
\begin{proposition}\label{compareG2_EH}
    Let $M^7$ be a spinnable manifold with a fixed orientation, and $G:\mathcal{V}\rightarrow \operatorname{Met}(M)$, mapping the $G_2$-structure to its underlying metric. Consider $\widehat{\mathcal{S}}= G^*(\mathcal{S})$ the pullback of the Einstein--Hilbert action. The Hitchin functional $\mathcal{T}$ satisfies $\mathcal{T} \geq \widehat{\mathcal{S}}$, with equality if and only if the $G_2$-structure is a constant multiple of a nearly parallel $G_2$ metric. 
\end{proposition}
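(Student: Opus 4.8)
The plan is to reduce the comparison of the two functionals to a pointwise comparison of their integrands, expressed through the Gray--Hervella-type torsion of Proposition \ref{torsion7}. The guiding idea, mirroring the nearly Kähler Proposition \ref{compareNK_EH}, is that the coefficients in $\mathcal{T}$ are engineered so that $\mathcal{T}-\widehat{\mathcal{S}}$ collapses to a manifestly nonnegative multiple of $\int_M \abs{\tau_3}^2$.

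First I would exploit that every $\psi\in\mathcal{V}$ is exact, hence closed. By Proposition \ref{torsion7}, $d\psi = 4\tau_1\wedge\psi + \tau_2\wedge\phi$, where the two summands lie in the distinct irreducible pieces $\Omega^5_7$ and $\Omega^5_{14}$. Since $X\mapsto X\wedge\psi$ on $\Omega^1$ is injective and $\beta\mapsto\beta\wedge\phi=-*\beta$ on $\Omega^2_{14}$ (by Lemma \ref{formdecom7}) is injective, $d\psi=0$ forces $\tau_1=0$ and $\tau_2=0$. Thus along $\mathcal{V}$ the torsion is supported in $\tau_0$ and $\tau_3$ only, with $d\phi = 4\tau_0\psi + *\tau_3$.

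Next I would rewrite both functionals as integrals against $\vol_g$. Using $\vol_\psi = 4\vol_g$, the definition of $\mathcal{T}$ becomes $\mathcal{T} = \int_M (7\tau_0^2 - 5)\,\vol_g$. For $\widehat{\mathcal{S}}$ I substitute the scalar curvature of Lemma \ref{scalar_curv_G2}, which with $\tau_1=\tau_2=0$ reduces to $s_g = 42\tau_0^2 - \tfrac12\abs{\tau_3}^2$, into the Einstein--Hilbert action \eqref{EH-def}. The normalisation of \eqref{EH-def} is precisely the one for which the $\tau_0^2$-terms and the constant terms of $\mathcal{T}$ and $\widehat{\mathcal{S}}$ coincide, leaving $\mathcal{T}-\widehat{\mathcal{S}} = c\int_M \abs{\tau_3}^2\,\vol_g$ with $c>0$ (explicitly $c=\tfrac{1}{12}$). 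This gives $\mathcal{T}\geq\widehat{\mathcal{S}}$ at once, with equality if and only if $\tau_3\equiv 0$.

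Finally, for the equality case I would show $\tau_3=0$ forces a rescaled nearly parallel structure. With $\tau_1=\tau_2=\tau_3=0$ we have $d\phi = 4\tau_0\psi$; applying $d$ and using $d\psi=0$ yields $d\tau_0\wedge\psi = 0$, and injectivity of $X\mapsto X\wedge\psi$ gives $d\tau_0=0$, so $\tau_0$ is constant. This constant cannot vanish, since $\tau_0=0$ would make $\phi$ closed and hence $\psi$ harmonic, contradicting exactness of $\psi$. A nonzero constant $\tau_0$ defines a nearly parallel $G_2$-structure up to the homothety $g\mapsto t^2 g$ that normalises $\tau_0$ to $1$, which is exactly a constant multiple of a nearly parallel $G_2$ metric. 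The main obstacle is the bookkeeping of normalising constants: one must check that the coefficients $7,5,4$ defining $\mathcal{T}$, the scalar-curvature coefficients of Lemma \ref{scalar_curv_G2}, and the normalisation of \eqref{EH-def} conspire to leave only the $\abs{\tau_3}^2$-term with a positive sign. The only genuinely geometric input is the equality analysis, via injectivity of wedging with $\psi$ together with the exclusion of the torsion-free case.
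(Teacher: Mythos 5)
Your proof is correct and follows essentially the same route as the paper: both reduce to the scalar curvature formula of Lemma \ref{scalar_curv_G2} with $\tau_1=\tau_2=0$ (forced by exactness of $\psi$), use $\vol_\psi = 4\vol_g$, and conclude $\mathcal{T}-\widehat{\mathcal{S}}=\frac{1}{12}\int_M\abs{\tau_3}^2\,\vol_g\geq 0$. Your equality-case analysis (that $\tau_3=0$ forces $\tau_0$ constant via $d^2\phi=0$, and nonzero since otherwise $\psi$ would be a nonzero exact harmonic form) supplies exactly the details the paper compresses into ``the claim follows.''
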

\begin{proof}
    By Lemma \ref{scalar_curv_G2}, we have 
    $$\widehat{\mathcal{S}}=  \frac{1}{6} \int_M \left(42\tau_0^2 - \frac{1}{2}\abs{\widehat{\tau_2}}^2\right)- 5\vol_g = \int_M 7\tau_0^2 - 5 - \frac{1}{12}\abs{\widehat{\tau_2}}^2 \vol_g\;.$$
    Using the relation $7\vol_g = \phi \wedge \psi = \frac{7}{4}\vol_\psi$, the claim follows.
\end{proof}
Let us study the variations of $\mathcal{T}$. We have
\begin{proposition}
    The Euler--Lagrange equation of $\mathcal{T}$ is
\begin{equation}
    \tau_0\mathcal{J}d\phi+ \frac{1}{2}\mathcal{J}(d\tau_0\wedge \phi) -\frac{7\tau_0^2+5}{4}\phi =0
\end{equation}
    In particular, the critical points of $\mathcal{T}$ are nearly parallel $G_2$-structures, up to orientation. The gradient flow with respect to the quadratic form $Q$ is 
    \begin{equation} \label{npg2flow}
        \partial_t \psi = d\left[\tau_0\mathcal{J}d\phi + \frac{1}{2}\mathcal{J}(d\tau_0\wedge \phi) - \frac{7\tau_0^2+5}{4} \phi \right] \;.
    \end{equation}
\end{proposition}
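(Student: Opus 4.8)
The plan is to compute the first variation of $\mathcal{T}$ directly, read off both the gradient flow and the pointwise Euler--Lagrange density, and then decompose the latter into $G_2$-irreducibles. Write $\chi=\delta\psi\in\Omega^4_{exact}$ for an admissible variation. I will need three ingredients: the variation of the volume $\delta\vol_\psi=\chi\wedge\phi$ (from \eqref{volumederivative}); the linearised duality $\delta\phi=\J\chi$ (Proposition \ref{linear7}); and a formula for $\delta\tau_0$. For the last one, note that on exact $\psi$ the closedness $d\psi=0$ kills $\tau_1$ and $\tau_2$ in Proposition \ref{torsion7}, so $d\phi=4\tau_0\psi+*\tau_3$ and hence $d\phi\wedge\phi=28\tau_0\vol_g=7\tau_0\vol_\psi$ in Bryant's normalisation (so that nearly parallel is $\tau_0=1$, $s_g=42\tau_0^2$, consistent with Proposition \ref{compareG2_EH}). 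Differentiating this identity and using $\delta(d\phi)=d\J\chi$ gives $\delta\tau_0\,\vol_\psi=\tfrac17\big(d(\J\chi)\wedge\phi+d\phi\wedge\J\chi\big)-\tau_0\,\chi\wedge\phi$, which expresses $\delta\tau_0$ entirely through $\chi$.

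Substituting into $\delta\mathcal{T}=\tfrac14\int_M\big(14\tau_0\,\delta\tau_0\,\vol_\psi+(7\tau_0^2-5)\,\chi\wedge\phi\big)$ and collecting the algebraic terms, the two volume contributions combine to $-\tfrac{7\tau_0^2+5}{4}\,\chi\wedge\phi$, while $\delta\tau_0$ leaves terms proportional to $\tau_0\,d(\J\chi)\wedge\phi$ and to $\tau_0\,d\phi\wedge\J\chi$. I would integrate the first of these by parts; together with the second it assembles a single $\tau_0\,\J\chi\wedge d\phi$ term plus a $d\tau_0\wedge\J\chi\wedge\phi$ contribution. The key algebraic input is that $\J$ is self-adjoint for the wedge pairing $(\alpha,\beta)\mapsto\int_M\alpha\wedge\beta$ on $\Omega^4\times\Omega^3$ (checked componentwise from $\J=\tfrac34*\pi_1+*\pi_7-*\pi_{27}$) and commutes with multiplication by the scalar $\tau_0$. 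Using these to move $\J$ off $\chi$, and rewriting $d\tau_0\wedge\J\chi\wedge\phi=-\J\chi\wedge(d\tau_0\wedge\phi)$, every term takes the form $\int_M\chi\wedge(\cdots)$, giving
\[
\delta\mathcal{T}=\int_M\chi\wedge\Big(\tau_0\,\J d\phi+\tfrac12\,\J(d\tau_0\wedge\phi)-\tfrac{7\tau_0^2+5}{4}\,\phi\Big).
\]
Calling the bracket $v$, and noting that $\int_M\chi\wedge v=\int_M\eta\wedge dv$ when $\chi=d\eta$ while $Q$ represents a variation by its primitive $\eta$, the $Q$-gradient of $\mathcal{T}$ is the exact $4$-form $dv$, which is precisely the flow \eqref{npg2flow}.

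To identify the critical points I would decompose $v$ into $G_2$-irreducibles. Using $d\phi=4\tau_0\psi+*\tau_3$ and the eigenvalues of $\J$ on each type ($\tfrac34*$ on $\Omega^4_1$, $-*$ on $\Omega^4_{27}$) gives $\J d\phi=3\tau_0\phi-\tau_3$; and since $X\wedge\phi\in\Omega^4_7$ for every $1$-form $X$ (the unique copy of the $7$-dimensional representation in $\Lambda^4$), one has $\J(d\tau_0\wedge\phi)=*(d\tau_0\wedge\phi)\in\Omega^3_7$. Collecting terms,
\[
v=\tfrac54(\tau_0^2-1)\,\phi-\tau_0\,\tau_3+\tfrac12*(d\tau_0\wedge\phi),
\]
whose three summands lie respectively in $\Omega^3_1$, $\Omega^3_{27}$ and $\Omega^3_7$. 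Hence $v=0$ forces $\tau_0^2=1$, $d\tau_0=0$ and $\tau_3=0$; so $\tau_0\equiv\pm1$ is constant and the only surviving torsion is $\tau_0$, i.e. the structure is nearly parallel $G_2$ up to a reversal of orientation (the sign of $\tau_0$). Conversely, a nearly parallel structure has $\J d\phi=3\phi$ and $d\tau_0=\tau_3=0$, so $v=0$, confirming $v=0$ as the Euler--Lagrange equation.

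I expect the main obstacle to be twofold. First, the constant bookkeeping: the factor conventions for $\tau_0$ must be tracked consistently through $\delta\tau_0$, the integration by parts, and the self-adjointness of $\J$, or the coefficients $\tfrac12$ and $\tfrac{7\tau_0^2+5}{4}$ will not come out right (the relation $d\phi\wedge\phi=7\tau_0\vol_\psi$ is what fixes them). Second, a point requiring care is that varying over exact $\chi$ yields the honest critical condition $dv=0$ (vanishing of the $Q$-gradient), which is a priori weaker than the pointwise equation $v=0$; the cleanest route is to present $v=0$ as the pointwise Euler--Lagrange density, whose irreducible decomposition immediately characterises nearly parallel structures, while the flow itself is driven by the exact form $dv$.
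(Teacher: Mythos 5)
Your proposal is correct and follows essentially the same route as the paper: the same differentiation of $d\phi\wedge\phi=7\tau_0\vol_\psi$ to obtain $\delta\tau_0$ (the paper's Lemma \ref{variation_tau0}), the same substitution, integration by parts and wedge-self-adjointness of $\mathcal{J}$ to reach $\delta\mathcal{T}=\int_M\chi\wedge v$, and the same $G_2$-type decomposition of $v$ (components in $\Omega^3_1$, $\Omega^3_7$, $\Omega^3_{27}$ forcing $\tau_0^2=1$, $d\tau_0=0$, $\tau_3=0$) to identify critical points and read off the flow $\partial_t\psi=dv$. Your closing caveat---that variations constrained to exact forms strictly yield $dv=0$ rather than the pointwise equation $v=0$---flags a subtlety the paper itself passes over silently, and your resolution (treating $v=0$ as the Euler--Lagrange density) is exactly what the paper does implicitly.
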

First, we need the following technical result
\begin{lemma}\label{variation_tau0}
    The variation of $\tau_0$ along $\delta \psi = \chi$ is 
     \begin{equation} \label{variationtau0} \delta \tau_0 \vol_\psi  = \frac{1}{7}\left[d( \mathcal{J}\chi \wedge \phi) + 2 d\phi \wedge \mathcal{J}\chi\right] - \tau_0 \phi \wedge \chi \;. 
     \end{equation}
\end{lemma}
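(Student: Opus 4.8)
The plan is to extract a scalar identity that \emph{defines} $\tau_0$ purely in terms of the forms $\psi$, $\phi=\widehat\psi$, $d\phi$ and the volume $\vol_\psi$, and then to differentiate this identity along $\delta\psi=\chi$, feeding in the two linearisations already available: $\delta\phi=\mathcal{J}\chi$ from Proposition \ref{linear7}, and $\delta\vol_\psi=\chi\wedge\phi$ from the volume-derivative formula \eqref{volumederivative} (recall $\widehat\psi=\phi$). Since $\tau_0$ is the $\Lambda^4_1=\langle\psi\rangle$ component of the torsion, the natural identity to differentiate is the one obtained by pairing $d\phi$ with $\phi$.

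First I would pin down the defining relation. Because $\psi$ is exact, $d\psi=0$, and the decomposition $d\psi=4\tau_1\wedge\psi+\tau_2\wedge\phi$ of Proposition \ref{torsion7} splits across the distinct summands $\Lambda^5_7$ and $\Lambda^5_{14}$; hence $\tau_1=\tau_2=0$ and $d\phi=4\tau_0\psi+*\tau_3$ with $*\tau_3\in\Omega^4_{27}$. Wedging with $\phi$, the $\Omega^4_{27}$ part drops out since $*\tau_3\wedge\phi=0$ (the summand $\Omega^4_{27}$ pairs trivially with $\phi\in\Omega^3_1$, by Lemma \ref{formdecom7}), while $\psi\wedge\phi=7\vol_g=\tfrac74\vol_\psi$. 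This yields the clean scalar identity
$$ d\phi\wedge\phi = 7\,\tau_0\,\vol_\psi\;. $$

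Differentiating this along $\delta\psi=\chi$ and substituting $\delta\phi=\mathcal{J}\chi$ together with $\delta\vol_\psi=\chi\wedge\phi$ gives
$$ 7\,\delta\tau_0\,\vol_\psi + 7\,\tau_0\,\chi\wedge\phi = d(\mathcal{J}\chi)\wedge\phi + d\phi\wedge\mathcal{J}\chi\;. $$
To reach the stated form I would rewrite the first term on the right using the Leibniz rule $d(\mathcal{J}\chi\wedge\phi)=d(\mathcal{J}\chi)\wedge\phi-d\phi\wedge\mathcal{J}\chi$ (the sign coming from $\deg\mathcal{J}\chi=3$, and $\mathcal{J}\chi\wedge d\phi=d\phi\wedge\mathcal{J}\chi$ since $3\cdot4$ is even), so that $d(\mathcal{J}\chi)\wedge\phi=d(\mathcal{J}\chi\wedge\phi)+d\phi\wedge\mathcal{J}\chi$. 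Combining the two $d\phi\wedge\mathcal{J}\chi$ terms, dividing by $7$, and using $\chi\wedge\phi=\phi\wedge\chi$ produces exactly \eqref{variationtau0}.

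The computation is short; the only genuine subtlety is bookkeeping the normalisation — it is the factor-$4$ convention $d\phi=4\tau_0\psi+\cdots$ that makes $d\phi\wedge\phi=7\tau_0\vol_\psi$ and hence the coefficient $\tfrac17$ come out correctly — together with keeping the wedge-product signs straight in the Leibniz step. I expect the main obstacle, such as it is, to be confirming that $\tau_1$ and $\tau_2$ vanish for exact $\psi$ so that the defining identity carries no spurious $\Omega^4_7$ contribution when wedged with $\phi$; everything else is a direct application of the two linearisation formulae and pointwise (Stokes-free) form algebra.
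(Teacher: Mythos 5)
Your proof is correct and follows essentially the same route as the paper: differentiate the identity $d\phi\wedge\phi=7\tau_0\vol_\psi$, substitute $\delta\phi=\mathcal{J}\chi$ and $\delta\vol_\psi=\chi\wedge\phi$, and apply the Leibniz rule. The only difference is that your worry about showing $\tau_1=\tau_2=0$ is unnecessary: the term $3\tau_1\wedge\phi$ in $d\phi$ dies upon wedging with $\phi$ anyway, since $\phi\wedge\phi=0$, so the identity $d\phi\wedge\phi=7\tau_0\vol_\psi$ holds for an arbitrary $G_2$-structure.
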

\begin{proof} Let $\delta \psi =  \chi$. Then $\delta \varphi = \mathcal{J}\chi$ by Proposition \ref{linear7}. Let us compute the variation of $\tau_0$. By definition, we have $d\phi \wedge \phi = 4\tau_0 \psi \wedge \phi = 7 \tau_0 \vol_\psi $. Taking the variation of this identity, we get
    $$
   7 \delta \tau_0 \vol_\psi + 7 \tau_0 \phi \wedge \chi  = d \mathcal{J}\chi \wedge \phi + d\phi \wedge \mathcal{J}\chi\;.
    $$
By the Leibniz rule, the claim follows.
\end{proof}
\begin{proof}[Proof of Proposition]
Using the result above, we have 
    \begin{align}
        \delta \mathcal{T} &= \frac{1}{4}\int_M 14 \tau_0 \delta \tau_0 \vol_\psi + \left(7\tau_0^2- 5\right) \phi \wedge \chi  \nonumber \\
        &= \frac{1}{4}\int_M 4 \tau_0   d\phi \wedge \mathcal{J}\chi -2 d\tau_0 \wedge \mathcal{J}\chi \wedge \phi - \left(7\tau_0^2+ 5 \right) \phi \wedge \chi \nonumber \\
        &=\int_M \chi \wedge \left(\tau_0\mathcal{J}d\phi + \frac{1}{2}\mathcal{J}(d\tau_0\wedge \phi)- \frac{7\tau_0^2+5}{4} \phi \right)\;, \label{1stvariationnewG2}
    \end{align}
and the Euler--Lagrange equation follows. Let's study critical points. Using $d\phi=4 \tau_0 \psi +*\tau_3$, it is clear that $\tau_3=0$ and $\tau_0=C \in \R$. We get an equation for $\tau_0$:
 $$ 12 \tau_0^2 - \left(7\tau^2_0 +5\right) =0\;,$$
 with solutions $\tau_0=\pm 1$. The case $\tau_0=1$ is the nearly parallel $G_2$ condition. For $\tau_0= -1$, we obtain a nearly parallel $G_2$-structure for the reversed orientation. The formula for the gradient flow follows from taking $\chi= d\eta$ and integrating by parts.
\end{proof}
Notice that, unlike the case of nearly Kähler structures, the flow depends explicitly on the torsion $\tau_0$ and its derivatives. In particular, the flow is third order in $\psi$ and thus non-parabolic.

Before studying the second variation, we have the following technical computation
\begin{lemma}
Let $(M, \psi)$ be a nearly parallel $G_2$-structure, and consider a variation $\delta \psi =  \chi =  f\psi +X \wedge \phi +  \chi_0$. We have
\begin{align*}
    \delta \tau_0 & = \frac{1}{7}d^*X - \frac{1}{4} f \;.
\end{align*}
\end{lemma}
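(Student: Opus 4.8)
The plan is to specialise the general variation formula of Lemma~\ref{variation_tau0} to a nearly parallel $G_2$-structure and then expand everything into $G_2$-irreducible pieces. At such a structure $\tau_0=1$ and, since $\tau_1=\tau_3=0$, the torsion equation of Proposition~\ref{torsion7} reduces to $d\phi=4\psi$. Substituting these into \eqref{variationtau0} leaves
\[
\delta\tau_0\,\vol_\psi=\tfrac{1}{7}\,d(\J\chi\wedge\phi)+\tfrac{8}{7}\,\psi\wedge\J\chi-\phi\wedge\chi\;,
\]
so the whole computation reduces to evaluating three wedge products once $\J\chi$ is known.

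First I would compute $\J\chi$ from the linearisation formula of Proposition~\ref{linear7}. Feeding the decomposition $\chi=f\psi+X\wedge\phi+\chi_0$ into $\J=*\big(\tfrac34\pi_1+\pi_7-\pi_{27}\big)$ and using $*\psi=\phi$ together with $*(X\wedge\phi)=-X\lrcorner\psi$, one obtains
\[
\J\chi=\tfrac{3}{4}f\,\phi-X\lrcorner\psi-*\chi_0\;.
\]

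The core of the argument is the evaluation of the three terms via the standard $G_2$ wedge identities of \cite[Lemma 4.37]{SW10}. The facts I would use are: $\phi\wedge\phi=0$ and $\psi\wedge(X\lrcorner\psi)=0$ (the latter from $\iota_X(\psi\wedge\psi)=0$); the normalisations $\phi\wedge\psi=\tfrac{7}{4}\vol_\psi$ and $(X\lrcorner\psi)\wedge\phi=4*X$, the constant $4$ being pinned down by $\iota_X(\phi\wedge\psi)=7*X$ together with $\psi\wedge(X\lrcorner\phi)=3*X$; and, crucially, that $\chi_0$ drops out entirely, because $*\chi_0\in\Lambda^3_{27}$ is orthogonal to $\phi$ and to $\psi$ in the sense of Lemma~\ref{formdecom7}, so $\phi\wedge\chi_0=\psi\wedge(*\chi_0)=(*\chi_0)\wedge\phi=0$. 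With these in hand the middle term gives $\tfrac{8}{7}\psi\wedge\J\chi=\tfrac{3}{2}f\,\vol_\psi$, the last term gives $-\phi\wedge\chi=-\tfrac{7}{4}f\,\vol_\psi$, and the first term collapses to $\tfrac{1}{7}d(-4*X)=-\tfrac{4}{7}\,d*X$.

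Finally I would rewrite $d*X$ through the codifferential: since $d*X=-(d^*X)\vol_g$ and $\vol_g=\tfrac14\vol_\psi$, the first term becomes $\tfrac{1}{7}(d^*X)\vol_\psi$. Adding the three contributions gives $\delta\tau_0\,\vol_\psi=\bigl(\tfrac{1}{7}d^*X-\tfrac14 f\bigr)\vol_\psi$, which is the claim. I expect the main obstacle to be purely bookkeeping: getting the $G_2$ contraction constants right (especially $(X\lrcorner\psi)\wedge\phi=4*X$) and tracking the conversions between $\vol_g$ and $\vol_\psi$, since the scalar part of the answer emerges only through the delicate cancellation $\tfrac32-\tfrac74=-\tfrac14$.
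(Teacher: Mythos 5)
Your proof is correct and follows essentially the same route as the paper: specialise Lemma \ref{variation_tau0} at a nearly parallel structure, expand $\mathcal{J}\chi$ in irreducible components, and reduce everything to the identities $(X\lrcorner\psi)\wedge\phi=4*X$, $\phi\wedge\psi=\tfrac{7}{4}\vol_\psi$, $d*X=-(d^*X)\vol_g$ and the vanishing of all $\Lambda^3_{27}$/$\Lambda^4_{27}$ wedge contributions. The only cosmetic difference is that the paper first rewrites the term $2\,d\phi\wedge\mathcal{J}\chi$ as $2\mathcal{J}(4\psi)\wedge\chi=6\,\phi\wedge\chi$ before expanding, whereas you evaluate $\tfrac{8}{7}\psi\wedge\mathcal{J}\chi$ directly from the decomposition; both lead to the same cancellation $\tfrac{3}{2}-\tfrac{7}{4}=-\tfrac{1}{4}$.
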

\begin{proof}
    From Equation \eqref{variationtau0}, we get
    \begin{align*}
        \delta \tau_0 \vol_\psi& = \frac{1}{7}\big[d(\mathcal{J}\chi \wedge \phi) + 2\mathcal{J}(4\psi)\wedge \chi\big] - \phi\wedge \chi = \frac{1}{7}\left(d(\mathcal{J}\chi \wedge \phi) + 6\phi \wedge \chi\right)  - \phi\wedge \chi\\
        &=\frac{1}{7}\left[d(*(X \wedge \phi) \wedge\phi) -  f \psi \wedge \phi \right] = -\frac{1}{7} \left(4d*X + f \psi \wedge \phi \right) = \left(\frac{1}{7}d^*X - \frac{1}{4} f \right) \vol_\psi\;,
    \end{align*}
    where we used the relation $7 \vol_g = \phi \wedge \psi = \frac{7}{4} \vol_\psi$.
\end{proof}
\begin{proposition}
    The second variation of $\mathcal{T}$ along $\delta\psi _i = \chi_i = f_i\psi + X_i \wedge \phi + (\chi_0)_i$ is given by
    \begin{equation}\label{2ndvariationNK2_eq}
        \delta^2 \mathcal{T}= \int_M  \chi_1 \wedge \left[ \mathcal{J}d \mathcal{J}\chi_2 - 4 \mathcal{J}\chi_2 + \frac{1}{14}* \left[d\left(d^*X_2 - \frac{7}{4} f_2\right)\wedge \phi \right] -\frac{1}{14} \left(d^*X_2 - \frac{7}{4} f_2\right) \phi \right] \;.
    \end{equation}
    In particular, the Hessian with respect to $Q$ is given by
    $$\mathcal{H}^\mathcal{T}(\chi)=  d\left[ \mathcal{J}d \mathcal{J}\chi - 4 \mathcal{J}\chi+\frac{1}{14}* \left[d\left(d^*X - \frac{7}{4} f\right)\wedge \phi \right] -\frac{1}{14} \left(d^*X - \frac{7}{4} f\right) \phi \right]\;.$$
\end{proposition}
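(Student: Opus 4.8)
The plan is to differentiate the first variation \eqref{1stvariationnewG2} once more and evaluate at a critical point. Writing the Euler--Lagrange $3$-form as $E(\psi)=\tau_0\mathcal{J}d\phi+\tfrac12\mathcal{J}(d\tau_0\wedge\phi)-\tfrac{7\tau_0^2+5}{4}\phi$, so that $\delta\mathcal{T}[\chi]=\int_M\chi\wedge E(\psi)$, I would compute the Hessian as the symmetric bilinear form $\delta^2\mathcal{T}[\chi_1,\chi_2]=\int_M\chi_1\wedge\delta_{\chi_2}E$, which at a critical point (where $E=0$) is independent of the chosen extensions of the variations. At such a point the $G_2$-structure is nearly parallel, so $\tau_0=1$, $d\tau_0=0$, $d\phi=4\psi$ and $\mathcal{J}d\phi=3\phi$ (using $\psi\in\Omega^4_1$ and $\mathcal{J}|_{\Omega^4_1}=\tfrac34*$). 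I then linearise $E$ term by term, using $\delta\phi=\mathcal{J}\chi_2$ from Proposition \ref{linear7} and $\delta\tau_0=\tfrac17 d^*X_2-\tfrac14 f_2=:\kappa$ from the preceding lemma.

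The one genuinely non-routine input is the variation of the operator $\mathcal{J}$ itself, which enters through $\tau_0\,\delta(\mathcal{J}d\phi)=(\delta\mathcal{J})(4\psi)+\mathcal{J}(d\mathcal{J}\chi_2)$. I would evaluate $(\delta\mathcal{J})\psi$ by exploiting the homogeneity of the Hitchin dual map $H:\psi\mapsto\widehat\psi=\phi$, which is homogeneous of degree $3/4$; Euler's relation gives the pointwise identity $\mathcal{J}_\psi\psi=\tfrac34\phi$. Differentiating $\mathcal{J}_{\psi+t\chi_2}(\psi+t\chi_2)=\tfrac34 H(\psi+t\chi_2)$ at $t=0$ then yields $(\delta\mathcal{J})\psi+\mathcal{J}\chi_2=\tfrac34\mathcal{J}\chi_2$, hence $(\delta\mathcal{J})\psi=-\tfrac14\mathcal{J}\chi_2$ and $4(\delta\mathcal{J})\psi=-\mathcal{J}\chi_2$. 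This is the step I expect to be the main obstacle, both because it is easy to overlook that $\mathcal{J}$ is itself $\psi$-dependent and because it is exactly what produces the correct coefficient $-4$ in front of $\mathcal{J}\chi_2$.

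Collecting the three linearised contributions gives $\delta_{\chi_2}E=\mathcal{J}d\mathcal{J}\chi_2-4\mathcal{J}\chi_2+\tfrac12\mathcal{J}(d\kappa\wedge\phi)-\tfrac12\kappa\,\phi$. To identify the middle term I would use that $\alpha\wedge\phi\in\Omega^4_7$ for every $1$-form $\alpha$, since $\Omega^4_7=*\Omega^3_7=\{*(X\lrcorner\psi)\}=\{X\wedge\phi\}$ by Lemma \ref{formdecom7}; thus $\mathcal{J}$ restricts to $*$ on $d\kappa\wedge\phi$, so $\tfrac12\mathcal{J}(d\kappa\wedge\phi)=\tfrac{1}{14}*[d(d^*X_2-\tfrac74 f_2)\wedge\phi]$. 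Substituting $\kappa=\tfrac17(d^*X_2-\tfrac74 f_2)$ into the last term reproduces exactly the bracket appearing in \eqref{2ndvariationNK2_eq}.

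Finally, to pass from this bilinear form to the Hessian endomorphism $\mathcal{H}^\mathcal{T}$ associated with $Q$, I would integrate by parts as in the $\mathcal{P}$ case: writing $\chi_1=d\eta_1$ and applying Stokes to $\int_M\chi_1\wedge B(\chi_2)$, where $B(\chi_2)$ denotes the bracketed $3$-form, moves the exterior derivative onto $B$ and gives $\int_M dB(\chi_2)\wedge\eta_1=Q(\chi_1,dB(\chi_2))$ in the sign convention used for $H^\mathcal{P}$. Hence $\mathcal{H}^\mathcal{T}(\chi)=d\,B(\chi)$, which is precisely the stated formula. The remaining checks (the Stokes signs and the vanishing of the $\Omega^4_1$-projection in the various wedges) are routine.
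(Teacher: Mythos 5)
Your proof is correct, but it takes a genuinely different route from the paper's at the one step you correctly identify as the crux, namely the $\psi$-dependence of $\mathcal{J}$. The paper explicitly remarks that varying \eqref{1stvariationnewG2} directly "would require us to understand $\delta\mathcal{J}$" and sidesteps this entirely: using the pointwise self-adjointness of $\mathcal{J}$ with respect to the wedge pairing and $\mathcal{J}^{-1}\phi=\tfrac{4}{3}\psi$, it rewrites the first variation as $\int_M \mathcal{J}\chi\wedge\big(\tau_0 d\phi+\tfrac12 d\tau_0\wedge\phi-\tfrac{7\tau_0^2+5}{3}\psi\big)$, so that $\mathcal{J}$ acts only on the variation $\chi$ and the bracket involves just $\phi,\psi,\tau_0$; differentiating again is then routine, and the potentially troublesome term $(\delta\mathcal{J})\chi_1\wedge[\,\cdots]$ dies because the bracket vanishes at a critical point. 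You instead confront $\delta\mathcal{J}$ head-on, observing that at a nearly parallel point it only ever hits $\psi$ (since $d\phi=4\psi$ and $d\tau_0=0$), and you evaluate $(\delta\mathcal{J})\psi=-\tfrac14\mathcal{J}\chi$ by differentiating Euler's relation $\mathcal{J}_\psi\psi=\tfrac34\widehat{\psi}$ for the degree-$3/4$ homogeneous duality map. This identity and its derivation are correct, and the rest of your bookkeeping checks out: the $\delta\tau_0$ terms combine to $-\tfrac12\delta\tau_0\,\phi$, the coefficient $-4$ splits as $-1$ (from $\delta\mathcal{J}$) plus $-3$ (from the multiplier term), $\Lambda^4_7=\{X\wedge\phi\}$ justifies $\mathcal{J}=*$ on $d(\delta\tau_0)\wedge\phi$, and the integration by parts giving $\mathcal{H}^{\mathcal{T}}=d\circ B$ matches the convention used for $\mathcal{H}^{\mathcal{P}}$. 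In exchange for being slightly longer, your argument is more self-contained and makes the origin of the $-4\mathcal{J}\chi$ term transparent, whereas the paper's rewriting trick is shorter but conceals where that coefficient comes from; your homogeneity identity $(\delta\mathcal{J})\psi=-\tfrac14\mathcal{J}\chi$ is also a reusable fact in its own right.
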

\begin{proof}
Notice that directly taking the variation of \eqref{1stvariationnewG2} would require us to understand $\delta \mathcal{J}$. We avoid this by noticing that we can rewrite $\delta \mathcal{T}$ as
$$\delta \mathcal{T}= \int_M \mathcal{J}\chi \wedge \left(\tau_0 d\phi + \frac{1}{2}(d\tau_0\wedge \phi)- \frac{7\tau_0^2+5}{4} \mathcal{J}^{-1}\phi \right) = \int_M \mathcal{J}\chi \wedge \left(\tau_0 d\phi + \frac{1}{2}(d\tau_0\wedge \phi)- \frac{7\tau_0^2+5}{3} \psi \right).$$
The right hand side can thus viewed as the variation of $\mathcal{T}$ for $\delta \phi = \delta \widehat{\psi} =\mathcal{J}\chi \in \Omega^3$. Thus, 
\begin{align*}
\delta^2 \mathcal{T}& = \int_M \mathcal{J}\chi_1 \wedge \left[\tau_0 d \mathcal{J}\chi_2 + \delta \tau_0 d\phi + \frac{1}{2}(d \delta_0 \tau_0\wedge \phi)-  \frac{14}{3} \tau_0 \delta \tau_0 \psi- \frac{7\tau_0^2+5}{3} \chi_2 \right] \\ 
&=\int_M \mathcal{J}\chi_1 \wedge \left[d \mathcal{J}\chi_2 - \frac{2}{3} \delta \tau_0 \psi + \frac{1}{2}(d \delta \tau_0 \wedge \phi) - \frac{7\tau_0^2+5}{3} \chi_2 \right]  \;.
\end{align*}
Using the lemma, we can rewrite this as \eqref{2ndvariationNK2_eq}. From the definition of $Q$, the expression of the Hessian is straightforward.
\end{proof}
Let us study the Hessian's spectrum. By Proposition \ref{G2gaugeslice}, we can restrict ourselves to the tangent of a slice to the diffeomorphism orbit $\mathcal{W}= \{ d(f\phi) + \chi_0\} \subseteq \Omega^4_{exact}$. We have
\begin{proposition}\label{hessdecomposition_G2}
Let $(M,\psi)$ be a nearly parallel $G_2$-manifold that is not isometric to the round $S^7$. The eigenvalue problem $(\mathcal{H}^\mathcal{T} - \mu):\mathcal{W}\rightarrow \mathcal{W}$ is equivalent to the PDE
\begin{align}
    \Delta \chi_0 +4 d* \chi_0  =\mu \chi_0 \;
\end{align}
when $\mu \neq -3$. For $\mu=-3$, the eigenforms are additionally given by multiples of $\psi$.
\end{proposition}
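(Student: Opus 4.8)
The plan is to mimic the slice-and-decompose argument used for $\mathcal{H}^\mathcal{P}$ in Proposition~\ref{hess7-1} and for $\mathcal{H}^\mathcal{Q}$ in Theorem~\ref{specHess6}, now applied to the third-order operator $\mathcal{H}^\mathcal{T}$. By Proposition~\ref{G2gaugeslice} it is enough to diagonalise $\mathcal{H}^\mathcal{T}$ on the slice $\mathcal{W}=\{\,d(f\phi)+\chi_0\,\}$, so I would first expand a general tangent vector $\chi=d(f\phi)+\chi_0$ using the nearly parallel identity $d\phi=4\psi$, which gives $d(f\phi)=df\wedge\phi+4f\psi$. This exhibits the $G_2$-type pieces the Hessian formula needs: the $\Lambda^4_1$ coefficient is $4f$, the $\Lambda^4_7$ vector is $df$, and the $\Lambda^4_{27}$ part is $\chi_0$; in particular the scalar $d^*X-\tfrac74 f$ occurring in $\mathcal{H}^\mathcal{T}$ specialises to $\Delta f-7f$.

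The primitive sector is immediate. For $\chi=\chi_0\in\Omega^4_{27,exact}$ the two correction terms vanish, and since $\chi_0$ is closed, $*\chi_0\in\Omega^3_{27}$ is coclosed; Corollary~\ref{coclosed-type} then forces $d*\chi_0\in\Omega^4_{27}$. Using $\mathcal{J}=-*$ on $\Omega^4_{27}$ (Proposition~\ref{linear7}) together with $d^*=*d*$ on $4$-forms in dimension $7$ and $\Delta\chi_0=dd^*\chi_0$, one computes $d\mathcal{J}d\mathcal{J}\chi_0-4\,d\mathcal{J}\chi_0=\Delta\chi_0+4\,d*\chi_0$, and the same type argument shows the right-hand side again lies in $\Omega^4_{27,exact}$. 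Setting it equal to $\mu\chi_0$ is precisely the asserted PDE.

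The real work is the conformal sector $d(f\phi)$. I would compute $\mathcal{J}d(f\phi)=3f\phi+*(df\wedge\phi)$ from Proposition~\ref{linear7}, differentiate using $d\psi=0$ and $\lie_{df}\psi=d(df\lrcorner\psi)$, apply $\mathcal{J}$ and $d$ a second time, and then add the explicit corrections $\tfrac{1}{14}*\!\big(d(\Delta f-7f)\wedge\phi\big)-\tfrac{1}{14}(\Delta f-7f)\phi$. Projecting the resulting identity onto the three $G_2$-types and invoking uniqueness of the Hodge decomposition (Theorem~\ref{hodgedecom7}) should collapse this sector to a small scalar-plus-vector system for $f$, exactly in the spirit of Theorem~\ref{specHess6}. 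I expect this bookkeeping to be the main obstacle: one must control the cross term $\lie_{df}\psi$ and the second-order contribution $\Delta f$ hidden inside the corrections, and verify that they recombine cleanly rather than leaking into the $\Omega^4_{27}$ component.

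To finish I would solve this conformal system. After the gauge choice $X=0$, the surviving conformal modes are governed by a scalar equation whose non-trivial solutions are either constant functions or solutions of $\Delta f=7f$; since $(M^7,\psi)$ is not the round sphere, Obata's theorem forbids the latter (exactly as in Proposition~\ref{kerDirac7}, this is where the hypothesis $M\ne S^7$ enters), so $f$ must be constant. A constant $f$ makes $d(f\phi)=4f\psi$ a multiple of $\psi$, and substituting the constant mode into the scalar equation pins the single exceptional value $\mu=-3$ at which $\psi$ is an eigenform. For every other $\mu$ the conformal part must vanish and the eigenproblem reduces entirely to $\Delta\chi_0+4\,d*\chi_0=\mu\chi_0$ on $\Omega^4_{27,exact}$, as claimed.
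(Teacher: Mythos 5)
Your framework --- work on the slice $\mathcal{W}$, expand $\chi = d(f\phi)+\chi_0 = 4f\psi + df\wedge\phi + \chi_0$, and separate types via the orthogonality of the Hodge decomposition of Theorem \ref{hodgedecom7} --- is the same as the paper's, and your primitive sector is correct and complete: for $\chi_0\in\Omega^4_{27,exact}$ the two correction terms vanish, Corollary \ref{coclosed-type} keeps every term in $\Omega^4_{27,exact}$, and $d\mathcal{J}d\mathcal{J}\chi_0 - 4\,d\mathcal{J}\chi_0 = \Delta\chi_0 + 4\,d*\chi_0$. You also correctly identify that $d^*X - \tfrac74 f$ specialises to $\Delta f - 7f$ on the slice.

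The genuine gap is the conformal sector, which is where the actual content of the proposition sits and which you defer (``I expect this bookkeeping to be the main obstacle'') and then replace by asserted answers. The paper performs it: using $d*(X\wedge\phi) = -\lie_X\psi$ twice, one finds
$$\mathcal{H}^\mathcal{T}\chi = -3\,d(f\phi) - 3\lie_{df}\psi - \tfrac{1}{14}\lie_{d(\Delta f-7f)}\psi - \tfrac{1}{14}d\big[(\Delta f - 7f)\phi\big] + \Delta\chi_0 + 4\,d*\chi_0\;,$$
so the eigenvalue problem on $\mathcal{W}$ is the system
$$-\tfrac{1}{14}(\Delta f - 7f) - 3f = \mu f\;, \qquad -\tfrac{1}{14}d(\Delta f - 7f) - 3\,df = 0\;, \qquad \Delta\chi_0 + 4\,d*\chi_0 = \mu\chi_0\;.$$
The first two equations combine to $\mu\,df = 0$: either $\mu = 0$, in which case the scalar equation reads $\Delta f = -35f$ and positivity of the Laplacian forces $f = 0$; or $f$ is constant. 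Your claimed dichotomy --- constants versus solutions of $\Delta f = 7f$, with Obata excluding the latter --- is not what the computation produces; $\Delta f = 7f$ is the equation appearing in Proposition \ref{kerDirac7}, pattern-matched into a proof where it does not arise.

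The cost of skipping the computation is visible in the exceptional eigenvalue, which you quote as $\mu = -3$ without deriving it. Substituting $f = C$ into the scalar equation above gives $\tfrac12 C - 3C = \mu C$, i.e.\ $\mu = -\tfrac52$; the same value follows from the slice-free check $\delta^2\mathcal{T}(\psi,\psi) = \tfrac{d^2}{ds^2}\big|_{s=1}\mathcal{T}(s\psi) = -\tfrac{35}{32}\int_M\vol_\psi$ against $Q(\psi) = \tfrac{7}{16}\int_M\vol_\psi$. (The value $-3$ is what the $\Omega^4_{27}$ relation $\mu = (\lambda+4)^2 - 4(\lambda+4)$ of Proposition \ref{2nd21st} would give at $\lambda = -1$, but the $\delta\tau_0$ correction terms in $\mathcal{H}^\mathcal{T}$ are nonzero precisely in the $\psi$-direction and shift the eigenvalue; note the paper's own displayed system also yields $-\tfrac52$ for constant $f$, so the stated $-3$ appears to be a slip.) That your proposal reproduces the quoted $-3$ rather than the value its own outlined method produces is the clearest sign that the decisive step was never carried out.
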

\begin{proof}
As in the proof of Proposition \ref{hess7-1}, we can take $\chi\in \mathcal{W}\subseteq \Omega^4_{exact}$ as $$\chi= d(f\phi) +\chi_0 = 4f\psi + df\wedge \phi + \chi_0$$ with $f \in \Omega^0$ and $\chi_0 \in \Omega^4_{27,exact}$, in virtue of Theorem \ref{hodgedecom7}, and Proposition \ref{G2gaugeslice}. We compute the four terms of the second variation separately. First,
\begin{align*}
    d\mathcal{J}\chi &= d \left( *(df \wedge \phi) +3f\phi - *\chi_0 \right)=3d(f\phi)-  d(df \lrcorner \psi) -d(*\chi_0)  = 3d(f\phi) - \lie_{df} \psi -d(*\chi_0)\;.
\end{align*}
Thus, we have 
\begin{align*}
 d \mathcal{J}d\mathcal{J}\chi &= d\mathcal{J}\left[3d(f\phi) - \lie_{df} \psi -d(*\chi_0)\right]=d \left[3*(df\wedge \phi)+9 f\phi  - \lie_{df} \phi+*d*\chi_0 \right]\\ &= 9 d(f\phi) -7\lie_{df}\phi + \Delta \chi_0\;.
\end{align*}
Similarly, using the identity $d* \left(X \wedge \phi\right) = - \lie_X \psi$ once more, the third term in \eqref{2ndvariationNK2_eq} becomes
$$ \frac{1}{14} d*\left[d \left(\Delta f - 7f\right)\wedge
        \phi \right] =  - \frac{1}{14} \lie_{d(\Delta f - 7f)}\psi \;.$$
The fourth term is simply $-\frac{1}{14}d\left[(\Delta f - 7 f)\phi \right]$. Putting all of these together and using that the Hodge decomposition of \ref{hodgedecom7} is orthogonal, we have 
\begin{align*}
     - \frac{1}{14}\left(\Delta f - 7f\right) -3f &= \mu f  \\
     - \frac{1}{14}d\left(\Delta f - 7f\right) - 3df& = 0\\
     \Delta \chi_0 +4 d* \chi_0  & =\mu \chi_0 \;,
\end{align*}
Now, if $df\neq 0$, the first two equations combine to yield $\mu=0$, which implies $f=0$ since $(\Delta -7)$ is strictly positive, by Obata's theorem \cite{Ob62}.  If $f=C \in \R$, it follows that $\mu=-3$.
\end{proof}
\begin{definition}\label{G2index}
    Let $(M^7, \phi)$ a nearly parallel $G_2$-manifold. We define the index of the nearly parallel $G_2$-structure $\operatorname{Ind}_{\phi}$ as the number of negative eigenvalues of the Hessian endomorphism $\mathcal{H}^\mathcal{T}: \mathcal{W}\rightarrow \mathcal{W}$ at $\psi$ minus one. 
\end{definition}
First, the following lemma shows that the index is well-defined.
\begin{lemma}\label{lemmaindex}
The spectrum of $\mathcal{H}^\mathcal{T}$ is bounded below by $-4$. In particular, the index is well-defined.
\end{lemma}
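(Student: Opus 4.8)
The plan is to reduce everything to the single PDE identified in Proposition \ref{hessdecomposition_G2} and then bound the associated quadratic form from below by completing a square. By that proposition every eigenvalue $\mu$ of $\mathcal{H}^\mathcal{T}$ on $\mathcal{W}$ is either $\mu=-3$, realised on the conformal direction spanned by $\psi$, or arises from a nonzero solution $\chi_0\in\Omega^4_{27,exact}$ of $\Delta\chi_0+4\,d*\chi_0=\mu\chi_0$. Since $-3\geq-4$, the conformal eigenvalue causes no trouble, so the entire content of the lemma is to bound below the eigenvalues of the operator $L:=\Delta+4\,d*$ acting on $4$-forms.

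First I would record two elementary facts about the Hodge star on a closed oriented $7$-manifold. In degree $4$ one has $d^*=*d*$ and $**=\mathrm{id}$ (and the same in degree $3$). An integration by parts on the closed manifold $M$ then shows that $d*\colon\Omega^4\to\Omega^4$ is formally self-adjoint, and the first identity gives $d*\chi_0=*\,d^*\chi_0$. Because $*$ is an isometry, the latter yields $\|d^*\chi_0\|_{L^2}=\|d*\chi_0\|_{L^2}$, so that $\langle\Delta\chi_0,\chi_0\rangle=\|d\chi_0\|^2+\|d^*\chi_0\|^2=\|d\chi_0\|^2+\|d*\chi_0\|^2$. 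These are the only structural inputs required.

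The key step is the completion of the square. Since $d*$ is self-adjoint, so is $d*+2$, and expanding gives $\|(d*+2)\chi_0\|^2=\|d*\chi_0\|^2+4\langle d*\chi_0,\chi_0\rangle+4\|\chi_0\|^2$. Substituting the expression for $\langle\Delta\chi_0,\chi_0\rangle$ above, one obtains
\[
\langle (L+4)\chi_0,\chi_0\rangle=\langle\Delta\chi_0,\chi_0\rangle+4\langle d*\chi_0,\chi_0\rangle+4\|\chi_0\|^2=\|d\chi_0\|^2+\|(d*+2)\chi_0\|^2\geq 0 .
\]
Hence if $L\chi_0=\mu\chi_0$ with $\chi_0\neq0$, then $\mu=\langle L\chi_0,\chi_0\rangle/\|\chi_0\|^2\geq-4$. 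Together with the conformal eigenvalue $\mu=-3$ this shows the spectrum of $\mathcal{H}^\mathcal{T}$ is bounded below by $-4$; as the spectrum is discrete by Proposition \ref{hessdecomposition_G2}, only finitely many eigenvalues lie below $0$, so the index of Definition \ref{G2index} is well defined.

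The routine part is the sign bookkeeping for $d^*=*d*$ and $**=\mathrm{id}$ in degrees $3$ and $4$ on a $7$-manifold; I would check these against the paper's orientation conventions, since a wrong sign there would turn $d*$ from self-adjoint into skew-adjoint and destroy the estimate. The one genuinely delicate point is conceptual rather than computational: one must ensure that the values $\mu$ produced by Proposition \ref{hessdecomposition_G2} really are $L^2$-eigenvalues of the self-adjoint operator $L$, so that the Rayleigh-quotient bound applies, rather than merely eigenvalues of $\mathcal{H}^\mathcal{T}$ relative to the indefinite pairing $Q$. This is precisely what the reduction to the PDE on $\Omega^4_{27,exact}$ guarantees, and it is the reason the indefiniteness of $Q$ does not obstruct the lower bound.
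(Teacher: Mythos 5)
Your proof is correct and is essentially the paper's own argument: both reduce via Proposition \ref{hessdecomposition_G2} to eigenforms $\chi_0\in\Omega^4_{27,exact}$ (plus the $\mu=-3$ conformal direction) and obtain the bound by completing the square, $0\leq \norm{d*\chi_0+2\chi_0}^2=\langle \mathcal{H}^\mathcal{T}(\chi_0),\chi_0\rangle+4\norm{\chi_0}^2$. One harmless slip: on a $7$-manifold $d^*=-*d*$ on $3$-forms (the sign $(-1)^p$ alternates with degree), so the parenthetical ``and the same in degree $3$'' is wrong for that identity, but since you only ever apply it to the $4$-form $\chi_0$, nothing in the argument is affected.
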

\begin{proof}
Let $\chi\in \Omega^4_{exact}$. If $\mu \neq -3$, we know that $\chi \in \Omega^4_{27}$, by Proposition \ref{hessdecomposition_G2}. Taking the $L^2$-norm of $d*\chi+ 2\chi$, we get 
$$0 \leq \langle d*\chi+ 2\chi, d*\chi+ 2 \chi\rangle = \langle d*d*\chi+ 4 d*\chi, \chi\rangle + 4 \norm{\chi}^2=  \langle \mathcal{H}^\mathcal{T}(\chi), \chi \rangle + 4 \norm{\chi}^2\;. \qedhere$$
\end{proof}
Moreover, we have a relation between the spectrum of the Hessians $\mathcal{H}^\mathcal{P}$ and $\mathcal{H}^\mathcal{T}$:
\begin{proposition}\label{2nd21st}
Solutions to $(\mathcal{H}^\mathcal{P}-\lambda)\chi=0$  with $\lambda \in \R$ for $\chi \in \Omega^4_{exact}$ are in correspondence with solutions to $\mathcal{H}^\mathcal{T}(\chi)=\mu \chi$ with $\mu \geq -4$. Moreover, the range $\lambda \in (-4,0)$ is in two-to-one correspondence with the range $\mu\in (-4,0)$, excluding multiples of the $4$-form $\psi$. 
\end{proposition}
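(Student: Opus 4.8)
The plan is to reduce both eigenvalue problems to the single first-order operator $T:=d{*}$ acting on $\Omega^4_{27,exact}$, to exhibit each Hessian as a polynomial in the self-adjoint operator $T$, and then to read off the correspondence from the spectral theorem together with an elementary analysis of the quadratic map $\lambda\mapsto\mu$.

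First I would isolate the relevant invariant subspace. By Propositions \ref{hess7-1} and \ref{hessdecomposition_G2}, after the Hodge decomposition and gauge fixing both eigenvalue problems split into a one-dimensional conformal piece and a piece supported on $\Omega^4_{27,exact}$. On the conformal line, spanned by $\psi$, each Hessian carries a single fixed eigenvalue ($\lambda=-1$ for $\mathcal{H}^\mathcal{P}$ by the first equation of Proposition \ref{hess7-1}, and the value recorded in Proposition \ref{hessdecomposition_G2} for $\mathcal{H}^\mathcal{T}$); since this is a single line and not a matched pair, it is precisely what must be removed, which accounts for the clause ``excluding multiples of the $4$-form $\psi$.'' It then suffices to treat $\Omega^4_{27,exact}$.

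On $\Omega^4_{27,exact}$ I would first check that $T=d{*}$ is well-defined and self-adjoint. An exact form is closed, so any $\chi_0\in\Omega^4_{27,exact}$ has $d\chi_0=0$; hence $*\chi_0\in\Omega^3_{27}$ is coclosed and Corollary \ref{coclosed-type} gives $T\chi_0=d{*}\chi_0\in\Omega^4_{27}$, again exact, so $T$ preserves the space. Self-adjointness on $4$-forms in dimension $7$ follows from Stokes' theorem and the fact that $*$ is an isometry. Using $d^*={*}d{*}$ on $\Omega^4(M^7)$ together with $d\chi_0=0$ yields the key identity $\Delta\chi_0=dd^*\chi_0=T^2\chi_0$, so that the reduced equations of Propositions \ref{hess7-1} and \ref{hessdecomposition_G2} become
\[
\mathcal{H}^\mathcal{P}\big|_{\Omega^4_{27,exact}}=-(T+4),\qquad
\mathcal{H}^\mathcal{T}\big|_{\Omega^4_{27,exact}}=\Delta+4\,d{*}=T^2+4T=T(T+4).
\]
Both Hessians are thus polynomials in $T$. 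Since $T^2=\Delta$ is elliptic with discrete spectrum of finite multiplicity and $T$ commutes with $T^2$, the operator $T$ is diagonalisable with a discrete real spectrum $\{c\}$, and its eigenforms give an $L^2$-basis diagonalising both Hessians at once. An eigenform with $T\chi_0=c\chi_0$ is simultaneously an $\mathcal{H}^\mathcal{P}$-eigenform with $\lambda=-(c+4)$ and an $\mathcal{H}^\mathcal{T}$-eigenform with $\mu=c(c+4)$; eliminating $c$ gives the correspondence $\mu=\lambda(\lambda+4)$, and $\mu=(c+2)^2-4\geq-4$ recovers the lower bound of Lemma \ref{lemmaindex}.

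Finally I would analyse the parabola $\lambda\mapsto\mu=\lambda(\lambda+4)$, which has vertex $(\lambda,\mu)=(-2,-4)$ and so is exactly two-to-one from $(-4,0)\setminus\{-2\}$ onto $(-4,0)$: each $\mu\in(-4,0)$ is the image of the two values $\lambda_\pm=-2\pm\sqrt{\mu+4}$, both lying in $(-4,0)$. At the level of $T$ this reflects that the two eigenspaces $E_{c_+}(T)\oplus E_{c_-}(T)$, with $c_\pm=-2\pm\sqrt{\mu+4}$ and hence $c_++c_-=-4$, together constitute the $\mu$-eigenspace of $\mathcal{H}^\mathcal{T}$ while producing two distinct $\mathcal{H}^\mathcal{P}$-eigenvalues $\lambda_\pm$; the two branches coalesce only at the vertex $\mu=-4$, which is why the refined statement is restricted to the open interval. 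I expect the main difficulty to be bookkeeping rather than analysis: keeping the conformal line $\langle\psi\rangle$, whose eigenvalues also fall inside $(-4,0)$, carefully separated from the $\Omega^4_{27}$ spectrum, and correctly pairing the two $T$-eigenvalue branches with the two $\mathcal{H}^\mathcal{P}$-eigenvalues feeding a common $\mathcal{H}^\mathcal{T}$-eigenvalue.
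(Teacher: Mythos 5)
Your proof is correct, and at its core it runs on the same engine as the paper's: the same reduction to $\Omega^4_{27,exact}$ via Propositions \ref{hess7-1} and \ref{hessdecomposition_G2}, the same separate treatment of the line $\langle\psi\rangle$ (with eigenvalues $\lambda=-1$, $\mu=-3$), and the same quadratic relation $\mu=\lambda(\lambda+4)$. Where you differ is in packaging: you diagonalise the self-adjoint operator $T=d{*}$ on $\Omega^4_{27,exact}$ (using $T^2=\Delta$ on closed forms and the finite-dimensionality of the $\Delta$-eigenspaces) and then read both Hessians off as the polynomials $\mathcal{H}^\mathcal{P}=-(T+4)$ and $\mathcal{H}^\mathcal{T}=T(T+4)$, so the correspondence is immediate from the spectral theorem. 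The paper instead argues by an explicit ladder construction: given $\mathcal{H}^\mathcal{T}\chi=\mu\chi$ it sets $\gamma_\pm=d{*}\chi-\lambda_\pm\chi$ with $\lambda_\pm=-2\pm\sqrt{\mu+4}$ and verifies by direct substitution that each non-zero $\gamma_\pm$ is an $\mathcal{H}^\mathcal{P}$-eigenform; this is exactly your projection of $\chi$ onto the two $T$-eigenspaces, written out by hand, and it never needs completeness or diagonalisability of $T$. So your route buys conceptual clarity --- the two-to-one count is transparent as $E_{c_+}(T)\oplus E_{c_-}(T)$ with $c_++c_-=-4$, and the lower bound of Lemma \ref{lemmaindex} falls out as $\mu=(c+2)^2-4\geq -4$ --- at the cost of one extra (correctly justified, via symmetry of $T$ on the finite-dimensional $\Delta$-eigenspaces) spectral-theoretic ingredient, while the paper's substitution argument is more elementary and purely algebraic.
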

\begin{proof}
First, $\chi=C\psi$ for $C\in \R$ are solutions to $(H^\mathcal{P}-\lambda)\chi=0$ for $\lambda=-1$ and to $(H^\mathcal{T}-\mu)\chi=0$ for $\mu=-3$. Thus, we can now assume that $\chi \in \Omega^4_{27,exact}$. 

First, let $\chi$  be a solution to $d*\chi = -(\lambda+4) \chi$. Then 
$$\mathcal{H}^\mathcal{T}(\chi)= - (\lambda+4) d*\chi - 4(\lambda+4) \chi = \big[ (\lambda+4)^2 - 4(\lambda+4)\big] \chi\;,$$ which is negative in the interval $\lambda \in (-4,0)$. Conversely, assume $\chi$ satisfies $\mathcal{H}^\mathcal{T}(\chi)=\mu \chi$ with $\mu\geq-4$. Let $\gamma_\pm=d*\chi - \lambda \chi$, for $\lambda_\pm= -2 \pm \sqrt{\mu+4}$. Clearly, $\gamma \in \Omega^4_{27,exact}$. If $\gamma_\pm=0$, we are done. Otherwise, $\gamma_\pm$ is a non-trivial solution to $\mathcal{H}^\mathcal{P}-\lambda$. By substituting $\gamma$ in $\mathcal{H}^\mathcal{T}(\chi)-\mu\chi$, we have
$$0=\Delta \chi +4d* \chi- \mu\chi = d*(\gamma + \lambda \chi) +4(\gamma + \lambda \chi)- \mu\chi= d*\gamma + (\lambda+4)\gamma + (\lambda^2 + 4\lambda - \mu)\chi \;. $$
For our chosen values of $\lambda$, the rightmost term cancels and we have that $\gamma$ satisfies $\mathcal{H}^\mathcal{P}(\gamma)=\lambda \gamma$, as needed.
\end{proof}
In particular, we have
\begin{corollary}\label{Qindex_G2}
    Let $(M, \phi)$ be a nearly parallel $G_2$-manifold, and consider the spaces $\mathcal{E}(\lambda)=\left\{ \chi_0 \in \Omega^4_{27} \st d*\chi_0 = \lambda \chi_0 \right\}$. The Hitchin index of the nearly parallel $G_2$-structure is given by
    \begin{equation*}
        \operatorname{Ind}_\phi = \sum_{\lambda \in (-4,0)} \dim~ \mathcal{E}(\lambda) \;.
    \end{equation*}
\end{corollary}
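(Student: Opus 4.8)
The plan is to extract $\operatorname{Ind}_\phi$ directly from the spectral decomposition of $\mathcal{H}^\mathcal{T}$ on the gauge slice $\mathcal{W}$ furnished by Proposition \ref{hessdecomposition_G2}. That proposition shows that every eigenform on $\mathcal{W}$ is either a multiple of $\psi$, occurring at the eigenvalue $\mu=-3$, or an element of $\Omega^4_{27,exact}$ solving $\Delta\chi_0+4\,d*\chi_0=\mu\chi_0$; the non-constant conformal directions $d(f\phi)$ produce no eigenforms. Since $\psi\in\Omega^4_1$ is $L^2$-orthogonal to $\Omega^4_{27}$, it contributes exactly one negative eigenvalue, linearly independent from all those coming from $\Omega^4_{27,exact}$. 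Hence the total number of negative eigenvalues of $\mathcal{H}^\mathcal{T}$ equals $1$ plus the number contributed by $\Omega^4_{27,exact}$, and subtracting one as in Definition \ref{G2index} reduces the corollary to identifying that $\Omega^4_{27,exact}$-count with $\sum_{\lambda\in(-4,0)}\dim\mathcal{E}(\lambda)$.

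To carry out this identification I would first note that any $\chi_0\in\Omega^4_{27,exact}$ is closed, so $\Delta\chi_0=dd^*\chi_0=(d*)^2\chi_0$, using $d^*=*d*$ on $4$-forms in dimension $7$ (the relevant sign being $+1$). This yields the factorisation $\mathcal{H}^\mathcal{T}\big|_{\Omega^4_{27,exact}}=(d*)^2+4\,d*=(d*+2)^2-4$. The first-order operator $\chi_0\mapsto d*\chi_0$ is self-adjoint here --- a one-line Stokes argument gives $\langle d*\alpha,\beta\rangle=\langle\alpha,d*\beta\rangle$ --- and it preserves $\Omega^4_{27,exact}$ by Corollary \ref{coclosed-type}, since on closed forms $d*\chi_0\in\Omega^4_{27}$ is again closed and exact. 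Because $(d*)^2=\Delta$ is the Hodge Laplacian restricted to this space, it has discrete spectrum with finite multiplicities, and $d*$, commuting with $\Delta$, diagonalises on each finite-dimensional $\Delta$-eigenspace. The resulting joint eigenspaces are exactly the spaces $\mathcal{E}(\lambda)$; note that for $\lambda\neq0$ every $\chi_0\in\mathcal{E}(\lambda)$ is automatically exact, being $\lambda^{-1}d*\chi_0$, so these eigenforms genuinely live in $\Omega^4_{27,exact}$.

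It then remains only to read off signs: on $\mathcal{E}(\lambda)$ the Hessian acts as the scalar $\lambda^2+4\lambda=(\lambda+2)^2-4$, which is strictly negative precisely for $\lambda\in(-4,0)$. Summing multiplicities over this range gives the number of negative eigenvalues of $\mathcal{H}^\mathcal{T}$ on $\Omega^4_{27,exact}$, and combining with the bookkeeping of the first paragraph yields $\operatorname{Ind}_\phi=\sum_{\lambda\in(-4,0)}\dim\mathcal{E}(\lambda)$. The one step requiring care is the simultaneous diagonalisation: I must ensure that the negative part of the second-order operator $\Delta+4\,d*$ is exhausted by honest $d*$-eigenforms rather than by forms that are merely eigenforms of the full Hessian, and it is precisely the factorisation of $\mathcal{H}^\mathcal{T}$ through the self-adjoint first-order operator $d*$ that guarantees this. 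Alternatively, one can bypass the diagonalisation altogether by quoting the eigenvalue correspondence already established in Proposition \ref{2nd21st}.
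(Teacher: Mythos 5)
Your proposal is correct and takes essentially the same route as the paper: the reduction via Proposition \ref{hessdecomposition_G2} and Definition \ref{G2index}, followed by identifying the negative spectrum of $\mathcal{H}^\mathcal{T}$ on $\Omega^4_{27,exact}$ with $d*$-eigenforms of eigenvalue $\lambda\in(-4,0)$, is exactly what the paper establishes through Lemma \ref{lemmaindex} and Proposition \ref{2nd21st}. Your factorisation $\mathcal{H}^\mathcal{T}\big|_{\Omega^4_{27,exact}}=(d*+2)^2-4$, with $d*$ self-adjoint, is precisely the algebra underlying the substitution $\gamma_\pm = d*\chi-\lambda_\pm\chi$ in the proof of Proposition \ref{2nd21st} (which you also cite as a fallback), so the two arguments coincide in content.
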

One could try to relate this with the Morse co-index of $\mathcal{T}$, as we did for the Hitchin index of nearly Kähler structures in Proposition \ref{index_vs_coindex}. However, a moment of thought suffices to realise that both the index and the co-index of $\mathcal{T}$ are infinite. Indeed, taking $\mathcal{E}$ as above, we have
$$ \delta^2 \mathcal{T} \Bigr|_{\mathcal{E}(\lambda)} = \int_M 
\chi_1 \wedge \left[(\lambda +4)* \chi_2 \right] = (\lambda +4)  \langle \chi_1, \chi_2 \rangle \;.$$
As for nearly Kähler structures, one could investigate the index of most known examples since they all possess some symmetry that would allow us to reduce the PDE to a simpler problem. We do not work out any examples but provide an outline of how to compute or bound the Hitchin index. 
\begin{enumerate}[label=(\roman*)]
    \item \textbf{Homogeneous examples:} The Peter-Weyl formalism for reductive spaces described above carries over verbatim. The case of nearly parallel $G_2$-structure is slightly more challenging since the differential operator is not simply a Laplacian, thus computations become more tedious. 

    Some computations in this direction were carried out by Alexandrov and Semmelmann in \cite{AS12} and Lehmann \cite{Leh21}. 

    \item \textbf{Sasaki-Einstein examples:} Recall that the inclusion $\SU(4) \subset \Spin(7)$ implies that every Sasaki-Einstein manifold carries a natural nearly parallel $G_2$-structure.  Let us assume that the underlying Sasaki structure is quasi-regular, so the Reeb field integrates to an $S^1$ action. In this case, the PDE $$\Delta \chi + 4d^* \chi = \mu \chi $$
    is $S^1$-equivariant. Thus, one can try to use the Peter-Weyl (Fourier) formalism along the fibres to reduce this problem to a complex PDE on the leaf space and obtain a bound for the index in terms of Hodge numbers of the complex orbifold base, using the results of Nagy's PhD thesis \cite{Nagy01}.

    The added difficulty in this case is that, while the PDE above is $S^1$-invariant, the underlying $G_2$-structure is not (and thus neither is $\Omega^4_{27}$), so one would need to check that the forms constructed above had the correct type. 

    \item \textbf{Squashed examples:} The squashed nearly parallel $G_2$ metrics are constructed by rescaling the fibres of a 3-Sasaki manifold. In particular, the squashed metric has an isometric action by $\SU(2)$, with a 4-dimensional orbifold leaf space.
    
    Thus, one can follow the same strategy of reducing the PDE to the 4-orbifold by using the Peter-Weyl formalism along the fibres. Similar ideas appeared in a recent preprint of Nagy and Semmelmann \cite{NS23}.
\end{enumerate}
\section{Outlook}
 We outline the connection between the nearly the Hitchin functionals in the study of $G_2$-conifolds 
 following \cite{LK20}, focusing on the Hitchin index. We conjecture there is an analogue discussion for $\Spin(7)$-conifolds.

The expectation is that the Hitchin acts as the stability index for conically singular $G_2$ manifolds. That is, the index measures the codimension of the singularity in the moduli space of conically singular $G_2$ manifolds. A first indication of this is the dimension bound of the obstruction space for $G_2$-conifold deformation:
\begin{proposition}[{\cite[Prop. 6.11]{LK20}}]
    Let $(M, \phi)$ be a conically singular $G_2$-manifold with singularities $p_1, \dots,p_n$, modeled on the $\Sigma_1, \dots, \Sigma_n$. The dimension of the obstruction space to the deformation problem is bounded above by 
$$\dim (\mathcal{O}_{/\mathcal{W}}) \leq n-1 + \sum_{i=1}^n \left(\Ind^{\Sigma_i} \right);.$$ 
Moreover, if $\Ind^{\Sigma_i}=0$ for all $i$, the remaining obstruction space is ineffective.
\end{proposition}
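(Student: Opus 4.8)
The plan is to follow the weighted elliptic deformation theory of $G_2$-conifolds developed in \cite{LK20}, reducing the bound on the obstruction space to a count of critical rates at each conical end. First I would recall the deformation setup: torsion-free $G_2$-structures modulo diffeomorphism are governed near a critical point by a linear elliptic operator $L$ (essentially $d + d^*$ on the relevant subcomplex of forms), whose cokernel is identified with the obstruction space $\mathcal{O}$. On a conically singular manifold this operator is Fredholm only on the weighted Hölder spaces $C^{k,\alpha}_\lambda$ for rates $\lambda$ avoiding a discrete set $\mathcal{D}$ of critical rates, and the quotient $\mathcal{O}_{/\mathcal{W}}$ arises from the piece of $\operatorname{coker} L$ transverse to the gauge slice $\mathcal{W}$. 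The analytic backbone is the Lockhart--McOwen theory, which expresses the jump in $\dim \operatorname{coker} L$ as the rate crosses $\mathcal{D}$ in terms of the multiplicity of the critical rates traversed.

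The key step is the identification of $\mathcal{D}$ near each singularity $p_i$. The asymptotic model there is the metric cone over the nearly Kähler link $\Sigma_i$, and the critical rates are exactly the exponents $\mu$ for which the cone operator admits a homogeneous solution $r^\mu \sigma$ with $\sigma$ an eigenform of the associated operator on $\Sigma_i$. By the cone-flow correspondence of Proposition \ref{SU3coneflow} and the spectral description of the Hitchin Hessian in Theorem \ref{specHess6}, these link eigenforms are precisely those governing $\mathcal{H}^\mathcal{Q}$, so that the indicial roots are determined by the coclosed eigenforms in $\Omega^2_8$ with Laplace eigenvalue $\lambda$. The window $\lambda \in (0,12)$ that defines the Hitchin index in \eqref{NKindexeq} maps bijectively onto the interval of critical rates lying strictly between the two natural weights of the deformation problem; counting these with multiplicity produces exactly $\Ind^{\Sigma_i}$ from the $i$-th end. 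The remaining contributions come from the lowest (constant and harmonic) modes at the cones: each of the $n$ ends contributes one such mode while an overall matching constraint removes a single global degree of freedom, yielding the additive $n-1$.

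Assembling these, the Lockhart--McOwen index-change formula gives $\dim(\mathcal{O}_{/\mathcal{W}}) \leq n-1 + \sum_{i=1}^n \Ind^{\Sigma_i}$, the inequality reflecting that not every cokernel element of the model need lift to a genuine obstruction. The final and most delicate claim is that when all $\Ind^{\Sigma_i}=0$ the residual $(n-1)$-dimensional obstruction is ineffective. Here dimension counting does not suffice: one must pass from the infinitesimal count to the nonlinear deformation map and show that the surviving obstructions, coming only from the constant modes, vanish on it. I would argue this via the moment-map and Hamiltonian structure underlying the Hitchin functional (cf. the remark following Proposition \ref{SU3coneflow}), exhibiting these modes as rescalings and matching parameters that can always be integrated, so that the quadratic obstruction map annihilates them. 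This last step, reconciling the linear index jump with the actual solvability of the deformation equation, is the main obstacle, since it requires control of the nonlinear terms near the singular model rather than a purely Fredholm-theoretic argument.
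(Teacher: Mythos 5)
You should first be aware that this paper does not prove the proposition at all: it is quoted directly from Karigiannis--Lotay \cite[Prop.~6.11]{LK20} in the Outlook section, purely to motivate the conjectural role of the Hitchin index as a stability index for $G_2$-conifolds. There is therefore no in-paper proof to compare against; your proposal must be judged against the argument in the cited source, whose broad framework (weighted H\"older spaces, Lockhart--McOwen Fredholm theory, indicial roots of the cone operator identified with eigenvalue problems on the nearly K\"ahler links, the $n-1$ arising from locally constant modes modulo a global constraint) you have reconstructed correctly in outline.

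Within that framework, however, your sketch has two genuine gaps. First, the central identification --- that the critical rates of the conifold operator in the relevant window correspond exactly to the eigenvalues counted by $\operatorname{Ind}^{\Sigma_i}$ in \eqref{NKindexeq} --- is asserted by appeal to Proposition \ref{SU3coneflow} and Theorem \ref{specHess6}, but those results concern the gradient flow and the Hessian $\mathcal{H}^\mathcal{Q}$ on the \emph{closed} link, not the indicial operator of the linearised torsion-free equation on the cone $C(\Sigma_i)$. Relating the two requires the actual separation-of-variables computation for $d+d^*$ on homogeneous forms $r^{\mu}\sigma$, keeping track of the $\Omega^2_8$ and $\Omega^3_{12}$ types and the gauge fixing; the agreement of this paper's Hitchin index with the analytic stability index of \cite{LK20} is a theorem (indeed, part of the point of the Outlook), not a substitution one may make by citation. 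Second, and more seriously, the ineffectiveness claim when all $\operatorname{Ind}^{\Sigma_i}=0$ is not proved by your argument: appealing to ``the moment-map and Hamiltonian structure underlying the Hitchin functional'' to say the constant modes ``can always be integrated'' is a hope, not a proof. In \cite{LK20} this step is carried out by showing that the residual $(n-1)$-dimensional piece of the cokernel --- locally constant multiples of the model forms near the singular points, modulo a global constant --- pairs trivially with the image of the nonlinear deformation map via explicit Stokes-type identities. That is precisely the control of the nonlinear terms which you correctly flag as the main obstacle, but do not supply.
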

To pursue this discussion, it would be useful to study manifolds with both asymptotically conical (AC) and conically singular (CS) ends. Although this case is not directly addressed by Karigiannis and Lotay, their methods should extend with minimal difficulty. In particular, for a manifold with only one conically singular point and an asymptotically conical end, we expect that the virtual dimension of the moduli space will be given by the difference of the Hitchin indices. 

This expectation can be motivated by treating $\mathcal{L}$ as an analogue of the Chern--Simons functional in instanton Floer theory. Consider a family of $\SU(3)$-structures $\left(\rho(t), \sigma(t)\right)$ on $\Sigma$, evolving with the gradient flow of $\mathcal{L}$ and connecting two of its critical points. Proposition \ref{SU3coneflow} implies there is an associated $G_2$ conifold with one CS and an AC end. Following the Chern--Simons analogy, the virtual dimension of the moduli space of such conifold should be given by the spectral flow of the family of $\SU(3)$-structures. In view of Proposition \ref{index2index} (cf. Prop. \ref{indexaslaplaceeigenvalueNK}), this corresponds to the index difference of the two nearly Kähler structures.  

\appendix

\section{Non parabolicity of the nearly Kähler Laplacian flow} \label{shortenu}
We show that the nearly Kähler gradient flow introduced in Section \ref{sectionclosedSU3} is not strictly parabolic, even after using a DeTurck-type trick. Thus, using standard techniques, one cannot guarantee the short-time existence and uniqueness of solutions to the flow. In particular, the symbol of the nearly Kähler Laplacian flow \eqref{lapflow} takes a similar shape to the $G_2$ Laplacian coflow, introduced by Karigiannis, McKay and Tsui in \cite{KMT12} (cf. \cite{Gri13}).

We begin by constructing suitable DeTurck vector fields, following the exposition of \cite{BX11}. We then compute the symbol of the flow, modified by a suitable DeTurck field. 

\subsection{The DeTurck vector fields}
We use the same recipe that DeTurck used for the Ricci flow, or Bryant and Xu \cite{BX11} for the $G_2$ Laplacian flow. Let $M$ be a manifold and $g$ a metric, $\nabla$ its Levi-Civita connection and $\nabla^0$ a fixed torsion-free connection (e.g. the Levi-Civita of a background metric). The difference 
$$T = \nabla- \nabla^0$$
is a well-defined section of $\operatorname{Sym}^2 TM^* \otimes TM$. Identifying $TM$ with $TM^*$ via the metric, and using the decomposition $\operatorname{Sym^2}TM = TM \oplus \operatorname{Sym}^2_0 TM$, we view $T$ as a section of $TM \oplus TM \otimes \Sym^2_0 TM$.

We obtain two vector fields from $T$, one from the first term of the decomposition, labelled $V_1$, and the other by contracting $TM$ with $\operatorname{Sym}^2_0 TM$, labelled $V_2$. Thus, whenever we have a $G$-structure on $TM$ with $G\subseteq \SO(n)$, we will have (at least) a two-dimensional family of vector fields associated with it.

Let us study the linearisation of these vector fields. Since $\nabla^0$ is fixed, the linearisation $T$ depends exclusively on the variation of the metric $g$. Let $h= \frac{d}{dt} g_t \Bigr|_{t=0}$. By the Koszul formula, one gets
$$g\left(T_*(h) \left(X,Y\right), Z \right) =  \frac{1}{2}\left( \nabla_X(Y, Z) + \nabla_Y(X,Z) - \nabla_Z (X,Y) \right) \;.$$
Using the trace and traceless decomposition for $h$, $h= fg + h_0$, we get (cf. \cite[Sect. 2.2]{BX11})  
$$ {V_1}_*(h) = \operatorname{grad}(f)  ~~~~~~~  {V_2}_*(h)= \operatorname{div}(h_0)\;.$$

In our case of interest, the $\SU(3)$-structure induces the further decomposition $\Sym^2_0\cong \Sym^2_+ \oplus \Sym^2_- $ into traceless $J$-invariant and $J$-anti-invariant symmetric maps. Thus, we obtain a 3-dimensional family of suitable DeTurck vector fields. We only consider the trace and the $J$-invariant vector fields for order reasons.

Fix $(\rho, \sigma)$ an $\SU(3)$-structure. Using the isomorphisms from Lemma \ref{decompositionSU(3)}, $\Lambda^4_1 \cong \R$ and $\operatorname{Sym}^2_+ \cong\Lambda^5_8$, it follows that there exists a universal constant $A$ such that a variation of $\sigma$, $\delta \sigma= f \sigma + X \wedge \hat{\rho} +\chi_0$, the induced variation of the metric is given by 
$$\delta g = \frac{1}{2}f g + A \iota^{-1}(\chi_0).$$
We need to compute the $\div\left(\iota^{-1}(\chi_0)\right)$. We have the following lemma.
\begin{lemma} Let $\chi \in \Lambda^4_8$. There is a universal constant $B$ for which 
    $$ d \chi = B *\div \left(\iota^{-1}(\chi)\right) +l.o.t\;,$$
    where $l.o.t$ is some 5-form depending smoothly on $\chi$ and the torsion of the $\SU(3)$-structure.
\end{lemma}
\begin{proof}
Assume $M$ carries a torsion-free $\SU(3)$-structure (i.e. Calabi-Yau). Consider the diagram
$$
\Lambda^1 \xleftarrow{~~c~~} \operatorname{Sym}_+^2 TM \otimes \Lambda^1 \xrightarrow{\mathbf{\iota} \otimes 1} \Lambda_{8}^4 \otimes \Lambda^1 \xrightarrow{\operatorname{Alt}} \Lambda^5  \;,$$
where $\operatorname{Alt}$ denotes skewsymmetrization, $c$ denotes contraction by the metric and $\iota$ is the map given in Lemma \ref{decompositionSU(3)}. By Schur’s lemma, there exists a universal constant $B$ such that
$$ * \operatorname{Alt}( \chi \otimes \alpha) =  B c \left( \iota^{-1}(\chi) \otimes \alpha\right) \;.$$
Now, we have $d(\chi) = \operatorname{Alt} \circ \nabla(\chi)$ and 
since $\nabla$ preserves the Calabi-Yau structure, it commutes with the map $\iota$, proving the statement for the torsion-free case. The torsion of the $\SU(3)$-structure will modify the identity involving only zeroth order terms.
\end{proof}
\begin{lemma}
Let $(\rho, \sigma)$ be an $\SU(3)$-structure. Then the DeTurck procedure outlined above allows us to construct two vector fields, $W_1(\rho,\sigma)$ and $W_2(\rho,\sigma)$, depending smoothly on the $\SU(3)$-structure, whose linearisation along a variation $\delta\sigma= \chi= f \sigma +  X \wedge \hat{\rho} + \chi_0$ is given by
$$ {W_1}_*(\chi) = df  ~~~~~~~~~~~~~~~~~~~~~~~  {W_2}_*(\chi) = *d\chi_0 +l.o.t\;.$$
\end{lemma}
We have rescaled our vector fields to eliminate all the constants and lighten notation. Since they are universal, there are no ambiguities in us doing so. If we restrict ourselves to $\SU(3)$-structures where the 4-form $\sigma=\frac{\omega^2}{2}$ is closed, we can further rewrite our DeTurck fields.
\begin{proposition}\label{dTTsu3}
    Let $(\rho, \sigma)$ be a $\SU(3)$-structure such that $d\sigma=0$. Then, the DeTurck fields can be chosen as 
    $$ {V_1}_*(\chi) = df  ~~~~~~~~~~~~~~~~~~~~~~~  {V_2}_*(\chi) = J\curl(X) +l.o.t\;.$$
\end{proposition}
\begin{proof}
We need to prove that a linear combination of ${W_1}_*$ and ${W_2}_*$ is equal to ${V_2}_*$, up to zeroth order terms. Linearising the condition $d\sigma=0$, we have
$$d\chi = df \wedge \sigma - J\curl(X) \wedge \sigma + d \chi_0 + l.o.t = 0\;.$$
Thus, we need to take $V_2= W_2 +W_1$.
\end{proof}
\subsection{The nearly Kähler Laplacian flow}
Let us study the parabolicity of the nearly Kähler Laplacian flow \eqref{SU3flow}, modified by the DeTurck term:
\begin{equation} \label{SU3flow}
    \begin{cases}
        \partial_t \sigma =  \Delta_\sigma\sigma -12 \sigma  + \lie_{V(\sigma)} \sigma \\
        d\sigma=0 \\
        \sigma(0)=\sigma_0\;,
    \end{cases}
\end{equation}
for $V(\sigma) = 3 V_1(\sigma) + 2 V_2(\sigma)$, with $V_i$ given in Proposition \ref{dTTsu3}. 

First, recall the following identity on nearly Kähler structures:
\begin{lemma}[{\cite[Lemma 3.7]{Fos17}}] \label{useful6}
Consider $(M, \omega, \rho)$ a nearly  structure and let $X\in \Omega^1$. Then, 
    $$d(X \lrcorner \rho) - d^*(X \wedge \rho)= (J\curl(X) + 2X) \wedge \omega - d^*X\rho - d^*(JX)\widehat{\rho} \;.$$
\end{lemma}
We compute the linearisation of $P=\Delta_\sigma \sigma - 12\sigma + \lie_{V(\sigma)}\sigma$ along $\chi= f\sigma + X \wedge \hat{\rho}+ \chi_0 \in \Omega^4_{closed}$. Since $\sigma$ is closed, $\Delta_\sigma \sigma = -d*d* \sigma$ and $\lie_V\sigma = d(V \lrcorner \sigma)$ and so 
\begin{align*}
    D_\sigma P(\chi) &= - d* d \mathcal{K} \chi + d( V_*(\chi) \lrcorner \sigma)  = -d *d(2f \omega + X \lrcorner \rho - *\chi_0) + d\left(( 3 {V_1}_* + 2{V_2}_*)\lrcorner \sigma\right) \;.
\end{align*}
Similarly, we can compute $\Delta_\sigma \chi = dd^* \chi = -d *d (f \omega + X \lrcorner \rho + *\chi_0)$.
And so, by Lemma \ref{useful6}, we have 
\begin{align*}
   D_\sigma P(\chi)+ \Delta \chi=& -d * d( 3f \omega + 2X \lrcorner \rho) + d\left(( 3 {V_1}_* + 2{V_2}_*)\lrcorner \sigma\right)   \\
   =& -d * \left[ 3 df \wedge \omega  + 2 \left( J\curl(X) \wedge \omega - d^*X \rho - d^*(JX) \hat{\rho} +d^*(JX \wedge \hat{\rho}\right) \right] \\
   &+ d\left( 3Jdf - 2\curl(X) \right) \wedge \omega  +l.o.t.  \\
   =& -d \left[  (3 Jdf -2\curl(X)) \wedge \omega  - 2 d^*X \hat{\rho} + 2d^*(JX) \rho\right] +  d\left( 3Jdf - 2\curl(X) \right) \wedge \omega+ l.o.t.  \\
   =& 2 \left[ dd^*X - Jdd^*(JX) \right] \wedge \hat{\rho} + l.o.t\;.
\end{align*} 
In particular, we have proved
\begin{proposition}\label{nonparabolicity}
The linearization of the operator $P=\Delta_\sigma \sigma +\lie_V(\sigma) \sigma$ evaluated at a closed 4-form $\chi= f\sigma + X \wedge \hat{\rho} + \chi_0$
is given by
$$D_\sigma P (\chi) = -\Delta_\sigma \chi +2(dd^*X - Jdd^*JX)  \wedge \hat{\rho}+ dF(\chi)$$
where $F(\chi)$ is a 3-form-valued algebraic function of $\chi$ that depends on the torsion of the $\SU(3)$-structure. In particular, its principal symbol in the direction $\xi$ satisfies 
$$ \langle \operatorname{S}_\xi (D_\sigma P)(\chi), \chi \rangle = - \abs{\xi}^2 \abs{\chi}^2 + 4 \left(\langle \xi, X \rangle^2 + \langle \xi, JX \rangle^2 \right) \;,$$
which is not coercive, so the flow is not parabolic.
\end{proposition}
\begin{proof}
Only the symbol computation remains. Since we know that $\operatorname{S}_\xi (d) = \xi \wedge$ and $\operatorname{S}_\xi (d^*) = \xi \lrcorner$, the computation follows from the identity $\langle X \wedge \hat{\rho} , Y \wedge \hat{\rho} \rangle = 2 \langle X , Y \rangle$.
\end{proof}
We notice a couple of remarks. First, the term $dd^*X - Jdd^*(JX)$ can not be reabsorbed by an additional term of the shape $\lie_{W(\sigma)} \sigma$ for a different choice of field $W(\sigma)$. Indeed, the linearized operator for $W(\sigma)\lrcorner \sigma$ along $\chi=f\sigma +X \wedge \hat{\rho}+ \chi_0$ will be a linear combination of $\curl(X)$, $\curl(JX)$, $df$ and $Jdf$, plus lower order terms. Second, one could attempt to modify the flow to make it elliptic, following the construction of Grigorian's modified $G_2$ Laplacian coflow in \cite{Gri13}. The idea is to construct second-order operators depending on $\sigma$, whose linearisation cancels out the terms $dd^*X$ and $Jdd^*(JX)$. In that direction, we have a first partial result.

Recall that $\tau_0(\sigma)= \frac{1}{3} *(d\omega \wedge \hat{\rho})$ is the 1-dimensional part of the torsion of $\sigma$, so it satisfies
 \begin{equation} \label{tau0_vol}
     \frac{1}{3}\sigma \wedge \omega = \frac{\tau_0^{-2}}{4} d\omega \wedge \widehat{d\omega}\;.
 \end{equation}
 \begin{lemma}
 The first order variation along $\chi=f\sigma+ X \wedge\hat{\rho}+\chi_0$ of $\tau_0$ is given by
$$\partial_{\chi} \tau_0 = -\frac{1}{2} d^*X  + l.o.t\;.$$
\end{lemma}
\begin{proof}
We differentiate Equation \eqref{tau0_vol} with respect to $\chi$:
$$ (\delta_\chi \tau_0)\rho \wedge \hat{\rho}= d(\delta_\chi \omega) \wedge\hat{\rho} + l.o.t. =d(\partial_\chi \omega \wedge\hat{\rho}) +l.o.t = d(X \lrcorner \rho \wedge\hat{\rho}) +l.o.t = -\frac{1}{2} (d^*X)\rho \wedge \hat{\rho} +l.o.t\;, $$
where $l.o.t$ are terms that depend algebraically on $\chi$ and the torsion of the $\SU(3)$-structure.
\end{proof}
We can introduce a first modification to the flow to remove one of the positive terms in the symbol.
\begin{corollary}
For $C\in \R$, consider the flow for $\sigma \in \Omega^4(M)$
\begin{equation} \label{SU3flowm2}
    \begin{cases}
        \partial_t \sigma =  \Delta_\sigma\sigma -12 \sigma  + \lie_{V(\sigma)} \sigma +d\left[(4\tau_0+ C)\hat{\rho}\right] \\
        d\sigma=0 \\
        \sigma(0)=\sigma_0\;,
    \end{cases}
\end{equation}
with $V(\sigma)$ as before and $\hat{\rho}$ the associated 3-form as usual. Then, the principal symbol of this flow satisfies  
$$ \langle \operatorname{S}_\xi (D_\sigma P)(\chi), \xi \rangle = - \abs{\xi}^2 \chi + 4 \langle \xi, JX \rangle^2 \;.$$
\end{corollary}
In particular, the question arises of whether we can further modify the flow \eqref{SU3flowm2} to obtain a parabolic flow.

\section{The Einstein--Hilbert action} \label{Einstein_stability}
Recall that Einstein metrics, solving the differential equation $\operatorname{Ric}_g =\lambda g$, are critical points of the Einstein--Hilbert action: 
\begin{equation} \label{EH-def}
    \begin{split}
    \mathcal{S}: \operatorname{Met}(M^n)  &\rightarrow \R \\
     g & \mapsto \frac{1}{n-1}\int_M  s_g - \lambda(n-2) \dvol_g \;,
    \end{split}
\end{equation}
where $\operatorname{Met}(M^n)$ is the space of metrics on $M^n$, and $s_g$ is the scalar curvature of the metric $g$.

Since nearly Kähler and nearly parallel $G_2$ manifolds are the links of Ricci flat cones, they are Einstein for $\lambda=n-1$. In particular, they are critical points of the Einstein--Hilbert action. We investigate the relation between the second variation of the Hitchin functionals and the Einstein--Hilbert functional.

For the remainder of the section, assume $(M^n,g)$ is not isometric to the round sphere. At a point $g \in \operatorname{Met}(M)$, the tangent space of $\operatorname{Met}(M)$ is identified with symmetric 2-tensors  $\Gamma\left(\Sym^2(T^*M)\right)$. As in the case of the Hitchin functionals, the functional $\mathcal{S}$ is diffeomorphism invariant. Thus, it is convenient to study variations orthogonal to the diffeomorphism orbit. We have an $L^2$-orthogonal decomposition: 
$$ \Gamma\left(\Sym^2(T^*M)\right)= \R g ~ \oplus \mathcal{C}_0^\infty(M) g~  \oplus \Gamma(TM) \oplus TT;\, $$
where the first and second terms correspond to constant rescalings and infinitesimal conformal deformations, respectively. The identification $\Gamma(TM) \rightarrow \Sym^2(TM)$ via the map $X\mapsto \lie_X g$ corresponds to the orbit of the diffeomorphism group. The term $TT$ are the traceless and transverse (tt for short) symmetric 2-tensors:
$$ TT(M,g) = \left\{ h \in \Gamma(\Sym^2(T^*M) \st \operatorname{tr}(h)=0 ,~\delta h =- \sum_i e_i \lrcorner \nabla_{e_i} h = 0 \right\}\;.$$
By Ebin’s slice theorem, this formal complement to the orbit tangent space is the tangent space to a genuine slice of the diffeomorphism orbit in a given conformal class.
\begin{theorem}{\cite[Thm 2.4 \& Thm. 2.5]{Koi79}} \label{Einstein2ndvar}
Let $(M^n,g)$ be an Einstein metric with constant $\lambda$. Then, when restricted to conformal variations, the second variation is given by 
\begin{equation}
    \delta^2 \mathcal{S}_g (f, f') =  \frac{n-2}{2}\int_M  \left\langle \Delta f - \frac{n \lambda}{n-1} f , f' \right\rangle \dvol \;.
\end{equation}
When restricted to $tt$-tensors, it is given by
\begin{equation}
    \delta^2 \mathcal{S}_g (h, h') =  - \frac{1}{n-1}\int_M \left \langle \Delta_L h - 2\lambda h, h' \right\rangle \dvol \;.
\end{equation}
for $h, h' \in TT(M,g) \subset \Gamma(\Sym^2_0 T^*M)$.
\end{theorem}
If $\lambda=n-1$, the operator $(\Delta  - n)f$ is strictly positive for $f \in \mathcal{C}^\infty_0(M)$, by Obata’s theorem \cite{Ob62}. The term $\Delta_L h$ is the Lichnerowicz Laplacian $$\Delta_L  =  \nabla^* \nabla + q(\mathcal{R})\;,$$
where $q(\mathcal{R})= \sum_{i<j} (e_i \wedge e_j )_* \left(\mathcal{R}(e_i, e_j)\right)_*$ is the standard curvature endomorphism induced by the Riemannian curvature tensor $\mathcal{R}$. One defines the co-index of an Einstein metric as the maximal subspace along which $\mathcal{S}_g |_{TT}$ is positive definite. Since the operator $\Delta_L h - 2\lambda h$ is a strongly elliptic operator, the co-index is guaranteed to be finite.

Let us study the Einstein co-index of nearly Kähler and nearly parallel $G_2$-structures.
\subsubsection*{Nearly Kähler manifolds}
We consider the case where $(M,g)$ is a nearly Kähler manifold. Since its metric cone is Ricci flat, the metric $g$ is Einstein with $\lambda=5$. By Lemma \ref{decompositionSU(3)}, we have an isomorphism 
\begin{align*}
    \Phi: \Sym^2_0 T^*M &\rightarrow \Omega^2_8 \oplus \Omega^3_{12}\\
 h = (h_+, h_- ) & \mapsto \left( I(h_+), \Upsilon(h_-) \right) \;,
\end{align*}
with $h_\pm = 1/2 \left(  h \pm JhJ \right)$ the $J$-commuting and $J$-anti-commuting parts of a traceless symmetric 2-tensor. Thus, $\overline{\Delta}=\Phi^{-1} \circ \Delta_L  \circ \Phi$ is a Laplacian-type operator on $\Omega^2_8 \oplus \Omega^3_{12}$.

The key result, due to Schwahn \cite{Sch22} (cf. \cite[Section 5]{MS11}), allows us to transform the eigenvalue problem $\mathcal{Q}-\lambda =0$  to an eigenvalue problem for the Laplacian on forms:
\begin{proposition}[{\cite[Lemma 3.2]{Sch22}}] \label{index_bound_NK}
 Let $(M^6, \omega, \rho)$ be a nearly Kähler manifold, not isometric to the round sphere. Consider the eigenspaces spaces 
 $$ \mathcal{E}(\lambda) = \left \{ \beta \in \Omega^2_{8} ~ \st  d^* \beta =0 \;, ~ \Delta \beta = \lambda \beta \right \}\;.$$
The Einstein index of $(M, g)$ is given by 
 \begin{equation} \label{einstein_index_NK}
     \operatorname{Ind}^{EH} = b^2(M) + b^3(M) + 3 \sum_{\lambda \in (0,2)} \dim \; \mathcal{E}(\lambda) + 2 \sum_{\lambda \in (2, 6)}\dim \; \mathcal{E}(\lambda) +  \sum_{\lambda \in (6,12)}
 \dim \; \mathcal{E}(\lambda)\;.
 \end{equation}
\end{proposition}
\begin{corollary}
The Einstein co-index is bounded below by the Hitchin index. 
\end{corollary}
\subsubsection*{Nearly parallel $G_2$ manifolds}
We now consider the case where $(M,g)$ is a nearly parallel $G_2$ manifold. Since its metric cone is Ricci flat, the metric $g$ is Einstein with $\lambda=6$. By Lemma \ref{formdecom7} we have an isomorphism $ i_\phi: \Gamma\left(\Sym^2_0\right) \rightarrow \Omega^3_{27}$.
The Laplacian comparison formula needed in this case is due to Alexandrov and Semmelmann.
\begin{proposition}[{\cite[Prop. 6.1]{AS12}}]
Under the map $i_\phi$, the operator
 $$ \mathcal{Q}(h) = \Delta_L h -12 h $$
 on $h \in TT(M)$ is identified with 
\begin{equation}\label{Einstein2v_G2}
        \widehat{ \mathcal{Q}} = \Delta \gamma + 2 *d \gamma - 8 \gamma 
\end{equation}
acting on $ \Omega_{TT} = \{\gamma \in \Omega^3_{27} \st \pi_7(d\gamma) =0\}$. 
\end{proposition}
The proof strategy is the same as that of nearly Kähler structures. Let us study the eigenvalue problem for $\widehat{\mathcal{Q}}$.
\begin{proposition}\label{index_bound_GS}
 Let $(M^7,g, \phi)$ be a nearly parallel $G_2$ manifold. Consider the eigenspaces spaces 
 $$ \mathcal{E}(\lambda) = \left\{ \gamma \in \Omega^3_{12}  \st   *d \gamma = \lambda \gamma \right\} ~~~~~~~~ \mathcal{F}(\lambda) = \left \{ \gamma \in \Omega^3_{12}  \st    dd^*\gamma = \lambda \gamma \right\} \;.$$
 The Einstein index of $(M, g)$ is given by
 \begin{equation} \label{einstein_index_bound_G2}
     \operatorname{Ind}^{EH} = b^3(M) + \sum_{\lambda \in (-4,0)\cup (0,2)} \dim ~\mathcal{E}(\lambda) + \sum_{\lambda \in (0,8)} \dim ~\mathcal{F}(\lambda)\;.
 \end{equation}
\end{proposition}
\begin{proof}
The operator $\widehat{\mathcal{Q}}$ commutes with the self-dual operator $*d$. Thus, we can find a common base of eigenforms. Let $\mu \in \R$ and consider the spaces $\mathcal{E}(\mu)$ and $\mathcal{F}(\mu^2)$ defined above. If $\mu\neq 0$, we have $\mathcal{E}(\mu) \subseteq \mathcal{F}(\mu^2)$, and thus are all finite-dimensional, by ellipticity of the Laplacian.

Substituting $*d\gamma = \mu \gamma$ for $\mu \neq 0$ in Equation \eqref{Einstein2v_G2}, we have that $\gamma$ is an eigenform of $\widehat{\mathcal{Q}}$ with eigenvalue $\lambda = \mu^2 + 2\mu -8$. If $\mu=0$, $\gamma$ is closed, and Equation \eqref{Einstein2v_G2} reduces to $ \Delta \gamma = dd^*\gamma = (\lambda +8) \gamma$, which concludes the proof of Equation \eqref{einstein_index_bound_G2}. From Corollary \ref{Qindex_G2}, we obtain the desired bound.
\end{proof}
\begin{remark}
    The purely topological bound $\operatorname{Ind}^{EH}\geq b^3(M)$ appeared in \cite{SWW22}. 
\end{remark}

\bibliographystyle{alpha}
\bibliography{Bibliography}

\begin{thebibliography}{FKMS97}

\bibitem[AS12]{AS12}
B.~Alexandrov and U.~Semmelmann.
\newblock {Deformations of nearly parallel $G_2$-structures}.
\newblock {\em Asian J. Math.}, 16:713--744, 2012.

\bibitem[BG08]{BG08}
C.~P. Boyer and K.~Galicki.
\newblock {\em {Sasakian geometry}}.
\newblock Oxford University Press, 2008.

\bibitem[Bry05]{Bry05}
R.~Bryant.
\newblock {Some remarks on $G_2$-structures}, 2005.
\newblock \url{https://arxiv.org/abs/math/0305124}.

\bibitem[BS89]{BS89}
R.~L. Bryant and S.~M. Salamon.
\newblock {On the construction of some complete metrics with exceptional holonomy}.
\newblock {\em Duke Mathematical Journal}, 58(3):829 -- 850, 1989.

\bibitem[BV07]{BV07}
L.~Bedulli and L.~Vezzoni.
\newblock {The Ricci tensor of $SU(3)$-manifolds}.
\newblock {\em Journal of Geometry and Physics}, 57(4):1125--1146, 2007.

\bibitem[BX11]{BX11}
R.~Bryant and F.~Xu.
\newblock {Laplacian Flow for Closed $G_2$-Structures: Short Time Behavior}, 2011.
\newblock \url{https://arxiv.org/abs/arXiv.1101.2004}.

\bibitem[Bä93]{Bar93}
C.~Bär.
\newblock {Real Killing spinors and holonomy}.
\newblock {\em Comm. Math. Phys.}, 154(3):509–52, 1993.

\bibitem[CS02]{CS02}
S.~Chiossi and S.~Salamon.
\newblock {The intrinsic torsion of $SU(3)$ and $G_2$ structures}, 2002.
\newblock Proc. conf. Differential Geometry Valencia 2001, \url{https://arxiv.org/abs/math/0202282}.

\bibitem[CV15]{CV15}
V.~Cortés and J.~J. Vásquez.
\newblock { Locally homogeneous nearly Kähler manifolds}.
\newblock {\em Ann. Global Anal. Geom.}, 48(3):269--294, 2015.

\bibitem[DS20]{SS20}
S.~Dwivedi and R.~Singhal.
\newblock {Deformation theory of nearly $G_2$ manifolds}, 2020.
\newblock \url{https://arxiv.org/abs/2007.02497}.

\bibitem[FH17]{FH17}
L.~Foscolo and M.~Haskins.
\newblock {New $G_2$-holonomy cones and exotic nearly Kähler structures on $S^6$ and $S^3 \times S^3$}.
\newblock {\em Annals of Mathematics}, 185:59 --130, 2017.

\bibitem[FKMS97]{FKMS97}
T.~Friedrich, I.~Kath, A.~Moroianu, and U.~Semmelmann.
\newblock {On nearly parallel G2-structures}.
\newblock {\em Journal of Geometry and Physics}, 23(3):259--286, 1997.

\bibitem[Fos17]{Fos17}
L.~Foscolo.
\newblock {Deformation theory of nearly Kähler manifolds}.
\newblock {\em Journal of the London Math. Soc.}, 95:586--612, 2017.

\bibitem[GH80]{GH80}
A.~Gray and L.~Hervella.
\newblock {The sixteen classes of almost Hermitian manifolds and their linear invariants}.
\newblock {\em Annali di Matematica Pura ed Applicata}, 123:35--58, 1980.

\bibitem[Gra70]{Gray70}
A.~Gray.
\newblock {Nearly Kähler manifolds}.
\newblock {\em J. Differential Geometry}, 4:283--309, 1970.

\bibitem[Gri13]{Gri13}
S.~Grigorian.
\newblock {Short-time behaviour of a modified Laplacian coflow of $G_2$-structures}.
\newblock {\em Advances in Mathematics}, 248:378--415, 2013.

\bibitem[Hit00]{Hitchin00}
N.~Hitchin.
\newblock {The geometry of three-forms in six and seven dimensions}, 2000.
\newblock \url{https://arxiv.org/abs/math/0010054}.

\bibitem[Hit01]{Hitchin01}
N.~Hitchin.
\newblock Stable forms and special metrics, 2001.
\newblock \url{https://arxiv.org/abs/math/0107101}.

\bibitem[Joy00]{Joy00}
D.~Joyce.
\newblock {\em {Compact Manifolds with Special Holonomy}}.
\newblock Oxford University Press, 2000.

\bibitem[Joy03]{Joy032}
D.~Joyce.
\newblock {Special Lagrangian submanifolds with isolated conical singularities}, 2003.
\newblock \url{https://arxiv.org/abs/math/0211295}.

\bibitem[Kar08]{Kar08}
S.~Karigiannis.
\newblock {Some Notes on $G_2$ and $Spin(7)$ Geometry}, 2008.
\newblock \url{https://arxiv.org/abs/math/0608618}.

\bibitem[KL20]{LK20}
S.~Karigiannis and J.D. Lotay.
\newblock {Deformation theory of $G_2$ conifolds}.
\newblock {\em Communications in Analysis and Geometry}, 28:1057--1210, 2020.

\bibitem[KMT12]{KMT12}
S.~Karigiannis, B.~McKay, and M.~Tsui.
\newblock {Soliton solutions for the Laplacian co-flow of some $G_2$-structures with symmetry}.
\newblock {\em Differential Geometry and its Applications}, 30(4):318--333, 2012.

\bibitem[Koi79]{Koi79}
N.~Koiso.
\newblock {On the second derivative of the total scalar curvature}.
\newblock {\em Osaka Journal of Mathematics}, 16(2):413--421, 1979.

\bibitem[Leh21]{Leh21}
F.~Lehmann.
\newblock {Deformations of asymptotically conical $Spin(7)$-manifolds}, 2021.
\newblock \url{https://arxiv.org/abs/2101.10310}.

\bibitem[Lot07]{Lot07}
J.D. Lotay.
\newblock {Coassociative 4-folds with conical singularities}.
\newblock {\em Communications in Analysis and Geometry}, 15:891--946, 2007.

\bibitem[LT05]{LT05}
D.~Lüst and D.~Tsimpis.
\newblock {Supersymmetric AdS4 compactifications of IIA supergravity}.
\newblock {\em Journal of High Energy Physics}, 2005(02), 2005.

\bibitem[MNS08]{MNS08}
A.~Moroianu, P.A. Nagy, and U.~Semmelmann.
\newblock {Deformations of nearly Kähler structures}.
\newblock {\em Pacific Journal of Mathematics}, 235:57--72, 2008.

\bibitem[MS10]{MS10}
A.~Moroianu and U.~Semmelmann.
\newblock {The Hermitian Laplace Operator on Nearly Kähler Manifolds}.
\newblock {\em Communications in Mathematical Physics}, 294:251--272, 2010.

\bibitem[MS11]{MS11}
A.~Moroianu and U.~Semmelmann.
\newblock {Infinitesimal Einstein deformations of nearly Kähler metrics}.
\newblock {\em Transactions of the American Mathematical Society}, 363(6):3057--3069, 2011.

\bibitem[Nag01]{Nagy01}
P.A. Nagy.
\newblock {\em {A principle of separations of variables for the spectrum of Laplacian acting on forms and applications}}.
\newblock PhD thesis, Universit of Savoie, 2001.

\bibitem[Nag02]{Nagy02}
P.A. Nagy.
\newblock {Nearly Kähler geometry and Riemannian foliations}.
\newblock {\em Asian J. Math.}, 3:481--504, 2002.

\bibitem[Nor08]{Nor08}
J.~Nordström.
\newblock {\em {Deformations and glueing of asymptotically cylindrical manifolds with exceptional holonomy}}.
\newblock PhD thesis, Cambridge, King's College, 2008.

\bibitem[NS21]{NS21}
P.A. Nagy and U.~Semmelmann.
\newblock {Deformations of nearly $G_2$-structures}.
\newblock {\em J.London Math.Soc.}, 104:1795--1811, 2021.

\bibitem[NS23]{NS23}
P.A. Nagy and U.~Semmelmann.
\newblock {The $G_2$ geometry of $3$-Sasaki structures}, 2023.
\newblock \url{https://arxiv.org/abs/2101.04494}.

\bibitem[Oba62]{Ob62}
M.~Obata.
\newblock {Certain conditions for a Riemannian manifold to be isometric with a sphere}.
\newblock {\em Journal of The Mathematical Society of Japan}, 14:333--340, 1962.

\bibitem[Pod21]{Pode21}
F.~Podestà.
\newblock {Nearly Parallel $G_2$-structures with Large Symmetry Group}.
\newblock {\em Canadian Journal of Mathematics}, 73(2):339--359, 2021.

\bibitem[RL82]{HL82}
R.Harvey and H.B. Lawson.
\newblock {Calibrated geometries}.
\newblock {\em Acta Math.}, 148:47--157, 1982.

\bibitem[Sch22]{Sch22}
P.~Schwahn.
\newblock {Coindex and Rigidity of Einstein Metrics on Homogeneous Gray Manifolds}.
\newblock {\em The Journal of Geometric Analysis}, 32, 2022.

\bibitem[Sin23]{singhal23}
R.~Singhal.
\newblock {Nearly half-flat $\rm{SU}(3)$-structures on $S^3\times S^3$}, 2023.
\newblock \url{https://arxiv.org/abs/2310.11233}.

\bibitem[{Sol}24]{ESF24c}
E.~{Solé-Farré}.
\newblock {The Hitchin index in cohomogeneity one nearly Kähler structures}, 2024.
\newblock \url{https://arxiv.org/abs/2410.21106}.

\bibitem[Spa11]{Sparks11}
J.~Sparks.
\newblock {Sasaki-Einstein manifolds}.
\newblock {\em Surv. Diff. Geom.}, 16:265--324, 2011.

\bibitem[SW17]{SW10}
D.~Salamon and T.~Walpuski.
\newblock {Notes on the octonions}, 2017.
\newblock \url{https://arxiv.org/abs/1005.2820}.

\bibitem[SWW22]{SWW22}
U.~Semmelmann, C.~Wang, and M.~Wang.
\newblock {Linear instability of Sasaki Einstein and nearly parallel $G_2$ manifolds}.
\newblock {\em International Journal of Mathematics}, 33(6), 2022.

\end{thebibliography}

\end{document}